\DeclarePairedDelimiter\floor{\lfloor}{\rfloor}
\newenvironment{abstracts}{%
	\ifx\maketitle\relax
	\ClassWarning{\@classname}{Abstract should precede
		\protect\maketitle\space in AMS document classes; reported}%
	\fi
	\global\setbox\abstractbox=\vtop \bgroup
	\normalfont\Small
	\list{}{\labelwidth\z@
		\leftmargin3pc \rightmargin\leftmargin
		\listparindent\normalparindent \itemindent\z@
		\parsep\z@ \@plus\p@
		
		\itemsep\medskipamount
	}%
}{%
	\endlist\egroup
	\ifx\@setabstract\relax \@setabstracta \fi
}
\newcommand{\abstractin}[1]{%
	\otherlanguage{#1}%
	\item[\hskip\labelsep\scshape\abstractname.]%
}
\theoremstyle{theorem}
\newtheorem{theorem}{Theorem}[section]
\newtheorem{corollary}[theorem]{Corollary}
\newtheorem{lemma}[theorem]{Lemma}
\newtheorem{prop}[theorem]{Proposition}
\newtheorem*{question}{Question}
\theoremstyle{definition}
\newtheorem{definition}[theorem]{Definition}
\theoremstyle{remark}
\newtheorem{example}[theorem]{Example}
\newtheorem{remark}[theorem]{Remark}
\theoremstyle{theorem}
\newtheorem{maintheorem}{Theorem}
\newtheorem{mainprop}[maintheorem]{Proposition}
\def\id{\mbox{Id} }
\def\et{\quad\mbox{and}\quad}
\DeclareMathOperator{\GL}{GL}
\DeclareMathOperator{\SL}{SL}
\DeclareMathOperator{\pr}{pr}
\DeclareMathOperator{\rank}{rank}
\DeclareMathOperator{\Aut}{Aut}
\DeclareMathOperator{\SAut}{SAut}
\DeclareMathOperator{\alg}{alg}
\DeclareMathOperator{\red}{red}
\DeclareMathOperator{\kk}{\textbf{k}}
\DeclareMathOperator{\C}{\mathbb{C}}
\DeclareMathOperator{\R}{\mathbb{R}}
\DeclareMathOperator{\Q}{\mathbb{Q}}
\DeclareMathOperator{\Z}{\mathbb{Z}}
\DeclareMathOperator{\N}{\mathbb{N}}
\DeclareMathOperator{\mm}{\mathfrak{m}}
\DeclareMathOperator{\nn}{\mathfrak{n}}
\DeclareMathOperator{\OO}{\mathcal{O}}
\DeclareMathOperator{\CH}{CH}
\DeclareMathOperator{\End}{End}
\DeclareMathOperator{\aff}{aff}
\renewcommand{\id}{\mathrm{id}}
\def\RR{\mathbb{R}}
\def\PP{\mathbb{P}}
\def\AA{\mathbb{A}}
\def\ZZ{\mathbb{Z}}
\def\CC{\mathbb{C}}
\def\QQ{\mathbb{Q}}
\def\GG{\mathbb{G}}
\def\c{\textrm{comp}}
\renewcommand{\k}{\textbf{k}}
\DeclareMathOperator{\g}{\mathfrak{g}}
\DeclareMathOperator{\Spec}{Spec}
\newcommand{\set}[2]{\{\,#1 \ | \ #2\,\}}
\newcommand{\Bigset}[2]{\left\{\,#1 \ \Big| \ #2\,\right\}}
\begin{document}
\title[Existence of Embeddings into Algebraic Groups]
{Existence of Embeddings of Smooth 
Varieties into Linear Algebraic Groups}
\author{Peter Feller \and Immanuel van Santen}
\address{Peter Feller, ETH Z\"urich, Department of Mathematics, R\"amistrasse 101, CH-8092 Z\"urich, Switzerland}
\email{peter.feller@math.ch}
\address{Immanuel van Santen, University of Basel, Department of Mathematics and Computer Science, Spiegelgasse $1$, CH-$4051$ Basel, Switzerland}
\email{immanuel.van.santen@math.ch}
\subjclass[2010]{14E25 (primary), and 14L17, 14R20, 14M17, 14R10 (secondary)}
\keywords{Embeddings, linear algebraic groups, homogeneous spaces}

\begin{abstracts}
%
%
	\abstractin{english}
	We prove that every smooth affine variety of dimension $d$ embeds into every simple algebraic group of dimension at least $2d+2$. We do this by establishing the existence of embeddings of smooth affine varieties into the total space of certain principal bundles. For the latter we employ and build upon parametric transversality results for flexible affine varieties due to Kaliman. By adapting a Chow-group-based argument due to Bloch, Murthy, and Szpiro, we show that our result is optimal up to a possible improvement of the bound to $2d+1$. 
	
	In order to study the limits of our embedding method, we use rational homology group calculations of homogeneous spaces and we establish a domination result for rational homology of complex 
	smooth varieties. 
\end{abstracts}

\maketitle

\tableofcontents

\vspace{-0.3cm}
\addtocontents{toc}{\protect\setcounter{tocdepth}{1}}

\section{Introduction}

In this text, \emph{varieties} are understood to be (reduced) algebraic varieties
over a fixed algebraically closed field $\kk$ 
of characteristic zero endowed with the Zariski topology. We will focus on \emph{affine} varieties---closed subvarieties of the affine space $\AA^n$. A closed embedding, \emph{embedding} for short, 
$f\colon Z\to X$ of an affine variety $Z$ into an affine variety $X$ is a morphism such that
$f(Z)$ is closed in $X$ and $f$ induces an isomorphism $Z \simeq f(Z)$ of varieties.

A focus of this text lies on embeddings into the underlying varieties of affine algebraic groups. Recall that
an affine algebraic group, an \emph{algebraic group} for short, is a closed subgroup of the general linear
group $\GL_k$ for some positive integer
$k$.
An algebraic group is \emph{simple} if it has no non-trivial connected normal subgroup.
We prove the following embedding theorem.

\begin{maintheorem}
	[{Theorem~\ref{thm.simple}}]
	\label{thmintro:main}
	Let $G$ be the underlying affine variety of a simple algebraic group  
	and $Z$ be a smooth affine variety. If $\dim G > 2 \dim Z + 1$,
	then $Z$ admits an embedding into $G$.
\end{maintheorem}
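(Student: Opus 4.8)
The plan is to exhibit $G$ as the total space of a principal bundle with flexible fibres over an easily-mappable base, and then to build the embedding fibre-by-fibre using parametric transversality in the spirit of Kaliman. Throughout put $d=\dim Z$, so the hypothesis reads $\dim G\geq 2d+2$. \textbf{Step 1 (reduction to total spaces of principal bundles).} Fix a Borel subgroup of $G$ and let $U$ be its unipotent radical. As a variety $U$ is isomorphic to an affine space, so it is flexible; it is also special in the sense of Serre and split-solvable, so $\pi\colon G\to G/U$ is a Zariski-locally trivial principal $U$-bundle whose pullback along a morphism from any affine variety is a \emph{trivial} bundle. From $\dim G=\dim U+\dim G/U$ and $\dim G\geq 2d+2$ one gets $\dim G/U\geq d+1$ (indeed, writing $\dim G=2r+\ell$ with $r=\dim U$ and $\ell$ the rank, $\dim G/U=r+\ell$). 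I would therefore reduce the theorem to the relative statement: for a principal bundle $\pi\colon E\to B$ with $E$ affine and with flexible, special, split-solvable fibre, and with $\dim E\geq 2d+2$ and $\dim B\geq d+1$, every smooth affine $Z$ of dimension $d$ embeds into $E$. Simple groups that are not simply connected cause no genuine trouble here: such a $G$ has its own Borel and unipotent radical, so one runs the construction directly on $G$ (or passes through a central isogeny).

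\textbf{Step 2 (reduction to a transversality problem in the fibres).} To prove the relative statement I would first choose a morphism $f\colon Z\to B$ in reasonably general position — e.g.\ finite onto a closed subvariety of $B$, which is possible because $\dim B\geq d$. Pulling $\pi$ back along $f$ gives, since $Z$ is affine and the fibre is special and split-solvable, a trivial bundle $f^{*}E\cong Z\times (\text{fibre})$, and the canonical morphism $f^{*}E\to E$ is finite, in particular closed. Hence it suffices to produce a morphism $g$ from $Z$ to the fibre such that the resulting morphism $\widetilde f_{g}\colon Z\to E$ — which over a trivializing open $V\subseteq B$ has the shape $z\mapsto(f(z),g(z))\in V\times(\text{fibre})$ — is injective and immersive, and therefore a closed immersion. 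Over the locus where $f$ is already injective and immersive nothing is required of $g$; the work is concentrated over the low-dimensional locus where $f$ fails one of these conditions, where $g$ must separate the finitely many families of identifications and cure the finitely many non-immersive directions.

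\textbf{Step 3 (parametric transversality à la Kaliman, and conclusion).} Choosing such a $g$ inside a sufficiently rich family, exploiting that the fibre is flexible, is precisely what parametric transversality for flexible affine varieties provides: infinite transitivity of $\SAut(\text{fibre})$ together with transitivity on jets supplies, with parameters ranging over $Z$, enough fibrewise motions for a general $g$ to work, the numerical input being exactly $\dim(\text{fibre})+\dim(\text{base})=\dim E\geq 2d+2$. A general $g$ then gives a closed embedding $Z\hookrightarrow E$; applying this to $E=G$, $B=G/U$ proves the theorem. The discrepancy between the $2d+2$ obtained here and the $2d+1$ one would hope for is the customary loss in making the argument parametric, and the abstract flags closing it as open.

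\textbf{Main obstacle.} I expect the crux to be Step 3: Kaliman's transversality has to be available in a genuinely relative form over the base $G/U$, i.e.\ the flexibility of the fibre must hold uniformly in families — the families of fibrewise automorphisms used must be algebraic over the base and still surject onto the relevant jet spaces — and the double-point and immersion analysis of Step 2 must be arranged so that flexibility is only ever invoked fibrewise. A further delicate point, which may well force the argument to differ in detail from the sketch above, is that $G/U$ is only quasi-affine: one must produce the base map $f$ with image genuinely inside $G/U$ (not in its affine closure, whose boundary has codimension $\geq 2$ but which a $d$-dimensional image would otherwise meet), and here too one likely has to spend the fibre direction's flexibility to push the image off the bad locus. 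A last, bookkeeping-level obstacle is to verify, uniformly over the classification and the isogeny classes of simple groups, that the chosen subgroup is flexible, special and split-solvable and that $\dim U$ and $\dim G/U$ are both large enough to feed Steps 2 and 3.
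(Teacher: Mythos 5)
Your proposal takes a genuinely different route from the paper. You work with the bundle $\pi\colon G\to G/U$ for $U$ the unipotent radical of a Borel subgroup, whereas the paper never uses all of $G$ as the total space: it first produces a \emph{codimension-one} closed subvariety $X\subset G$ isomorphic to $\AA^{m}\times H$ (via the big-cell decomposition $R_u(P^-)\times R_u(P)\times(P\cap P^-)^u\hookrightarrow G$ for a well-chosen maximal parabolic $P$; Lemma~\ref{lem:max_parabolic} and Proposition~\ref{prop.estimate}), and then embeds $Z$ into $X$ using Theorem~\ref{thm.Product} with base $P=\AA^{\dim Z}$ and a $\GG_a$-quotient of $H$. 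This is not a cosmetic difference: it is exactly what makes the dimension count close.

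The gap in your argument is the numerical input, which you state as ``exactly $\dim(\text{fibre})+\dim(\text{base})=\dim G\geq 2d+2$.'' That is not the constraint the transversality step actually imposes. After fixing $f\colon Z\to G/U$ finite onto its image and trivializing the $U$-bundle over $f(Z)$, you must choose $g\colon Z\to U$ so that the resulting map $Z\to G$ is injective and immersive. The off-diagonal double-point locus of $f$ in $Z\times Z$ has dimension $\leq d$, and separating it by a generic $g$ is a codimension-$\dim U$ condition; the same count applies to the kernel of the differential (Lemma~\ref{lem.estimate_kernel_differential} gives $\dim\ker(\mathrm{d}f)\leq d$). So the approach needs $\dim U>d$. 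But writing $\dim G=2r+\ell$ with $r=\dim U$ and $\ell=\rank G\geq 1$, the hypothesis $\dim G\geq 2d+2$ only gives $r\geq d+1-\ell/2$, which fails to yield $r>d$ as soon as $\ell\geq 2$. Concretely, for $G=\SL_3$ one has $r=3$, $\ell=2$, $\dim G=8$, and $d=3$ satisfies $\dim G\geq 2d+2$, yet $\dim U=3=d$, so your generic choice of $g$ is not expected to clear the double-point and immersion loci. (The same deficit $1-r+d$ appears if one feeds $\pi\colon G\to G/U$ and its first $\GG_a$-quotient into Proposition~\ref{prop.key}: the bound $\dim Z+\dim P-\dim Q+k$ becomes $d+1-r$, and one needs this $\leq 0$.) The paper's codimension-one restriction exists precisely to ``spend'' this rank-$\ell$ loss: inside $X\simeq\AA^m\times H$ the base is $P=\AA^{d}$ of dimension exactly $d$, so Noether normalization supplies the finite surjective map to $P$, and the fiber $\AA^{m-d}\times H$ of $X\to\AA^{d}$ is large enough for the estimate. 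The quasi-affineness of $G/U$ that you flag is a real nuisance, but it is not the decisive obstacle; the dimension deficit is.
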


In case $\dim G$ is even, the dimension assumption on $\dim Z$ in terms of $\dim G$ from Theorem~\ref{thmintro:main} is optimal; while in case $\dim G$ is odd, the dimension assumption can at best be relaxed by one, that is from $\dim G > 2 \dim Z + 1$ to $\dim G \geq 2 \dim Z + 1$.
Indeed, we have the following.

\begin{mainprop}
	[{Corollary~\ref{cor.non-emeddability}}]
	\label{cor:optim}
	Let $G$ be the underlying affine variety of an algebraic group of dimension $n\geq 1$.
	Then, for every integer
	$d \geq \frac{n}{2}$
	there exists a smooth irreducible affine variety $Z$ of dimension $d$ that does not admit
	an embedding into~$G$.
\end{mainprop}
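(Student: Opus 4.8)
The plan is to exploit the triviality of the tangent bundle of an algebraic group, together with the finiteness of the positive-degree Chow groups of a connected linear algebraic group, and then to produce suitable witnesses $Z$ by adapting the Chow-group argument of Bloch, Murthy, and Szpiro.

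First I would reduce to the case $G$ connected (replacing $G$ by its identity component $G^0$: a closed \emph{irreducible} $Z$ can embed only inside a single translate of $G^0$, and $\dim G^0 = \dim G$); write $n = \dim G$ and fix an integer $d \ge n/2$. Then I would set $X = G \times \AA^{2d-n}$, a smooth affine variety of dimension $2d$, and note that $T_X \cong \OO_X^{2d}$ is free, since $T_G$ is trivialized by left-invariant vector fields and $T_{\AA^{2d-n}}$ is free. Suppose $i \colon Z \hookrightarrow G \hookrightarrow X$ is a closed embedding of a smooth irreducible affine $d$-fold; it is automatically a regular embedding of codimension $d$. From the normal bundle exact sequence $0 \to T_Z \to i^*T_X \to N_{Z/X} \to 0$ together with $T_X \cong \OO_X^{2d}$, Whitney's formula gives $c(T_Z)\,c(N_{Z/X}) = 1$ in $\CH^\bullet(Z)$, and comparing degree-$d$ components (using $\dim Z = d$) yields $c_d(N_{Z/X}) = [\,c(T_Z)^{-1}\,]_d$ in $\CH^d(Z) = \CH_0(Z)$. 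On the other hand, the self-intersection formula gives $i^*[Z]_X = c_d(N_{Z/X})$, where $[Z]_X \in \CH^d(X)$ is the class of the subvariety $i(Z)$ and $i^*$ is the Gysin pullback. By homotopy invariance $\CH^d(X) = \CH^d(G)$, and for a connected linear algebraic group $\CH^{\ge 1}$ is torsion (it vanishes after $\otimes\,\QQ$) and finitely generated (being a quotient of the Chow ring of a flag variety), hence finite; so $[\,c(T_Z)^{-1}\,]_d = i^*[Z]_X$ is killed by the exponent $e(G)$ of $\CH^{\ge 1}(G)$.

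This shows that \emph{if $Z$ is a smooth irreducible affine $d$-fold with $d \ge n/2$ and $[\,c(T_Z)^{-1}\,]_d \in \CH_0(Z)$ of order not dividing $e(G)$ --- in particular of infinite order --- then $Z$ admits no closed embedding into $G$}. So the Proposition would follow from the following statement, which no longer involves $G$: \emph{for every $d \ge 1$ there is a smooth irreducible affine variety $Z_d$ of dimension $d$ over $\kk$ with $[\,c(T_{Z_d})^{-1}\,]_d$ of infinite order in $\CH_0(Z_d)$}. Producing such $Z_d$ is where I would adapt Bloch--Murthy--Szpiro. For $d = 1$ I can be explicit: take $Z_1 = \overline{C} \setminus \{p\}$ with $\overline{C}$ smooth projective of genus $\ge 2$; then $[\,c(T_{Z_1})^{-1}\,]_1 = c_1(\Omega^1_{Z_1}) = [\Omega^1_{Z_1}]$ is the image in $\mathrm{Pic}(Z_1) = \CH_0(Z_1)$ of the canonical class of $\overline{C}$, and for an appropriate point $p$ it has infinite order --- over $\overline{\QQ}$ by the Manin--Mumford/Raynaud theorem (a curve of genus $\ge 2$ contains only finitely many torsion points of its Jacobian), and over any algebraically closed field of characteristic zero by base change from $\overline{\QQ}$, using that $\CH_0$ injects under extensions of algebraically closed fields. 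For general $d$ I would argue in the same spirit --- e.g.\ from products of such curves, or from general affine open subsets of smooth projective $d$-folds carrying a nonzero holomorphic $d$-form --- extracting infinite order of the relevant top Segre class from a Mumford--Roitman-type analysis of $\CH_0$.

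The reduction above is soft; the main obstacle is this last step: producing, over an \emph{arbitrary} algebraically closed field of characteristic zero, a smooth affine $d$-fold whose tangent bundle has an infinite-order top Segre class in $\CH_0$. The difficulty concentrates over $\overline{\QQ}$, where the easy ``$\CH_0$ is enormous'' arguments available over $\CC$ are unavailable, and where the Bloch--Murthy--Szpiro circle of ideas is genuinely needed.
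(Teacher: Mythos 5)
Your overall strategy is the same as the paper's: reduce to the identity component, stabilize by a factor $\AA^{2d-n}$ to reach dimension $2d$, exploit triviality of $T_G$ plus the Whitney sum formula to relate the top Chern class of the (co)normal bundle to the top Segre class of $T^*Z$, use the self-intersection formula and the torsion of the positive-codimension Chow groups of a connected linear algebraic group (Grothendieck/Brion) to deduce a constraint in $\CH_0(Z)$, and then seek a witness $Z$ violating it. However, your argument as written leaves the decisive gap open and the paper closes it by citation: BMS already prove exactly what you need. Concretely, the paper invokes \cite[Theorem 5.8]{BlMuSz1989Zero-cycles-and-th} to produce, for every $d\ge 1$ and over any algebraically closed $\kk$ of characteristic zero, a smooth irreducible affine $d$-fold $Z$ with $s_d(T^*Z)\cap[Z]\neq 0$ in $\CH_0(Z)$ --- this is precisely the existence statement you flagged as ``the main obstacle''. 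Moreover, the paper avoids your detour through a finite exponent $e(G)$ and an infinite-order requirement: it invokes \cite[Proposition 2.1]{BlMuSz1989Zero-cycles-and-th}, which says $\CH_0(Z)$ is torsion-free for smooth affine $Z$ of dimension $\ge 2$, so ``torsion'' already forces ``zero'' and nonvanishing of $s_d(T^*Z)$ suffices; the residual case $d=1$ is dispatched by observing $\dim G=2$ forces $G$ solvable, hence $\CH_1(G)=0$. Your claim that $\CH^{\ge 1}(G)$ is finite is correct, but the sketch (``a quotient of the Chow ring of a flag variety'') does not by itself yield torsion --- that is the actual content of Grothendieck's theorem which the paper quotes; and your Manin--Mumford detour for $d=1$ is more than is needed once one knows $\CH_1(G)=0$. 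In short: right skeleton, but the crucial witness existence and the torsion-freeness shortcut are both already supplied by BMS, and without citing them the proof is incomplete.
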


Theorem~\ref{thmintro:main} fits well 
in the context of classical embedding theorems in different categories. 
We provide this context in the next subsection and an 
outline of the proof of Theorem~\ref{thmintro:main} in the subsection after that.

Before that
, we discuss a domination result for the rational homology of smooth varieties, which we believe to be of independent interest.
The connection to Theorem~\ref{thmintro:main} comes from an application that explains 
one crucial obstacle to weakening the dimension assumption to $\dim G \geq 2 \dim Z + 1$ 
in our proof of Theorem~\ref{thmintro:main}; see Proposition~\ref{prop:intronomapsZtoG/H} in the outline of the proof of Theorem~\ref{thmintro:main} below. For this domination result we work over the field of complex numbers, and 
{rational} homology groups $H_\ast(\cdot; \QQ)$
are taken with respect to the Euclidean topology.

\begin{maintheorem}
\label{mthm:surjectiononhomology} Let $f\colon X\to Y$ be a proper 
surjective morphism between complex $n$-dimensional smooth varieties. Then 
the induced map on $k$-th rational homology $H_k(X;\Q)\to H_k(Y;\Q)$ is a surjection for all 
integers $k \geq 0$.
\end{maintheorem}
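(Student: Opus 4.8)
The plan is to produce a transfer homomorphism $\tau\colon H_k(Y;\Q)\to H_k(X;\Q)$ satisfying $f_\ast\circ\tau=\delta\cdot\id$ for some integer $\delta\ge 1$; since $\delta$ is then invertible in $\Q$, the map $f_\ast\colon H_k(X;\Q)\to H_k(Y;\Q)$ is split surjective, which is exactly the assertion. The construction of $\tau$ will be the composition of Poincar\'e duality, pullback on compactly supported cohomology, and Poincar\'e duality again, and the identity $f_\ast\circ\tau=\delta\cdot\id$ will come from the projection formula together with the behaviour of the fundamental class under proper pushforward.

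First I would reduce to the case that $X$ and $Y$ are irreducible of dimension $n$. Rational homology is additive over the finitely many connected components, so it suffices to treat each connected component $Y_0$ of $Y$ separately, replacing $X$ by the preimage $f^{-1}(Y_0)$, which is open and closed in $X$, hence a smooth variety, and maps properly and surjectively onto $Y_0$; thus we may assume $Y$ is connected, hence irreducible, of dimension $n$. Now for each connected component $X_i$ of $X$ the image $f(X_i)$ is closed in $Y$ (as $f$ is proper, hence a closed map) and irreducible, and these finitely many closed subsets cover the irreducible variety $Y$, so $f(X_{i_0})=Y$ for some $i_0$. The component $X_{i_0}$ is smooth and irreducible, with $\dim X_{i_0}=n$ (it surjects onto $Y$, so $\dim X_{i_0}\ge n$, while $\dim X_{i_0}\le\dim X=n$), and surjectivity of $H_k(X_{i_0};\Q)\to H_k(Y;\Q)$ implies the surjectivity we want. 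So assume from now on that $X$ and $Y$ are irreducible of dimension $n$; then $f$, being dominant between irreducible varieties of equal dimension, is generically finite, and we put $\delta:=[\CC(X):\CC(Y)]\in\ZZ_{\ge 1}$.

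Next I would build $\tau$ from Poincar\'e duality. As $X$ and $Y$ are smooth complex varieties, their underlying Euclidean spaces are oriented topological $2n$-manifolds, so Poincar\'e--Lefschetz duality gives isomorphisms $\mathrm{PD}_X\colon H^{2n-k}_c(X;\Q)\xrightarrow{\ \cap[X]\ }H_k(X;\Q)$ and $\mathrm{PD}_Y\colon H^{2n-k}_c(Y;\Q)\xrightarrow{\ \cap[Y]\ }H_k(Y;\Q)$, where $[X]\in H^{\mathrm{BM}}_{2n}(X;\Q)$ and $[Y]\in H^{\mathrm{BM}}_{2n}(Y;\Q)$ are the fundamental classes. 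Because $f$ is proper, there is a pullback $f^\ast\colon H^\ast_c(Y;\Q)\to H^\ast_c(X;\Q)$ on compactly supported cohomology and a pushforward $f_\ast\colon H^{\mathrm{BM}}_\ast(X;\Q)\to H^{\mathrm{BM}}_\ast(Y;\Q)$ on Borel--Moore homology; I then set $\tau:=\mathrm{PD}_X\circ f^\ast\circ\mathrm{PD}_Y^{-1}$. For $\gamma\in H_k(Y;\Q)$, writing $\gamma=[Y]\cap\beta$ with $\beta\in H^{2n-k}_c(Y;\Q)$, the projection formula for the proper map $f$ yields $f_\ast(\tau\gamma)=f_\ast\bigl([X]\cap f^\ast\beta\bigr)=(f_\ast[X])\cap\beta$, and substituting $f_\ast[X]=\delta\,[Y]$ gives $f_\ast(\tau\gamma)=\delta\,([Y]\cap\beta)=\delta\gamma$. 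This finishes the proof. (Equivalently, $\tau$ is the classical transfer map and the computation records $f_\ast\circ\tau=\deg(f)\cdot\id$.)

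The content of the argument is concentrated in two standard but not purely formal inputs, and these are the points I would be most careful about. The first is the identity $f_\ast[X]=\delta\,[Y]$ in $H^{\mathrm{BM}}_{2n}(Y;\Q)$, i.e.\ that the topological degree of a generically finite proper morphism in characteristic zero agrees with the degree $[\CC(X):\CC(Y)]$ of the function-field extension; this follows, via the cycle class map $\CH_n(-)\to H^{\mathrm{BM}}_{2n}(-;\Q)$ and its compatibility with proper pushforward, from the definition of the pushforward of algebraic cycles (see, e.g., Fulton's book on intersection theory). The second is the compatibility of the projection formula with the cap products $H^{\mathrm{BM}}_\ast\otimes H^\ast_c\to H_\ast$ on $X$ and on $Y$, which is classical. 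Finally, I note that both hypotheses on $f$ enter decisively: properness supplies the pullback on $H^\ast_c$ and the pushforward on $H^{\mathrm{BM}}_\ast$, while surjectivity together with $\dim X=\dim Y$ forces $\delta\ge 1$ rather than $\delta=0$.
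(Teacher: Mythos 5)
Your proof is correct and shares the paper's core idea: build a transfer $\tau = \mathrm{PD}_X \circ f^\ast \circ \mathrm{PD}_Y^{-1}$ from Poincar\'e duality and pullback on compactly supported cohomology, then verify $f_\ast \circ \tau = d\cdot\id$ with $d\neq 0$ via the projection formula. Where you diverge is in the technical realization and, more substantively, in how $d\geq 1$ is established. You phrase duality via Borel--Moore homology and fundamental classes, and you identify $d$ with the algebraic degree $[\CC(X):\CC(Y)]$ using cycle class maps $\CH_n(-)\to H^{\mathrm{BM}}_{2n}(-;\QQ)$ and their compatibility with proper pushforward (Fulton). The paper instead proves the Hopf-type identity $f_k\circ f_{!,k}=d\cdot\id$ from scratch for arbitrary proper continuous maps of possibly non-compact $R$-oriented topological manifolds (Theorem~\ref{thm:Hopfdegree}), working with orientation classes $o_K$ over exhausting compacts, the direct-limit description of $H^\ast_{\c}$, and Alexander duality --- precisely because a citable reference covering the non-compact case was not available --- and then obtains $d\geq 1$ topologically as the number of preimages of a regular value of the proper holomorphic surjection. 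Your route is leaner for a reader who accepts the Borel--Moore/cycle-class package; the paper's route buys the more general domination statement Theorem~\ref{thm:THMCgeneral}, including the fundamental-group part~\ref{thm:THMCgeneral_b} that recovers and generalizes Gurjar's theorem. Also note that your reduction step (choosing one component $X_{i_0}$ surjecting onto $Y$) is a minor shortcut over the paper's reduction; both are fine since $X_{i_0}$ is closed in $X$, so $f|_{X_{i_0}}$ is still proper and $H_k(X_{i_0};\QQ)\to H_k(Y;\QQ)$ factors through $H_k(X;\QQ)$.
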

We will formulate a version of Theorem~\ref{mthm:surjectiononhomology} (see Theorem~\ref{thm:THMCgeneral}) in the category of complex manifolds that can be understood as a generalization of Gurjar's Theorem~\cite{Gu1980Topology-of-affine} (see Remark~\ref{rmk:Gurjar}).
We prove Theorem~\ref{mthm:surjectiononhomology} via a version of
Hopf's Theorem on the Umkehrungshomomorphismus for non-compact topological manifolds; see Appendix~\ref{AppendixA}.

\subsection*{Context: embedding theorems in various settings}

\subsubsection*{Holme-Kaliman-Srinivas embedding theorem}
When considering affine varieties as closed subvarieties of the affine space $\AA^n$, it is natural to wonder about their minimal embedding dimension in affine space. It turns out that
every smooth affine variety $Z$ embeds into $\AA^n$ for $n\geq 2\dim Z +1$; see Holme~\cite{Ho1975Embedding-obstruct}, 
Kaliman~\cite{Ka1991Extensions-of-isom
}, and Srinivas~\cite{Sr1991On-the-embedding-d}.
This can be understood as an analog of the following classical result in differential topology.

\subsubsection*{Whitney embedding theorem}The weak Whitney 
embedding theorem states that every closed smooth manifold $M$ can be embedded into $\R^{n}$ for $n\geq 2\dim M+1$~\cite{Wh1936Differentiable-man}. 
The fact that Whitney's result also holds in case $n=2\dim M$ is known as the strong Whitney embedding theorem, based on the so-called Whitney trick \cite{Wh1944The-self-intersect}.
Furthermore, if $M$ is a closed smooth manifold such that $\dim M$ is not a power of 2, then Haefliger-Hirsch~\cite{HaHi1963On-the-existence-a} proved that $M$ embeds into $\R^{2\dim M-1}$. In contrast, the real projective 
space of dimension $2^k$ for $k \geq 0$ yields a $2^k$-dimensional smooth manifold that
does not embed into $\RR^{2 \cdot 2^k-1}$ \cite{Pe1957Some-non-embedding}.

\subsubsection*{Holomorphic embeddings of Stein manifolds}
Focusing on $\kk=\C$ (hence $\AA^n=\C^n$), it is natural to compare the Holme-Kaliman-Srinivas 
result with the holomorphic setup.
It is 
known that every Stein manifold $M$ of dimension at least $2$ can be holomorphically embedded into $\C^{n}$ for $n> \frac{3}{2}\dim M$; see Eliashberg-Gromov~\cite{ElGr1992Embeddings-of-Stei} and Sch\"urmann~\cite{Sc1997Embeddings-of-Stei}.
Examples of Forster show that this dimension condition is optimal~\cite{Fo1970Plongements-des-va}.

Focusing on more general targets, Andrist, Forsterni$\check{c}$, Ritter, and Wold
proved that for every Stein manifold $X$ that satisfies the  (volume) density property and every
Stein manifold $M$ such that $\dim X \geq 2 \dim M + 1$, there exists a holomorphic embedding 
of $M$ into $X$~\cite{AnFoRi2016Proper-holomorphic}.
In particular, if $G$ is a characterless algebraic group,
then $G$ satisfies the density property by Donzelli-Dvorsky-Kaliman \cite[Theorem~A]{DoDvKa2010Algebraic-density-} or $G$ is isomorphic to $\CC$. Hence,
every smooth affine variety $Z$ with $2 \dim Z + 1 \leq \dim G$ admits a holomorphic
embedding into $G$.
As far as the authors know, it remains open whether a 
dimension improvement \`a la Eliashberg-Gromov is possible.

\subsubsection*{Embeddings into projective varieties}
Comparing with the projective setting,
a further analog of the weak Whitney embedding theorem
states that every smooth projective variety $Z$ embeds into $\PP^n$ provided
$n \geq 2 \dim Z + 1$; see Lluis~\cite{Ll1955Sur-limmersion-des}.

While the Holme-Kaliman-Srinivas embedding result concerning affine spaces generalizes to
some, possibly all affine algebraic groups,
the embedding result due to Lluis concerning projective spaces cannot generalize to 
projective algebraic groups, better known as abelian varieties. In fact,
each rational map $Z \dashrightarrow A$ from a rationally connected variety $Z$ into 
an abelian variety $A$ is constant; see \cite[Corollary to Theorem 4, Ch.~II]{La1983Abelian-varieties}.

\subsubsection*{Optimality of the dimension condition for algebraic embeddings}
As seen above, in many categories, $d$-dimensional objects embed into the standard space of dimension $2d$, e.g.~the strong Whitney embedding theorem, or even lower like 
in the case of the Eliashberg-Gromov result. 
In contrast, even the analog of the strong Whitney embedding theorem is known to fail for affine
varieties.
Indeed, by a result of Bloch-Murthy-Szpiro~\cite{BlMuSz1989Zero-cycles-and-th}, for every $d \geq 1$ 
there exists a $d$-dimensional smooth affine variety that does not embed into $\AA^{2d}$. In fact, their argument (based on Chow group calculations) suffices to also yield Proposition~\ref{cor:optim}, as we will see in Section~\ref{sec:non-embed}.

Incidentally, in the Lluis embedding theorem, the dimension bound is optimal in
the sense that for every $d \geq 1$ there 
is a smooth projective variety of dimension $d$ that does not
admit an embedding into $\PP^{2d}$; see
Horrocks-Mumford~\cite{HoMu1973A-rank-2-vector-bu} and Van de Ven~\cite{Va1975On-the-embedding-o}.

\subsection*{Proof strategy: an embedding method and its limits}

\subsubsection*{Proof strategy of the Holme-Kaliman-Srinivas theorem and an 
approach to more general targets} We recall the basic idea behind the Holme-Kaliman-Srinivas embedding theorem, which uses the same method as the proofs of 
the weak Whitney embedding theorem and the Lluis embedding theorem.
To show that every smooth affine variety $Z$ embeds into $X=\AA^{2\dim Z+1}$, one starts from an arbitrary embedding $Z\subseteq\AA^m$ for some large integer 
$m \gg 2\dim Z+1$, and shows that the composition of the inclusion $Z \subseteq \AA^m$ with a generic linear projection $\AA^m\to \AA^{2\dim Z+1}$ is still an embedding.

For more general targets $X$, one looses the availability of (many) projections from $\AA^m$ to $X$.
In contrast with the above strategy, instead, we consider a morphism $\pi \colon X\to\AA^{\dim Z}$ and a finite morphism
$Z\to \AA^{\dim Z}$ (guaranteed to exist by Noether normalization) in order to build our embedding $Z\to X$ as a factorization of $Z \to \AA^{\dim Z}$ through $\pi$.  
This approach is similar to the setup of Eliashberg-Gromov and their notion of relative embedding using their `background map'; see~\cite[Section~2]{ElGr1992Embeddings-of-Stei}.
A strength of this approach lies in the following fact:
checking that a morphism $f\colon Z\to X$ is an embedding (i.e.~a proper injective morphism with everywhere injective differential), reduces to checking that $f$ is injective and has everywhere injective differential, since any morphism that can be composed with another yielding a finite (in particular proper) morphism is proper. Sloppily speaking, one gets properness `for free'.

\subsubsection*{Outline of the proof of Theorem~\ref{thmintro:main}} More concretely, our approach to prove Theorem~\ref{thmintro:main} can be understood in two steps. 
Step one 
involves finding 
a specific subvariety of a simple algebraic group using classical algebraic group theory. 
Using parametric transversality results, in step two 
we promote finite maps with target the base space of a principal bundle to 
embeddings into the total space.
Here the total space is the subvariety constructed in 
step one. 
These two steps will be treated in detail in Sections~\ref{sec:appl:groups} and~\ref{sec:EmbIntoPriBund}, respectively. We provide a short outline, where we fix a smooth affine variety $Z$ and a simple algebraic group $G$ with $\dim G > 2\dim Z + 1$. 

\textbf{Step one.} 
We find a closed codimension one subvariety $X\subset G$ isomorphic to $\AA^{\dim Z}\times H$,
where $H$ is a characterless closed subgroup of $G$. This will be achieved using a well-chosen maximal parabolic subgroup in $G$ and constitutes the bulk of Section~\ref{sec:appl:groups}. 
It turns out that $G$ itself cannot be a product of the form $\AA^m \times H$ for any variety $H$ underlying an algebraic group and $m>0$; hence, the $X$ we found has the largest possible dimension.

\textbf{Link between the two steps.} 
We note that step one reduces the proof of Theorem~\ref{thmintro:main} to finding an embedding of $Z$ into 
$\AA^{\dim Z}\times H$.
We set up a principal bundle together with a finite morphism from $Z$ into the base. For the latter, denoting by $\GG_a$ the underlying additive algebraic 
group of the
ground field $\kk$,
we consider the principal $\GG_a$-bundle $\rho\colon \AA^{\dim Z}\times H\to \AA^{\dim Z}\times H/U$, where $U$ is a closed subgroup of $H$ that is isomorphic to $\GG_a$. 
Using Noether normalization, one has a finite morphism $Z\to\AA^{\dim Z}$, which yields a morphism $r\colon Z\to \AA^{\dim Z} \times H/U$ by composing with
a section of the projection $\eta \colon \AA^{\dim Z}\times H/U \to \AA^{\dim Z}$ to the first factor.
Writing $X\coloneqq \AA^{\dim Z}\times H$ and 
$Q \coloneqq \AA^{\dim Z}\times H/U$, 
we have the following commutative diagram
\begin{equation}\label{eq:thm2.5asdiag}
	\begin{gathered}
	\xymatrix{
			&  X \ar[dr]^{\pi}\ar[d]^{\rho}
			& \\
			Z\ar[r]^{r}
			&Q\ar[r]^-{\eta}&
			\AA^{\dim Z}\, \quad.
		}
	\end{gathered}
\end{equation}

\textbf{Step two.} 
We consider the following setup generalizing~\eqref{eq:thm2.5asdiag}. This constitutes our embedding method mentioned earlier.
Consider a principal $\GG_a$-bundle $\rho\colon X\to Q$, 
where $X$ is a smooth irreducible affine variety of dimension at least $2\dim Z+1$,
and a finite morphism $Z\to \AA^{\dim Z}$ that is the composite of morphisms $r\colon Z\to Q$
and $\eta\colon Q\to \AA^{\dim Z}$ such that the following holds.
The composition $\pi\coloneqq \eta\circ\rho\colon X\to \AA^{\dim Z}$
is a smooth morphism such that there are sufficiently many automorphisms of $X$
that fix $\pi$ (see Definition~\ref{def.sufficiently_transitively}).
Given this setup, we
show that there exists an embedding of $Z$ into $X$ (see Theorem~\ref{thm.Product}).
This is done in Section~\ref{sec:EmbIntoPriBund} building on notions and results due to Kaliman~\cite{Ka2020Extensions-of-isom}. 
Next, we explain in broad strokes how we build such an embedding.

Note first that $\rho \colon X \to Q$ restricts to a trivial $\GG_a$-bundle over any affine subvariety of $Q$.  Hence, there exists a morphism 
\[
	f_0 \colon Z \to \rho^{-1}(r(Z)) \simeq r(Z) \times \GG_a \subset X
\]
such that $\rho \circ f_0 = r$. Then we use a generic automorphism $\varphi$ of $X$ that fixes 
$\pi$ to construct an `improved'  morphism $f_1 \colon Z \to X$ with 
$\rho \circ f_1 = \rho \circ \varphi \circ f_0$. `Improved' means 
that $f_1$ and its differential are `more injective' than $f_0$ and its differential, respectively.
After finitely many, say $k$, such `improvements',
we get an injective morphism $f_k \colon Z \to X$ with everywhere injective differential. Note that by construction we have that
$\pi \circ f_k = \eta \circ r \colon Z \to \AA^{\dim Z}$ is finite. This shows the properness of $f_k$,
and thus $f_k$ is an embedding of $Z$ into $X$.



\subsubsection*{The case of small dimensions and other cases}
While, in general, we do not know how to weaken the dimension assumption to the optimal $\dim G\geq 2\dim Z+1$ in Theorem~\ref{thmintro:main}, we are able to treat the case 
$\dim G \leq 8$: every smooth affine variety $Z$ embeds 
in every characterless algebraic group $G$ of dimension $\leq 8$ if $2 \dim Z + 1 \leq \dim G$; 
see Proposition~\ref{prop.lowdimension}.

From the method of the proof it is clear that Theorem~\ref{thmintro:main} generalizes to products of 
a simple algebraic group with affine spaces (Theorem~\ref{thm.simple}) and to products of a 
semisimple algebraic group with affine spaces 
but with a stronger dimension assumption (Theorem~\ref{thm.semisimple}). 
In case the dimension of the affine space in the product is big enough, we get in fact 
the embedding result with 
the optimal dimension assumption; see Corollary~\ref{cor.product}. In particular, we give a new proof of
the Holme-Kaliman-Srinivas embedding theorem; see Remark~\ref{rem:reproveHKS}.

Our embedding method also yields that if a smooth affine
variety $Z$ embeds into a smooth affine variety $X$ with $\dim X \geq 2 \dim Z + 1$,
then $Z$ embeds into the target of every finite \'etale surjection 
from $X$, whenever $X$
has sufficiently many automorphisms; see Corollary~\ref{cor:finite_etale}.
In particular, Theorem~\ref{thmintro:main} generalizes to homogeneous spaces
of simple algebraic groups with finite stabilizer; see Proposition~\ref{prop.characterization_suff_trans_algebraic_groups}.

\subsubsection*{Limits of the method and relation to Theorem~\ref{mthm:surjectiononhomology}}
We end the introduction by coming back to a statement from earlier: the seemingly unrelated Theorem~\ref{mthm:surjectiononhomology} explains a major obstacle to treating the case $\dim G=2\dim Z+1$. We explain this in terms of the above short two step outline.
In fact, in step one we find $\pi\colon X\to \AA^{\dim Z}$ by restricting 
the natural projection $p \colon G\to G/H$ for some closed subgroup $H$ to 
$X \subseteq G$, i.e.~$\pi \coloneqq p |_X \colon X\to p(X) \subseteq G/H$. 
 However, by the dimension assumption that we need for step two, if we were to follow that strategy, we would have to choose $X\subseteq G$ of full dimension. Hence, assuming w.l.o.g. that $G$ is irreducible, we would have to choose $X=G$ and would have to replace $\AA^{\dim Z}$ with a homogeneous space 
$G/H$ of dimension $\dim Z$ in diagram~\eqref{eq:thm2.5asdiag}.
For the embedding method from step two to work for 
$G/H$ in place of $\AA^{\dim Z}$ in diagram~\eqref{eq:thm2.5asdiag}, we need in particular a finite morphism from $Z$ to $G/H$; compare Theorem~\ref{thm.Product}. However, there exist $Z$ such that no finite morphism from $Z$ to $G/H$ exists.
Concretely, working over $\C$, rational homology calculations for homogeneous spaces 
(see Proposition~\ref{prop.homotopy_non-vanishing}) and Theorem~\ref{mthm:surjectiononhomology} 
yield the following result.

\begin{mainprop}[Proposition~\ref{prop.no-finite-morph}] \label{prop:intronomapsZtoG/H}
Let $Z$ be a simply-connected complex smooth 
algebraic variety with the rational homology of a point. If $G/H$ is a $\dim Z$-dimensional complex homogeneous space of a complex simple algebraic group $G$, then there is no proper surjective
morphism from $Z$ to $G/H$. 
\end{mainprop}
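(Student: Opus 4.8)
The plan is to argue by contradiction. Suppose that a proper surjective morphism $f\colon Z\to G/H$ exists; I would feed $f$ into Theorem~\ref{mthm:surjectiononhomology} to deduce that $G/H$ is rationally acyclic, which contradicts the rational homology computation of Proposition~\ref{prop.homotopy_non-vanishing}.

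First I would verify the hypotheses of Theorem~\ref{mthm:surjectiononhomology}. The homogeneous space $G/H$ is a smooth complex variety: the quotient of an algebraic group by a closed subgroup exists as a quasi-projective variety, and it is smooth since $\kk$ has characteristic zero and $G$ acts transitively; by assumption $\dim G/H=n\coloneqq\dim Z$, while $Z$ is smooth of the same dimension $n$. Thus $f\colon Z\to G/H$ is a proper surjective morphism between complex $n$-dimensional smooth varieties, and Theorem~\ref{mthm:surjectiononhomology} gives that $f_\ast\colon H_k(Z;\QQ)\to H_k(G/H;\QQ)$ is surjective for every integer $k\geq 0$. (It is the properness of $f$, not merely its surjectivity, that enters here.)

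Next I would use that $Z$ has the rational homology of a point, that is $H_0(Z;\QQ)=\QQ$ and $H_k(Z;\QQ)=0$ for all $k\geq 1$. Together with the surjectivity of $f_\ast$ just obtained, this forces $H_k(G/H;\QQ)=0$ for all $k\geq 1$; and since $Z$ is connected (being smooth and $\QQ$-acyclic) and $f$ is surjective, $G/H$ is connected as well. Hence $G/H$ itself has the rational homology of a point. As $\dim G/H=\dim Z\geq 1$, this contradicts Proposition~\ref{prop.homotopy_non-vanishing}, according to which a positive-dimensional homogeneous space of a complex simple algebraic group has non-zero rational homology in some positive degree. Therefore no proper surjective morphism $Z\to G/H$ exists.

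Given these two inputs, the argument is short, and I do not expect a genuine obstacle in assembling it: the only points needing care are matching the hypotheses of Theorem~\ref{mthm:surjectiononhomology} --- in particular the equality of dimensions $\dim G/H=\dim Z$ and the properness of $f$ --- and invoking Proposition~\ref{prop.homotopy_non-vanishing} for the homogeneous space at hand. The substantive work sits in those two results: the homological surjectivity statement Theorem~\ref{mthm:surjectiononhomology} (proved via an Umkehr homomorphism for non-compact manifolds) and the non-vanishing of positive-degree rational homology for an arbitrary homogeneous space of a simple group. Note finally that the proof uses only the connectedness and rational acyclicity of $Z$, not its simple-connectedness.
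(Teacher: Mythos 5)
Your overall plan (combine Theorem~\ref{mthm:surjectiononhomology} with Proposition~\ref{prop.homotopy_non-vanishing}) is the right one, but there is a genuine gap in the final step, and the closing remark is incorrect. Proposition~\ref{prop.homotopy_non-vanishing} concerns rational \emph{homotopy} groups: it asserts $\pi_i(G/H)\otimes_\ZZ\QQ\neq 0$ for some $i>1$. You paraphrase it as a statement about rational \emph{homology}, but that is not what it says, and the passage from nonvanishing rational homotopy to nonvanishing rational homology requires a rational Hurewicz argument, which in turn requires $G/H$ to be simply connected. For a general closed subgroup $H\subset G$ the quotient $G/H$ need not be simply connected, and the transfer isomorphism $H_*(G/H;\QQ)\simeq H_*(G/H^\circ;\QQ)^{\pi_0(H)}$ shows that rational homology can in principle vanish for $G/H$ even while the homotopy obstruction lives in the finite cover.

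The paper closes this gap precisely where you claim no hypothesis is needed. It first replaces $G$ by its universal cover (harmless, since the homogeneous space is unchanged), and then replaces $H$ by its identity component $H^\circ$, lifting the proper surjective morphism $Z\to G/H$ through the finite \'etale cover $G/H^\circ\to G/H$. This lifting is exactly where the simply-connectedness of $Z$ is used (together with \cite[Proposition~20]{Se1958Espaces-fibres-alg} to see the lift is algebraic). After this reduction, $G$ is simply connected and $H$ is connected, so the long exact homotopy sequence gives that $G/H$ is simply connected; the rational Hurewicz theorem then converts the nonvanishing $\pi_{i_0}(G/H)\otimes\QQ$ into nonvanishing $H_{i_0}(G/H;\QQ)$, and only at that point does Theorem~\ref{mthm:surjectiononhomology} deliver the contradiction with $H_{i_0}(Z;\QQ)=0$. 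So your final sentence — that the proof uses only connectedness and rational acyclicity of $Z$, not its simple-connectedness — is wrong: the simple-connectedness of $Z$ is needed to lift $f$ to the connected cover and thereby to arrange for $G/H$ to be simply connected.
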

%
And indeed, we do not know whether such $Z$ embed into simple algebraic groups of dimension $2\dim Z+1$. Concretely, the authors cannot answer the following 
question, even over $\CC$ and for contractible $Z$.

\begin{question}
Does every $7$-dimensional smooth affine variety embed into $\SL_4$?
\end{question}

Addendum: In a new arXiv preprint, Kaliman has answered this question affirmatively~\cite[Theorem 1.1]{Ka2021Holme-type-theorem}. In fact, more generally, he proves that, 
if $G$ is a semisimple algebraic group such that its Lie algebra is a product of Lie algebras
of special linear groups, then every smooth affine variety $Z$ with $2 \dim Z + 1 \leq \dim G$
admits an embedding into $G$.

\subsection*{Acknowledgements} We thank J\'er\'emy Blanc, Adrien Dubouloz, Stefan Friedl, Matthias Nagel, Patrick Orson, Pierre-Marie Poloni, and Paula Tru{\"o}l for helpful conversations.
Moreover, we would like to thank the anonymous referee for the detailed and helpful comments. We are in particular grateful to them for pointing out an error in our original argument for Proposition~\ref{prop.sufficiently_transitive_on_fibers_quotient}.

PF gratefully acknowledges support by the SNSF Grant 181199.

\medskip 
\noindent

\addtocontents{toc}{\protect\setcounter{tocdepth}{2}}

\section{Embeddings into the total space of a principal bundle}\label{sec:EmbIntoPriBund}

For the main result in this section the following definition will be useful:

\begin{definition}
	\label{def.sufficiently_transitively}
	Let $X$ be a variety. A subgroup $G$ of the group
	of algebraic automorphisms $\Aut(X)$ acts 
	\emph{sufficiently transitively on $X$} if the natural action on $X$ is $2$-transitive
	and the natural action on $(TX)^\circ$ is transitive, where $(TX)^\circ$ denotes 
	the complement of the zero-section in the total space $TX$ of the tangent bundle of $X$.
\end{definition}

Let us recall the definition of an algebraic subgroup of an automorphism group which goes back to Ramanujam~\cite{Ra1964A-note-on-automorp}.

\begin{definition}
	\label{def.alg_subgroup}
	Let $X$ be a variety. A subgroup $H \subset \Aut(X)$ is called \emph{algebraic subgroup}
	if there exists an algebraic group $G$ and a faithful algebraic action $\rho \colon  G \times X \to X$
	such that $H$ is the image of the homomorphism 
	$f_\rho \colon G \to \Aut(X)$ induced by $\rho$. 
\end{definition}

\begin{remark}	
	\label{rem.alg_subgroup}
	Note that the algebraic group $G$ in Definition~\ref{def.alg_subgroup} 
	is uniquely determined by $H$ in the following sense: 
	if $G'$ is another algebraic group with a faithful algebraic action $\rho'$
	on $X$ such that $f_{\rho'}(G') = H$, then there exists an isomorphism of algebraic groups
	$\sigma \colon G' \to G$ such that $f_{\rho'} = f_\rho \circ \sigma$ \cite[Theorem~9]{KrReSa2019Is-the-Affine-Spac}. This allows us to identify $G$ and $H$.
\end{remark}

Moreover, we will use the following subgroups of the automorphism group of a variety:
\begin{definition}
	Let $X$ be a variety. Then $\Aut^{\alg}(X)$ 
	denotes the subgroup of $\Aut(X)$ that is generated
	by all \emph{connected} algebraic subgroups of $\Aut(X)$.
	
	If $X$ comes equipped with a morphism $\pi \colon X \to P$, then $\Aut_P(X)$ denotes
	the subgroup of $\Aut(X)$ that consists of the $\sigma \in \Aut(X)$ with $\pi \circ \sigma = \pi$.
	We define $\Aut^{\alg}_P(X)$ as the subgroup of $\Aut_P(X)$ that is generated by all
	connected algebraic subgroups of $\Aut(X)$ that lie in $\Aut_P(X)$. 
\end{definition}

The main result to construct embeddings in this article is the following theorem.
Note that $\GG_a$ denotes the underlying additive algebraic group of the 
ground field \textbf{$\k$}. The proof of the theorem is contained in Subsection~\ref{subsec:Proof_embedd_into_tot_space_principal_bundle}.

\begin{theorem}
	\label{thm.Product}
	Let $X$ be a smooth irreducible affine variety such that:
	\begin{enumerate}[label=\alph*), leftmargin=*]
		\item \label{thm.Product_a} 
				 There is a principal $\GG_a$-bundle $\rho \colon X \to Q$;
		\item \label{thm.Product_b} 
				There is a smooth morphism $\pi \colon X \to P$ such that
				$\Aut_P^{\alg}(X)$ acts sufficiently transitively on each fiber of $\pi$;
		\item \label{thm.Product_c} 
				There is a morphism $\eta \colon Q \to P$ that satisfies $\eta \circ \rho = \pi$.
	\end{enumerate}
	If there exists a smooth affine variety $Z$ such that $\dim X \geq 2 \dim Z + 1$ and
	\begin{enumerate}[label=\alph*), leftmargin=*]
		\setcounter{enumi}{3}
		\item \label{thm.Product_d} there exists a morphism $r \colon Z \to Q$ such that 
		$\eta \circ r \colon Z \to P$
		is finite and surjective,
	\end{enumerate}
	then there exists an embedding of $Z$ into $X$. 
\end{theorem}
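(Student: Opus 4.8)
The plan is to produce the embedding by starting from an arbitrary lift of $r$ through $\rho$ and then repeatedly correcting it, using both automorphisms in $\Aut^{\alg}_P(X)$ and re-choices of the lift, until it becomes injective with everywhere injective differential; properness will then come for free. Concretely, since $Z$ is affine and a principal $\GG_a$-bundle over an affine base is trivial (its class lies in $H^1(Z,\OO_Z)=0$), the pullback bundle $r^{\ast}\rho\colon Z\times_Q X\to Z$ admits a section; composing it with the projection to $X$ yields a morphism $f_0\colon Z\to X$ with $\rho\circ f_0=r$, hence $\pi\circ f_0=\eta\circ\rho\circ f_0=\eta\circ r$, which is finite and surjective by~\ref{thm.Product_d}. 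Since $\pi$ is separated and $\eta\circ r$ is finite (in particular proper), any morphism $g\colon Z\to X$ with $\pi\circ g=\eta\circ r$ is automatically proper; as recalled in the introduction, a proper morphism that is injective with everywhere injective differential is an embedding. So it suffices to modify $f_0$, within the class of morphisms $g$ with $\pi\circ g=\eta\circ r$, so that $g$ becomes injective with everywhere injective differential.

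The key point is that both possible failures of this condition are confined to the fibres of $\pi$: if $g(z_1)=g(z_2)$ then $\eta\circ r(z_1)=\eta\circ r(z_2)$, and if $dg(v)=0$ for some $0\neq v\in T_zZ$ then $d(\eta\circ r)(v)=0$. Since $\eta\circ r\colon Z\to P$ is finite, the set of such pairs $(z_1,z_2)\in(Z\times Z)\setminus\Delta_Z$ is contained in $(Z\times_P Z)\setminus\Delta_Z$, of dimension at most $\dim Z$, and the set of such $(z,[v])\in\PP(TZ)$ lies over the ramification locus of $\eta\circ r$, of dimension at most $\dim Z-1$. On the other hand, each fibre of $\pi$ is smooth of dimension $\dim X-\dim P=\dim X-\dim Z$, so the diagonal in the square of a fibre, and the zero section of the tangent bundle of a fibre, both have codimension $\dim X-\dim Z$. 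The hypothesis $\dim X\geq 2\dim Z+1$ says exactly that $\dim X-\dim Z>\dim Z$, so these codimensions strictly exceed the dimensions of the corresponding bad loci.

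We then correct $f_0$ iteratively, at each stage by a generic element $\varphi$ of a connected algebraic subgroup of $\Aut^{\alg}_P(X)$ followed by a generic re-choice of the relevant section (an $\OO(Z)$-torsor of choices, hence, after truncation to a large finite-dimensional linear subspace, a genuine algebraic family of corrections); all corrections preserve $\pi\circ g=\eta\circ r$. Using $2$-transitivity of $\Aut^{\alg}_P(X)$ on each fibre of $\pi$, a generic $\varphi$ separates two points lying in the same $\pi$-fibre but in different $\rho$-fibres; using transitivity on the nonzero tangent vectors of each fibre of $\pi$, a generic $\varphi$ handles the tangent directions that are not $\rho$-vertical. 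The remaining bad sets consist of pairs of points, respectively tangent directions, lying in a single fibre of $\rho$; these cannot be separated by any automorphism of $X$, and here the generic re-lift does the job, translating them inside their common $\GG_a$-fibre by independent amounts. A dimension count against the bounds above shows that each correction strictly lowers the dimension of the (closed, $Z$-proper) bad locus, so after finitely many corrections we obtain $f_k\colon Z\to X$ which is injective, has everywhere injective differential, and satisfies $\pi\circ f_k=\eta\circ r$; by the first step $f_k$ is proper, hence an embedding of $Z$ into $X$.

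I expect the main obstacle to be this last step: making the parametric transversality rigorous in the relative, fibred setting, where the automorphism group is infinite-dimensional and only $2$-transitive on fibres, and, above all, dovetailing the genericity of the automorphism with the genericity of the re-lift so that all bad points and tangent directions are eliminated through a terminating induction. This is precisely where the notions and results of Kaliman~\cite{Ka2020Extensions-of-isom} are indispensable.
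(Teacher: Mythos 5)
Your proposal takes essentially the same approach as the paper's proof: start from a lift of $r$ through the principal $\GG_a$-bundle (using that $Z$ is affine so the pulled-back bundle is trivial), observe that any $g$ with $\pi\circ g=\eta\circ r$ is automatically proper, and then iterate a two-step correction — a generic automorphism in $\Aut^{\alg}_P(X)$ to handle bad pairs and tangent vectors with distinct $f$-images, followed by a generic re-lift along $\rho$ to handle those with coinciding $f$-images — so that each pass strictly lowers the dimension of the bad locus. The paper packages the dimension bookkeeping with the invariant $\theta_f=\max\{\dim Z^{(2)}_X,\dim(\ker\mathrm d f)^\circ\}$ and isolates the Kaliman-style parametric transversality in Proposition~\ref{prop.key} and the re-lift step in Proposition~\ref{prop.lower_theta}, but the mechanism is exactly the one you sketch.
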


Part of Theorem~\ref{thm.Product} can be illustrated by the following diagram
\[
	\xymatrix{
		Z \ar[rr]^{\textrm{$\exists$ embedding}} & & X \ar[dr]^-{\pi}\ar[d]^-{\rho}
			\\
		& Z \ar[r]^-{r} & Q\ar[r]^-{\eta}&
		P \ar@{}[r]_-{.} &
	}
\]

\begin{remark}
	\label{rem.thm.Product}
	Let $X$ be a smooth affine irreducible variety
	and assume that conditions~\ref{thm.Product_a},~\ref{thm.Product_b}, \ref{thm.Product_c} of Theorem~\ref{thm.Product} are satisfied. If $Z$ is a smooth affine variety with
	$\dim X \geq 2 \dim Z + 1$, $P = \AA^{\dim Z}$, and $\eta \colon Q \to P$ has a section 
	$s \colon P \to Q$, then condition~\ref{thm.Product_d}
	is also satisfied. Indeed, in this case there exists a finite morphism $p \colon Z \to \AA^{\dim Z}$ 
	due to Noether's Normalization Theorem and one can choose $r \coloneqq s \circ p \colon Z \to Q$.
\end{remark}

\subsection{Transversality results}
This subsection essentially amounts to collecting and rephrasing
some material from~\cite{Ka2020Extensions-of-isom} 
that we need for the proof of Theorem~\ref{thm.Product}.

\begin{definition}
	\label{def.big_enough}
	Let $X \to P$ be a smooth morphism of smooth irreducible varieties and 
	let $\mathcal{H} = (H_1, \ldots, H_s)$ be a tuple of connected algebraic
	subgroups $H_1, \ldots, H_s \subset \Aut_P(X)$.
	 Then $\mathcal{H}$ is
	
	\begin{enumerate}[leftmargin=*]
		\item \emph{big enough for proper intersection}, if for 
		every morphism $f \colon Y \to X$ and every locally closed
		subvariety $Z$ in $X$ 
		there is an open subset $U \subset H_1 \times \cdots \times H_s$
		such that for every $(h_1, \ldots, h_s) \in  U$ we have
		\[
			\label{Eq.enough for proper intersection}
			\tag{PI}
			\dim Y \times_X h_1 \cdots h_s \cdot Z  \leq \dim Y \times_P Z + \dim P - \dim X \, .
		\]
		
		\item \emph{big enough for smoothness} if
		there exists an open dense
		subset $U \subset H_1 \times \cdots \times H_s$ such that the morphism
		\begin{align*}
			\Phi_{\mathcal{H}} \colon H_1 \times \cdots \times H_s \times X & \to X \times_P X \, , \\
			((h_1, \ldots, h_s), x) & \mapsto (h_1 \cdots h_s \cdot x, x)
		\end{align*}
		is smooth on $U \times X$.
	\end{enumerate}
\end{definition}

\begin{prop}
	\label{prop.Big_enough_for_smoothness}
	Let $X \to P$ be a smooth morphism of smooth irreducible varieties and
	let $\mathcal{H} = (H_1, \ldots, H_s)$ be a tuple of connected algebraic
	subgroups $H_1, \ldots, H_s$ in $\Aut_P(X)$. Then:
	
	\begin{enumerate}[leftmargin=*]
		\item  \label{prop.Big_enough_for_smoothness1} 
		If $\mathcal{H}$ is big enough for smoothness, 
		then $\mathcal{H}$ is big enough for proper intersection.
		\item \label{prop.Big_enough_for_smoothness2}
		If $\mathcal{H}$ is big enough for smoothness and 
		$H_0, H_{s+1} \subset \Aut_P(Y)$ are two connected algebraic subgroups, then
		$(H_0, H_1, \ldots, H_s, H_{s+1})$ is big enough for smoothness.
	\end{enumerate}
\end{prop}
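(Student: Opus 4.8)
The plan is to prove Proposition~\ref{prop.Big_enough_for_smoothness} by reducing both statements to the interplay between smoothness of the incidence morphism $\Phi_{\mathcal H}$ and dimension counts for fiber products. For part~\ref{prop.Big_enough_for_smoothness1}, suppose $\mathcal H$ is big enough for smoothness, so there is an open dense $U \subset H_1 \times \cdots \times H_s$ on which $\Phi_{\mathcal H}$ is smooth. Given a morphism $f \colon Y \to X$ and a locally closed $Z \subseteq X$, I would form the composite morphism $Y \times_P Z \to X \times_P X$ (via $f$ on the first factor and the inclusion on the second) and observe that the preimage under $\Phi_{\mathcal H}$ of the image of this composite, intersected with $U \times X$, is a locally closed subvariety $W$ of $U \times X$; the fiber of $W$ over a point $(h_1, \ldots, h_s) \in U$ is exactly (isomorphic to) $Y \times_X h_1 \cdots h_s \cdot Z$, up to the auxiliary $X$-coordinate. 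Smoothness of $\Phi_{\mathcal H}$ on $U \times X$ gives that $W$ has the expected dimension, namely $\dim W = \dim(U \times X) + \dim(Y \times_P Z) - \dim(X \times_P X)$, and then generic flatness of the projection $W \to U$ (Chevalley's theorem on fiber dimension, over the dense open locus) yields, after shrinking $U$, that the generic fiber has dimension $\dim W - \dim U$. Plugging in $\dim(X\times_P X) = 2\dim X - \dim P$ and simplifying gives precisely the bound~\eqref{Eq.enough for proper intersection}, after accounting for the extra $X$-factor by noting the fiber over $(h_1,\dots,h_s)$ surjects onto $h_1\cdots h_s\cdot Z$ with fibers equal to those of $Y\times_X h_1\cdots h_s\cdot Z$.

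For part~\ref{prop.Big_enough_for_smoothness2}, the idea is that pre- and post-composing the sliding action with two more group factors $H_0$ and $H_{s+1}$ cannot destroy smoothness, because composition of smooth morphisms and base change preserve smoothness. Concretely, I would factor $\Phi_{(H_0, H_1, \ldots, H_s, H_{s+1})}$ through $\Phi_{\mathcal H}$: writing an element as $((h_0, h_1, \ldots, h_s, h_{s+1}), x)$, one sends it first to $((h_1,\ldots,h_s), h_{s+1}\cdot x, h_0)$-type data and then applies $\Phi_{\mathcal H}$ in the middle coordinates while carrying the actions of $h_0$ and $h_{s+1}$ along as (smooth, indeed étale-on-fibers) automorphisms of $X$ over $P$. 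Since each $h_i \in \Aut_P(X)$ acts as an isomorphism of $X$ commuting with the structure morphism to $P$, the map $(h, x) \mapsto h\cdot x$ from $H_i \times X \to X$ is smooth (its a composition of the smooth action morphism, which for a group action is smooth since it's a fiber bundle, with projections). Composing the smooth map $\Phi_{\mathcal H}|_{U\times X}$ with these smooth reparametrizations shows $\Phi_{(H_0,\ldots,H_{s+1})}$ is smooth on $(H_0 \times U \times H_{s+1}) \times X$, which is open and dense.

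The main obstacle I anticipate is the bookkeeping in part~\ref{prop.Big_enough_for_smoothness1}: one must be careful about the distinction between $Y \times_X h\cdot Z$ and the fiber of the incidence variety $W \to U$, since the latter naturally lives in $(\text{point})\times X$ and records the $X$-coordinate $x$ with $h\cdot x \in h\cdot Z$ and $f(y) = x$ for some $y$. Translating the smoothness/flatness dimension count on $W$ into the stated inequality requires identifying this fiber correctly and checking that the generic-flatness shrinking of $U$ is compatible with the finitely many data $(f, Z)$ — but in fact the inequality~\eqref{Eq.enough for proper intersection} only needs to hold for \emph{some} open $U$ depending on $(f,Z)$, so this is fine. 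A secondary subtlety is that $Z$ is merely locally closed, not closed, so one works with its closure or stratifies; since dimension is insensitive to this, it causes no real trouble. Part~\ref{prop.Big_enough_for_smoothness2} should be essentially formal once the smoothness of the individual action maps $H_i \times X \to X$ over $P$ is recorded.
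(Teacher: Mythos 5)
For part~\ref{prop.Big_enough_for_smoothness1}, your overall structure mirrors the paper's, but there is a genuine gap in the definition of the incidence variety $W$. You define $W$ as the preimage under $\Phi_{\mathcal H}$ of the \emph{image} of $Y\times_P Z\to X\times_P X$, intersected with $U\times X$. This $W$ lives inside $U\times X$, and its fiber over $h\in U$ is the set $\{x\in Z : h_1\cdots h_s\cdot x\in f(Y)\}$, which is only the \emph{projection} of $Y\times_X h_1\cdots h_s\cdot Z$ to the $X$-coordinate — not the fiber product itself. When $f$ has positive-dimensional fibers, the projection can have strictly smaller dimension than $Y\times_X h_1\cdots h_s\cdot Z$, so bounding $\dim W - \dim U$ does \emph{not} bound $\dim Y\times_X h_1\cdots h_s\cdot Z$, which is what~\eqref{Eq.enough for proper intersection} requires. (Your formula $\dim W = \dim(U\times X)+\dim(Y\times_P Z)-\dim(X\times_P X)$ betrays the mismatch: for the preimage-of-image the correct term would be $\dim(\text{image})$, not $\dim(Y\times_P Z)$.) The fix, which is what the paper actually does, is to take the \emph{fiber product} $W = (U\times X)\times_{X\times_P X}(Y\times_P Z)$ rather than the preimage of an image; then the fiber over $h\in U$ really is, after reduction, isomorphic to $Y\times_X h_1\cdots h_s\cdot Z$, the map $W\to Y\times_P Z$ is smooth by base change from $\Phi_{\mathcal H}|_{U\times X}$ (giving the $\leq$-bound on $\dim W$, not an equality), and generic flatness of $W\to U$ then closes the argument. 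You flag this bookkeeping issue yourself as an "obstacle" and assert it "requires identifying this fiber correctly", but you don't actually make the correction, and as stated the argument doesn't yield the inequality.

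For part~\ref{prop.Big_enough_for_smoothness2}, the paper simply cites a remark of Kaliman, whereas you give a direct argument. Your idea is right: on $(H_0\times U\times H_{s+1})\times X$ one can factor $\Phi_{(H_0,\ldots,H_{s+1})}$ as $\gamma\circ(\mathrm{id}\times\Phi_{\mathcal H}|_{U\times X})\circ\alpha$ where $\alpha\colon (h_0,h,h_{s+1},x)\mapsto ((h_0,h_{s+1}),(h,h_{s+1}\cdot x))$ is an isomorphism and $\gamma\colon ((h_0,h_{s+1}),(z,y))\mapsto (h_0\cdot z,h_{s+1}^{-1}\cdot y)$ is smooth (being an automorphism of $(H_0\times H_{s+1})\times(X\times_P X)$ followed by a projection), so the composite is smooth on an open dense set. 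Your sketch conveys this but is too vague to constitute a proof — in particular the phrase "étale-on-fibers" has no clear role here — so I would write the factorization out explicitly.
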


\begin{proof}
	\eqref{prop.Big_enough_for_smoothness1}: The proof closely follows~\cite[Theorem~1.4]{Ka2020Extensions-of-isom}. By assumption, there is an open dense
	subset $U \subset H_1 \times \cdots \times H_s$ such that 
	$\Phi_{\mathcal{H}} |_{U \times X} \colon U \times X \to X \times_P X$ is smooth.
	Let $f \colon Y \to X$ be a morphism and let $Z$ be a locally closed subvariety of $X$.
	Let $W$ be the fiber product of $Y \times_P Z \to X \times_P X$ and $\Phi_\mathcal{H}|_{U \times X}$:
	\[
		\xymatrix@=10pt{
			W \ar[d] \ar[rrr] &&& Y \times_P Z \ar[d] \\
			U \times X \ar[rrr]^-{\Phi_{\mathcal{H}} |_{U \times X}} &&& X \times_P X \, .
		}
	\]
	By generic flatness~\cite[Theorem 10.84]{GoWe2010Algebraic-geometry}, we may shrink $U$ and
	assume that $\pi \colon W \to U \times X \to U$ is flat.
	Take $h = (h_1, \ldots, h_s) \in U$. Then
	\[
		\pi^{-1}(h)_{\red} \longrightarrow  (Y \times_X h_1 \cdots h_s \cdot Z)_{\red} \, , \quad
		((h, x), (y, z)) \to (y, h_1 \cdots h_s \cdot z)
	\]
	is an isomorphism, since $(h_1 \cdots h_s \cdot x, x) = (f(y), z)$ for each
	$((h, x), (y, z)) \in \pi^{-1}(h)_{\red}$. If $\pi^{-1}(h)$ is empty, then~\eqref{Eq.enough for proper intersection} from Definition~\ref{def.big_enough} is satisfied
	(as by convention $\dim \varnothing = -\infty$) 
	and thus we may assume that
	$\pi^{-1}(h)$ is non-empty and we get 
	$\dim \pi^{-1}(h) \leq \dim W - \dim U$ by the flatness of $\pi$.  By the smoothness of $\Phi_\mathcal{H} |_{U \times X}$
	and the pullback diagram above, $W \to Y \times_P Z$ is smooth 
	since smoothness is preserved under pullbacks. In particular, 
	$\dim W \leq \dim Y \times_P Z + \dim U \times X - \dim X \times_P X$. In total we get
	\begin{align*}
		\dim Y \times_X h_1 \cdots h_s \cdot Z 
		&= \dim \pi^{-1}(h) \\
		&\leq \dim Y \times_P Z + \dim X - \dim X \times_P X \\
		&= \dim Y \times_P Z + \dim P - \dim X \, ,
	\end{align*}
	since $\dim X \times_P X = 2 \dim X - \dim P$, which in turn 
	follows from the smoothness of $X \to P$ and the irreducibility of $X$, $P$.
	
	\eqref{prop.Big_enough_for_smoothness2}:
	This follows directly from~\cite[Remark 1.8]{Ka2020Extensions-of-isom}.
\end{proof}

\begin{prop}[{\cite[Proposition 1.7]{Ka2020Extensions-of-isom}}]
	\label{prop.transitivity_and_big_enough_for_smoothness}
	Let $\kappa \colon X \to P$ be a smooth morphism of smooth irreducible varieties 
	and let a subgroup $G \subset \Aut_P(X)$ be generated by a family
	$\mathcal{G}$ of connected algebraic subgroups of $\Aut_P(X)$ which is
	closed under conjugation by $G$. Moreover, assume that $G$ acts transitively on
	each fiber of $\kappa$.

	Then there exist $H_1, \ldots, H_s \in \mathcal{G}$ such that
	$(H_1, \ldots, H_s)$ is big enough for smoothness.\qed
\end{prop}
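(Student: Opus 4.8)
The plan is to recast ``big enough for smoothness'' as an infinitesimal statement and then to build the tuple out of a single finite ``infinitesimally transitive'' subfamily of $\mathcal{G}$, the final promotion to a parametric statement being the delicate point. Since $X$ and $\kappa$ are smooth, $M := H_1 \times \cdots \times H_s$ (a product of smooth connected groups) and $X \times_P X$ are smooth, so $\Phi_{\mathcal{H}}$ is smooth at a point precisely when its differential there is surjective. Computing $d\Phi_{\mathcal{H}}$ at $(\mathbf{h}, x)$ and separating the $X$-directions (which span the graph of $d(h_1 \cdots h_s)_x$) from the directions tangent to the $H_i$, one checks that $\Phi_{\mathcal{H}}$ is smooth at $(\mathbf{h}, x)$ if and only if the evaluation map $\sigma_x \colon M \to \kappa^{-1}(\kappa(x))$, $\mathbf{h} \mapsto h_1 \cdots h_s \cdot x$, is smooth at $\mathbf{h}$. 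So the goal becomes: find $H_1, \ldots, H_s \in \mathcal{G}$ and a non-empty (hence dense, as $M$ is irreducible) open $U \subseteq M$ on which all the maps $\sigma_x$, $x \in X$, are smooth.

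The next step is a pointwise infinitesimal transitivity: for every $x \in X$, $\sum_{H \in \mathcal{G}} \operatorname{image}\bigl(T_e H \to T_x(H \cdot x)\bigr) = T_x(\kappa^{-1}(\kappa(x)))$. Indeed, by transitivity the orbit $G \cdot x$ equals the whole fiber $\kappa^{-1}(\kappa(x))$, which in characteristic zero is smooth; writing $G = \langle \mathcal{G}\rangle$ and exhausting, one obtains a dominant morphism $H_1' \times \cdots \times H_r' \to G \cdot x$, $\mathbf{h} \mapsto h_1' \cdots h_r' \cdot x$, with the $H_i'$ in $\mathcal{G}$, whose differential at a generic point surjects onto the tangent space of the fiber; translating this surjection back to $x$ by the appropriate automorphism of $X$ replaces each $H_i'$ by a $G$-conjugate, which again lies in $\mathcal{G}$ since $\mathcal{G}$ is closed under conjugation by $G$. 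The reverse inclusion is trivial.

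Now, for a finite tuple $\mathcal{L} = (L_1, \ldots, L_t)$ of members of $\mathcal{G}$, the locus $V(\mathcal{L}) := \{x \in X : \sum_i \operatorname{image}(T_e L_i \to T_x(L_i \cdot x)) = T_x(\kappa^{-1}(\kappa(x)))\}$ is Zariski-open by lower semicontinuity of rank, and is exactly the locus where $\sigma_x$ (for the tuple $\mathcal{L}$) is smooth at $(e, \ldots, e)$. By the previous step the sets $V(\mathcal{L})$ cover $X$; by quasi-compactness of $X$ finitely many do, and concatenating the corresponding tuples (concatenation only enlarges the sum defining $V$) yields one tuple $\mathcal{K} = (K_1, \ldots, K_N)$ with $V(\mathcal{K}) = X$, i.e.\ $\Phi_{\mathcal{K}}$ is smooth along the identity section $\{(e, \ldots, e)\} \times X$. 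By the same Noetherian-plus-quasi-compactness argument applied to the transitivity of $G$ itself, I may also arrange that for a sufficiently long word $S_1 \cdots S_t$ in members of $\mathcal{G}$ the map $(s_1, \ldots, s_t) \mapsto s_1 \cdots s_t \cdot y$ is dominant onto $\kappa^{-1}(\kappa(y))$ for every $y$; call such a word a \emph{sweeping block}.

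The remaining step — which I expect to be the main obstacle — is to upgrade ``smooth along the identity section'' to ``smooth on $U \times X$'', which is not formal, since an open set containing a section need not contain any slab $U \times X$. The idea is to take $\mathcal{H}$ to be the spanning family $K_1, \ldots, K_N$ with a sweeping block inserted after each $K_j$. Then for generic $\mathbf{h} \in M$ each ``position variable'' $b_j := (\textrm{suffix of }\mathbf{h}\textrm{ past the }j\textrm{-th block})^{-1}$ moves the point $x$ to a generic point $y_j$ of its fiber, and the tangent-space contribution of the $j$-th block to $d\Phi_{\mathcal{H}}$ at $(\mathbf{h}, x)$ becomes $d(b_j)_{y_j}\bigl(T_{y_j}(K_j \cdot y_j)\bigr)$; one must show that for generic $\mathbf{h}$ the sum of these over $j$ fills the vertical tangent space at every $x$ simultaneously. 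Since the pointwise infinitesimal transitivity gives $\sum_j \dim T_x(K_j \cdot x) \geq \dim \kappa^{-1}(\kappa(x))$, this is a general-position statement for a family of subspaces that, thanks to the sweeping blocks, varies submersively; establishing it uniformly over the fibers — that is, turning a condition that is generic in the fiber variable into one that is generic in the parameter variable — is exactly the parametric-transversality bookkeeping of Kaliman~\cite{Ka2020Extensions-of-isom}, which one follows and adapts. Finally, no control on $s$ is needed because appending further groups preserves ``big enough for smoothness'' by Proposition~\ref{prop.Big_enough_for_smoothness}.
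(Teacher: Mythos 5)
The paper does not actually prove this proposition: the statement is attributed directly to Kaliman~\cite[Proposition 1.7]{Ka2020Extensions-of-isom} and closed with a \textqed, so there is no in-text proof to compare against beyond the citation. Your reduction steps are correct and do track what Kaliman's argument must accomplish. The observation that, with $X$, $P$, the product of the $H_i$, and $X\times_P X$ all smooth, smoothness of $\Phi_{\mathcal{H}}$ at $(\mathbf{h},x)$ is equivalent to surjectivity of $d\Phi_{\mathcal{H}}$ there, and that after subtracting the tautological diagonal contribution this reduces to surjectivity of $d\sigma_x$ onto the vertical tangent space $T_y\bigl(\kappa^{-1}(\kappa(x))\bigr)$ at $y=h_1\cdots h_s\cdot x$, is right. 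The pointwise infinitesimal transitivity is also right, and you correctly isolate where the hypothesis that $\mathcal{G}$ is closed under $G$-conjugation enters, namely in translating the generic-point surjection of the orbit map back to the fixed point $x$. The Noetherian patching to a single tuple $\mathcal{K}$ with $\Phi_{\mathcal{K}}$ smooth along the identity section $\{e\}\times X$ is likewise sound, since each $V(\mathcal{L})$ is open by semicontinuity of rank against the constant fiber dimension of $\kappa$.

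But then you explicitly label the remaining step, from smoothness along $\{e\}\times X$ to smoothness on a slab $U\times X$ with $U$ dense open in $M$, as ``the main obstacle,'' gesture at the sweeping-block construction, and close with the sentence that it ``is exactly the parametric-transversality bookkeeping of Kaliman, which one follows and adapts.'' That is not an argument: it is the entire nontrivial content of the cited Proposition 1.7. The obstruction you identify is genuine and correctly diagnosed (an open neighborhood of a section in $M\times X$ need not contain any slab $U\times X$, so genericity in the $M$-variable must be made uniform over every $x\in X$ at once), but it is left unresolved. Concretely, what is missing is the proof that, for the interleaved tuple of spanning blocks and sweeping blocks, the bad locus $B\subset M\times X$ where $d\Phi$ fails to surject projects to a nowhere dense subset of $M$; this requires controlling the fiber dimensions of $B\to M$, which is exactly the quantitative role the sweeping blocks play in Kaliman's proof and is not supplied here. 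So the proposal is a correct and informative reduction to, not an independent proof of, the cited result; in effect it does what the paper does (defer to Kaliman), just with more of the scaffolding made explicit.
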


\subsection{Sufficiently transitive group actions}

For a variety $X$ we denote by $\SAut(X)$ the subgroup
of $\Aut(X)$ that is generated by all unipotent algebraic subgroups; in particular, 
$\SAut(X)\subseteq \Aut^{\alg}(G)$. 
Transitivity of the natural action of $\SAut(X)$ on $X$ 
implies $m$-transitivity for all $m$ and 
that one can prescribe the tangent map of 
an automorphism of $X$ at a finite number of fixed points:

\begin{theorem}[{\cite[Theorem~0.1, Theorem~4.14 and Remark~4.16]{ArFlKa2013Flexible-varieties}}]
	\label{thm.transitivity_and_describing_jets}
	Let $X$ be an irreducible smooth affine variety of dimension at least $2$.
	If $\SAut(X)$ acts transitively on $X$, then:
	\begin{enumerate}[leftmargin=*]
		\item \label{thm.flexible1} $\SAut(X)$ acts $m$-transitively on $X$ for each $m \geq 1$;
		\item \label{thm.flexible2} for every finite subset $Z \subset X$ and
		every collection $\beta_z \in \SL(T_z X)$, $z \in Z$, there is an automorphism
		$\varphi \in \SAut(X)$ that fixes $Z$ pointwise such that
		the differential satisfies $\textrm{d}_z \varphi = \beta_z$ for all $z \in Z$.\qed
	\end{enumerate}
	\end{theorem}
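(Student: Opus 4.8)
Since this theorem is quoted from \cite{ArFlKa2013Flexible-varieties}, the plan is to reprove it along the lines of that paper. The key is to upgrade the hypothesis ``$\SAut(X)$ acts transitively'' to the stronger statement that $X$ is \emph{flexible}: at every point $x$ the tangent vectors at $x$ to the orbits $H\cdot x$, as $H$ ranges over all $\GG_a$-subgroups of $\Aut(X)$, span $T_xX$. To obtain this, fix $x_0\in X$ and note that the $\SAut(X)$-orbit of $x_0$ is the union, over all finite tuples $(H_1,\dots,H_k)$ of $\GG_a$-subgroups, of the images of the orbit maps $\mu\colon H_1\times\cdots\times H_k\to X$, $(t_1,\dots,t_k)\mapsto t_1\cdots t_k\cdot x_0$. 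Choose a tuple for which the closure $Y$ of $\im\mu$ has maximal dimension (such a maximum exists, being bounded by $\dim X$). Since this family of images is stable under composing with further $\GG_a$-subgroups, maximality together with transitivity forces $Y=X$: otherwise some $\GG_a$-subgroup moves a point of the dense constructible set $\im\mu$ off the proper invariant candidate $Y$, and appending a suitable conjugate of it would enlarge $\dim Y$. Hence $\mu$ is dominant, so by generic smoothness its differential is onto at some $(t_1^0,\dots,t_s^0)$; unwinding the chain rule, the velocity vectors at $x_1\coloneqq t_1^0\cdots t_s^0\cdot x_0$ of the conjugate $\GG_a$-subgroups $(t_1^0\cdots t_{i-1}^0)H_i(t_1^0\cdots t_{i-1}^0)^{-1}$ span $T_{x_1}X$. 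Thus $x_1$ is flexible, and since flexibility is preserved by $\SAut(X)$ and $\SAut(X)\cdot x_1=X$, the variety $X$ is flexible.

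Next I would prove part~\ref{thm.flexible1} by induction on $m$, the case $m=1$ being the hypothesis. Given distinct $p_1,\dots,p_m$ and $q_1,\dots,q_m$, first move $p_1,\dots,p_{m-1}$ to $q_1,\dots,q_{m-1}$ by some $\psi\in\SAut(X)$ using the inductive hypothesis; it remains to find $\theta\in\SAut(X)$ fixing $q_1,\dots,q_{m-1}$ with $\theta(\psi(p_m))=q_m$. The engine is the ``replica'' construction: if $\delta$ is the locally nilpotent derivation generating a $\GG_a$-subgroup and $f\in\ker\delta$, then $f\delta$ is again locally nilpotent, and its $\GG_a$-subgroup is trivial on $\{f=0\}$ but moves points along the original orbits where $f\neq 0$. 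Using flexibility at $\psi(p_m)$ one picks finitely many $\GG_a$-subgroups whose orbits through $\psi(p_m)$ span all tangent directions there; multiplying their generators by functions vanishing at $q_1,\dots,q_{m-1}$ but not at $\psi(p_m)$ (here $\dim X\geq 2$ is used to prevent the orbits from being forced through the fixed points), one obtains elements of $\SAut(X)$ fixing $q_1,\dots,q_{m-1}$ whose combined motion of $\psi(p_m)$ sweeps out a positive-dimensional locally closed subvariety; a chaining argument along such orbits then reaches $q_m$.

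For part~\ref{thm.flexible2}, note first that a $\GG_a$-subgroup fixing a point acts on its tangent space through $\exp$ of a nilpotent operator, so its differential there lies in $\SL$; hence $\SL(T_zX)$ is the best one can hope for, and since transvections generate it, it suffices to realize transvections. Starting from locally nilpotent derivations realizing flexibility at a point $z\in Z$, I would multiply their generators by functions that vanish on $Z$ and are chosen to control the $1$-jet at $z$ (vanishing to higher order at $z$ in all but one direction, and to order $\geq 2$ at the points of $Z\setminus\{z\}$), producing $\GG_a$-subgroups that fix $Z$ pointwise, have trivial differential at the points of $Z\setminus\{z\}$, and realize a prescribed transvection of $T_zX$. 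Running this for each $z\in Z$ in turn and composing finitely many of the resulting automorphisms yields $\varphi\in\SAut(X)$ fixing $Z$ pointwise with $\mathrm{d}_z\varphi=\beta_z$ for all $z\in Z$; this is \cite[Theorem~4.14, Remark~4.16]{ArFlKa2013Flexible-varieties}.

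The main obstacle is the passage, carried out in the last two paragraphs, from the infinitesimal and pointwise notion of flexibility to the global statements about moving finite configurations and prescribing jets. This rests on the surgery on locally nilpotent derivations — selecting the multipliers $f\in\ker\delta$ so that they vanish exactly where required without destroying local nilpotency or collapsing the relevant orbits — and on the non-formal fact that the resulting orbits actually connect the prescribed points; the hypothesis $\dim X\geq 2$ enters precisely at this point.
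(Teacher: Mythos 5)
The paper does not prove this theorem; it is cited without proof from \cite{ArFlKa2013Flexible-varieties} (Theorem~0.1, Theorem~4.14, Remark~4.16), so there is no internal argument to compare against. Your sketch faithfully reconstructs the outline of the cited source's proof: upgrading transitivity of $\SAut(X)$ to flexibility via a dominant orbit map $\mu\colon H_1\times\cdots\times H_k\to X$ and generic smoothness in characteristic zero, propagating flexibility by homogeneity, and then using the replica construction $f\delta$ with $f\in\ker\delta$ to achieve $m$-transitivity and to realize prescribed elements of $\SL(T_zX)$ (generated by transvections coming from exponentials of nilpotents) while fixing the remaining points of the finite set, the hypothesis $\dim X\geq 2$ entering to keep the replica orbits from being trapped at the fixed points.
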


\begin{example}
	\label{exa.Enough_transitively}
	Let $F$ be an irreducible smooth affine variety of dimension $\geq 2$ such that
	$\SAut(F)$ acts transitively on it. Then,
	Theorem~\ref{thm.transitivity_and_describing_jets} implies that 
	$\SAut(F)$ acts sufficiently transitively on $F$; see Definition~\ref{def.sufficiently_transitively}.
\end{example}

\begin{example}
	\label{exa.Enough_transitivity_alg_group}
	If $G$ is a connected 
	characterless algebraic group, then the group 
	$\Aut^{\alg}(G)$ acts sufficiently transitively
	on $G$. Indeed, such a $G$ is generated by its unipotent subgroups (see e.g.~\cite[Lemma~1.1]{Po2011On-the-Makar-Liman}) and thus $G \subseteq \SAut(G)$. In particular,
	$\SAut(G)$ acts transitively on $G$. 
	Now, if $\dim G = 0$, then the statement is trivial. If $\dim G = 1$,
	then $G$ is isomorphic to $\GG_a$ and thus $\Aut^{\alg}(G) = \Aut(\AA^1)$; 
	hence, the statement is also clear. If $\dim G \geq 2$,
	then the statement follows from Example~\ref{exa.Enough_transitively}.
\end{example}

Incidentally, the above example characterizes algebraic groups $G$ such that 
$\Aut^{\alg}(G)$ acts sufficiently transitively on $G$:

\begin{prop}
	\label{prop.characterization_suff_trans_algebraic_groups}
	Let $G$ be an algebraic group. Then $\Aut^{\alg}(G)$ acts sufficiently transitively on $G$
	if and only if $G$ is connected and characterless.
\end{prop}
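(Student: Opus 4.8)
The plan is to prove both implications, with the forward direction being the substantive one. For the easy direction, suppose $G$ is connected and characterless. Then Example~\ref{exa.Enough_transitivity_alg_group} already gives that $\Aut^{\alg}(G)$ acts sufficiently transitively on $G$, so there is nothing more to do.

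For the converse, assume $\Aut^{\alg}(G)$ acts sufficiently transitively on $G$; in particular the action is $2$-transitive, hence transitive, so all connected components of $G$ have the same dimension and $G$ is homogeneous under $\Aut^{\alg}(G)$. First I would rule out the case that $G$ is disconnected: if $G$ has $\geq 2$ components then, after picking two points in one component and one in another, $2$-transitivity of $\Aut^{\alg}(G)$ would have to move a pair lying in a single component to a pair lying in different components; since every connected algebraic subgroup of $\Aut(G)$ is itself connected and hence preserves the (finite) set of connected components of $G$, and $\Aut^{\alg}(G)$ is generated by such subgroups, every element of $\Aut^{\alg}(G)$ permutes the components of $G$ — so it cannot split a pair of points lying in the same component. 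This contradicts $2$-transitivity unless $G$ is connected. Thus $G$ is connected.

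It remains to show that a connected $G$ with $\Aut^{\alg}(G)$ acting sufficiently transitively is characterless, i.e.~has no nontrivial character $\chi \colon G \to \GG_m$. The key point is that the group of characters $\mathfrak{X}(G)$ is a birational invariant of the underlying variety of $G$ in a suitable sense, and more relevantly it is preserved by \emph{all} automorphisms of the variety $G$ in a way that obstructs transitivity: concretely, if $\chi \colon G \to \GG_m$ is a nonconstant character then $\OO(G)^\ast / \kk^\ast$ is nontrivial, and the fibers of the morphism $G \to \GG_m$ underlying $\chi$ (or rather, the level sets of the unit $\chi \in \OO(G)^\ast$) form a pencil that is canonically attached to the variety $G$. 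More precisely, any variety automorphism $\varphi$ of $G$ acts on $\OO(G)^\ast/\kk^\ast$, and since $\OO(G)^\ast/\kk^\ast$ is a finitely generated free abelian group, a connected algebraic subgroup of $\Aut(G)$ must act trivially on it (the action is given by a morphism from a connected variety to the discrete automorphism group of a lattice). Hence every element of $\Aut^{\alg}(G)$ fixes $\chi$ up to a scalar, so it permutes the fibers $\chi^{-1}(a)$ among themselves according to a fixed automorphism of $\GG_m$; in particular no element of $\Aut^{\alg}(G)$ can send two points with the same $\chi$-value to two points with different $\chi$-values. Since $\chi$ is nonconstant there are such pairs, contradicting $2$-transitivity. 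Therefore $G$ has no nontrivial character, i.e.~$G$ is characterless.

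The main obstacle I expect is the bookkeeping in the last paragraph: one must be careful that $\OO(G)^\ast/\kk^\ast$ is indeed finitely generated (this holds because $G$ is a connected algebraic group, where in fact $\OO(G)^\ast/\kk^\ast \cong \mathfrak{X}(G)$ is finitely generated — a standard fact, e.g.~from Rosenlicht's theorem on units), and that the $\Aut^{\alg}(G)$-action on this lattice is genuinely trivial rather than merely finite; the connectedness of the algebraic subgroups generating $\Aut^{\alg}(G)$ is what forces triviality, and this needs to be spelled out. Once that is in place, the transitivity obstruction is immediate. Alternatively, if one prefers to avoid the unit-group argument, one could invoke that a characterless connected group is exactly one generated by its unipotent subgroups (as used in Example~\ref{exa.Enough_transitivity_alg_group}) together with a structural description of $\SAut$; but the unit-group route is cleaner and self-contained.
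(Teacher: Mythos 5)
Your ``if'' direction and your connectedness argument are fine, and your strategy for the ``characterless'' part of the ``only if'' direction is genuinely different from the paper's: the paper decomposes $G = G^u \rtimes T$, shows that any connected algebraic action on $G$ descends to an action on the torus $T$, and then applies Lemma~\ref{lem.torus-alg_autos} to conclude that $\Aut^{\alg}(T)$ consists of translations only, which forces $T$ to be trivial. You instead let $\Aut^{\alg}(G)$ act directly on the unit lattice $\OO(G)^\ast/\kk^\ast$ and read off a contradiction with $2$-transitivity. That is a nice and more conceptual idea (and would work for any affine variety, not just an algebraic group), but as written there are two real gaps.

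The main one is the claim that a connected algebraic subgroup $H \subset \Aut(G)$ acts trivially on $\OO(G)^\ast/\kk^\ast$. Your stated reason --- ``the action is given by a morphism from a connected variety to the discrete automorphism group of a lattice'' --- is not a valid step: the assignment $h \mapsto (\text{induced lattice automorphism})$ is not obviously an algebraic morphism, nor continuous for any topology that would let you invoke connectedness of $H$, so ``connected source, discrete target'' does not apply. What actually proves triviality is Rosenlicht's unit theorem for products (every invertible regular function on $H \times G$, with $H$, $G$ irreducible over an algebraically closed field, factors as $u(h)v(g)$): applying it to $\rho^\ast(\chi) \in \OO(H\times G)^\ast$, where $\rho\colon H\times G\to G$ is the action, gives $h^\ast(\chi) \equiv v \pmod{\kk^\ast}$ independently of $h$. (Alternatively, reduce to one-parameter subgroups and compute units in $\OO(G)[t]$ and $\OO(G)[t,t^{-1}]$.) You do flag that ``this needs to be spelled out,'' but the hint you give points in a wrong direction, so this is a missing idea rather than a deferred detail. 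The second gap is the final sentence: to contradict $2$-transitivity you need \emph{both} a pair of distinct points with equal $\chi$-values and a pair with distinct $\chi$-values. The second exists because $\chi$ is nonconstant, but the first requires $\ker\chi \neq \{e\}$, which fails when $\chi$ is injective --- and for connected $G$ that happens precisely when $G \cong \GG_m$ with $\chi$ a generating character. That case must be handled separately, e.g.~by replacing $\chi$ with $\chi^2$ (whose kernel $\{\pm 1\}$ has two points in characteristic zero), or by quoting the translation-only description of $\Aut^{\alg}(\GG_m)$ from Lemma~\ref{lem.torus-alg_autos}.
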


\begin{proof}
	According to Example~\ref{exa.Enough_transitivity_alg_group} we only have to show the `only if'-part.
	
	Let $G$ be an algebraic group such that $\Aut^{\alg}(G)$ acts sufficiently transitively on it.
	
	Let $g, g' \in G$. Since $\Aut^{\alg}(G)$ acts transitively on $G$, there exist connected
	algebraic subgroups $H_1, \ldots, H_r$ in $\Aut(G)$ such that $g'$ lies
	in the image of the morphism
	\[
		H_1 \times \cdots \times H_r \to G \, , \quad (h_1, \ldots, h_r) \mapsto (h_1 \circ \ldots \circ h_r)(g) \, .
	\]
	Hence, $g, g' \in G$ lie in an irreducible closed subset of $G$. Since $g, g'$ were arbitrary elements
	of $G$, it follows that $G$ is connected.
	
	Denote by $G^u$ the algebraic subgroup of $G$ that is generated by all unipotent elements
	in $G$. Then $G^u$ is closed and normal in $G$ and each invertible function on $G^u$
	is constant.
 	There exists an algebraic torus $T \subseteq G$ (i.e.~$T$ is a product of finitely many copies of the underlying multiplicative group of the ground field) such that $G = G^u \rtimes T$;
 	see~e.g.~\cite[Lemma 8.2]{FeSa2019Uniqueness-of-embe}.
	
	Let $\pi \colon G \to T$ be the canonical projection. Take an arbitrary 
	algebraic action $\rho \colon H \times G \to G$ of an arbitrary connected algebraic group $H$.
	Since each invertible function on each fiber of $\pi$ is constant, the morphism
	\[
		H \times G \stackrel{\rho}{\to} G \stackrel{\pi}{\to} T
	\]
	is invariant under the algebraic action $N \times (H \times G) \to H \times G$ that is given
	by $n \cdot (h, g) = (h, n g)$. Hence, the morphism $\pi \circ \rho$
	factors through $\id_H \times \pi$, i.e.~there is a commutative diagram
	\begin{equation}
		\label{Eq.Induced_action}
		\begin{gathered}
		\xymatrix{
			H \times G \ar[r]^-{\rho} \ar[d]_-{\id_H \times \pi} & G \ar[d]^-{\pi} \\
			H \times T \ar[r]^-{\rho_T} & T
		}
		\end{gathered}
	\end{equation}
	for a unique morphism $\rho_T \colon H \times T \to T$. As $\rho$ is an action,
	$\rho_T$ is an action as well. Since $\Aut^{\alg}(G)$ acts
	$2$-transitively on $G$ and since each action $\rho$ of a connected algebraic group 
	on $G$ induces an action $\rho_T$ on 
	$T$ such that~\eqref{Eq.Induced_action} commutes, 
	we get that $\Aut^{\alg}(T)$ acts $2$-transitively on $T$.
	By Lemma~\ref{lem.torus-alg_autos} below, we find that $T$ is trivial, and
	thus $G = G^u$ is characterless.
\end{proof}

The following lemma is certainly well-known to the specialists. However, for lack of a reference
we give a proof of it.

\begin{lemma}
	\label{lem.torus-alg_autos} 
	Let $T$ be an algebraic torus. Then 
	\[
		\Aut^{\alg}(T) = \set{ T \to T \, , \, \,  t \mapsto s t}{s \in T} \, .
	\]
\end{lemma}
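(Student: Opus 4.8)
The plan is to show both inclusions. One inclusion is trivial: each translation $t \mapsto st$ is an automorphism of $T$ as a variety, and since $T$ is connected (as a product of copies of $\GG_m$) it is an algebraic subgroup of $\Aut(T)$; hence the right-hand side is contained in $\Aut^{\alg}(T)$. The real content is the reverse inclusion, i.e.\ that every automorphism coming from a connected algebraic subgroup of $\Aut(T)$ is a translation, and that the group generated by all such is still only the translations.

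\textbf{Step 1: Reduce to a single connected algebraic subgroup.} Since $\Aut^{\alg}(T)$ is generated by connected algebraic subgroups $H \subseteq \Aut(T)$, and since the translations form a subgroup, it suffices to show that each such $H$ consists of translations. So fix a connected algebraic group $H$ with a faithful algebraic action $\rho \colon H \times T \to T$, and let me write $h \cdot t$ for $\rho(h, t)$.

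\textbf{Step 2: Analyze the action via the character lattice.} Let $M = \mathfrak{X}(T) \cong \ZZ^n$ be the character lattice, so $T = \Spec \kk[M]$. For each fixed $h \in H$, the automorphism $t \mapsto h \cdot t$ of the variety $T$ induces an automorphism of $\kk[M]$. The key structural fact is that the invertible functions on $T$ are exactly $\kk^\ast \cdot M$, so any variety automorphism of $T$ permutes the cosets and, after composing with a translation, induces an automorphism of the lattice $M$ (equivalently, lies in $\GL_n(\ZZ) \ltimes T$, the full automorphism group of $T$ as a variety). This gives a homomorphism $H \to \GL_n(\ZZ)$ recording the ``linear part''. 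Because $H$ is connected and $\GL_n(\ZZ)$ is discrete, this homomorphism is trivial; hence for every $h \in H$ the automorphism $t \mapsto h \cdot t$ differs from a lattice automorphism that is the identity, i.e.\ it is a translation $t \mapsto c(h) \cdot t$ for some $c(h) \in T$. (One should check $c \colon H \to T$ is a morphism, which follows from evaluating $\rho$ at a point, and a group homomorphism since $\rho$ is an action.) Therefore $H$ acts through translations, as desired.

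\textbf{Main obstacle.} The one point that needs care is the description of $\Aut(T)$ as an \emph{abstract variety}: that it equals $\GL_n(\ZZ) \ltimes T(\kk)$. This rests on the fact that the units in $\kk[M]$ are precisely $\kk^\ast \times M$ (a standard computation: $\kk[M]$ is a Laurent polynomial ring, whose units are monomials times nonzero scalars), so an automorphism must send the ``monomial part'' $M$ bijectively to itself up to scalars and translation, yielding a group isomorphism of $M$. Once this is in hand, the connectedness argument killing the $\GL_n(\ZZ)$-component is immediate, and the proof concludes. No deep input is required beyond this lattice bookkeeping.
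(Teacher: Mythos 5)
Your proof is correct and takes a genuinely different route from the paper. The paper's proof is short because it invokes a result of Rosenlicht (\cite[Theorem~2]{Ro1961Toroidal-algebraic}): for any algebraic action $\rho \colon H \times T \to T$ there exist morphisms $\mu \colon H \to T$ and $\lambda \colon T \to T$ with $\rho(h,t) = \mu(h)\lambda(t)$; normalizing so $\mu(e_H) = e_T$ forces $\lambda = \id$, and one reads off directly that $H$ acts by translations. Rosenlicht's theorem holds for arbitrary $H$, so the paper never needs connectedness. You instead rederive the key structural fact from scratch: the group $\Aut(T)$ is $\GL_n(\ZZ) \ltimes T$ because the unit group of a Laurent polynomial ring is $\kk^\ast \times M$, and then you kill the lattice-automorphism component by the connectedness of $H$ together with discreteness of $\GL_n(\ZZ)$. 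This is more elementary and self-contained; the trade-off is that you must use connectedness of $H$, which is fine here since $\Aut^{\alg}(T)$ is by definition generated by connected subgroups.

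One step deserves to be made explicit: you need that the ``linear part'' map $H \to \GL_n(\ZZ)$ is locally constant, not merely a homomorphism of abstract groups (a homomorphism from a connected group to a discrete group need not be trivial in general; continuity is what makes it so). This follows, but requires a short argument. Write $\rho^*(x_i) = \sum_{a} f_{i,a}\, x^a \in \OO(H)[x_1^{\pm}, \ldots, x_n^{\pm}]$ with only finitely many nonzero $f_{i,a} \in \OO(H)$. For each fixed $h$ the restriction must be a nonzero scalar times a monomial, so the open sets $\{h : f_{i,a}(h) \neq 0\}$ are pairwise disjoint and cover $H$; by connectedness exactly one of them is nonempty, whence $\rho^*(x_i) = f_{i,a_i}\, x^{a_i}$ with $f_{i,a_i}$ nowhere vanishing, and the exponent $a_i$ is independent of $h$. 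Evaluating at $e_H$ gives $a_i = e_i$, so the linear part is the identity and $c \coloneqq (f_{1,e_1}, \ldots, f_{n,e_n}) \colon H \to T$ is a morphism with $\rho(h,t) = c(h)t$, which is a group homomorphism because $\rho$ is an action. With this added, your argument is complete.
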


\begin{proof}
	Let $H \subset \Aut(T)$ be an algebraic subgroup. Hence there exists a faithful algebraic
	$H$-action $\rho \colon H \times T \to T$ such that the image of the induced homomorphism
	in $\Aut(T)$ is $H$ (see Remark~\ref{rem.alg_subgroup}). 
	By~\cite[Theorem~2]{Ro1961Toroidal-algebraic} there exist 
	morphisms $\mu \colon H \to T$ and $\lambda \colon T \to T$ such that
	\[
		\rho(h, t) = \mu(h)\lambda(t) \quad
		\textrm{for each $h \in H$, $t \in T$} \, .
	\]
	After replacing $\mu$ and $\lambda$ by $t_0 \mu$ and $t_0^{-1} \lambda$, respectively, 
	for some
	$t_0 \in T$, we may assume
	that $\mu(e_H) = e_T$, where $e_H$ and $e_T$ denote the neutral elements of $H$ and $T$,
	respectively. Hence, $t = \rho(e_H, t) = \lambda(t)$ for each $t \in T$, and thus
	\[
		\rho(h, t) = \mu(h)t \quad \textrm{for each $h \in H$, $t \in T$} \, .
	\]
	This implies that $H$ lies inside $\set{ T \to T \, , \, \,  t \mapsto st}{s \in T}$, and thus the lemma follows.
\end{proof}

\subsection{Sufficiently transitive group actions on fibers}

In the next proposition, we provide a class of smooth morphisms $\pi \colon X \to P$
such that $\Aut_{P}^{\alg}(X)$ acts sufficiently transitively on each fiber of $\pi$.

\begin{prop}
	\label{prop.sufficiently_transitive_on_fibers_quotient}
	Let $G$ be a connected 
	algebraic group and $H \subseteq G$ be a connected characterless algebraic subgroup of
	dimension $\geq 2$. Then,
	the algebraic quotient $\pi \colon G \to G/H \eqqcolon P$ 
	is a smooth morphism such that $\Aut_{P}^{\alg}(G)$ acts
	sufficiently transitively on each fiber of $\pi$.
\end{prop}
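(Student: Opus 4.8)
The plan is to produce enough automorphisms of $G$ fixing $\pi$ — all of them right translations by one-parameter subgroups of $H$, or replicas thereof — and then to verify that their restrictions to a single fibre generate a sufficiently transitive group on that fibre. Smoothness of $\pi$ is the easy part: $\pi\colon G\to G/H$ is faithfully flat of finite type, and each of its fibres is isomorphic (via a left translation) to the smooth variety $H$, so $\pi$ is smooth. Now fix a fibre $F=\pi^{-1}(q)$, write $F=gH$, and fix the isomorphism $\iota_g\colon H\xrightarrow{\sim}F$, $h\mapsto gh$. Since $H$ is connected and characterless of dimension $\ge 2$, it is generated by its unipotent one-parameter subgroups (as recalled in Example~\ref{exa.Enough_transitivity_alg_group}), so $\SAut(H)$ acts transitively on $H$, and hence, by Theorem~\ref{thm.transitivity_and_describing_jets} (see Example~\ref{exa.Enough_transitively}), sufficiently transitively on $H$. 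It therefore suffices to realise enough of this transitivity on $F$ through restrictions of elements of $\Aut_{G/H}^{\alg}(G)$.

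First, for $h_0\in H$ the right translation $R_{h_0}\colon g\mapsto gh_0$ fixes $\pi$ (as $gh_0H=gH$), and $R_H:=\{R_{h_0}:h_0\in H\}$ is the image of the faithful algebraic action $(h,g)\mapsto gh^{-1}$, hence a connected algebraic subgroup of $\Aut(G)$ contained in $\Aut_{G/H}(G)$; so $R_H\subseteq\Aut_{G/H}^{\alg}(G)$, and under $\iota_g$ it restricts on $F$ to the simply transitive right-translation action of $H$ on itself. More generally, given a one-parameter unipotent subgroup $U=\{u(t):t\in\GG_a\}$ of $H$ and a regular function $\chi\in\OO(G)$ invariant under right translation by $U$, the morphism
\[
	\varphi^{U,\chi}\colon \GG_a\times G\to G,\qquad (t,g)\longmapsto g\,u\bigl(t\,\chi(g)\bigr),
\]
is a $\GG_a$-action on $G$ — one checks $\varphi^{U,\chi}_s\circ\varphi^{U,\chi}_t=\varphi^{U,\chi}_{s+t}$ using the $U$-invariance of $\chi$ — it is faithful whenever $\chi\not\equiv 0$, and it fixes $\pi$; hence its image is a connected algebraic subgroup of $\Aut_{G/H}(G)$, so $\varphi^{U,\chi}\in\Aut_{G/H}^{\alg}(G)$. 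In fact $\varphi^{U,\chi}$ is the $\GG_a$-action generated by $\chi\,\partial$, where $\partial$ is the locally nilpotent derivation of $\OO(G)$ attached to $R_U$ and $\chi\in\ker\partial=\OO(G)^{U}$; that is, it is a replica of $R_U$. Under $\iota_g$ it restricts on $F$ to the replica $h\mapsto h\,u(t\,f(h))$ of right-$U$-translation on $H$, with $f=\iota_g^{\ast}(\chi|_F)$ lying in the image of the restriction map $\OO(G)^{U}\to\OO(H)^{U}$.

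Let $\Gamma\subseteq\Aut(H)$ be the subgroup generated by $R_H$ together with all replicas $h\mapsto h\,u(t\,f(h))$ arising as above (over all one-parameter unipotent $U\subseteq H$ and all $f$ in the image of $\OO(G)^{U}\to\OO(H)^{U}$). By the last paragraph $\Gamma$ lies in the restriction of $\Aut_{G/H}^{\alg}(G)$ to $F\cong H$, so it is enough to show that $\Gamma$ acts sufficiently transitively on $H$. Two ingredients feed into this. First, because $H$ is characterless its Lie algebra $\mathfrak{h}$ is spanned by its nilpotent elements — the semisimple part of $\mathfrak{h}$ is, and the solvable radical of $H$ is unipotent, since a non-trivial central torus in $H/R_u(H)$ would produce a character of $H$; equivalently, the Lie algebras of the one-parameter unipotent subgroups $U\subseteq H$ span $\mathfrak{h}$, so the velocity vectors of the actions $R_U$ span every tangent space of $H$. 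Second, the subalgebra $\im\bigl(\OO(G)^{U}\to\OO(H)^{U}\bigr)$ of $\OO(H)^{U}$ separates the points of the quotient $H/U$: indeed $\OO(G)^{U}=\OO(G/U)$, the homogeneous space $G/U$ is quasi-affine (as $U$ is unipotent), so $\OO(G/U)$ separates its points, and $H/U$ embeds into $G/U$. Granting these, the interpolation arguments of~\cite{ArFlKa2013Flexible-varieties, Ka2020Extensions-of-isom} — in the relative form of the latter — upgrade the transitivity of $R_H$ to $m$-transitivity of $\Gamma$ for every $m$ and to the prescribability of tangent maps of elements of $\Gamma$ along finite subsets of $H$; in particular $\Gamma$ acts sufficiently transitively on $H$, proving the proposition.

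I expect this last step to be the main obstacle. The subtlety is that we do not have all replicas $f\,\partial$ with $f\in\ker\partial=\OO(H)^{U}$ at hand, but only those with $f$ in the image of $\OO(G)^{U}\to\OO(H)^{U}$, which is in general a proper subalgebra since $\GG_a$ is not reductive; one must check that this restricted stock of $\GG_a$-actions, alongside the simply transitive group $R_H$, still suffices to run the transitivity and jet-prescription induction. The replicas are genuinely indispensable: already for $H=\GG_a^2$, the group $R_H$ acts simply transitively but never $2$-transitively. Verifying that the available replica functions separate enough points and enough first-order data on $H$, and carrying this through the flexibility machinery, is where the real work lies.
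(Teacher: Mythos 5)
Your setup is correct and mirrors the paper's: reduce by $G$-equivariance to the fibre $H$ over $eH$, observe that right translations $R_H$ and, crucially, replicas $\varphi^{U,\chi}$ with $\chi\in\OO(G)^{U}$ lie in $\Aut^{\alg}_{G/H}(G)$ and restrict to replicas $f\,\partial$ on $H$ with $f=\chi|_H$ in the image of $\OO(G)^U\to\OO(H)^U$, and then invoke the flexibility machinery on $H$. You have also correctly located the crux of the matter: whether one has \emph{enough} replicas on $H$.

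But you leave that crux unresolved, and your conjectural way around it would not work as sketched. You write that $\im(\OO(G)^U\to\OO(H)^U)$ ``is in general a proper subalgebra since $\GG_a$ is not reductive'' and then try to make do with the weaker property that it separates points of $H/U$. Two problems. First, the premise is false under the standing hypothesis: the paper's Lemma~\ref{lem.Surjectivity_on_invariant_rings} proves that $\OO(G)^{U}\to\OO(H)^{U}$ is in fact \emph{surjective} whenever $H$ is characterless (and the paper's $\SL_2$/Borel example shows the characterless hypothesis is genuinely needed — the failure there is exactly a character of $H$ getting in the way, which is also the content of Example~\ref{exa.sufficiently_transitive_on_fibers_quotient:conterexampledim1}). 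Proving this surjectivity is the real work of the proposition: the paper's argument passes through the quasi-affineness of $G/U$, $H/U$, $G/H$, their affine hulls $(\cdot)_{\aff}$, and a finite Galois cover of a neighbourhood in $G/H$ over which $\rho\colon G/U\to G/H$ trivialises, in order to extend a given $f\in\OO(H)^{U}$ to an element of $\OO(G)^{U}$. Second, even if surjectivity did fail, ``separates points of $H/U$'' would not suffice to run the flexibility machinery: Theorem~\ref{thm.transitivity_and_describing_jets_general} requires a \emph{saturated} set of LNDs, i.e.\ closure under multiplication by \emph{all} of $\ker D$ (Definition~\ref{def.saturated}), and also delivers prescription of differentials, which point-separation of a subalgebra does not control. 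So the fallback route is not merely hard — it is pointed in the wrong direction, and the missing ingredient is precisely the surjectivity lemma. Once you have it, the proof closes exactly as you anticipate: take $\mathcal{N}=\{fD : f\in\ker D\}$ on $H$, note $G(\mathcal{N})$ acts transitively since $H$ is generated by its $1$-dimensional unipotent subgroups, saturate via Remark~\ref{rem.saturated_set_LNDs}, apply Theorem~\ref{thm.transitivity_and_describing_jets_general}, and use the surjectivity to extend every generator $fD$ of $G(\mathcal{N})$ to a $\GG_a$-action $qE$ on $G$ lying in $\Aut^{\alg}_{G/H}(G)$.
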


We note that the dimension condition $\dim H \geq 2$ in Proposition~\ref{prop.sufficiently_transitive_on_fibers_quotient} is necessary, as the following example shows.

\begin{example}\label{exa.sufficiently_transitive_on_fibers_quotient:conterexampledim1}
	Denote by $\pi \colon \SL_2 \to P \coloneqq \SL_2 / H$ the 
	algebraic quotient, where $H \subset \SL_2$ denotes
	the subgroup of unipotent upper triangular matrices. In this case each automorphism $\varphi$
	in $\Aut_{P}(\SL_2)$ acts as a translation on $\pi^{-1}(p) \simeq \AA^1$ for each $p \in P$. 
	In particular, for each $p\in P$ we have that $\Aut_{P}^{\alg}(\SL_2)$ does not act sufficiently transitively
	on $\pi^{-1}(p)$ (while the group 
	$\Aut^{\alg}(\pi^{-1}(p))$ acts sufficiently transitively on $\pi^{-1}(p)$ by
	Example~\ref{exa.Enough_transitivity_alg_group}).
	
	That $\varphi$ acts as a translation on each fiber of $\pi$ can be checked explicitly 
	by writing $\varphi$ with respect to the following para\-metrizations
	\[
	\AA^1 \setminus \{0\} \times \AA^1 \times \AA^1 \to \SL_2 \, , \quad (x, z, y) \mapsto
	\begin{pmatrix}
	x & y \\
	z & \frac{yz+1}{x}
	\end{pmatrix}
	\]
	and
	\[
	\AA^1 \times \AA^1 \setminus \{0\} \times \AA^1 \to \SL_2 \, , \quad (x, z, w) \mapsto
	\begin{pmatrix}
	x & \frac{xw-1}{z} \\
	z & w
	\end{pmatrix} \, .
	\]
	
\end{example}

For the proof of Proposition~\ref{prop.sufficiently_transitive_on_fibers_quotient}, we need some preparation. First, we recall a more general version of
Theorem~\ref{thm.transitivity_and_describing_jets} stated in terms of the following definition.

\begin{definition}[{\cite[Definition~2.1]{ArFlKa2013Flexible-varieties}}]
	\label{def.saturated}
	Let $X$ be an affine variety and let $\mathcal{N}$ be a set of locally nilpotent derivations
	on the coordinate ring
	$\OO(X)$ and let $G(\mathcal{N})$ be the subgroup of $\SAut(X)$ that is generated
	by all automorphisms of $X$ that are induced by the locally nilpotent derivations in $\mathcal{N}$.
	Then $\mathcal{N}$ is called saturated, if
	\begin{enumerate}[leftmargin=*, label=(\roman*)]
		\item \label{Def.Saturated_1} $\mathcal{N}$ is closed under conjugation by elements from $G(\mathcal{N})$ and
		\item \label{Def.Saturated_2} for each
		$D \in \mathcal{N}$ and each $f \in \ker(D)$ we have $f D \in \mathcal{N}$.
	\end{enumerate}
\end{definition}

\begin{remark}
	\label{rem.saturated_set_LNDs}
	If $X$ is an affine variety and if $\mathcal{N}$ is a set of locally nilpotent derivations on $\OO(X)$
	that satisfies~\ref{Def.Saturated_2} from Definition~\ref{def.saturated}, then there exists
	a bigger set $\mathcal{N}'$ of locally nilpotent derivations on $\OO(X)$ that is saturated and
	satisfies $G(\mathcal{N}') = G(\mathcal{N})$; see~\cite[Lemma~4.6]{FlKaZa2017Cancellation-for-s}.
\end{remark}

We come now to the promised generalization of Theorem~\ref{thm.transitivity_and_describing_jets}.

\begin{theorem}[{\cite[Theorem~2.2, Theorem~4.14 and Remark~4.16]{ArFlKa2013Flexible-varieties}}]
	\label{thm.transitivity_and_describing_jets_general}
	Let $X$ be an irreducible smooth affine variety of dimension at least $2$
	and let $\mathcal{N}$ be a saturated set of locally nilpotent derivations on $\OO(X)$.
	If the subgroup $G(\mathcal{N})$ of $\SAut(X)$ acts transitively on $X$, then:
	\begin{enumerate}[leftmargin=*]
		\item \label{thm.flexible1} $G(\mathcal{N})$ acts $m$-transitively on $X$ for each $m \geq 1$;
		\item \label{thm.flexible2} for every finite subset $Z \subset X$ and
		every collection $\beta_z \in \SL(T_z X)$, $z \in Z$, there is an automorphism
		$\varphi \in G(\mathcal{N})$ that fixes $Z$ pointwise such that
		the differential satisfies $\textrm{d}_z \varphi = \beta_z$ for all $z \in Z$.\qed
	\end{enumerate}
\end{theorem}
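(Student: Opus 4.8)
The plan is to reprove the theorem along the lines of Arzhantsev--Flenner--Kaliman \cite{ArFlKa2013Flexible-varieties}, carrying along the extra bookkeeping that only automorphisms built from derivations in the given saturated set $\mathcal{N}$ occur; the two saturation axioms of Definition~\ref{def.saturated} are exactly what makes this bookkeeping go through, so it is morally the same argument as the one behind Theorem~\ref{thm.transitivity_and_describing_jets}.

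For the first assertion ($m$-transitivity) I would begin with a purely group-theoretic reduction: if for every finite $S\subset X$ the pointwise stabilizer $G(\mathcal{N})_S=\{\varphi\in G(\mathcal{N}):\varphi|_S=\id\}$ acts transitively on $X\setminus S$, then $G(\mathcal{N})$ acts $m$-transitively for all $m$ (induction on $m$: move $a_1,\dots,a_{m-1}$ to $b_1,\dots,b_{m-1}$ by the inductive hypothesis, then move the image of $a_m$, which is a point of $X\setminus\{b_1,\dots,b_{m-1}\}$, to $b_m$ by an element of $G(\mathcal{N})_{\{b_1,\dots,b_{m-1}\}}$). So it suffices to prove that $G(\mathcal{N})_S$ acts transitively on $X\setminus S$ for every finite $S$. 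Since $X$ is irreducible of dimension $\ge 2$, the set $X\setminus S$ is connected, hence it is enough that every $G(\mathcal{N})_S$-orbit in $X\setminus S$ is open; and this follows once, for each $q\in X\setminus S$, the velocities at $q$ of one-parameter subgroups of $G(\mathcal{N})_S$ span $T_qX$ (a suitable finite composition of the corresponding flows then maps a parameter space onto a neighbourhood of $q$). The one-parameter subgroups available are $t\mapsto\exp(t\,fD)$ with $D\in\mathcal{N}$ and $f\in\ker D$ vanishing on $S$: indeed $fD\in\mathcal{N}$ by \ref{Def.Saturated_2} of Definition~\ref{def.saturated}, the automorphism $\exp(t\,fD)$ fixes $S$ pointwise because $fD$ vanishes on $S$, and its velocity at $q$ is $f(q)\,D(q)$. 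Thus the goal reduces to the equality $\Span_{\kk}\{\,D(q):D\in\mathcal{N},\ \exists\,f\in\ker D\ \text{with}\ f|_S=0\ne f(q)\,\}=T_qX$.

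Two ingredients enter this span statement. First, \emph{infinitesimal flexibility}: transitivity of $G(\mathcal{N})$ forces $\Span\{D(q):D\in\mathcal{N}\}=T_qX$ for all $q$. Writing $X=G(\mathcal{N})\cdot q$ as a union of images of orbit maps $(t_1,\dots,t_r)\mapsto\exp(t_rD_r)\cdots\exp(t_1D_1)(q)$ with $D_i\in\mathcal{N}$, some such map is dominant and hence submersive at a general parameter value; transporting this to $q$ by an element of $G(\mathcal{N})$ and using that conjugates of members of $\mathcal{N}$ again lie in $\mathcal{N}$ (axiom \ref{Def.Saturated_1} of Definition~\ref{def.saturated}) gives the span at $q$ itself. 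Second, and this is the crux, one must be able to impose in addition that $f\in\ker D$ vanishes on $S$ but not at $q$; this is delicate because $\ker D$ need not separate $q$ from $S$. \textbf{I expect this to be the main obstacle.} Following \cite{ArFlKa2013Flexible-varieties}, the fix is not to struggle with a single $D$ but to use that a saturated $\mathcal{N}$ with $G(\mathcal{N})$ transitive contains abundantly many derivations: after reducing to a two-dimensional coordinate picture, one plays a pair of locally nilpotent derivations off against each other via the kernel-multiplication (``replica'') operation of \ref{Def.Saturated_2} to produce such an $f$ for generic $D$ in a spanning family, genericity still leaving enough derivations to span $T_qX$. Making this precise is the technical heart of \cite{ArFlKa2013Flexible-varieties}, which I would import.

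For the second assertion (prescribing differentials), put $Z=\{z_1,\dots,z_k\}$. Since $\mathrm{d}_z(\varphi\circ\psi)=\mathrm{d}_z\varphi\circ\mathrm{d}_z\psi$ whenever $\varphi,\psi$ fix $z$, and since $\SL(T_zX)$ is generated by transvections $\id+v\otimes\ell$ with $\ell(v)=0$, it suffices to realize, for each index $i$ and each such transvection at $z_i$, an automorphism $\varphi\in G(\mathcal{N})$ fixing $Z$ pointwise with $\mathrm{d}_{z_i}\varphi$ that transvection and $\mathrm{d}_{z_j}\varphi=\id$ for $j\ne i$. The model computation: for $D\in\mathcal{N}$ and $f\in\ker D$ with $f|_Z=0$, the linearization at any $z$ of the vector field $fD$ is the endomorphism $w\mapsto(\mathrm{d}_zf)(w)\,D(z)$ of $T_zX$, which has rank $\le 1$ and is nilpotent because $(\mathrm{d}_zf)(D(z))=(Df)(z)=0$; hence $\mathrm{d}_z\exp(fD)=\id_{T_zX}+D(z)\otimes\mathrm{d}_zf$, a transvection of determinant $1$, which is precisely why only $\SL$, not $\GL$, can be hit. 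To produce $\id+v\otimes\ell$ at $z_i$ one takes $D\in\mathcal{N}$ with $D(z_i)$ a nonzero multiple of $v$ (available by the infinitesimal flexibility above) and $f\in\ker D$ vanishing on $Z$, with $\mathrm{d}_{z_i}f$ the matching multiple of $\ell$ and $f$ vanishing to second order at every $z_j$, $j\ne i$. The existence of such an $f$ inside $\ker D$ — a prescribed compatible $1$-jet at $z_i$ together with second-order vanishing at the other $z_j$ — is once more a separation/interpolation statement in $\ker D$, settled by the same machinery that was the obstacle above. Composing these automorphisms over $i$ and over a factorization of each $\beta_{z_i}$ into transvections, the contributions at the remaining points being trivial at each step, yields the required $\varphi\in G(\mathcal{N})$.
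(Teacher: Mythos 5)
The paper offers no proof of this statement at all: the \verb|\qed| sits directly after the enumerated items because the result is imported wholesale from \cite[Theorem~2.2, Theorem~4.14 and Remark~4.16]{ArFlKa2013Flexible-varieties}. So there is no in-paper argument to measure yours against; the relevant comparison is with the cited source. Against that source, your outline is a faithful reconstruction of the architecture: the reduction of $m$-transitivity to transitivity of pointwise stabilizers $G(\mathcal{N})_S$ on $X\setminus S$, the orbit-openness criterion via velocities $f(q)D(q)$ of replicas $fD\in\mathcal{N}$ (with condition~\ref{Def.Saturated_2} of Definition~\ref{def.saturated} supplying the replicas and condition~\ref{Def.Saturated_1} allowing transport by conjugation), and, for part~(2), the generation of $\SL(T_zX)$ by transvections together with the jet computation $\mathrm{d}_z\exp(fD)=\id_{T_zX}+D(z)\otimes\mathrm{d}_zf$ are exactly the ingredients of the cited proof, and the computation itself is correct ($f(z)=0$ makes the linearization $w\mapsto(\mathrm{d}_zf)(w)D(z)$, which squares to zero since $(Df)(z)=0$).

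The gap is that at both places where the argument becomes nontrivial --- producing $f\in\ker D$ vanishing on $S$ but not at $q$ for enough $D$ to span $T_qX$, and producing $f\in\ker D$ with a prescribed compatible $1$-jet at $z_i$ and second-order vanishing at the other points of $Z$ --- you resolve the difficulty by writing that you would import it from \cite{ArFlKa2013Flexible-varieties}. Those interpolation statements inside $\ker D$ \emph{are} the content of Theorems~2.2 and~4.14 there, i.e.\ they are the statement you were asked to prove; importing them makes the proposal circular as a standalone proof. This is, admittedly, exactly what the paper does one level up (it cites the whole theorem), so as a justification for using the result your text is adequate; as a proof it is an accurate but incomplete outline, with everything outside the two flagged steps sound.
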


\begin{lemma}
	\label{lem.Surjectivity_on_invariant_rings}
	Let $G$ be an algebraic groups and let $U \subseteq H \subseteq G$ be closed subgroups
	such that $H$ is characterless and $U$ is unipotent. 
	Then, the restriction map $\OO(G) \to \OO(H)$, $q \mapsto q|_H$
	induces a surjection on the $U$-invariant rings $\OO(G)^U \to \OO(H)^U$ where the $U$-actions
	are induced by right multiplication.
\end{lemma}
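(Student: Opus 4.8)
The plan is to reduce the statement to a standard fact: if $U$ is a unipotent group acting on an affine variety and $W$ is a $U$-stable closed subvariety, then the restriction map on $U$-invariant functions is surjective. Concretely, I would proceed as follows. First, recall that $H$ is a closed subgroup of $G$, so the restriction map $\OO(G)\to\OO(H)$ is surjective (as $H$ is a closed subscheme of the affine variety $G$, and we are in characteristic zero so $H$ is reduced). Both rings carry a $U$-action induced by right multiplication, and the restriction map is $U$-equivariant because $U\subseteq H$ and right multiplication by $U$ preserves $H$. So we are left with showing: given a surjection $A\twoheadrightarrow B$ of $U$-algebras with $U$ unipotent, the induced map $A^U\to B^U$ is surjective.

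Next I would invoke that $U$ is unipotent, hence (in characteristic zero) it is a $\GG_a$-iterated extension, i.e.\ it admits a filtration by normal closed subgroups with successive quotients isomorphic to $\GG_a$. By an induction on $\dim U$ using this filtration, it suffices to treat the case $U\simeq\GG_a$. For $U=\GG_a$, the $\GG_a$-action on a finitely generated algebra corresponds to a locally nilpotent derivation $D$, and $A^U=\ker(D_A)$, $B^U=\ker(D_B)$. The key tool is that for a locally nilpotent $\GG_a$-action one has a Reynolds-type operator / slice argument, or more simply: the higher group cohomology of $\GG_a$ (equivalently of its Lie algebra acting locally nilpotently) vanishes on any module, so $H^1(\GG_a, \ker(A\to B))=0$, which gives the surjectivity $A^{\GG_a}\to B^{\GG_a}$ directly from the long exact sequence in group cohomology applied to $0\to I\to A\to B\to 0$. (Here $I=\ker(A\to B)$ is a $\GG_a$-stable ideal.) The vanishing $H^1(\GG_a,M)=0$ for every rational $\GG_a$-module $M$ is classical: $\GG_a$ acting rationally and locally finitely is an increasing union of unipotent finite-dimensional actions, and one reduces to the Lie-algebra statement $H^1(\g,M)=0$ for $\g$ one-dimensional abelian acting nilpotently, which is elementary.

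For the inductive step from $\GG_a$ to general unipotent $U$: write $U = U'\rtimes (\text{or extension by}) \GG_a$ with $U'\trianglelefteq U$ normal and $U/U'\simeq\GG_a$. Given $b\in B^U$, by the $\GG_a$-case (applied to the $\GG_a=U/U'$-action on $A^{U'}\to B^{U'}$, noting $A^{U'}\to B^{U'}$ is surjective by the inductive hypothesis and both carry a residual $U/U'$-action since $U'$ is normal) we lift $b$ to some $a\in (A^{U'})^{U/U'}=A^U$. This completes the induction.

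I expect the main obstacle to be the bookkeeping around the residual action: one must check that the $U/U'$-action on $A^{U'}$ and $B^{U'}$ is well-defined and algebraic, that the restriction map $A^{U'}\to B^{U'}$ is $U/U'$-equivariant, and that $(A^{U'})^{U/U'}=A^U$ — all standard but needing the normality of $U'$ in $U$ and rationality of the actions. The cohomological input ($H^1(\GG_a,-)=0$ on rational modules) is the conceptual heart, but it is well-known; the potential pitfall is only in ensuring all modules in sight are rational (locally finite) $\GG_a$-modules, which holds since $\OO(G)$ and $\OO(H)$ are rational $G$- and $H$-modules and we restrict the action to $U$.
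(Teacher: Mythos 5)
Your proposal has a fatal gap: the cohomological input you rely on is false, and the argument never uses the hypothesis that $H$ is characterless---a hypothesis the paper shows by explicit example (the Borel subgroup of $\SL_2$) cannot be dropped. For a rational $\GG_a$-module $M$, corresponding to a locally nilpotent operator $D$ on $M$, rational cohomology agrees in characteristic zero with Lie algebra cohomology of the one-dimensional abelian Lie algebra, which gives $H^1(\GG_a,M)\simeq M/D(M)$. This is \emph{not} zero; indeed it is nonzero whenever $M\neq 0$ (for the trivial module $k$ one has $H^1(\GG_a,k)\simeq k$). Consequently, the intermediate claim that a $\GG_a$-equivariant surjection of algebras $A\twoheadrightarrow B$ always induces a surjection $A^{\GG_a}\to B^{\GG_a}$ is wrong. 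A concrete counterexample: $A=k[x,y]$ with locally nilpotent derivation $D=x\,\partial_y$, and the $\GG_a$-stable ideal $I=(x)$. Then $A^{\GG_a}=k[x]$, $B=A/I\simeq k[y]$ carries the trivial action so $B^{\GG_a}=k[y]$, and the induced map $k[x]\to k[y]$ sends $x\mapsto 0$, which is far from surjective. The paper's own example (Borel subgroup $H\subset\SL_2$, $U\subset H$ unipotent) is exactly of this form and is presented immediately after the lemma precisely to show the characterless hypothesis on $H$ is needed; any proof that does not invoke it must be wrong.

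The actual argument in the paper is of a quite different nature. It exploits the quasi-affineness of $G/U$, $H/U$, $G/H$ (which is where ``$H$ characterless'' and ``$U$ unipotent'' enter), passes to the canonical affinizations $(G/U)_{\aff}$, $(H/U)_{\aff}$, $(G/H)_{\aff}$, locally trivializes the $H/U$-bundle $G/U\to G/H$ after a finite Galois base change $V'\to V$, extends the given invariant $f\in\OO(H)^U=\OO(H/U)$ first to $V'\times (H/U)_{\aff}$ (possible because that is an affine scheme), averages over the Galois group $\Gamma$ to descend, and finally extends the resulting function from the closed subscheme $\rho_{\aff}^{-1}(H)$ of the affine scheme $(G/U)_{\aff}$ to obtain the desired element of $\OO(G)^U$. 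The crux is that one can extend functions on closed subschemes of affine schemes, and the affinizations are affine precisely because of the characterless/unipotent hypotheses. There is no cohomological vanishing at play; the difficulty is exactly the failure of such vanishing, circumvented geometrically.
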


The following example shows, that the assumption that $H$ is characterless is necessary:

\begin{example}
	Let $G = \SL_2$, $H$ the subgroup of upper triangular matrices and let
	$U \subseteq H$ be the subgroup with $1$ on the diagonal. Denote the coordinates
	on $\SL_2$ by
	\[
		\begin{pmatrix}
			x & y \\
			z & w
		\end{pmatrix}
	\]
	Then $\OO(G) \to \OO(H)$ identifies with the homomorphism
	\[
		\kk[x, y, z, w] / (xw-yz-1)
		\xrightarrow{x \mapsto x \, , \ 
			y \mapsto y \, , \
			z \mapsto 0 \, , \
			w \mapsto w}
		\kk[x, y, w] / (xw - 1)
	\]
	and thus $\OO(G)^U \to \OO(H)^U$ identifies with  the non-surjective 
	homomorphism
	\[
		\kk[x, z]
		\xrightarrow{
			x \mapsto x \, , \ 
			z \mapsto 0}
		\kk[x, w] / (xw - 1) \simeq \kk[x, x^{-1}]  \, .
	\]
	
\end{example}

We first provide the proof of Proposition~\ref{prop.sufficiently_transitive_on_fibers_quotient} using Lemma~\ref{lem.Surjectivity_on_invariant_rings}. Afterwards, we provide the setup and the proof of Lemma~\ref{lem.Surjectivity_on_invariant_rings}.

\begin{proof}[Proof of Proposition~\ref{prop.sufficiently_transitive_on_fibers_quotient}]	
	We have to show that $\Aut^{\alg}_{G/H}(G)$ acts sufficiently transitively on each fiber of
	$\pi \colon G \to G/H$. Since $\pi$ is $G$-equivariant with respect to left multiplication by $G$,
	it is enough to show that $\Aut^{\alg}_{G/H}(G)$ acts sufficiently transitively on the closed subset 
	$H$ of $G$. Let
	\[
		\mathcal{N} = \Bigset{fD}{
									\begin{array}{l}
										\textrm{$D$ is a locally nilpotent derivation of $\OO(H)$ induced} \\
										\textrm{by right multiplication of a one-dimensional} \\ 
										\textrm{unipotent subgroup of $H$ and $f \in \ker(D)$}
									\end{array}
								} \, .
	\]
	Since $H$ is connected and characterless, $H$ is spanned by 
	all its one-dimen\-sion\-al unipotent subgroups; 
	see~\cite[Lemma~1.1]{Po2011On-the-Makar-Liman} Hence, the subgroup $G(\mathcal{N})$ 
	of $\SAut(H)$ generated by $\mathcal{N}$ acts transitively on $H$. By
	Remark~\ref{rem.saturated_set_LNDs}, there exists a saturated set of locally nilpotent derivations $\mathcal{N'}$ with $G(\mathcal{N})=G(\mathcal{N}')$; hence, Theorem~\ref{thm.transitivity_and_describing_jets_general}
	implies that $G(\mathcal{N})$ acts sufficiently transitively on $H$.
	Therefore, it suffices to show that every element of $G(\mathcal{N})$ can be extended to an 
	automorphism in $\Aut^{\alg}_{G/H}(G)$.
	
	Let $D \in \mathcal{N}$, $f \in \ker(D)$ and denote by
	$U \subseteq H$ the corresponding one-dimensional unipotent subgroup. 
	Note that $\ker(D)$ is equal to the invariant ring $\OO(H)^{U}$.
	By Lemma~\ref{lem.Surjectivity_on_invariant_rings}, there exists $q \in \OO(G)^U$
	such that  $q|_H = f$.
	
	Denote by $E$ the locally nilpotent derivation of $\OO(G)$
	induced by right multiplication of $U$ on $G$. Since $q \in \OO(G)^U = \ker(E)$,
	the derivation $q E$ of $\OO(G)$ is locally nilpotent. Since $U$ is a subgroup of $H$,
	the fibers $gH$, $g \in G$ of $\pi$ are stable under the induced $\GG_a$-action 
	of $q E$ and thus this $\GG_a$-action gives an algebraic subgroup of $\Aut^{\alg}_{G/H}(G)$.
	Note that we have a commutative diagram of the following form
	\[
	\xymatrix{
		\OO(G) \ar[r]^-{qE} \ar@{->>}[d] & \OO(G) \ar@{->>}[d] \\
		\OO(H) \ar[r]^-{fD} & \OO(H)
	}
	\] 
	where the vertical arrows are induced by the embedding $H \subseteq G$. Therefore
	we found our desired extension of the $\GG_a$-action induced by $fD$.
\end{proof}

\begin{proof}[Proof of Lemma~\ref{lem.Surjectivity_on_invariant_rings}]
	Since $H$ and $U$ are characterless, the quotients $G/U$, $H/U$ and $G/H$ are quasi-affine;
	see \cite[Example 3.10]{Ti2011Homogeneous-spaces}. Hence, the canonical morphisms
	\begin{itemize}[leftmargin=*]
		\item $\iota_{G/U} \colon G/U \to (G/U)_{\aff} \coloneqq \Spec(\OO(G)^U)$
		\item $\iota_{H/U} \colon H/U \to (H/U)_{\aff} \coloneqq \Spec(\OO(H)^U)$
		\item $\iota_{G/H} \colon G/H \to (G/H)_{\aff} \coloneqq \Spec(\OO(G)^H)$
	\end{itemize}
	are dominant open immersions;
	see \cite[\S5, Proposition 5.1.2]{Gr1961Elements-de-geomet-II}. 
	The targets of these open immersions are affine schemes that are, in general, 
	not of finite type over $\kk$.
	There are unique $G$-actions
	on $(G/U)_{\aff}$ and $(G/H)_{\aff}$ such that $\iota_{G/U}$ and $\iota_{G/H}$ are $G$-equivariant
	and a unique $H$-action on $(H/U)_{\aff}$ such that $\iota_{H/U}$ is $H$-equivariant; see \cite[Lemma~5]{KrReSa2019Is-the-Affine-Spac}. Moreover, the canonical $G$-equivariant morphism
	$\rho \colon G/U \to G/H$ induces a unique $G$-equivariant morphism 
	\[
		\rho_{\aff} \colon (G/U)_{\aff} \to (G/H)_{\aff}
	\]
	such that the following diagram commutes
	\[
		\xymatrix{
			G/U \ar[r]^-{\iota_{G/U}} \ar[d]^-{\rho} & (G/U)_{\aff} \ar[d]^-{\rho_{\aff}} \\
			G/H \ar[r]^-{\iota_{G/H}} & (G/H)_{\aff} \, .
		}
	\]
	
	Let $V \subseteq G/H$ be an open affine neighbourhood 
	of $q \coloneqq H \in G/H$. We may assume that there is an
	$s \in \OO(G/H) = \OO(G)^H$ 
	such that $V = (G/H)_s$, i.e.~$V$ consists
	of all points in $G/H$ where $s$ does not vanish. Further we may assume that the extension
	$s_{{\aff}} \colon (G/H)_{\aff} \to \AA^1$ of $s \colon G/H \to \AA^1$ via $\iota_{G/H}$
	vanishes on the complement of $\iota_{G/H}(G/H)$
	in $(G/H)_{\aff}$. Hence, $\iota_{G/H}(V) = ((G/H)_{\aff})_{s_{{\aff}}}$ and therefore
	\[
		\iota_{G/U}(\rho^{-1}(V)) \subseteq \rho_{\aff}^{-1}(\iota_{G/H}(V))
		= \rho_{\aff}^{-1}(((G/H)_{\aff})_{s_{{\aff}}}) = ((G/U)_{\aff})_{s_{{\aff}} \circ \rho_{\aff}} \, .
	\]
	By \cite[\href{https://stacks.math.columbia.edu/tag/01P7}{Lemma 01P7}]{stacks-project} we have $(\OO(G/U))_{s \circ \rho} = \OO((G/U)_{s \circ \rho})$ and thus
	\[
		((G/U)_{\aff})_{s_{{\aff}} \circ \rho_{\aff}} = ((G/U)_{s \circ \rho})_{\aff} = (\rho^{-1}(V))_{\aff} \, .
	\]
	Hence, we have the following commutative diagram
	\begin{equation}
	\label{eq.rho_inverse_of_V_aff}
	\tag{$\triangle$}
	\begin{gathered}
	\xymatrix{			
		\rho^{-1}(V) \ar@{=}[r]& (G/U)_{s \circ \rho} \ar[d]_-{\textrm{open}} \ar[r] & 
		((G/U)_{\aff})_{s_{{\aff}} \circ \rho_{\aff}} \ar[d]^-{\textrm{open}}
		\ar@{=}[r]& (\rho^{-1}(V))_{\aff} \\
		& G/U \ar[r]^-{\iota_{G/U}} & (G/U)_{\aff} \, .
	}
	\end{gathered}
	\end{equation}
	Furthermore, we may shrink $V$ such that there exists a finite Galois covering $\tau \colon V' \to V$
	for some finite group $\Gamma$ (i.e.~$\tau$ is a geometric
	quotient for a free $\Gamma$-action on $V'$) such that the pull-back map 
	$\rho'$ in the following pull-back diagram
	\[
	\xymatrix{
		V' \times_V \rho^{-1}(V) \ar[r]^-{\tau'} \ar[d]^-{\rho'} & \rho^{-1}(V) \ar[d]^-{\rho|_{\rho^{-1}(V)}} \\
		V' \ar[r]^-{\tau} & V
	}
	\]
	is a trivial $H/U$-bundle;
	see~\cite[\S1.5 and Proposition~3]{Se1958Espaces-fibres-alg}. In particular, 
	there exists an isomorphism 
	$\varphi \colon V' \times (H/U) \to V' \times_V \rho^{-1}(V)$ such that 
	$\rho' \circ \varphi \colon V' \times (H/U) \to V'$ is the projection onto the first factor. 
	As $\tau \colon V' \to V$ is finite and $V$ is affine, $V'$ is affine as well.
	Note further, that the $\Gamma$-action on $V'$ induces a natural free $\Gamma$-action
	on $V' \times_V \rho^{-1}(V)$ such that $\rho'$ is $\Gamma$-equivariant
	and $\tau'$ is a geometric quotient for this $\Gamma$-action.
	Choose $q' \in V'$ such that $\tau(q') = q$.
	
	Let $f \in \OO(H/U) = \OO(H)^U$. The goal is to extend $f$ to an element in $\OO(G)^U$.
	Consider the morphism
	\[
		f' \colon \Gamma q' \times (H/U) \xrightarrow{\varphi |_{\Gamma q' \times (H/U)}} 
		(\rho')^{-1}(\Gamma q') 
		\xrightarrow{\tau'|_{(\rho')^{-1}(\Gamma q')}} \rho^{-1}(q) = H/U \xrightarrow{f} \AA^1 \, . 
	\]
	Then the extension $f'_{\aff} \colon \Gamma q' \times (H/U)_{\aff} = (\Gamma q' \times (H/U))_{\aff} \to \AA^1$ of $f'$ can be extended to a morphism 
	\[
		\label{eq.Extension}
		\tag{$\ast$}
		V' \times (H/U)_{\aff} \to \AA^1 \, ,
	\]
	as $\Gamma q' \times (H/U)_{\aff}$ is a closed subscheme in the affine scheme $V' \times (H/U)_{\aff}$.
	Let $F' \colon V' \times (H/U) \to \AA^1$ be the composition of
	$\id_{V'} \times \iota_{H/U}$ with the morphism~\eqref{eq.Extension}.
	By construction we have that
	$F' |_{\Gamma q' \times (H/U)} = f'$. Now, let
	\[
		G' \coloneqq  \frac{1}{|\Gamma|}\sum_{\gamma \in \Gamma} \gamma \cdot (F' \circ \varphi^{-1})
		\colon V' \times_V \rho^{-1}(V) \to \AA^1
	\]
	be the average of $F' \circ \varphi^{-1}$ over $\Gamma$. Since 
	$f' \circ (\varphi|_{(\rho')^{-1}(\Gamma q')})^{-1} = f \circ \tau'|_{(\rho')^{-1}(\Gamma q')}$ is $\Gamma$-invariant, it follows that
	\[
		\label{eq.Extension2}
		\tag{$\ast\ast$}
		f \circ \tau'|_{(\rho')^{-1}(\Gamma q')} = G' |_{(\rho')^{-1}(\Gamma q')} \, .
	\]
	Since $G'$ is $\Gamma$-invariant and since $\tau'$ is a geometric quotient for the 
	$\Gamma$-action on $V' \times_V \rho^{-1}(V)$, 
	there exists a morphism $F \colon \rho^{-1}(V) \to \AA^1$
	such that $G' =  F \circ \tau'$. {Using~\eqref{eq.Extension2}, we find} $F |_{\rho^{-1}(q)} = f$.
	{The commutative diagramm~\eqref{eq.rho_inverse_of_V_aff} implies that} $F$ extends to a morphism
	$F_{\aff} \colon (\rho^{-1}(V))_{\aff} = \rho_{\aff}^{-1}(\iota_{G/H}(V))  \to \AA^1$ via $\iota_{G/U}$, 
	i.e.~
	\[
		F_{\aff}(\iota_{G/U}(gU)) = F(gU) \quad  \textrm{for all $gU \in \rho^{-1}(V)$} \, .
	\]
	Hence the restriction $F_{\aff} |_{\rho_{\aff}^{-1}(H)} \colon  \rho_{\aff}^{-1}(H) \to \AA^1$ satisfies
	\[
		F_{\aff}(\iota_{G/U}(hU)) = F(hU) = f(hU) \quad \textrm{for all $h \in H$} \, .
	\]
	Since $\rho_{\aff}^{-1}(H)$ is a closed subscheme of the affine scheme $(G/U)_{\aff}$,
	there exists an extension of $F_{\aff} |_{\rho_{\aff}^{-1}(H)}$ to a morphism $(G/U)_{\aff} \to \AA^1$.
	This is our desired element in $\OO(G)^U$.
\end{proof}

\subsection{The proof of Theorem~\ref{thm.Product}}
\label{subsec:Proof_embedd_into_tot_space_principal_bundle}
Throughout  this subsection we use the following notation.

\medskip

\textbf{Notation.} Let $f \colon X \to Z$ be a morphism of varieties, 
then we denote by $X_Z^{(2)}$
the complement of the diagonal in the fiber product $X \times_Z X$ and we denote by
$(\ker \textrm{d} f)^\circ$ the complement of the zero section in the kernel of the differential
$\textrm{d}f \colon TX \to TZ$.

\medskip

We start with the following rather technical result that will turn out to be the key.

\begin{prop}
	\label{prop.key}
	Let $\pi \colon X \to P$,  $\rho \colon X \to Q$ be smooth morphisms
	of smooth irreducible varieties such that
	there exists a morphism $\eta \colon Q \to P$ with $\pi = \eta \circ \rho$. 
	Assume that $\Aut_P^{\alg}(X)$ acts sufficiently transitively on each fiber of $\pi$.
	
	If $Z$ is a smooth variety and $f \colon Z \to X$ is a morphism such that each non-empty fiber of $\pi \circ f \colon Z \to P$ has the same dimension $k \geq 0$, then
	there exists a $\varphi \in \Aut_P^{\alg}(X)$ with
	\begin{align*}
		\label{enough.inj} 
		\tag{A}
		\dim((\varphi \circ f) \times (\varphi \circ f))^{-1}(X_Q^{(2)}) &\leq \dim Z + \dim P - \dim Q + k \\
		\label{enough.imm}
		\tag{B}
		\dim (\textrm{d} (\varphi \circ f))^{-1}(\ker \textrm{d} \rho)^\circ 
		&\leq \dim Z + \dim P - \dim Q + k \, .
	\end{align*}
\end{prop}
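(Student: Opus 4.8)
The plan is to produce $\varphi$ by applying parametric transversality (the "big enough" machinery of Proposition~\ref{prop.Big_enough_for_smoothness} and Proposition~\ref{prop.transitivity_and_big_enough_for_smoothness}) to the two diagonal-type loci separately, and then intersect the two open sets of good parameters. First I would observe that since $\Aut_P^{\alg}(X)$ acts sufficiently transitively on each fiber of $\pi$, in particular it acts transitively on each fiber, so Proposition~\ref{prop.transitivity_and_big_enough_for_smoothness} (applied with $\kappa = \pi$ and with $\mathcal{G}$ the family of connected algebraic subgroups of $\Aut_P^{\alg}(X)$, which generate it and may be taken closed under conjugation) yields a tuple $\mathcal{H} = (H_1,\dots,H_s)$ of connected algebraic subgroups of $\Aut_P(X)$ that is big enough for smoothness, hence by Proposition~\ref{prop.Big_enough_for_smoothness}\eqref{prop.Big_enough_for_smoothness1} big enough for proper intersection. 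By Proposition~\ref{prop.Big_enough_for_smoothness}\eqref{prop.Big_enough_for_smoothness2} we may enlarge the tuple on either side at will without losing this property; this flexibility will be used to treat \eqref{enough.inj} and \eqref{enough.imm} simultaneously.

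For \eqref{enough.inj}, the idea is to compare the fiber products over $Q$ and over $P$. Consider the morphism $f\times f\colon Z\times_P Z \to X\times_P X$ and note that $X_Q^{(2)}$ sits inside $X\times_P X$ (since $\rho$ factors $\pi$, the fiber product $X\times_Q X$ is a subvariety of $X\times_P X$, and $X_Q^{(2)}$ is an open subvariety of it removing the diagonal of $X\times_Q X$). Applying the defining property \eqref{Eq.enough for proper intersection} of "big enough for proper intersection" to the morphism $f\times f\colon Z\times_P Z\to X\times_P X$, the ambient smooth morphism $X\times_P X\to P$ — wait, more carefully, I would apply it in the form given: with $Y=Z$, with the subvariety being $X_Q^{(2)}$ pushed into $X\times X$, and with the ambient smooth morphism over $P$ replaced by the doubled morphism. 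Concretely: run the transversality statement for the smooth morphism $\pi\times\pi\colon X\times X\to P\times P$ restricted appropriately, so that applying $h_1\cdots h_s$ diagonally to $X_Q^{(2)}$ and intersecting with the image of $Z\times_P Z$ gives the bound $\dim \bigl((\varphi f)\times(\varphi f)\bigr)^{-1}(X_Q^{(2)}) \le \dim(Z\times_P Z) + \dim P - \dim(X\times_P X) + (\dim X - \dim Q)$, using that $X_Q^{(2)}$ has dimension $2\dim X - \dim Q$ inside $X\times_P X$ which has dimension $2\dim X-\dim P$. Since all nonempty fibers of $\pi\circ f$ have dimension $k$, we have $\dim(Z\times_P Z)\le \dim Z + k$, and the arithmetic then collapses to $\dim Z + \dim P - \dim Q + k$ as required.

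For \eqref{enough.imm}, I would run the analogous argument on tangent bundles. The locus $(\ker\mathrm{d}\rho)^\circ$ is a locally closed subvariety of $TX$, and $\mathrm{d}\rho\colon TX\to TQ$ over the smooth morphism $\rho$ is itself (the total space of) a smooth morphism; since $\pi=\eta\circ\rho$, we have $\ker\mathrm{d}\rho\subseteq\ker\mathrm{d}\pi\subseteq TX$, and $\Aut_P^{\alg}(X)$ acts on $TX$ preserving $\ker\mathrm{d}\pi$ fiberwise (because $\pi\circ\varphi=\pi$). So I would apply the "big enough for proper intersection" property on the tangent level: to the morphism $\mathrm{d}f\colon TZ\to TX$ and the subvariety $(\ker\mathrm{d}\rho)^\circ$, with the ambient smooth morphism being $TX\to TP$ (or $\mathrm{d}\pi$), getting $\dim(\mathrm{d}(\varphi f))^{-1}(\ker\mathrm{d}\rho)^\circ \le \dim(TZ\times_{TP}\ker\mathrm{d}\pi\text{-ish}) + \dim TP - \dim TX + \cdots$; here the dimension count uses $\dim(\ker\mathrm{d}\rho) = \dim X + (\dim X - \dim Q)$, $\dim TX = 2\dim X$, and the fact that on the fiber level the relevant transversality for the tangent-bundle action follows from the sufficiently-transitive hypothesis (the action on $(TX)^\circ$ restricted to a fiber being transitive). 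The fiber-dimension hypothesis on $\pi\circ f$ again feeds in to bound $\dim TZ$ relative terms, yielding exactly \eqref{enough.imm}. The one technical point requiring care — and which I expect to be the main obstacle — is verifying that "big enough for smoothness/proper intersection" for the $\Aut_P^{\alg}(X)$-action on $X$ over $P$ implies the corresponding statements for the induced actions on $X\times_P X$ over $P$ and on $TX$ over $P$ (or over $TP$); this is where one must either re-derive the $\Phi_{\mathcal H}$-smoothness for the doubled/tangent morphisms from the fiberwise sufficient transitivity via Proposition~\ref{prop.transitivity_and_big_enough_for_smoothness} directly, rather than formally deducing it. Finally, intersecting the two dense opens of good parameters (possible after concatenating tuples, using Proposition~\ref{prop.Big_enough_for_smoothness}\eqref{prop.Big_enough_for_smoothness2}) produces a single $\varphi = h_1\cdots h_t$ satisfying both \eqref{enough.inj} and \eqref{enough.imm}, and $\varphi\in\Aut_P^{\alg}(X)$ since each $h_i$ lies in a connected algebraic subgroup of $\Aut_P(X)$.
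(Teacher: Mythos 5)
Your plan is essentially the paper's proof: apply Proposition~\ref{prop.transitivity_and_big_enough_for_smoothness} directly to the induced actions on $\kappa\colon X_P^{(2)}\to P$ (via $\varphi\mapsto\varphi\times_P\varphi$, transitive on fibers by 2-transitivity) and on $\kappa'\colon(\ker\mathrm{d}\pi)^\circ\to P$ (via the differential action, transitive on fibers since $\Aut_P^{\alg}(X)$ is transitive on $(T(\text{fiber}))^\circ$), concatenate the resulting tuples with Proposition~\ref{prop.Big_enough_for_smoothness}\eqref{prop.Big_enough_for_smoothness2}, and read off both bounds from one dense open of good parameters---you correctly identify that one should re-derive big-enough-for-smoothness in the doubled and tangent settings rather than try to transfer it from the action on $X$, which is exactly what the paper does (it never even invokes Proposition~\ref{prop.transitivity_and_big_enough_for_smoothness} for $\pi$ itself). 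Two bookkeeping points to fix in a full write-up: in your displayed bound for~\eqref{enough.inj} the correction term should be $2\dim X-\dim Q-\dim P$ (the relative dimension of $X_Q^{(2)}\to P$), not $\dim X-\dim Q$, which is what makes the arithmetic collapse to $\dim Z+\dim P-\dim Q+k$; and for~\eqref{enough.imm} you need the estimate $\dim\ker\mathrm{d}(\pi\circ f)\le\dim Z+k$, which the paper proves separately as Lemma~\ref{lem.estimate_kernel_differential}.
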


For the proof of the estimate~\eqref{enough.imm} in this key proposition, we need the following estimate:

\begin{lemma}
	\label{lem.estimate_kernel_differential}
	Let $f \colon X \to Y$ be a morphism of varieties
	such that $X$ is smooth and denote by $k$ the maximal dimension among the fibers of  $f$. 
	
	Then the kernel of
	the differential $\textrm{d} f \colon TX \to TY$, i.e.~the closed subvariety 
	\[
		\ker(\textrm{d} f)\coloneqq \bigcup_{x\in X}\ker(\textrm{d}_x f)\subseteq TX \, ,
	\] 
	satisfies 
	$\dim \ker(\textrm{d} f) \leq \dim X + k$.
\end{lemma}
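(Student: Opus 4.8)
The plan is to stratify $X$ by the rank of the differential and bound the dimension of the kernel bundle stratum by stratum. For an integer $r \geq 0$, let $X_r \subseteq X$ be the locally closed subset of points $x \in X$ at which $\operatorname{rank}(\mathrm{d}_x f) = r$; then $\ker(\mathrm{d} f)$ is the (disjoint) union over $r$ of its restrictions $\ker(\mathrm{d} f)|_{X_r}$, and over $X_r$ the kernel is a vector bundle of rank $\dim X - r$ (after passing to irreducible components of $X_r$, which we may, since $X$ is smooth hence its locally closed subsets have finitely many components). Hence $\dim \ker(\mathrm{d} f)|_{X_r} = \dim X_r + \dim X - r$, and it suffices to show $\dim X_r \leq r + k$ for every $r$, where $k$ is the maximal fiber dimension of $f$.

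The estimate $\dim X_r \leq r + k$ is the main point. First I would reduce to the case where $X$ is irreducible and $X_r$ is dense in $X$ (replace $X$ by the closure of an irreducible component of $X_r$ of maximal dimension, noting that fibers of the restriction of $f$ only get smaller, so $k$ does not increase; and $X$ stays... well, $X$ need no longer be smooth, but that is harmless since we now only need a dimension count, not the bundle structure). On such an $X$, the generic rank of $\mathrm{d}f$ is $r$, so a general fiber of $f$ has dimension $\dim X - r$ by generic smoothness (we are in characteristic zero): indeed, over a dense open $V \subseteq f(X)$ the morphism $f \colon f^{-1}(V) \to V$ is smooth of relative dimension $\dim X - \dim \overline{f(X)}$, and $\dim \overline{f(X)} = r$ because the generic rank of the differential of a dominant morphism of varieties over a characteristic-zero field equals the dimension of the target. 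Therefore $\dim X - r = \dim X - \dim\overline{f(X)} \leq k$, which rearranges to $\dim X = \dim X_r \leq r + k$, as wanted.

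Combining, $\dim \ker(\mathrm{d} f) = \max_r \big(\dim X_r + \dim X - r\big) \leq \max_r \big((r+k) + \dim X - r\big) = \dim X + k$. The only subtlety to watch is the bookkeeping when $X$ is reducible and when $f(X)$ is not closed — both are handled by passing to irreducible components and to the closure of the image — together with the fact that I invoke generic smoothness, which is exactly where characteristic zero (standing assumption in the paper) is used. I expect the rank-stratification and the identification of the generic fiber dimension with $\dim X - (\text{generic rank of }\mathrm{d}f)$ to be the crux; everything else is routine dimension arithmetic.
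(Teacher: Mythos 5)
Your proof is correct in outcome, and it takes a route that is similar in spirit to the paper's but uses a genuinely different stratification. The paper invokes Hartshorne's generic smoothness lemma to partition $X$ into locally closed smooth strata $X_i$ on which $f|_{X_i}\colon X_i\to\overline{f(X_i)}$ is \emph{smooth}, and then deduces $\dim\ker(\mathrm{d}_x f_i)\leq k$ directly from the surjectivity of $\mathrm{d}_x f_i$ together with the fiber-dimension bound. You instead stratify by the \emph{rank} of $\mathrm{d}_x f$, and use generic smoothness a posteriori to bound each $\dim X_r$. Both lead to the same arithmetic $\dim X_i + (\dim X - \dim X_i) + k = \dim X + k$ vs.~$(r+k) + (\dim X - r) = \dim X + k$. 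What your approach buys is a more canonical partition (the rank stratification is intrinsic to $\mathrm{d}f$), at the cost of a slightly longer argument to establish the estimate $\dim X_r \leq r + k$; what the paper's approach buys is that the per-stratum kernel bound $\dim\ker(\mathrm{d}_x f_i)\leq k$ is immediate once the strata are fixed.

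One small imprecision to flag, which does not affect the conclusion: after replacing $X$ by $\overline{W}$ (the closure of a top-dimensional irreducible component $W$ of $X_r$), you assert that the generic rank of $\mathrm{d}(f|_{\overline W})$ is $r$ and hence $\dim\overline{f(\overline W)} = r$. This need not be an equality. The differential of $f|_W$ at a smooth point $x\in W$ is the restriction of $\mathrm{d}_x f$ to $T_x W \subseteq T_x X$, and its rank can be strictly less than $r$. A concrete example: $f\colon\AA^2\to\AA^2$, $(x,y)\mapsto(x,xy)$, has $X_1 = \{x=0\}$, but $f|_{X_1}$ is constant, so $\dim\overline{f(X_1)}=0 < 1 = r$. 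However the inequality you actually need, $\dim\overline{f(\overline W)}\leq r$, does hold (by generic smoothness applied to $f|_{W^{\mathrm{sm}}}$ onto its image, whose generic rank is at most $r$), and combined with the fiber-dimension bound $\dim\overline W - \dim\overline{f(\overline W)} \leq k$ this gives $\dim X_r \leq r + k$ as wanted. So just replace the ``$=$'' by ``$\leq$'' and the argument is clean.
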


\begin{proof}[Proof of Lemma~\ref{lem.estimate_kernel_differential}]
	Let $X = \bigcup_{i=1}^n X_i$ be a partition into smooth, irreducible, locally 
	closed subvarieties $X_1, \ldots, X_n$ in $X$ such that 
	\[
		f_i \coloneqq f|_{X_i} \colon X_i \to \overline{f(X_i)}
	\]
	is smooth for each $i=1,\ldots, n$ (see~\cite[Lemma~10.5, Ch.~III]{Ha1977Algebraic-geometry}).
	Note that $f(X_i)$ is an open subvariety of $\overline{f(X_i)}$ that is smooth,
	see~\cite[Proposition 3.1, Expos\'e II]{GrRa2003Revetements-etales}. Let $x \in X_i$.
	Thus the differential $\textrm{d}_x f_i \colon T_x X_i \to T_{f(x)} \overline{f(X_i)}$ is surjective
	and since $\dim f_i^{-1}(x) \leq k$, we get $\dim \ker (\textrm{d}_{x} f_i) \leq k$. 
	Then the kernel of 
	\[
		T_x X_i \hookrightarrow T_x X \stackrel{\textrm{d}_x f}{\longrightarrow} T_{f(x)} Y
	\]
	has dimension $\leq k$, which implies
	$\dim \ker(\textrm{d}_x f) \leq \dim T_x X - \dim T_x X_i + k$.
	Since $X$ is smooth, we have $\dim T_x X \leq \dim X$ (we did not assumed that $X$ is 
	equidimensional, hence we do not necessarily have an equality) and since $X_i$ is smooth and irreducible, we have $\dim T_x X_i = \dim X_i$. Thus we get
	\begin{align*}
		\dim \ker( \textrm{d} f)|_{X_i} &\leq \dim X_i + \max_{x \in X_i} \dim\ker(\textrm{d}_x f) \\
		&\leq \dim X_i + \dim X - \dim X_i + k = \dim X + k \, .
	\end{align*}
	Hence, $\dim \ker(\textrm{d} f) \leq \max_{1 \leq i \leq n} 
	\dim (\ker \textrm{d}_x f) \cap TX |_{X_i} \leq \dim X + k$.
\end{proof}

\begin{proof}[Proof of Proposition~\ref{prop.key}]	
	Let $G\coloneqq \Aut_P^{\alg}(X)$ and let $\mathcal{G}$ be the family of all
	connected algebraic subgroups of $\Aut(X)$ that lie in $\Aut_P(X)$. By definition 
	$G$ is generated by the subgroups inside $\mathcal{G}$ and $\mathcal{G}$ is
	closed under conjugation by elements of $G$.

	Since $\pi \colon X \to P$ is smooth and $G$ acts sufficiently transitively
	on each fiber of $\pi$, the morphisms
	\[
		\kappa \colon X_P^{(2)} \to P \, , \quad (x, x') \mapsto \pi(x) 
	\]
	and
	\[
		\kappa' \colon  (\ker \textrm{d} \pi )^\circ \to X \stackrel{\pi}{\longrightarrow} P
	\]
	are smooth and $G$ acts transitively on each fiber of
	$\kappa$ and $\kappa'$.
	
	Applying Proposition~\ref{prop.transitivity_and_big_enough_for_smoothness} to
	$\kappa$ and the image of $G$ in $\Aut_P(X_P^{(2)})$
	under $\varphi \mapsto \varphi \times_P \varphi$ gives 
	$H_1, \ldots, H_s \in \mathcal{G}$ such that
	$\mathcal{H} = (H_1, \ldots, H_s)$ is big enough for smoothness with respect to 
	$\kappa$. Likewise one gets
	$H'_1, \ldots, H'_{s'} \in \mathcal{G}$ such that
	$\mathcal{H'} = (H'_1, \ldots, H'_{s'})$ is big enough for smoothness with respect to 
	$\kappa'$. Using Proposition~\ref{prop.Big_enough_for_smoothness}\eqref{prop.Big_enough_for_smoothness2}, 
	$\mathcal{M} = (H_1, \ldots, H_s, H'_1, \ldots, H'_{s'})$ is big enough for smoothness with respect to 
	$\kappa$ and $\kappa'$. By
	Proposition~\ref{prop.Big_enough_for_smoothness}\eqref{prop.Big_enough_for_smoothness1},
	$\mathcal{M}$ is also big enough for proper intersection with respect 
	to $\kappa$ and $\kappa'$. Hence, there is an open dense subset 
	$U \subset H_1 \times \cdots \times H_s \times H'_1 \times \cdots \times H'_{s'}$ such that
	for each element in $U$ the estimate~\eqref{Eq.enough for proper intersection} in Definition~\ref{def.big_enough}
	is satisfied with respect to 
	\begin{itemize}
		\item the smooth morphism $\kappa \colon X_P^{(2)} \to P$,
		\item the morphism 
				$(f \times f)|_{(f \times f)^{-1}(X_P^{(2)})} \colon (f \times f)^{-1}(X_P^{(2)}) \to X_P^{(2)}$ and
		\item the closed subset $X_Q^{(2)}$ in $X_P^{(2)}$
	\end{itemize}
	and
	\begin{itemize}
		\item the smooth morphism $\kappa' \colon (\ker \textrm{d} \pi)^\circ \to P$,
		\item the morphism 
				$\textrm{d} f|_{(\textrm{d} f)^{-1}(\ker \textrm{d} \pi)^\circ} \colon (\textrm{d} f)^{-1}(\ker \textrm{d} \pi)^\circ \to (\ker \textrm{d} \pi)^\circ$ and
		\item the closed subset $(\ker \textrm{d} \rho)^\circ$ in $(\ker \textrm{d} \pi)^\circ$.
	\end{itemize}
	That means that, if we choose an element $(h_1, \ldots, h_s, h'_1, \ldots, h_{s'}') \in U$, then the automorphism 
	$\varphi = (h_1 \cdots h_s \cdot h_1' \cdots h_{s'}')^{-1} \in G$ satisfies the following
	estimates:
	\begin{align*}
			&\dim ((\varphi \circ f) \times (\varphi \circ f))^{-1}(X_Q^{(2)}) \\
			&= \dim ( f \times f)^{-1}(X_P^{(2)}) \times_{X_P^{(2)}} (\varphi \times \varphi)^{-1}(X_Q^{(2)}) \\ 
			&\overset{\eqref{Eq.enough for proper intersection}}{\leq}  \dim (f \times f)^{-1}(X_P^{(2)}) \times_P X_Q^{(2)} + \dim P - \dim X_P^{(2)}
	\end{align*}
	and
	\begin{align*}
			& \dim (\textrm{d} (\varphi \circ f))^{-1}(\ker \textrm{d} \rho)^\circ \\
			&= \dim (\textrm{d} f)^{-1}(\ker \textrm{d} \pi)^\circ \times_{(\ker \textrm{d} \pi)^\circ} 
			 (\textrm{d} \varphi)^{-1}(\ker \textrm{d} \rho)^\circ  \\
			&\overset{\eqref{Eq.enough for proper intersection}}{\leq} \dim (\textrm{d} f)^{-1}(\ker \textrm{d} \pi)^\circ \times_P (\ker \textrm{d} \rho)^\circ
			+ \dim P - \dim (\ker \textrm{d} \pi)^\circ \, .
	\end{align*}
	
	Since $\pi \colon X \to P$ and $\kappa \colon X \to Q$ are both smooth morphisms
	of smooth irreducible varieties, we get
	\begin{itemize}
		\item $\dim X_P^{(2)} = 2 \dim X - \dim P$
		\item $\dim X_Q^{(2)} = 2 \dim X - \dim Q$
		\item $\dim (\ker \textrm{d} \pi)^\circ = 2 \dim X - \dim P$.
	\end{itemize}
	Hence, it is enough to show the following estimates:
	\begin{enumerate}
		\item  \label{Eq.estimate1} 
		$\dim(f \times f)^{-1}(X_P^{(2)}) \times_P X_Q^{(2)} 
		\leq 2 \dim X + \dim Z - \dim Q - \dim P + k$
		\item \label{Eq.estimate2}
		$\dim(\textrm{d} f)^{-1}(\ker \textrm{d} \pi)^\circ \times_P (\ker \textrm{d} \rho)^\circ \leq 
		2 \dim X + \dim Z - \dim Q - \dim P + k$
	\end{enumerate}

	We establish~\eqref{Eq.estimate1}:
	Consider the following pull-back diagram
	\begin{equation}
	\begin{gathered}
	\label{eq.diagram_double_points}
	\xymatrix@=10pt{
	    (f \times f)^{-1}(X_P^{(2)}) \times_P X_Q^{(2)} \ar[r] \ar[d]  
		& X_Q^{(2)} \ar[d]^{\varepsilon} \\
		(f \times f)^{-1}(X_P^{(2)}) \ar[r] & P
	}
	\end{gathered}
	\end{equation}
	Let $Q_0 \subset Q$ be the image of $\rho \colon X \to Q$. Since $\rho$ is smooth,
	$Q_0$ is an open dense subset of $Q$. Hence
	$\eta|_{Q_0} \colon Q_0 \to P$ is a morphism of smooth irreducible varieties.
	Since $\pi = \eta|_{Q_0} \circ \rho \colon X \to P$ is smooth, it follows
	that $\eta|_{Q_0}$ is smooth. Thus 
	\[
		\varepsilon \colon X_Q^{(2)} = X_{Q_0}^{(2)} \to Q_0 \stackrel{\eta|_{Q_0}}{\longrightarrow} P
	\]
	is smooth as well of relative dimension $2 \dim X - \dim Q - \dim P$.
	Since each non-empty fiber of $\pi \circ f \colon Z \to P$ has dimension $k$, 
	the image of $Z \times_P Z \to P$ is contained in $\pi(f(Z))$
	and each non-empty fiber of it has dimension $\leq 2k$. Thus the same holds for
	\[
		(f \times f)^{-1}(X_P^{(2)}) \to P \, .
	\]
	Hence $\dim (f \times f)^{-1}(X_P^{(2)}) \leq \dim \overline{\pi(f(Z))} + 2k = \dim Z + k$
	and the estimate~\eqref{Eq.estimate1} follows from the pull-back diagram~\eqref{eq.diagram_double_points}.
	
	We establish~\eqref{Eq.estimate2}: Consider the following fiber product:
	\begin{equation}
	\begin{gathered}
		\label{eq.diagram_kernel}
		\xymatrix@=10pt{
			(\textrm{d} f)^{-1}(\ker \textrm{d} \pi)^\circ \times_P (\ker \textrm{d} \rho)^\circ 
			\ar[r] \ar[d] & (\ker \textrm{d} \rho)^\circ \ar[d] \\
			(\textrm{d} f)^{-1}(\ker \textrm{d} \pi)^\circ \ar[r] & P 
		}
	\end{gathered}
	\end{equation}
	Since $\rho \colon X \to Q$ is smooth, we get $\dim (\ker \textrm{d} \rho)^\circ = 2 \dim X - \dim Q$.
	Hence $(\ker \textrm{d} \rho)^\circ \to P$ is smooth of relative dimension $2 \dim X - \dim Q - \dim P$
	(since $(\ker \textrm{d} \rho)^\circ \to X$ and $\pi \colon X \to P$ are smooth). 
	Moreover,
	\[
		\dim (\textrm{d} f)^{-1}(\ker \textrm{d} \pi)^\circ \leq \dim \ker \textrm{d} (\pi \circ f) \leq \dim Z + k
	\]
	where the second inequality follows from Lemma~\ref{lem.estimate_kernel_differential}, 
	since each non-empty fiber of $\pi \circ f \colon Z \to P$ has dimension $k$ and $Z$ is smooth. 
	Thus the desired estimate~\eqref{Eq.estimate2} follows from the pull-back diagram~\eqref{eq.diagram_kernel}.
\end{proof}


\begin{lemma}
	\label{lem.stick_together}
	Let $f \colon Z \to X$ and $\rho \colon X \to Q$ be morphisms of varieties. Then we have the following:
	\begin{align*}
		\dim Z_Q^{(2)} &= \max\left\{ \dim (f \times f)^{-1}(X_Q^{(2)}), \dim Z^{(2)}_X \right\} \\
		\dim \ker \textrm{d}(\rho \circ f)^\circ  &= \max\left\{\dim (\textrm{d}f)^{-1}(\ker \textrm{d} \rho)^\circ,
		\dim (\ker \textrm{d}f)^\circ \right\} \, .
	\end{align*}
\end{lemma}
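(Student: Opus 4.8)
The plan is to prove each of the two identities by exhibiting the variety on the left-hand side as a disjoint union of the closed subvariety and the open subvariety that appear on the right-hand side, and then to invoke the elementary principle that if a variety $W$ is covered by a closed subvariety $C$ and its open complement $U = W \setminus C$, then $\dim W = \max\{\dim C, \dim U\}$. (This principle holds because every irreducible component of $W$ either lies in $C$, in which case it is a component of $C$, or meets $U$ in a dense open subset, in which case it is the closure of a component of $U$; together with the convention $\dim\varnothing = -\infty$ this also covers the degenerate cases in which one of the two pieces is empty.)

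For the first identity I would unwind the definitions: a point of $Z_Q^{(2)}$ is a pair $(z_1, z_2) \in Z \times Z$ with $z_1 \ne z_2$ and $\rho(f(z_1)) = \rho(f(z_2))$, and such a pair lies in $Z_X^{(2)}$ exactly when in addition $f(z_1) = f(z_2)$, and in $(f \times f)^{-1}(X_Q^{(2)})$ exactly when in addition $f(z_1) \ne f(z_2)$; thus set-theoretically $Z_Q^{(2)} = Z_X^{(2)} \sqcup (f \times f)^{-1}(X_Q^{(2)})$. Because $Z$, $X$, $Q$ are varieties over $\kk$, the morphisms $f$, $\rho \circ f$ and $\rho$ are separated, so the relevant diagonals are closed immersions; hence $Z_Q^{(2)}$ is open in $Z \times_Q Z$, and $Z \times_X Z$ --- being the pull-back of the (closed) diagonal $X \hookrightarrow X \times_Q X$ along $Z \times_Q Z \to X \times_Q X$ --- is closed in $Z \times_Q Z$. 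Intersecting the latter with $Z_Q^{(2)}$ shows that $Z_X^{(2)}$ is closed in $Z_Q^{(2)}$ with open complement $(f \times f)^{-1}(X_Q^{(2)})$, and the dimension principle gives the first identity.

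For the second identity I would argue fibrewise via the chain rule $\textrm{d}_z(\rho \circ f) = \textrm{d}_{f(z)}\rho \circ \textrm{d}_z f$: a point of $\ker\textrm{d}(\rho \circ f)^\circ$ is a pair $(z, v)$ with $0 \ne v \in T_z Z$ and $\textrm{d}_z f(v) \in \ker(\textrm{d}_{f(z)}\rho)$; if $\textrm{d}_z f(v) = 0$ this says $(z,v) \in (\ker\textrm{d}f)^\circ$, while if $\textrm{d}_z f(v) \ne 0$ it says $\textrm{d}_z f(v) \in (\ker\textrm{d}\rho)^\circ$, i.e.\ $(z,v) \in (\textrm{d}f)^{-1}((\ker\textrm{d}\rho)^\circ)$; hence $\ker\textrm{d}(\rho \circ f)^\circ = (\ker\textrm{d}f)^\circ \sqcup (\textrm{d}f)^{-1}((\ker\textrm{d}\rho)^\circ)$ as sets. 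Since $\ker\textrm{d}f = (\textrm{d}f)^{-1}(0_{TX})$ is the preimage of the zero-section, it is closed in $TZ$, so its trace on $\ker\textrm{d}(\rho \circ f)^\circ$ --- which is precisely $(\ker\textrm{d}f)^\circ$ --- is closed there, with open complement $(\textrm{d}f)^{-1}((\ker\textrm{d}\rho)^\circ)$; the dimension principle again yields the claim.

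There is no real obstacle here: the lemma is a bookkeeping consequence of the definitions. The only points that require care are checking that the two pieces in each decomposition are genuinely a closed subvariety and its open complement --- for which one uses that varieties over $\kk$ are separated and that zero-sections of tangent bundles are closed --- and keeping track of the empty-set cases, handled by $\dim\varnothing = -\infty$.
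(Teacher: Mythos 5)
Your proposal is correct and takes essentially the same approach as the paper: both proofs observe that the left-hand side decomposes as a disjoint union (as a set) of the two pieces appearing on the right. You add the small but welcome extra detail of checking that one piece is closed and the other open in the ambient variety, which the paper leaves implicit when passing to the dimension formula.
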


\begin{proof}
	The first equality follows, since the underlying set of
	$Z_Q^{(2)}$ is the disjoint union of
	\[
		\set{(z_1, z_2) \in Z \times Z}{ \rho(f(z_1)) = \rho(f(z_2)) \, , \ f(z_1) \neq f(z_2)} 
		= (f \times f)^{-1}(X_Q^{(2)})
	\]
	and the underlying subset of $Z^{(2)}_X$ in $Z \times Z$. 
	The second equality follows, since the underlying set of 
	$\ker \textrm{d}(\rho \circ f)^\circ$ is the disjoint union of
	\[
		\set{v \in TZ}{\textrm{d}(\rho \circ f)(v) = 0 \, , \  (\textrm{d}f)(v) \neq 0} = 
		(\textrm{d}f)^{-1}(\ker \textrm{d} \rho)^\circ
	\]
	and the underlying subset of $(\ker \textrm{d}f)^\circ$ in $TZ$.
\end{proof}

In order to construct embeddings, 
we use the following characterization of them:

\begin{prop}
	\label{prop.char_closed_embeddings_text}
	A morphism $f \colon Z \to X$ of varieties is an embedding if and only if 
	the following conditions are satisfied
	\begin{itemize}[leftmargin=*]
		\item $f$ is proper
		\item $f$ is injective
		\item for each $z \in Z$, the differential $\textrm{d}_z f \colon T_z Z \to T_{f(z)} X$ is injective. 
	\end{itemize}
\end{prop}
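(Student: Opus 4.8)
\textbf{The plan} is to prove the two implications separately. For the ``only if'' direction I would factor an embedding $f$ as $Z\xrightarrow{\ \sim\ }f(Z)\hookrightarrow X$, where $f(Z)$ carries its reduced induced structure: the first arrow is an isomorphism and the second a closed immersion, both proper and both injective, so $f$ is proper and injective; an isomorphism induces isomorphisms on Zariski tangent spaces, and a closed immersion $W\hookrightarrow X$ induces injections $T_pW\hookrightarrow T_pX$ because the surjection $\mathcal{O}_{X,p}\twoheadrightarrow\mathcal{O}_{W,p}$ gives a surjection $\mathfrak{m}_{X,p}/\mathfrak{m}_{X,p}^{2}\twoheadrightarrow\mathfrak{m}_{W,p}/\mathfrak{m}_{W,p}^{2}$, hence an injection on the dual spaces; thus every $\mathrm{d}_zf$ is injective.

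For the ``if'' direction, the first thing I would record is a local dictionary: at a closed point $z\in Z$ with image $x=f(z)$, the differential $\mathrm{d}_zf$ is dual to the map $\mathfrak{m}_x/\mathfrak{m}_x^{2}\to\mathfrak{m}_z/\mathfrak{m}_z^{2}$ induced by $f^{\#}$, so $\mathrm{d}_zf$ is injective iff that map is surjective iff $\mathfrak{m}_z=\mathfrak{m}_x\mathcal{O}_{Z,z}+\mathfrak{m}_z^{2}$, which by Nakayama applied to the finitely generated module $\mathfrak{m}_z/\mathfrak{m}_x\mathcal{O}_{Z,z}$ amounts to $\mathfrak{m}_x\mathcal{O}_{Z,z}=\mathfrak{m}_z$ (i.e.\ $f$ is unramified at $z$, the residue-field condition being vacuous since both residue fields are $\kk$). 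Next, using that $f$ is proper hence closed, I would replace $X$ by $f(Z)$ with its reduced structure; all three hypotheses survive (properness by the cancellation property applied to $Z\to f(Z)\hookrightarrow X$; injectivity trivially; injectivity of the differential because $T_zf(Z)\hookrightarrow T_zX$), and now $f$ is surjective, so it suffices to show $f$ is an isomorphism. Being injective and of finite type, $f$ is quasi-finite, and proper plus quasi-finite gives finite by Zariski's Main Theorem; in particular $f$ is affine and $\mathcal{A}\coloneqq f_{*}\mathcal{O}_Z$ is a coherent sheaf of $\mathcal{O}_X$-algebras.

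The crux is then to show $\mathcal{O}_X\to\mathcal{A}$ is surjective; its cokernel is coherent, so I would check stalks at closed points $x\in X$. By injectivity the fibre over $x$ is a single closed point $z$, so $\mathcal{A}_x=\mathcal{O}_{Z,z}$ is a finite $\mathcal{O}_{X,x}$-module, and by Nakayama surjectivity of $\mathcal{O}_{X,x}\to\mathcal{O}_{Z,z}$ reduces to that of $\mathcal{O}_{X,x}/\mathfrak{m}_x\to\mathcal{O}_{Z,z}/\mathfrak{m}_x\mathcal{O}_{Z,z}$; by the dictionary $\mathfrak{m}_x\mathcal{O}_{Z,z}=\mathfrak{m}_z$, so this is just the identity $\kk\to\kk$ of residue fields, hence surjective. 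Therefore $f$ is a closed immersion, and being surjective onto the reduced scheme $X$ its ideal sheaf lies in the nilradical, which is zero, so $f$ is an isomorphism, completing the argument. I expect no genuinely deep step here; the points needing the most care will be checking that the three hypotheses are inherited under the reduction to the finite surjective case, and the observation — central to promoting ``finite'' to ``closed immersion'' — that injectivity of the differential is exactly the Nakayama condition $\mathfrak{m}_x\mathcal{O}_{Z,z}=\mathfrak{m}_z$ together with the fact that residue fields at closed points are $\kk$; the only substantial external input is Zariski's Main Theorem in the form ``proper $+$ quasi-finite $\Rightarrow$ finite''.
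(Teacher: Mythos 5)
Your proof is correct. The high-level skeleton is the same as the paper's: pass to $f(Z)$ with its reduced structure, use ``proper $+$ injective (hence quasi-finite) $\Rightarrow$ finite'' to reduce to an affine problem, and then a Nakayama argument turns the pointwise differential condition into surjectivity on local rings, hence into a closed immersion, hence (by surjectivity onto a reduced target) into an isomorphism. The technical packaging differs in one worthwhile respect: the paper isolates the local step as its Lemma~\ref{lem.isom} and proves $\mm_A B=\mm_B$ from the surjectivity of $\mm_A/\mm_A^2\to\mm_B/\mm_B^2$ via Krull's intersection theorem (computing in $B/\mm_A B$), whereas you extract the unramifiedness $\mm_x\OO_{Z,z}=\mm_z$ directly by applying Nakayama to the finitely generated $\OO_{Z,z}$-module $\mm_z/\mm_x\OO_{Z,z}$, then apply Nakayama a second time (to the finite module $\OO_{Z,z}$ over $\OO_{X,x}$) to get surjectivity of $\OO_{X,x}\to\OO_{Z,z}$. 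This makes Krull's intersection theorem unnecessary and is arguably the more economical route. The paper also has a preparatory localization lemma (its Lemma~\ref{lem.algebra}) to identify $(A\setminus\mm_A)^{-1}B$ with $B_{\mm_B}$; you use the equivalent observation implicitly when you identify the stalk $(f_*\OO_Z)_x$ with $\OO_{Z,z}$ via the fact that the finite fibre over $x$ is the single point $z$, but for a finite morphism this is standard and does not need a separate lemma. A small cosmetic point: in your reduction you write $T_zf(Z)\hookrightarrow T_zX$; this should read $T_{f(z)}f(Z)\hookrightarrow T_{f(z)}X$, and the observation you need is that $\mathrm{d}_zf$ factors through the inclusion $T_{f(z)}f(Z)\hookrightarrow T_{f(z)}X$, so injectivity of the composite forces injectivity of $T_zZ\to T_{f(z)}f(Z)$.
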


We prove this proposition in the Appendix~\ref{AppendixB} 
for the lack of a reference to an elementary proof; 
see Proposition~\ref{prop.crit_embedding}.
From Proposition~\ref{prop.key} and Lemma~\ref{lem.stick_together} we get now immediately the following consequence:

\begin{corollary}
	\label{cor:finite_etale}
	Let $X$ be a smooth irreducible 
	variety such that $\Aut^{\alg}(X)$ acts sufficiently transitively on $X$. 
	If $\rho \colon X \to Q$
	is a finite \'etale surjection and $Z \subset X$ is a smooth closed subvariety with
	$\dim X \geq 2 \dim Z + 1$, then there exists $\varphi \in \Aut^{\alg}(X)$ 
	such that $\rho \circ \varphi \colon X \to Q$
	restricts to an isomorphism $Z \to \rho(\varphi(Z))$.
\end{corollary}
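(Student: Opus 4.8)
The plan is to deduce this from Proposition~\ref{prop.key} by specialising the base $P$ to a point. Concretely, I would take $P = \Spec \kk$, so that $\pi \colon X \to P$ is the structure morphism and $\eta \colon Q \to P$ the structure morphism of $Q$; then $\pi = \eta \circ \rho$ holds trivially. Before applying the proposition one must check its hypotheses. The variety $X$ is smooth and irreducible by assumption; $Q$ is irreducible, being the image of the surjection $\rho$; and $Q$ is smooth because $\rho$ is étale and surjective, so for $x \in X$ over $q \in Q$ we have $\widehat{\OO_{Q,q}} \cong \widehat{\OO_{X,x}}$ (all residue fields being $\kk$), whence regularity of $\OO_{X,x}$ forces regularity of $\OO_{Q,q}$. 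Moreover $\pi$ is a smooth morphism since $X$ is a smooth variety, and $\rho$ is a smooth morphism since étale morphisms are smooth of relative dimension zero. Finally, as $P$ is a point we have $\Aut_P(X) = \Aut(X)$ and $\Aut_P^{\alg}(X) = \Aut^{\alg}(X)$, and the only fibre of $\pi$ is $X$ itself, so the hypothesis that $\Aut^{\alg}(X)$ acts sufficiently transitively on $X$ is precisely what Proposition~\ref{prop.key} requires.

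Next I would apply Proposition~\ref{prop.key} to the closed immersion $f \colon Z \hookrightarrow X$ (the case $Z = \varnothing$ being trivial). The only non-empty fibre of $\pi \circ f \colon Z \to P$ is $Z$, so $k = \dim Z$, and we have $\dim P = 0$ and $\dim Q = \dim X$ since $\rho$ is finite. The proposition then produces $\varphi \in \Aut^{\alg}(X)$ with
\begin{align*}
\dim\bigl((\varphi\circ f)\times(\varphi\circ f)\bigr)^{-1}(X_Q^{(2)}) &\le 2\dim Z - \dim X, \\
\dim\bigl(\textrm{d}(\varphi\circ f)\bigr)^{-1}(\ker\textrm{d}\rho)^\circ &\le 2\dim Z - \dim X.
\end{align*}
Since $\dim X \ge 2\dim Z + 1$, both right-hand sides are negative, so both schemes are empty. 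Now $Z_X^{(2)} = \varnothing$ and $(\ker\textrm{d}(\varphi\circ f))^\circ = \varnothing$, because $\varphi \circ f$ is injective (both $\varphi$ and $f$ are) with everywhere injective differential (as $\textrm{d}\varphi$ is an isomorphism and $\textrm{d}f$ is injective). Hence Lemma~\ref{lem.stick_together}, applied to $\varphi \circ f \colon Z \to X$ and $\rho \colon X \to Q$, gives $Z_Q^{(2)} = \varnothing$ and $\ker\textrm{d}(\rho\circ\varphi\circ f)^\circ = \varnothing$; that is, $(\rho\circ\varphi)|_Z = \rho\circ\varphi\circ f \colon Z \to Q$ is injective and has injective differential at every point.

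To conclude I would invoke the embedding criterion Proposition~\ref{prop.char_closed_embeddings_text}: the morphism $(\rho\circ\varphi)|_Z$ is proper, being a composite of the closed immersion $f$, the isomorphism $\varphi$, and the finite (hence proper) morphism $\rho$; combined with injectivity and injectivity of the differential, this makes it a closed embedding. Therefore it induces an isomorphism $Z \xrightarrow{\sim} \rho(\varphi(Z))$ onto a closed subvariety of $Q$, which is the assertion.

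There is no serious obstacle here: the corollary is essentially a direct specialisation of Proposition~\ref{prop.key}. The only points needing attention are confirming that $Q$ is again a smooth irreducible variety so that the proposition applies, and keeping the dimension bookkeeping straight. In particular, I would note that because $\rho$ is étale the bundle $\ker\textrm{d}\rho$ is just the zero section, so estimate~\eqref{enough.imm} is automatic and the genuine content is the injectivity estimate~\eqref{enough.inj} together with the hypothesis $\dim X \ge 2\dim Z+1$ that renders the relevant loci negative-dimensional, hence empty.
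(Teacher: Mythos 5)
Your proof is correct and follows essentially the same route as the paper's: apply Proposition~\ref{prop.key} with $P$ a point and $f$ the inclusion $Z\hookrightarrow X$, observe that the dimension bound $2\dim Z-\dim Q\le -1$ forces both loci to be empty, then combine with Lemma~\ref{lem.stick_together} and the embedding criterion Proposition~\ref{prop.char_closed_embeddings_text}. The only differences are cosmetic: you supply an explicit completion argument for the smoothness and irreducibility of $Q$ (the paper just cites SGA~1), and you add the (true but unneeded) remark that the $\ker\textrm{d}\rho$ estimate is vacuous since $\rho$ is étale.
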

\begin{proof}
	We apply Proposition~\ref{prop.key} to $\pi \colon X \to P \coloneqq \{\textrm{pt}\}$, $\rho \colon X \to Q$
	(note that $Q$ is irreducible and smooth by~\cite[Proposition 3.1, 
	Expos\'e II]{GrRa2003Revetements-etales}),
	and the inclusion $f \colon Z \hookrightarrow X$ in order to get a $\varphi \in \Aut^{\alg}(X)$ such
	that 
	\begin{align*}
		\dim((\varphi \circ f) \times (\varphi \circ f))^{-1}(X_Q^{(2)}) &
		\leq 2\dim Z - \dim Q \leq \dim X - 1 - \dim Q < 0 \\
		\dim (\textrm{d} (\varphi \circ f))^{-1}(\ker \textrm{d} \rho)^\circ 
		&\leq 2\dim Z - \dim Q \leq \dim X - 1 - \dim Q < 0
	\end{align*}
	where we used the assumption $\dim X \geq  2 \dim Z+1$.
	Applying Lemma~\ref{lem.stick_together} to $\varphi \circ f \colon Z \to X$ and $\rho \colon X \to Q$ yields,
	that the composition
	\[
		Z \stackrel{f}{\hookrightarrow} X \stackrel{\varphi}{\longrightarrow} X \stackrel{\rho}{\longrightarrow} Q
	\]
	is injective and the differential 
	$\textrm{d}_z(\rho \circ \varphi \circ f) \colon T_z Z \to T_{\rho(\varphi(z))} Q$ is injective
	for each $z \in Z$. As the composition $\rho \circ \varphi \circ f \colon Z \to Q$ is also proper, 
	the statement follows from
	Proposition~\ref{prop.char_closed_embeddings_text}. 
\end{proof}

The following number associated to each morphism 
will be crucial for the proof of Theorem~\ref{thm.Product}:

\begin{definition}
	\label{def.theta}
	For each morphism $f \colon Z \to X$ of varieties we define the
	\emph{$\theta$-invariant} by
	\[
		\theta_f \coloneqq \max\{ \, \dim Z^{(2)}_X \, , \ \dim (\ker \textrm{d} f)^\circ \, \} \, .
	\]
	In case $W \subseteq Z$ is locally closed, we define the \emph{restricted $\theta$-invariant} by
	\[
		\theta_f |_W \coloneqq \max\{ \, \dim W^{(2)}_X \, , \ 
											\dim (\ker \textrm{d} f)^\circ |_W \, \} \, .
	\]
\end{definition}

Note that $\theta_f$ stays the same if we replace $f$ with $\varphi \circ f$ for an automorphism
$\varphi \in \Aut(X)$. Moreover, the following remarks hold.

\begin{remark}
	\label{rem.theta-invariant}
	If $f \colon Z \to X$ is a proper morphism, then $f$
	is an embedding if and only if $\theta_f < 0$.  
	This follows directly from Proposition~\ref{prop.char_closed_embeddings_text}.
\end{remark}

\begin{remark}
	\label{rem.restricted_theta}
	If $f \colon Z \to X$ is a morphism and if $X_1, \ldots, X_r \subseteq X$ are locally closed subsets
	with $\bigcup_i X_i = X$, then we have
	\[
		\theta_f = \max_i \theta_f |_{f^{-1}(X_i)} \, .
	\]
\end{remark}


The next result will enable us to inductively lower the $\theta$-invariant in the proof
of Theorem~\ref{thm.Product}. We formulate it first in a general version suitable
for the applications, and we formulate it afterwards in the special case needed for the proof of Theorem~\ref{thm.Product}.

\begin{prop}
	\label{prop.lower_theta}
	Let $\rho \colon X \to Q$ be a principal $\GG_a$-bundle,
	$Z$ an affine variety and $r \colon Z \to Q$ a finite morphism.
	Moreover, let $A \subseteq Z$ be a closed subset, 
	let $g_A \colon A \to X$ be a morphism with $\rho \circ g_A = r |_A$ and let
	$Z_1, \ldots, Z_s \subseteq Z \setminus A$
	be locally closed subsets. 
	
	Then there is a morphism $g \colon Z \to X$ with
	$\rho \circ g = r$, $g |_A = g_A$ and such that the restricted $\theta$-invariants satisfy
	$\theta_g |_{Z_i} \leq \theta_r |_{Z_i} - 1$ for all $i$.
\end{prop}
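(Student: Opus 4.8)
The plan is to build $g$ by a finite induction, correcting it one fiber-datum at a time using generic automorphisms that fix $\pi$. First I would reduce to the situation where $\rho\colon X\to Q$ is a \emph{trivial} $\GG_a$-bundle over the relevant affine subsets: since $Z$ is affine and $r\colon Z\to Q$ is finite, $r(Z)$ is a closed affine subvariety of $Q$, and $\rho$ restricted over $r(Z)$ is a trivial $\GG_a$-bundle. Hence there is \emph{some} morphism $g_0\colon Z\to X$ with $\rho\circ g_0=r$; moreover the fibers of $\pi\circ g_0=\eta\circ r\colon Z\to P$ all have the same dimension (namely $\dim Z-\dim P$, as $\eta\circ r$ is finite onto $P$, once one restricts to the image), so Proposition~\ref{prop.key} applies. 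The issue is twofold: we must not disturb the prescribed value $g_A$ on the closed subset $A$, and Proposition~\ref{prop.key} only lowers the \emph{absolute} $\theta$-invariant, whereas we want to lower it on each locally closed piece $Z_i$ separately.

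The key technical step is therefore to prove a \emph{relative} version of Proposition~\ref{prop.key}: given a morphism $f\colon Z\to X$ with $\rho\circ f=r$ and a closed subset $A\subseteq Z$ on which $f$ is already the desired $g_A$, there is $\varphi\in\Aut_P^{\alg}(X)$ fixing $g_A(A)$ pointwise (and with prescribed trivial differential along $g_A(A)$) such that the two estimates~\eqref{enough.inj} and~\eqref{enough.imm} hold with $Z$ replaced by each $Z_i$ on the left-hand side. Here I would invoke the description-of-jets statement, Theorem~\ref{thm.transitivity_and_describing_jets_general}: the group $\Aut_P^{\alg}(X)$ contains automorphisms fixing any finite collection of points with arbitrary $\SL$-differentials there, but we need to fix an entire positive-dimensional $A$. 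The way around this is standard in Kaliman's framework: one replaces $X$ by $X\setminus g_A(A)$ (or works with the ideal of $g_A(A)$), noting that the relevant locally nilpotent derivations can be multiplied by functions vanishing on $g_A(A)$ while staying in a saturated set, so the corresponding one-parameter subgroups fix $g_A(A)$; then the transversality machinery (Propositions~\ref{prop.transitivity_and_big_enough_for_smoothness}, \ref{prop.Big_enough_for_smoothness}) is run over this smaller (still smooth, still ``sufficiently transitive on fibers'') variety. To handle each $Z_i$ simultaneously one applies the ``big enough for proper intersection'' property finitely many times and intersects the resulting dense opens in the parameter group; the dimension bookkeeping is exactly as in the proof of Proposition~\ref{prop.key}, giving on each $Z_i$
\[
\theta_{\varphi\circ f}|_{Z_i}\le \dim Z_i+\dim P-\dim Q+k-(\text{gain}),
\]
and since $\theta_r|_{Z_i}=\dim Z_i+\dim P-\dim Q+k$ when $r$ is finite (here $\ker\textrm{d}\,r$ contributes nothing because $r$ is finite, so $(\ker\textrm{d}\,r)^\circ$ is empty and $\dim Z_i^{(2)}_Q\le \dim Z_i$), a single application already lowers $\theta_r|_{Z_i}$ by at least one.

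Finally I would assemble the proof by induction on a well-chosen quantity, e.g. on $\max_i\theta_r|_{Z_i}$, or more straightforwardly: apply the relative version once with $f=g_0$ to get $g_1=\varphi_1\circ g_0$ with $\rho\circ g_1=r$, $g_1|_A=g_A$, and $\theta_{g_1}|_{Z_i}\le\theta_r|_{Z_i}-1$ on each $i$; since $\theta$ is unchanged under post-composition with $\Aut(X)$ this is exactly the claimed conclusion in one step, provided the ``gain'' in the relative Proposition~\ref{prop.key} is at least $1$, which follows from the strict inequality $2$-transitivity gives (the generic $\varphi$ separates a point-pair and kills a tangent vector, dropping the dimension of the bad locus by at least one). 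I expect the main obstacle to be the bookkeeping needed to make $\varphi$ fix the \emph{positive-dimensional} closed set $A$ while still retaining enough transitivity on the fibers of $\pi$ over the complement of $A$ — i.e. verifying that passing to $X\setminus g_A(A)$ preserves hypothesis~\ref{thm.Product_b}-style sufficient transitivity and that the resulting automorphisms extend (by the identity on $g_A(A)$) to honest elements of $\Aut_P^{\alg}(X)$. This is where care with Lemma~\ref{lem.Surjectivity_on_invariant_rings}-type extension arguments, or with the saturated-set formalism of Definition~\ref{def.saturated}, will be essential.
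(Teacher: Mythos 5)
Your proposal is not on track — it invokes machinery that does not apply to this statement, and it contains a concrete error.

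First, Proposition~\ref{prop.lower_theta} does not have $\pi$, $P$, or $\eta$ in its hypotheses: there is only the principal $\GG_a$-bundle $\rho\colon X\to Q$, a finite $r\colon Z\to Q$, the closed set $A\subseteq Z$, the partial lift $g_A$, and the pieces $Z_i$. Moreover $X$ and $Z$ are not assumed smooth or irreducible. Consequently Proposition~\ref{prop.key}, the transversality results (Propositions~\ref{prop.Big_enough_for_smoothness} and~\ref{prop.transitivity_and_big_enough_for_smoothness}), the saturated-LND formalism, and $\Aut^{\alg}_P(X)$ are simply not available here; your plan rests on developing a ``relative Proposition~\ref{prop.key}'' that fixes a positive-dimensional set $A$, which you yourself flag as the main obstacle and which you do not resolve. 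There is also a logical mismatch: an automorphism $\varphi\in\Aut^{\alg}_P(X)$ preserves $\pi$ but not $\rho$, so $\rho\circ(\varphi\circ g_0)\ne r$ in general; after applying $\varphi$ you would have to re-lift anyway, which is exactly the alternation between Proposition~\ref{prop.key} and Corollary~\ref{cor.lower_theta} that the paper performs inside the proof of Theorem~\ref{thm.Product}. You have conflated these two distinct steps. Finally, your claim that $(\ker\textrm{d}r)^\circ$ is empty because $r$ is finite is false (e.g.\ $z\mapsto z^2$ on $\AA^1$ is finite with $\textrm{d}_0 r=0$), and the subsequent identification of $\theta_r|_{Z_i}$ is therefore unjustified.

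The paper's proof is elementary and does not use any of the flexibility or transversality apparatus. Since $r$ is finite and $Z$ is affine, $W:=r(Z)$ is closed and affine in $Q$, so $\rho$ is trivial over $W$: pick a $W$-isomorphism $\iota\colon W\times\GG_a\to\rho^{-1}(W)$. A lift $g$ of $r$ is then just a choice of a function $q\colon Z\to\GG_a$ via $g(z)=\iota(r(z),q(z))$. Now choose, for each $i$, a finite set of points $R_i$ in $(Z_i)_Q^{(2)}$ and $S_i$ in $(\ker\textrm{d}r)^\circ|_{Z_i}$ meeting every irreducible component. Since $Z$ is affine and $Z_i\subset Z\setminus A$, one can choose $q$ so that it agrees with $\pr_{\GG_a}\circ\iota^{-1}\circ g_A$ on $A$, is injective on the projections of each $R_i$, and has nonvanishing differential on each $S_i$. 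This forces $g|_A=g_A$, $\rho\circ g=r$, and drops each component of $(Z_i)_Q^{(2)}$ and of $(\ker\textrm{d}r)^\circ|_{Z_i}$ by at least one dimension when passing to $g$, which is exactly $\theta_g|_{Z_i}\le\theta_r|_{Z_i}-1$. The lesson is that the freedom here comes entirely from the additive structure of the $\GG_a$-fiber and the affineness of $Z$, not from automorphisms of $X$.
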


Part of Proposition~\ref{prop.lower_theta} can be illustrated by the following commutative
diagram with filler $g$:
\[
	\xymatrix@=3pt{
		A \ar@{}[dddd]|-{\rotatebox[origin=c]{-90}{$\subset$}} \ar[rrrr]^-{g_A} &&&&  X \ar[dddd]_-{\rho} 
		\\
		\\
		\\
		\\
		Z \ar[rrrr]^-{r} \ar@{.>}[rrrruuuu]^-{\exists g} &&&& Q & .
	}
\]

\begin{proof}
	Let $W \coloneqq r(Z) \subset Q$. Since $r \colon Z \to Q$ is finite and $Z$ is affine, 
	$W$ is a closed affine subvariety of $Q$ 
	by Chevalley's Theorem, \cite[Theorem~12.39]{GoWe2010Algebraic-geometry}. 
	The restriction $\rho^{-1}(W) \to W$ of $\rho$ is locally trivial with respect to the Zariski topology
	(see~\cite[Example, \S2.3]{Se1958Espaces-fibres-alg}) and since $W$ is affine, it is
	a trivial principal $\GG_a$-bundle (see e.g.~\cite[Proposition~1, \S1]{Gr1958Torsion-homologiqu}); 
	this means, there exists a $W$-isomorphism
	$\iota \colon W \times \GG_a \to \rho^{-1}(W)$.
	
	For $i \in \{1, \ldots, s\}$, we choose finite subsets
	\[
		R_i \subseteq (Z_i)_{Q}^{(2)} \quad \textrm{and} \quad
		S_i \subseteq \left(\ker \textrm{d}r \right)^\circ |_{Z_i}
	\]
	such that each irreducible component of
	$(Z_i)_{Q}^{(2)}$ and of $\left(\ker \textrm{d}r \right)^\circ |_{Z_i}$ 
	contains a point of $R_i$ and of $S_i$, respectively.
	Let $\pr_1, \pr_2 \colon Z \times Z \to Z$ 
	be the projection onto the first and second factor, respectively.
	As $Z$ is affine and $Z_i \subset Z \setminus A$ for all $i$, 
	there exists a morphism $q \colon Z \to \GG_a$ such that
	\begin{itemize}[leftmargin=*]
		\item $q$ restricted to $A$ is equal to $\pr_{\GG_a} \circ \iota^{-1} \circ g_A$ where
				$\pr_{\GG_a} \colon W \times \GG_a \to \GG_a$ denotes the natural projection onto $\GG_a$,
		\item $q$ restricted to $\pr_1(R_i) \cup \pr_2(R_i)$ is injective for all $i$ and
		\item $\textrm{d} q \colon TZ \to T\GG_a$ restricted to $S_i$ never vanishes for all $i$.
	\end{itemize}
	Now, we define
	\[
	\xymatrix@=0.1pt{
		g \colon &  Z \ar[rrrr] &&&& W \times \GG_a \ar[rrrr]^-{\iota}_-{\simeq} &&&& \rho^{-1}(W) \subset X \, . \\
		& z \ar@{|->}[rrrr] &&&& (r(z), q(z))
	}
	\]
	Since $\iota \colon W \times \GG_a \to \rho^{-1}(W)$ is a $W$-isomorphism, $\rho \circ g = r$.
	Moreover, by construction we have $g |_{A} = g_A$. Now, we claim that
	\begin{align}
			\label{eq.a}  \dim (Z_i)^{(2)}_X &\leq \dim (Z_i)^{(2)}_Q - 1 \quad \textrm{for all $i$} \, , \\
			\label{eq.b} 
			\dim (\ker \textrm{d} g)^\circ |_{Z_i} &\leq \dim (\ker \textrm{d} r)^\circ |_{Z_i}-1 \quad
			\textrm{for all $i$} \, .
	\end{align}
	
	For proving~\eqref{eq.a}, take an irreducible component $V$ of $(Z_i)_{Q}^{(2)}$. Then
	\[
		V^\circ \coloneqq \set{(v_1, v_2) \in V}{g(v_1) \neq g(v_2)}
	\]
	is an open subset of $V$. By construction, there exists
	$(z_1, z_2) \in R_i \cap V$ with $g(z_1) \neq g(z_2)$. Hence, $V^\circ$ is non-empty.
	This implies that $V \cap (Z_i)_{X}^{(2)}$ is properly contained in $V$. Since 
	$(Z_i)_{X}^{(2)}$ is a closed subset of $(Z_i)_{Q}^{(2)}$,
	we get~\eqref{eq.a}. Similarly, we get~\eqref{eq.b} by
	using that $\textrm{d} g$ restricted to $S_i$ never vanishes. Together, 
	the estimates~\eqref{eq.a} and~\eqref{eq.b} imply that 
	$\theta_g |_{Z_i} \leq \theta_r |_{Z_i} - 1$ for all $i$.
\end{proof}

By choosing $A$ as the empty set, $s=1$, and $Z_1$ equal to $Z$, 
Proposition~\ref{prop.lower_theta} becomes the following.
\begin{corollary}
		\label{cor.lower_theta}
		Let $\rho \colon X \to Q$ be a principal $\GG_a$-bundle,
		$Z$ an affine variety and 
		$r \colon Z \to Q$ a finite morphism.
		Then there exists a morphism $g \colon Z \to X$ such that
		$\rho \circ g = r$ and $\theta_g \leq \theta_r-1$. \qed
\end{corollary}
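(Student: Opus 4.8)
The plan is to derive this immediately from Proposition~\ref{prop.lower_theta} by specializing its data. Concretely, I would apply Proposition~\ref{prop.lower_theta} to the given principal $\GG_a$-bundle $\rho \colon X \to Q$, the given affine variety $Z$, and the given finite morphism $r \colon Z \to Q$, with the choices $A = \varnothing$, $s = 1$, and $Z_1 = Z$. The hypotheses of Proposition~\ref{prop.lower_theta} that are not already assumed in the corollary are then trivially satisfied: the empty set is closed in $Z$; there is a (unique) morphism $g_A \colon \varnothing \to X$, and the relation $\rho \circ g_A = r|_A$ holds vacuously; and $Z_1 = Z \setminus \varnothing = Z$ is open in $Z$, hence locally closed.

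Proposition~\ref{prop.lower_theta} then yields a morphism $g \colon Z \to X$ with $\rho \circ g = r$ (the condition $g|_A = g_A$ being vacuous) such that $\theta_g|_{Z_1} \leq \theta_r|_{Z_1} - 1$, that is, $\theta_g|_Z \leq \theta_r|_Z - 1$. It then remains only to observe that for any morphism $f \colon Z \to X$ the restricted $\theta$-invariant over all of $Z$ agrees with the unrestricted one: taking $W = Z$ in Definition~\ref{def.theta} we have $Z^{(2)}_X = Z^{(2)}_X$ and $(\ker \textrm{d} f)^\circ|_Z = (\ker \textrm{d} f)^\circ$, so $\theta_f|_Z = \max\{\dim Z^{(2)}_X, \dim(\ker \textrm{d} f)^\circ\} = \theta_f$. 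Applying this to both $g$ and $r$ gives $\theta_g \leq \theta_r - 1$, as claimed.

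There is no genuine obstacle here: the entire content of the corollary is contained in Proposition~\ref{prop.lower_theta}, and it is recorded separately only because this global version --- lowering $\theta_f$ by one at the cost of modifying the lift of $r$ along a trivializable restriction of $\rho$ --- is the statement that will be iterated in the proof of Theorem~\ref{thm.Product}.
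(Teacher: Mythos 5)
Your proof is correct and matches the paper's approach exactly: the paper proves Corollary~\ref{cor.lower_theta} by precisely the specialization $A = \varnothing$, $s = 1$, $Z_1 = Z$ in Proposition~\ref{prop.lower_theta}, recording this in the sentence immediately preceding the corollary statement. The only extra remark you add, that $\theta_f|_Z = \theta_f$, is correct and implicit in the paper.
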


%
%

We prove Theorem~\ref{thm.Product} by inductively applying Corollary~\ref{cor.lower_theta}.

\begin{proof}[Proof of Theorem~\ref{thm.Product}]
	Let $Z$ be a smooth affine variety such that $\dim X \geq 2 \dim Z + 1$ 
	and such that condition~\ref{thm.Product_d} is satisfied. Let $n \coloneqq \dim P = \dim Z$. 
	
	The following claim will enable us to lower the $\theta$-invariant. \\
	
	\noindent
	\begin{tabular}{ll}
		\textbf{Claim:} & $\exists \ f \colon Z \to X$ 
			such that $\pi \circ f \colon Z \to P$ is finite and $\theta_f \geq 0$ \\
			& $\implies$ $\exists \ g \colon Z \to X$ 
			such that $\pi \circ g \colon Z \to P$ is finite and $\theta_g < \theta_f$
	\end{tabular}
	\bigskip

	\begin{proof}[Proof of Claim] 
	Let $f \colon Z \to X$ be a morphism such that $\pi \circ f \colon Z \to P$ is finite
	and $\theta_f \geq 0$.
	By condition~\ref{thm.Product_d},
	$\eta \colon Q \to P$ is surjective and since $\rho \colon X \to Q$ is surjective,
	we get that $\pi \colon X \to P$ is surjective as well. Since $\rho$ and 
	$\pi$ are smooth surjections 
	and since $X$ is smooth and irreducible, it follows that $P$ and $Q$ are smooth and irreducible;
	see~\cite[Proposition 3.1, Expos\'e II]{GrRa2003Revetements-etales}. By condition~\ref{thm.Product_b},
	$\Aut^{\alg}_P(X)$ acts sufficiently transitively on each fiber of $\pi$.
	Thus we may apply Proposition~\ref{prop.key} to $f \colon Z \to X$ 
	and may choose a $\varphi \in \Aut^{\alg}_P(X)$ such that 
	$f' \coloneqq \varphi \circ f$ satisfies
	\begin{align*}
	&\max \{ \, \dim (f' \times f')^{-1}(X_{Q}^{(2)}) \, , \ \dim (\textrm{d}f')^{-1}(\ker \textrm{d}\rho)^\circ \, \} \\
	&\leq \dim Z + \dim P - \dim Q \, ,
	\end{align*}
	since $\pi \circ f \colon Z \to P$ is finite (see condition~\ref{thm.Product_d}). Note that
	\[
		\dim Z + \dim P - \dim Q = 2n - \dim Q \leq \dim X -1 - \dim Q = 0 \, ,
	\]
	since $\rho \colon X \to Q$ is a principal $\GG_a$-bundle.
	Thus by
	Lemma~\ref{lem.stick_together}:
	\begin{align*}
		\dim Z_{Q, \rho \circ f'}^{(2)} &\leq \max\{\, 0 \, , \ \dim Z_{X, f'}^{(2)} \, \} \\
		\dim \ker \textrm{d}(\rho \circ f')^\circ &\leq \max\{\, 0 \, , \ \dim \ker (\textrm{d}f')^\circ \, \}
	\end{align*}
	where we compute $Z_{Q, \rho \circ f'}^{(2)}$ and $Z_{X, f'}^{(2)}$ with respect
	to $\rho \circ f'$ and $f'$, respectively. 
	Thus $\theta_{f'} \leq \theta_{\rho \circ f'} \leq \max\{0, \theta_{f'}\}$, 
	which implies (as $\theta_f = \theta_{f'} \geq 0$)
	\begin{equation}
		\label{eq_theta}
		\theta_f = \theta_{f'} = \theta_{\rho \circ f'} \, .
	\end{equation}
	Note that $\rho \circ f' \colon Z \to Q$ is finite, since $\pi \circ f' = \pi \circ f$ is finite.
	Hence, applying Corollary~\ref{cor.lower_theta} to $\rho \circ f' \colon Z \to Q$ yields 
	a morphism $g \colon Z \to X$ such that $\rho \circ g = \rho \circ f'$
	and $\theta_g < \theta_{\rho \circ f'}$. Thus, we get $\theta_g < \theta_f$ by~\eqref{eq_theta}.
	Since $\pi \circ g = \pi \circ f'$ is finite, this completes the proof of the claim.
	\end{proof}
	
	By condition~\ref{thm.Product_d}, the composition $\eta \circ r \colon Z \to P$ is finite.
	In particular, $r \colon Z \to Q$ is finite and since $Z$ is affine, there exists a morphism
	$f \colon Z \to X$ such that $\rho \circ f = r$; see
	Corollary~\ref{cor.lower_theta}. By the finiteness of $\pi \circ f = \eta \circ r$,
	we can iteratively apply the claim in order to get a morphism $g \colon Z \to X$
	such that $\pi \circ g \colon Z \to P$ is finite and $\theta_g < 0$. In particular,
	$g \colon Z \to X$ is proper, and, thus, $g \colon Z \to X$ is an embedding
	by Remark~\ref{rem.theta-invariant}.
\end{proof}

\section{Applications: Embeddings into algebraic groups}
\label{sec:appl:groups}

In this section we apply the results from Section~\ref{sec:EmbIntoPriBund} 
in order to construct embeddings
of smooth affine varieties into characterless algebraic groups.

In the entire section, we use the language of and results about algebraic groups, with more notions showing up in later subsections. For the basic results on algebraic groups we refer
to~\cite{Hu1975Linear-algebraic-g} and for the basic results about Lie algebras and root systems
we refer to~\cite{Hu1978Introduction-to-Li}.

\subsection{Embeddings into a product of  the form $\AA^m \times H$}

In this subsection, we study embeddings of smooth affine varieties
into varieties of the from $\AA^m \times H$ where $H$ is a characterless algebraic group.
While this is of independent interest, for us it is also a preparation to establish Theorem~\ref{thmintro:main}; compare with the outline of the proof in the introduction. 

\begin{corollary}
	\label{cor.product}
	Let $H$ be a characterless algebraic group and let $Z$ be a smooth affine variety with
	\begin{equation}
		\label{Eq.Estimate_embbed_prod}
		\tag{$\ast$}
		2 \dim Z + 1 \leq m+ \dim H \, .
	\end{equation}
	If $\dim Z \leq m$, then $Z$ admits an embedding into $\AA^m \times H$.
\end{corollary}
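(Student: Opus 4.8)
The plan is to obtain this as an application of Theorem~\ref{thm.Product}, combined with Remark~\ref{rem.thm.Product} and Proposition~\ref{prop.sufficiently_transitive_on_fibers_quotient}, after first reducing to the case $m=\dim Z$. Set $n\coloneqq\dim Z$. If $n=0$, then $Z$ is a finite set of reduced points and embeds into $\AA^m\times H$ as soon as $\dim(\AA^m\times H)=m+\dim H\geq 1$, which holds by~\eqref{Eq.Estimate_embbed_prod}; so assume $n\geq 1$. Since $\dim Z\leq m$, we can write $\AA^m\times H\cong\AA^n\times H'$ with $H'\coloneqq\AA^{m-n}\times H$. Here $H'$ is again a connected characterless algebraic group (a direct product of copies of $\GG_a$ with $H$; note that $H$, being characterless, has no non-constant invertible functions and is in particular connected), and~\eqref{Eq.Estimate_embbed_prod} together with $n\geq 1$ gives $\dim H'=(m-n)+\dim H\geq(2n+1)-n=n+1\geq 2$. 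As an embedding of $Z$ into $\AA^n\times H'$ is the same as an embedding of $Z$ into $\AA^m\times H$, it therefore suffices to prove the statement when $m=n$, with $H$ connected characterless of dimension at least $2$ and $2n+1\leq n+\dim H$.

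Now assume $m=n$ and put $X\coloneqq\AA^n\times H$, a smooth irreducible affine variety. I would apply Theorem~\ref{thm.Product} with $P\coloneqq\AA^n$ and $\pi\colon X\to P$ the projection onto the first factor. Regarding $G\coloneqq\AA^n\times H$ as an algebraic group with normal subgroup $H$, the morphism $\pi$ is precisely the algebraic quotient $G\to G/H$; hence Proposition~\ref{prop.sufficiently_transitive_on_fibers_quotient}, which applies because $H$ is connected, characterless, and of dimension $\geq 2$, tells us that $\pi$ is smooth and that $\Aut^{\alg}_P(X)$ acts sufficiently transitively on each fiber of $\pi$, which is condition~\ref{thm.Product_b}. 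For conditions~\ref{thm.Product_a} and~\ref{thm.Product_c}, choose a one-dimensional unipotent subgroup $U\subseteq H$ (it exists since $H$ is spanned by its one-dimensional unipotent subgroups and $\dim H\geq 1$, cf.~\cite[Lemma~1.1]{Po2011On-the-Makar-Liman}), take $\rho\colon X\to Q\coloneqq\AA^n\times(H/U)$ to be the quotient by right translation by $U$, a principal $\GG_a$-bundle since $U\cong\GG_a$ is unipotent, and take $\eta\colon Q\to P$ to be the projection onto the first factor, so that $\eta\circ\rho=\pi$. Finally $\dim X=n+\dim H\geq 2n+1=2\dim Z+1$, $P=\AA^{\dim Z}$, and $\eta$ has the obvious section $a\mapsto(a,eU)$, so Remark~\ref{rem.thm.Product} yields condition~\ref{thm.Product_d}. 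Theorem~\ref{thm.Product} then produces an embedding of $Z$ into $X=\AA^n\times H$, completing the proof.

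I do not expect a genuine obstacle in this argument: once Theorem~\ref{thm.Product} is available, the corollary is a matter of arranging its hypotheses. The only place the extra assumption $\dim Z\leq m$ enters is the reduction step, where $m-\dim Z$ copies of $\GG_a$ are absorbed into the group factor so that the resulting group has dimension at least $2$, which is exactly what is needed for Proposition~\ref{prop.sufficiently_transitive_on_fibers_quotient} to apply to the fibers of $\pi$; and~\eqref{Eq.Estimate_embbed_prod} is precisely the inequality that simultaneously guarantees this and the dimension bound $\dim X\geq 2\dim Z+1$ required to run the embedding method.
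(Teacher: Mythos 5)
Your argument is correct and essentially follows the paper's proof: both reduce to $X\cong\AA^{\dim Z}\times G'$ with $G'$ connected characterless, and both then apply Theorem~\ref{thm.Product} together with Remark~\ref{rem.thm.Product}, taking $\pi$ to be the first projection, $\rho$ the quotient by a one-dimensional unipotent subgroup $U\subseteq G'$, and $\eta$ the induced projection. The one small divergence is in how hypothesis~\ref{thm.Product_b} is checked. The paper invokes Example~\ref{exa.Enough_transitivity_alg_group} to get sufficient transitivity of $\Aut^{\alg}(G')$ on $G'$ and (tacitly) uses that automorphisms of the factor $G'$ lift to automorphisms of the product $\AA^{\dim Z}\times G'$ that commute with $\pi$; this gives sufficient transitivity of $\Aut^{\alg}_P(X)$ on the fibers for $G'$ of any dimension, so no separate $\dim Z=0$ case is needed. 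You instead view $X=\AA^n\times H'$ as an algebraic group with $H'$ as a normal subgroup and invoke Proposition~\ref{prop.sufficiently_transitive_on_fibers_quotient} for the quotient $X\to X/H'$. That verifies condition~\ref{thm.Product_b} directly and explicitly, but since the proposition requires $\dim H'\geq 2$, you must treat $\dim Z=0$ separately, which you do. Both verifications are valid: the paper's is a quicker shortcut exploiting the product structure, while yours reuses the proposition built for general algebraic quotients, at the modest cost of a base-case check.
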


\begin{proof}
	We may and do assume that $H$ is connected. 
	We set $d \coloneqq \dim Z \leq m$ and $G\coloneqq \AA^{m-d} \times H$. Since $G$ is a connected
	characterless algebraic group, 
	$\Aut^{\alg}(G)$ acts sufficiently transitively
	on $G$ by Example~\ref{exa.Enough_transitivity_alg_group}.

	Let $X = \AA^d \times G \simeq \AA^m \times H$. Since
	$\dim G = m + \dim H - d \geq d+1 \geq 1$ due to~\eqref{Eq.Estimate_embbed_prod}
	and since $G$ is characterless, we may and do choose a one-dimensional 
	unipotent subgroup $U \subseteq G$. Let $Q = \AA^d \times G/U$.
	We apply Theorem~\ref{thm.Product} and Remark~\ref{rem.thm.Product}
	to the natural projections
	\[
		\pi \colon X \to \AA^d \, , \quad
		\rho \colon X \to Q \quad \textrm{and} \quad
		\eta \colon Q \to \AA^d 
	\]
	and get our desired embedding $Z \to X$.
\end{proof}

\begin{remark}\label{rem:reproveHKS}
	Corollary~\ref{cor.product} gives us back the Holme-Kaliman-Srinivas embedding theorem, 
	when we take for $H$ the trivial group.
\end{remark}

\subsection{Embeddings into a product of the form $\AA^m \times (\SL_2)^s$}
In this subsection we study the special case $\AA^m \times (\SL_2)^s$.
The main result of the subsection is 
Proposition~\ref{prop.product_SL_2}, which is an analog of Corollary~\ref{cor.product} with a weaker dimension condition. 
This result will be used in order to get
optimal dimension conditions for embeddings into characterless algebraic groups of low dimension in Subsection~\ref{sec.lowdimenion}.

\begin{prop}
	\label{prop.product_SL_2}
	Let $s, m \geq 0$ be integers and let $Z$ be a smooth affine variety with
	\begin{equation}
		\label{Eq.Estimate_embbed_prod_SL2}
		\tag{$\ast\ast$}
		2 \dim Z + 1 \leq m  + \dim \left( (\SL_2)^s \right) \, .
	\end{equation}
	If $\dim Z \leq m+s$, then $Z$
	admits an embedding into $\AA^m \times (\SL_2)^s$.
\end{prop}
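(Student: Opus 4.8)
The plan is to prove the statement by induction on $s$, using a closed embedding $\AA^2\hookrightarrow\SL_2$ to peel off factors of $\SL_2$ as long as the dimension hypothesis is not sharp, and to handle the sharp case by a direct application of Theorem~\ref{thm.Product}. For the reduction, I would first note that $\SL_2$ contains $\AA^2$ as a closed subvariety --- concretely the slice where the lower-left entry is $1$, which one also recognizes as the trivialization of the principal $\GG_a$-bundle $\SL_2\to\SL_2/U_0\simeq\AA^2\setminus\{0\}$ (with $U_0$ a one-dimensional unipotent subgroup) over a closed $\AA^1\subset\AA^2\setminus\{0\}$. Hence for $s\ge 1$ there is a closed embedding $\AA^{m+2}\times(\SL_2)^{s-1}\hookrightarrow\AA^m\times(\SL_2)^s$. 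Assuming $\dim Z>m$ (the case $\dim Z\le m$ being Corollary~\ref{cor.product} with $H=(\SL_2)^s$) and $2\dim Z+1<m+\dim((\SL_2)^s)$, one has $2\dim Z+1\le(m+2)+\dim((\SL_2)^{s-1})$ and $\dim Z\le m+s\le(m+2)+(s-1)$, so the inductive hypothesis for the pair $(m+2,s-1)$ --- with base case $s=0$ the Holme--Kaliman--Srinivas theorem, recovered in Remark~\ref{rem:reproveHKS} --- produces an embedding $Z\hookrightarrow\AA^{m+2}\times(\SL_2)^{s-1}$, hence $Z\hookrightarrow\AA^m\times(\SL_2)^s$. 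This leaves only the sharp case $2\dim Z+1=m+\dim((\SL_2)^s)$, in which $X\coloneqq\AA^m\times(\SL_2)^s$ has dimension exactly $2\dim Z+1$ and, by elementary arithmetic, $m<\dim Z\le 2m+1$.

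For the sharp case I would apply Theorem~\ref{thm.Product} to $X=\AA^m\times(\SL_2)^s$ itself, regarded as the underlying variety of the connected characterless group $G=\GG_a^m\times(\SL_2)^s$. Picking a one-dimensional unipotent subgroup $U\subseteq G$ makes $\rho\colon X\to Q\coloneqq G/U$ a principal $\GG_a$-bundle, so condition~\ref{thm.Product_a} holds and $\dim X=2\dim Z+1$ by assumption. What remains is to construct a smooth morphism $\pi\colon X\to P\coloneqq\AA^{\dim Z}$ factoring as $\pi=\eta\circ\rho$ for some $\eta\colon Q\to P$ admitting a section, such that $\Aut^{\alg}_P(X)$ acts sufficiently transitively on every fiber of $\pi$ (conditions~\ref{thm.Product_b} and~\ref{thm.Product_c}). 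Given such a $\pi$, Remark~\ref{rem.thm.Product} upgrades a Noether normalization $Z\to\AA^{\dim Z}$ to a morphism $r$ satisfying condition~\ref{thm.Product_d}, and Theorem~\ref{thm.Product} then yields the embedding $Z\hookrightarrow X=\AA^m\times(\SL_2)^s$.

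The construction of $\pi$ is the crux and the step I expect to be hardest. The naive candidates do not work: the projection onto the $\AA^m$-factor only reaches $\AA^m$, and any attempt to make a factor $\SL_2$ map onto a positive-dimensional piece of $P$ produces a fiber containing a rigid factor such as $\GG_m\times\AA^1$, on which no automorphism group acts $2$-transitively; likewise a group quotient $G\to G/H$ isomorphic to $\AA^{\dim Z}$ forces $H\supseteq(\SL_2)^s$ and hence $\dim Z\le m$ again (more conceptually, for $H$ semisimple $G/H$ has non-vanishing rational homology, so by Theorem~\ref{mthm:surjectiononhomology} it cannot receive a finite surjection from every $Z$). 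One therefore needs a $\pi$ whose fibers are flexible varieties assembled from copies of $\SL_2$ and affine spaces; here I would exploit the action of $\SL_2$ on $\AA^2$ --- for instance the multiplication map $\SL_2\times\AA^2\to\AA^2$, which is smooth and surjective with every fiber isomorphic to $\SL_2$ --- together with the closed embedding $\AA^2\hookrightarrow\SL_2$ from the reduction step and Proposition~\ref{prop.sufficiently_transitive_on_fibers_quotient}, so as to secure at once the flexibility of the fibers of $\pi$ and the extension of the fiberwise automorphisms to $\Aut^{\alg}_P(X)$. Verifying that the resulting $\pi$ is simultaneously smooth, surjective onto $\AA^{\dim Z}$, compatible with $\rho$, and sufficiently transitively fibered is, I expect, the genuinely delicate part of the proof.
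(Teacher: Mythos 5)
There is a genuine gap, and you have in fact located it yourself: the sharp case $2\dim Z+1=m+3s$ is not resolved, and your own remarks about $G/H$ and Theorem~\ref{mthm:surjectiononhomology} show why the strategy you suggest cannot work. The inductive reduction $\AA^2\hookrightarrow\SL_2$ correctly peels off a factor of $\SL_2$ when $2\dim Z+1< m+3s$, but it merely shifts the problem: starting from $(m,s)$ one may land exactly on the sharp case for some $(m+2k,s-k)$. At the sharp dimension bound $\dim X=2\dim Z+1$, Theorem~\ref{thm.Product} forces $\dim P=\dim Z$ and one needs a finite surjection $Z\to P$ whose fibers of $\pi\colon X\to P$ carry a sufficiently transitive $\Aut_P^{\alg}(X)$-action. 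You observe that every candidate for $P$ that isn't an affine space runs into the homological obstruction of Proposition~\ref{prop:intronomapsZtoG/H} (and that a group-theoretic quotient $G/H\simeq\AA^{\dim Z}$ forces $H\supseteq(\SL_2)^s$ and $\dim Z\le m$), and you do not actually construct $\pi$; the sketch involving $\SL_2\times\AA^2\to\AA^2$ is left as an unverified hope. This is exactly the wall the authors discuss in Section~\ref{chp.limits_of_our_methods}, and it is why they do \emph{not} prove Proposition~\ref{prop.product_SL_2} by invoking Theorem~\ref{thm.Product}.

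The paper's route is substantively different and sidesteps the obstruction. It never sets up a $\pi\colon X\to P$ in the sense of Theorem~\ref{thm.Product}. Instead it works directly with (restricted) $\theta$-invariants: starting from a Noether normalization $r\colon Z\to\AA^d$ made transversal to coordinate hyperplanes via Kleiman's theorem, it stratifies $Z$ into pieces $Z_I=r^{-1}(H_I)$ and bounds $\theta_r|_{Z_I}\le d-|I|$. Then it uses the refined Proposition~\ref{prop.lower_theta} (with a nontrivial closed set $A$ and the strata $Z_I$) to inductively lift $r$ across a tower of trivial $\GG_a$-bundles whose targets are $(\AA^2\setminus\{0\})^l\times\AA^{d-l}$, dropping the restricted $\theta$-invariant by one on each stratum with $l\notin I$. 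Finally it lifts through the $\GG_a$-bundles $\SL_2\to\AA^2\setminus\{0\}$ and a linear projection using Corollary~\ref{cor.lower_theta}, which brings $\theta_g$ below zero under the stated hypothesis $2d+1\le m+3s$. The point is that the punctured affine plane $\AA^2\setminus\{0\}$, not a homogeneous space of $(\SL_2)^s$, is what makes the counting close. If you want to salvage your induction, you would still need an independent argument for the sharp case, and the paper's stratified-$\theta$ method is essentially that argument --- so you would end up reproducing it anyway.
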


\begin{remark}
	\label{rem.product_SL_2}
	In Proposition~\ref{prop.product_SL_2} we may replace the condition $\dim Z \leq m$ by
	$s-1 \leq m$ in case $m + 3s$ is odd and by $s-2 \leq m$ in case $m+3s$ is even. Indeed,
	if $m+3s$ is odd, then $s-1 \leq m$ implies that
	\[
		\dim Z \stackrel{\eqref{Eq.Estimate_embbed_prod_SL2}}{\leq} \frac{m + 3s-1}{2}
		= \frac{m + (s-1) + 2s}{2} \leq \frac{m + m +2s}{2} = m+s  
	\]
	and if $m + 3s$ is even, then $s-2 \leq m$ implies that
	\[
		\dim Z \stackrel{\eqref{Eq.Estimate_embbed_prod_SL2}}{\leq}
		\frac{m + 3s - 2}{2} = \frac{m + (s-2) + 2s}{2} \leq \frac{m + m +2s}{2} = m+s \, .
	\]
\end{remark}

\begin{proof}[Proof of Proposition~\ref{prop.product_SL_2}]
	We may and do assume that $\dim Z \geq 0$.
	If $\dim Z < s$, we may replace $Z$ with $Z \times \AA^{s- \dim Z}$
	and the assumed dimension estimates are still satisfied; thus, we may and do assume
	that $\dim Z \geq s$. We set $d \coloneqq \dim Z$.
	
	Choose a finite morphism $r \colon Z \to \AA^{d}$ 
	(which exists due to Noether Normalization). 
	For any subset $I \subseteq \{1, \ldots, s\}$, let
	\[
		H_I \coloneqq \set{(x_1, \ldots, x_d) \in \AA^d}
					    {\textrm{$x_i = 0$ for each $i \in I$ and $x_i \neq 0$ for each $i \not\in I$}} \, .
	\]
	Moreover, we denote for $k \in \{0, \ldots, d\}$
	\[
		Z_k \coloneqq \set{z \in Z}{\rank \textrm{d}_z r = k} \, ,
	\]
	which is a locally closed subset of $Z$. 
	Note that $\dim Z_k \leq k$. (Indeed, since
	$r |_{Z_k} \colon Z_k \to r(Z_k)$ is a finite morphism, there
	exists $z \in Z_k$ with $\dim Z_k = \rank \textrm{d}_z(r |_{Z_k}) \leq \rank \textrm{d}_z r = k$.)
	Using Kleiman's Transversality Theorem~\cite[2. Theorem]{Kl1974The-transversality} there exists
	an affine linear automorphism $\varphi$ of $\AA^d$ such that
	\[
		\dim Z_k \cap r^{-1}(\varphi^{-1}(H_I)) \leq \dim Z_k + \dim H_I - d \leq k - |I| \, .
	\]
	Hence, after replacing $r$ by $\varphi \circ r$, we may assume that the dimension of the locally 
	closed subset
	\[
		Z_{k, I} \coloneqq  Z_k \cap r^{-1}(H_I) \subseteq Z
	\]
	is less than or equal to $k - |I|$. Since $\rank \textrm{d}_z r = k$ for each $z \in Z_{k, I}$ we get
	\begin{equation}
		\label{eq.estimate2}
		\dim (\ker \textrm{d} r )^\circ |_{Z_{k, I}} 
		\leq \dim (\ker \textrm{d} r ) |_{Z_{k, I}} = \dim Z_{k, I} + (d-k) \leq d - |I| \, .
	\end{equation}
	Now, for $I \subseteq \{1, \ldots, s\}$, let
	\[
		Z_I \coloneqq r^{-1}(H_I) = \bigcup_{k=0}^d Z_{k, I} \subset Z \, .
	\]
	Since $\dim Z_{k, I} \leq k - |I|$ for all $k$, we get 
	$\dim Z_I \leq d- |I|$. Since $r |_{Z_I} \colon Z_I \to \AA^d$ is finite, 
	the projection $\dim (Z_I)^{(2)}_{\AA^d} \to Z_I$ 
	to one of the factors is quasi-finite. Hence, 
	$\dim (Z_I)^{(2)}_{\AA^d} \leq \dim Z_I \leq d - |I|$,
	and by the estimate~\eqref{eq.estimate2} we get $\dim (\ker \textrm{d} r )^\circ |_{Z_I} \leq d - |I|$.
	In total the restricted $\theta$-invariants of $r$ satisfy
	\begin{equation}
		\label{eq.theta_r_restricted}
		\theta_r |_{Z_I} \leq d - |I| \quad \textrm{for all $I \subseteq\{1, \ldots, s\}$} \, .
	\end{equation}

	We set
	\[
		X_l	 \coloneqq (\AA^2 \setminus \{(0,0)\})^{l-1} \times \AA^2 \times \AA^{d-l}   \, ,
		\quad
		Q_l  \coloneqq (\AA^2 \setminus \{(0,0)\})^{l} \times \AA^{d-l}\, ,
	\]
	and
	\[
		\rho_l \coloneqq \id_{(\AA^2 \setminus \{(0,0)\})^{l-1}} \times \pr_1 \times \id_{\AA^{d-l}}  
			\colon X_l \to Q_{l-1}\, ,
	\]
	where $\pr_1 \colon \AA^2 \to \AA^1$ denotes 
	the projection onto the first factor. 
	Next, for $l \in \{0, \ldots, s\}$, we
	construct inductively finite
	morphisms $g_l \colon Z \to Q_l$
	such that we have $\rho_{l} \circ g_l = g_{l-1}$, $\theta_{g_l} |_{Z_I} \leq \theta_{g_{l-1}} |_{Z_I}$ 
	for all $I \subseteq \{1, \ldots, s \}$, and $\theta_{g_l}|_{Z_I} \leq \theta_{g_{l-1}}|_{Z_I} -1$ for 
	$I\subset \{1, \ldots, s\}$ with $l\notin I$.
	
	Let $g_0 \colon Z \to Q_0$ be the finite morphism $r \colon Z \to \AA^d$.
	By induction, we assume that the finite morphism
	\[
		g_{l-1} = \left(g_{l-1}^{(1)}, \ldots, g_{l-1}^{(l+d-1)}\right) \colon Z \to Q_{l-1}
	\]
	is already constructed for some $1 \leq l \leq s$. We apply Proposition~\ref{prop.lower_theta}
	to the trivial $\GG_a$-bundle $\rho_l \colon X_l \to Q_{l-1}$, the closed subset
	\[
		A \coloneqq 
		r^{-1}\left(\set{(x_1, \ldots, x_d) \in \AA^d}{x_l = 0}\right) =
		\bigcup_{I \subseteq \{1, \ldots, s\} \colon l \in I} Z_I \subseteq Z \, ,
	\]
	and the morphism
	\[
		g_A \colon A \to Q_l \, , \quad a \mapsto 
		\left(g_{l-1}^{(1)}(a), \ldots, g_{l-1}^{(2l-1)}(a), 1, 
		g_{l-1}^{(2l)}(a), \ldots, g_{l-1}^{(l+d-1)}(a)\right) 
	\]
	in order to get a morphism 
	$g_l \colon Z \to X_l$ with $\rho_l \circ g_l = g_{l-1}$, $g_l |_A = g_A$
	and 
	\begin{equation}
		\label{eq.lower_theta_piece}
		\theta_{g_l} |_{Z_I} \leq \theta_{g_{l-1}} |_{Z_I}-1 \quad 
		\textrm{for all $I \subseteq \{1, \ldots, s\}$ with $l \not\in I$}
	\end{equation}
	(here we used that $Z_I \subseteq Z \setminus A$ for each $I$ with $l \not\in I$).
	Since $\rho_l \circ g_l = g_{l-1}$, we get
	$\theta_{g_l} |_{Z_I} \leq \theta_{g_{l-1}} |_{Z_I}$
	for all $I \subseteq \{1, \ldots, s \}$. Since $g_l|_A=g_A$ and since $g_{l-1}^{(2l-1)}$
	is equal to the $l$-th coordinate function of $r$, we get that
	the image of $g_l$ is contained in $Q_l$. Thus, we may consider $g_l$ as a morphism
	$Z \to Q_l$.
	
	Now, 
	\begin{align*}
		\theta_{g_s} |_{Z_I} \stackrel{\eqref{eq.lower_theta_piece}}{\leq} \theta_r |_{Z_I} - (s-|I|) 
									  \stackrel{\eqref{eq.theta_r_restricted}}{\leq} d-s \quad 
		\textrm{for all $I \subseteq \{1, \ldots, s\}$} \, .
	\end{align*}
	Since $r \colon Z \to \AA^d$ factorizes through $g_s \colon Z \to Q_s$,
	$\AA^d = \bigcup_I H_I$, and $Z_I = r^{-1}(H_I)$,
	Remark~\ref{rem.restricted_theta} implies that
	\begin{equation}
		\label{eq.theta_g_s}
		\theta_{g_s} = 
		\max_{I \subseteq \{1, \ldots, s\}} \theta_{g_s} |_{Z_I} \leq d-s \, .
	\end{equation}
	
	Finally, let 
	$\rho \coloneqq \eta^s \times \pr \colon (\SL_2)^s \times \AA^m \to Q_s = 
	(\AA^2 \setminus \{(0, 0)\})^s \times \AA^{d-s}$, where
	$\eta \colon \SL_2 \to \AA^2 \setminus \{(0, 0)\}$ denotes the projection to the first column
	and $\pr \colon \AA^m \to \AA^{d-s}$ is a surjective linear map (such a map exists, since $d \leq s+m$).
	Since $\eta$ is a $\GG_a$-bundle, $\rho$ is the composition of $2s+m-d$ many $\GG_a$-bundles.
	Thus, Corollary~\ref{cor.lower_theta} gives us a morphism $g \colon Z \to (\SL_2)^s \times \AA^m$
	such that $\rho \circ g = g_s$ and $\theta_g \leq \theta_{g_s} - (2s+m-d)$. Using the
	estimate~\eqref{eq.theta_g_s} gives us $\theta_g \leq 2d - (3s+m)$. By 
	\eqref{Eq.Estimate_embbed_prod_SL2}, we have
	$2d - (3s+m) < 0$. Since $g$ is proper, Remark~\ref{rem.theta-invariant} implies that
	$g$ is an embedding.
	\end{proof}

\subsection{Embeddings into (semi)simple algebraic groups}
In this sub\-section, we consider arbitrary (semi)simple algebraic groups 
$G$ as targets of embeddings of smooth
affine varieties $Z$. 
However, while doing so the price we have to pay is to relax
the dimension condition $2 \dim Z + 1 \leq \dim G$ in order to get an embedding of $Z$ into $G$.

From the point of view of the outline of the proof of Theorem~\ref{thmintro:main} in the introduction, the content of this subsection can be summarized as follows. Fixing a semisimple algebraic group $G$, we start with two lemmas (Lemma~\ref{lem:max_parabolic} and Lemma~\ref{lem:G_red_commutator}) that yield closed subvarieties $X_P\subseteq G$ with $X_P \simeq \AA^m\times H$ based on a choice of a parabolic subgroup $P\subseteq G$. 
We then formulate a version of Theorem~\ref{thmintro:main} for semisimple algebraic groups where the dimension assumption on $Z$ depends on dimension estimates for a chosen parabolic subgroup $P$ and its subgroups $P^u$ and $R_u(P)$ defined below. Finally, we provide dimension estimates for $P^u$ and $R_u(P)$ for good choices of $P\subset G$ for simple algebraic groups based on the classification of simple Lie algebras (Proposition~\ref{prop.estimate}). This suffices to yield Theorem~\ref{thmintro:main} by applying Corollary~\ref{cor.product} to $X_P$ for a good choice of $P$ (Theorem~\ref{thm.simple}).

We recall a few notions. If $G$ is an algebraic group, we denote by $R(G)$ the radical, by $R_u(G)$ its unipotent radical, 
and by $G^u$ the closed subgroup of $G$ that is generated by all unipotent elements of $G$.
Recall that a connected algebraic group $G$ is called semisimple if $G$ is non-trivial and $R(G)$ is trivial,
and it is called simple if $G$ is non-commutative and contains no non-trivial proper closed connected
normal subgroup. Moreover, a non-trivial algebraic group $G$ is called reductive if $R_u(G)$ is trivial.

For lack of a reference, we insert a proof of the following classical facts:

\begin{lemma}
	\label{lem:max_parabolic}
	Let $G$ be a semisimple algebraic group and let $P \subset G$ be a parabolic subgroup.
	Then the following holds:
	\begin{enumerate}[leftmargin=*]
		\item \label{lem:max_parabolic1} If $L$ is a Levi factor of $P$, then $R_u(P) \rtimes L^u =  P^u$.
		\item \label{lem:max_parabolic1.5} If $P^- \subset G$ is an opposite parabolic subgroup to $P$, 
															then we have $\dim G = \dim R_u(P) + \dim P$ and   
															the product morphism
															\[
																R_u(P^{-}) \times R_u(P) \times (P \cap P^-)^u \to G
															\] 
															is an embedding\footnote{By convention, for us embeddings are closed. In contrast, the product morphism $G \times G \times G \to G$
															restricts to an isomorphism
															$R_u(P^{-}) \times R_u(P) \times (P \cap P^-) \to W$, 
															where $W$ is
															\emph{open} in $G$.}.
		\item \label{lem:max_parabolic2} If $P$ is a maximal parabolic subgroup of $G$
		(i.e. $P$ is a maximal proper subgroup of $G$ that contains a Borel subgroup),
		then $\dim P^u = \dim P - 1$.
	\end{enumerate}
\end{lemma}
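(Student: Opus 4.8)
The plan is to prove the three parts in order, using standard structure theory of parabolic subgroups as in~\cite{Hu1975Linear-algebraic-g} together with the Levi decomposition.

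For part~\ref{lem:max_parabolic1}: write $P = R_u(P) \rtimes L$ for a Levi factor $L$. Since $R_u(P)$ is unipotent, it consists of unipotent elements, so $R_u(P) \subseteq P^u$. Next I would observe that $L$ is reductive, hence $L^u$ (the subgroup generated by the unipotent elements of $L$) is the derived subgroup $(L, L)$, which is semisimple, and that every unipotent element of $P$ maps into a unipotent element of $L = P/R_u(P)$ under the quotient, hence lies in $R_u(P) \cdot L^u$. Conversely $R_u(P) \rtimes L^u$ is generated by unipotent elements (being a product of a unipotent group and a group generated by unipotents), and it is normal in $P$ because $R_u(P)$ is normal in $P$ and $L^u = (L,L)$ is normalized by $L$ (as $(L,L)$ is characteristic in $L$) and centralized modulo $R_u(P)$ by nothing larger matters — more precisely $R_u(P)\rtimes L^u$ is the preimage of the characteristic subgroup $(L,L)$ of $L$ under $P \to L$, hence normal in $P$. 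Since $P^u$ is the smallest such normal subgroup whose quotient has no unipotent elements (equivalently, is a torus), and $L/(L,L)$ is a torus, we conclude $R_u(P) \rtimes L^u = P^u$.

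For part~\ref{lem:max_parabolic1.5}: here I would invoke the standard fact (Bruhat-type decomposition for opposite parabolics, cf.~\cite[\S28,\S30]{Hu1975Linear-algebraic-g}) that the product morphism $R_u(P^-) \times (P \cap P^-) \times R_u(P) \to G$ is an open immersion onto the big cell, where $P \cap P^-$ is a common Levi factor of $P$ and $P^-$. Taking dimensions gives $\dim G = \dim R_u(P^-) + \dim(P\cap P^-) + \dim R_u(P) = \dim P + \dim R_u(P)$, using $\dim R_u(P^-) = \dim R_u(P)$ and $\dim P = \dim(P\cap P^-) + \dim R_u(P)$. For the embedding claim, I would replace $(P\cap P^-)$ by its subgroup $(P \cap P^-)^u = ((P\cap P^-),(P\cap P^-))$ from part~\ref{lem:max_parabolic1}; the restricted product morphism $R_u(P^-) \times R_u(P) \times (P\cap P^-)^u \to G$ is then injective with injective differential (being a restriction of an open immersion, reordering factors is harmless up to the group structure), and it is proper because the source is a closed subvariety of $R_u(P^-)\times (P\cap P^-)\times R_u(P)$ which maps isomorphically onto a locally closed — in fact closed in the big cell — subset; the key point to check is that the image is actually closed in $G$, which follows since $R_u(P^-) \times R_u(P) \times (P\cap P^-)^u$ is a closed subvariety of the affine variety $R_u(P^-)\times R_u(P)\times(P\cap P^-)$ and the latter is closed in $G$ precisely when... — this closedness is the one subtle point and I would argue it via the fact that $(P\cap P^-)^u$ is closed in $(P \cap P^-)$ and the big cell is the full preimage, so the image is closed in the big cell, and one checks it is closed in $G$ by a dimension/orbit argument or by noting the complement of the big cell is the locus where a certain minor vanishes while on our subvariety that minor is a unit.

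For part~\ref{lem:max_parabolic2}: by part~\ref{lem:max_parabolic1}, $\dim P^u = \dim R_u(P) + \dim L^u = \dim R_u(P) + \dim(L,L)$, while $\dim P = \dim R_u(P) + \dim L = \dim R_u(P) + \dim(L,L) + \dim Z(L)^\circ$, where $Z(L)^\circ$ is the connected center of the Levi factor, a torus. So $\dim P - \dim P^u = \dim Z(L)^\circ$, the rank of the central torus of $L$. For a maximal parabolic $P$ corresponding to omitting a single simple root from a base, the central torus of the Levi has dimension exactly $1$; this is the standard description of maximal parabolics via subsets of the simple roots of cardinality one less than the rank. Hence $\dim P^u = \dim P - 1$.

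The main obstacle I expect is the closedness assertion in part~\ref{lem:max_parabolic1.5} (the footnote explicitly flags the contrast between the closed embedding we claim and the open-cell isomorphism), since one must carefully separate the closed subvariety $R_u(P^-)\times R_u(P)\times (P\cap P^-)^u$ from the merely open big cell; the rest is bookkeeping with Levi decompositions and the standard theory of parabolics.
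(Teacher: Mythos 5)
Your parts~\ref{lem:max_parabolic1} and~\ref{lem:max_parabolic2} are essentially correct; they take slightly different routes from the paper but are valid. For~\ref{lem:max_parabolic1}, the paper argues more directly: one inclusion is obvious, and the reverse follows from $\pi|_L \colon L \xrightarrow{\sim} P/R_u(P)$ giving $L^u \simeq (P/R_u(P))^u = P^u/R_u(P)$, whereas you characterize $P^u$ as the smallest normal subgroup with torus quotient; both work. For~\ref{lem:max_parabolic2}, the paper proves the ``central torus has dimension one'' claim by tracking the groups $Z_i = (\bigcap_{j\le i}\ker\alpha_j)^\circ$ and identifying $R(Q)\simeq Z_{r-1}$ for $Q = P/R_u(P)$ via~\cite[\S30.2]{Hu1975Linear-algebraic-g}; you invoke this as a standard fact, which is acceptable but leaves more to the reader.

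The genuine gap is exactly the one you flagged in~\ref{lem:max_parabolic1.5}: closedness of the image $R_u(P^-)\cdot R_u(P)\cdot (P\cap P^-)^u = R_u(P^-)P^u$ in $G$, not merely in the big cell. Your two sketched options do not resolve it. Being closed in an open set does not imply closed in $G$, and the ``some minor is a unit'' argument is type-specific and not fleshed out --- you would need to exhibit, for every simple $G$ and every parabolic $P$, a global regular function cutting out the complement of the big cell that is constant on $U^-L^u U$, which is not obvious. The paper's resolution is different and worth knowing: since $P^u$ is characterless, the homogeneous space $G/P^u$ is \emph{quasi-affine} (\cite[Example~3.10]{Ti2011Homogeneous-spaces}); by Rosenlicht's theorem, orbits of unipotent groups on quasi-affine varieties are closed, so the $R_u(P^-)$-orbit through $eP^u$ is closed in $G/P^u$; pulling back along $G\to G/P^u$ shows $R_u(P^-)P^u$ is closed in $G$. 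That single idea --- quasi-affinity of $G/P^u$ plus closedness of unipotent orbits --- is the missing ingredient in your sketch and is what turns the open-cell isomorphism into a closed embedding.
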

For the proof of Lemma~\ref{lem:max_parabolic} we need the following Lemma.

\begin{lemma}
	\label{lem:G_red_commutator}
	Let $G$ be a connected reductive algebraic group. Then 
	\begin{enumerate}[label=\alph*), leftmargin=*]
		\item $G^u = [G, G]$,
		\item $G = G^u \cdot R(G)$, $G^u \cap R(G)$ is finite and $G^u$ is trivial or semisimple. 
	\end{enumerate}
\end{lemma}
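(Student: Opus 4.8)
The plan is to reduce both parts to standard structure theory of connected reductive groups. Recall that for a connected reductive group $G$ one has $G = R(G) \cdot [G,G]$ with $R(G)$ a central torus, $[G,G]$ semisimple, and $R(G) \cap [G,G]$ finite; this is the standard decomposition (see e.g.~Humphreys~\cite{Hu1975Linear-algebraic-g}). So the content of part b) will be almost entirely the identification $G^u = [G,G]$ from part a), and the observation that $[G,G]$ is either trivial (when $G$ is a torus) or semisimple.

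For part a) I would argue by two inclusions. For $[G,G] \subseteq G^u$: the commutator subgroup $[G,G]$ is semisimple (or trivial), hence it is generated by its unipotent elements — a semisimple group is generated by the root subgroups $U_\alpha$, which are unipotent — and therefore $[G,G] \subseteq G^u$. (If one prefers to avoid invoking that a semisimple group is generated by unipotents, one can instead note that $[G,G]$ is connected and characterless, since a semisimple group has no nontrivial characters, and then apply~\cite[Lemma~1.1]{Po2011On-the-Makar-Liman} already cited in the excerpt.) For the reverse inclusion $G^u \subseteq [G,G]$: consider the quotient map $q \colon G \to G/[G,G]$. The target $G/[G,G]$ is a connected commutative reductive group, hence a torus, hence contains no nontrivial unipotent elements. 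Thus $q$ kills every unipotent element of $G$, so every unipotent element lies in $[G,G]$; since $[G,G]$ is a (closed, normal) subgroup, the subgroup $G^u$ it generates also lies in $[G,G]$. Combining the two inclusions gives $G^u = [G,G]$.

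For part b): by the standard reductive decomposition $G = R(G) \cdot [G,G] = R(G) \cdot G^u$ using part a), with $R(G) \cap G^u = R(G) \cap [G,G]$ finite. Finally $G^u = [G,G]$ is semisimple unless it is trivial, which happens exactly when $G = R(G)$ is a torus; this is precisely the dichotomy claimed.

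The main obstacle here is really just bookkeeping: making sure the classical facts being invoked (that a reductive $G$ decomposes as $R(G)\cdot[G,G]$ with finite intersection, that $[G,G]$ is semisimple, that a connected commutative reductive group is a torus with no unipotents) are cited to the references already in play, and handling the degenerate torus case cleanly. No genuinely hard step is expected; the argument is a short organization of well-known structure theory.
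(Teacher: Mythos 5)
Your proof is correct, but it routes the two inclusions of part a) through different arguments than the paper does. For $[G,G]\subseteq G^u$ the paper considers the quotient $G/G^u$, observes it contains only semisimple elements and hence is a torus, and concludes commutativity; you instead invoke up front that $[G,G]$ is semisimple and therefore generated by unipotents. For $G^u\subseteq[G,G]$ the paper gives a bare-hands computation, exhibiting each root subgroup $U_\alpha$ inside $[G,G]$ via the identity $\lambda(\alpha(t)-1)=t\lambda(1)t^{-1}\lambda(1)^{-1}$; you instead pass to the quotient $G/[G,G]$, argue it is a commutative connected reductive group, hence a torus, hence kills all unipotents. In short, you and the paper each use one quotient-is-a-torus argument and one more direct argument, but for opposite inclusions. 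Your version leans a bit more on black-boxed reductive structure theory (the semisimplicity of $[G,G]$ is used already in part a), whereas the paper derives part a) from the commutator formula and only invokes the decomposition $G=R(G)\cdot[G,G]$ for part b)); the paper's version is slightly more self-contained at the cost of one explicit computation. Both are valid, and for part b) the two proofs coincide: both reduce to the standard reductive decomposition cited to Borel.
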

\begin{proof}[Proof of Lemma~\ref{lem:G_red_commutator}]
	Note that $G/G^u$ is an algebraic torus, as it is connected and contains only semisimple elements; 
	see \cite[Proposition 21.4B and Theorem 19.3]{Hu1975Linear-algebraic-g}. In particular,
	$G^u$ contains the commutator subgroup $[G, G]$.
	
	On the other hand, for every non-trivial character $\alpha$ of 
	a maximal algebraic torus $T \subset G$, $[G, G]$ contains the root subgroup $U_\alpha \subset G$ with 
	respect to $T$, since for each isomorphism 
	$\lambda \colon \GG_a \to U_\alpha$ we have
	\[
		\lambda(\alpha(t)-1) = \lambda(\alpha(t))\lambda(1)^{-1} =
		t \lambda(1) t^{-1} \lambda(1)^{-1} \in [G, G] \quad
		\textrm{for every $t \in T$} \, .
	\] 
	Hence $[G, G]$ contains $G^u$ and thus we get the first statement.
	
	The second statement follows from the first statement and 
	from~\cite[Proposition 14.2, Ch.~IV]{Bo1991Linear-algebraic-g}.
\end{proof}

\begin{proof}[Proof of Lemma~\ref{lem:max_parabolic}]
	\eqref{lem:max_parabolic1}: By definition we have $R_u(P) \ltimes L = P$. Hence, we get
	an inclusion $R_u(P) \rtimes L^u \subset P^u$. On the other hand, the inclusion $P^u \subset P$
	induces an inclusion $P^u/R_u(P) \subset P / R_u(P)$ and $\pi \colon P \to P / R_u(P)$
	restricts to an isomorphism $\pi |_L \colon L \to  P / R_u(P)$. Hence, 
	\[
		L^u \xlongrightarrow[\simeq]{\pi |_{L^u}} (P / R_u(P))^u = P^u/R_u(P) \, ,
	\]
	which implies~\eqref{lem:max_parabolic1}.
	
	\eqref{lem:max_parabolic1.5}: By~\cite[Example 3.10]{Ti2011Homogeneous-spaces},
	the algebraic quotient $G/P^u$ is quasi-affine. Let $P^-$ be an opposite parabolic subgroup to $P$.
	The orbit in $G /P^u$ 	through the class of the neutral 
	element under the natural action of the unipotent radical $R_u(P^-)$ is therefore closed in $G /P^u$. 
	This implies that $R_u(P^-)P^u$ is closed in $G$. 
	
	By definition, $L \coloneqq P \cap P^-$ is a Levi factor of $P$ (and also of $P^-$). 
	The product morphism induces an isomorphism of varieties
	\[
		R_u(P^-) \times R_u(P) \times L \stackrel{\simeq}{\longrightarrow} R_u(P^-) \times P 	\stackrel{\simeq}{\longrightarrow} R_u(P^-) P
	\]
	and $R_u(P^-)P$ is an open dense subset of $G$ (see 
	\cite[Proposition 14.21]{Bo1991Linear-algebraic-g} or \cite[Appendix~B.2]{FeSa2019Uniqueness-of-embe}). 
	This gives the first statement. Due to 
	\eqref{lem:max_parabolic1}, we have $P^u = R_u(P) \rtimes L^u$.
	Hence, the above isomorphism restricts to an
	isomorphism:
	\[
	R_u(P^-) \times R_u(P) \times L^u \stackrel{\simeq}{\longrightarrow} R_u(P^-) P^u \, .
	\]
	
	\eqref{lem:max_parabolic2}:
	By construction $G / R_u(G)$ is a reductive or trivial algebraic group. 
	In the second case, $G$ contains no maximal 
	parabolic subgroup and thus we may assume that $G/R_u(G)$
	is reductive. Since $R_u(G)$ is contained 
	in every Borel subgroup of $G$, it follows that $R_u(G)$ is contained in $P$. Thus $P / R_u(G)$
	is a maximal parabolic subgroup of $G/R_u(G)$. Since $R_u(G) \subset P^u$, we get an isomorphism
	\[
		P / P^u \simeq (P/R_u(G)) / (P^u/R_u(G)) \, .
	\]
	Thus, it is enough to show \eqref{lem:max_parabolic2} in case $G$ is reductive
	(and by definition it is connected).
	
	Let $B \subset G$ be a Borel subgroup, $T \subset B$ a maximal algebraic torus,
	$r = \dim T$, $r$ is the rank of $G$, and let $\frak{X}(T)$ be the group of characters of $T$.
	We may choose simple roots $\alpha_1, \ldots, \alpha_r \in  \frak{X}(T)$ 
	such that $P$ is the parabolic subgroup
	with respect to $\alpha_1, \ldots, \alpha_{r-1}$; see \cite[Theorem in \S29.3]{Hu1975Linear-algebraic-g}.
	Let
	\[
		Z_i = \left(\bigcap_{j=1}^i \ker(\alpha_j)\right)^\circ \subset T \quad
		\textrm{for each $i =1, \ldots, r$}
	\]
	where $H^\circ$ denotes the identity component of a closed subgroup $H \subset G$.
	Since by definition $\alpha_1, \ldots, \alpha_r$ form a basis of $\frak{X}(T) \otimes_\ZZ \RR$, it
	follows that over $\ZZ$ the elements $\alpha_1, \ldots, \alpha_r$ are linearly independent.
	Hence, the dimension of $Z_i$ is $r-i$. From~\cite[\S30.2]{Hu1975Linear-algebraic-g}, it follows that
	\[
		R(P) = R_u(P) \rtimes Z_{r-1}  \, .
	\]
	Now, let $Q\coloneqq P / R_u(P)$. Thus $Q$ is a connected reductive algebraic group.
	Since $P^u$ is the preimage of $Q^u$ under the canonical projection
	$\pi \colon P \to Q$, we get
	\[
		P / P^u \simeq Q / Q^u \, .
	\]
	Note that $\pi(R(P))$ is a normal solvable connected subgroup of $Q$ and thus
	$\pi(R(P)) \subset R(Q)$. On the other hand, $\pi^{-1}(R(Q))$ is a normal, connected subgroup
	and it is solvable, as $R_u(P) = \ker(\pi)$ and $R(Q)$ are solvable. The latter two statements
	together imply that $\pi^{-1}(R(Q)) = R(P)$ and thus 
	\[
		 R(Q) \simeq Z_{r-1} \, .
	\]
	By Lemma~\ref{lem:G_red_commutator}, $Q = Q^u \cdot R(Q)$ and $R(Q) \cap Q^u$ is finite. Thus, 
	the canonical projection $Q \to Q / R(Q)$ restricts to an isogeny $Q^u \to Q/R(Q)$. In total:
	\[
		1 = \dim Z_{r-1} = \dim R(Q) = \dim Q - \dim Q^u = \dim Q /Q^u = \dim P / P^u \, .\qedhere
	\]
\end{proof}

\begin{theorem}
	\label{thm.simple}
	Let $G$ be a simple algebraic group, let $k \geq 0$ be an integer,  
	and let $Z$ be a smooth affine variety. If $\dim G +k > 2 \dim Z + 1$,
	then $Z$ admits an embedding into $G\times \AA^k$.
\end{theorem}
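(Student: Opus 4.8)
I would deduce Theorem~\ref{thm.simple} from Corollary~\ref{cor.product}. The crucial point is to realize inside $G$ a \emph{closed} subvariety $X_P$ of codimension one that is isomorphic, as a variety, to $\AA^{m_0}\times H$ for some characterless algebraic group $H$; then $X_P\times\AA^k\cong\AA^{m_0+k}\times H$ is closed in $G\times\AA^k$, and an embedding $Z\hookrightarrow X_P\times\AA^k$ furnished by Corollary~\ref{cor.product} finishes the proof. One may assume $G$ connected, since otherwise one passes to its identity component---still simple of the same dimension---and composes an embedding into it with the inclusion into $G\times\AA^k$.

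\textbf{Building $X_P$.} I would fix a maximal parabolic $P\subseteq G$, an opposite parabolic $P^-$, and set $L\coloneqq P\cap P^-$ (a common Levi factor of $P$ and $P^-$). By Lemma~\ref{lem:max_parabolic} the product morphism $R_u(P^-)\times R_u(P)\times L^u\to G$ is a closed embedding, so its image $X_P$ is a closed subvariety of $G$; the same lemma gives $P^u=R_u(P)\rtimes L^u$, $\dim P^u=\dim P-1$ (as $P$ is maximal), and $\dim G=\dim R_u(P)+\dim P$, while $R_u(P)$ and $R_u(P^-)$ have equal dimension. Since $R_u(P)$ and $R_u(P^-)$ are unipotent they are isomorphic as varieties to affine spaces, whence $X_P\cong\AA^{m_0}\times L^u$ with $m_0\coloneqq 2\dim R_u(P)$; and $\dim X_P=\dim R_u(P^-)+\dim P^u=\dim R_u(P)+\dim P-1=\dim G-1$. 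As a Levi factor $L$ is reductive, so Lemma~\ref{lem:G_red_commutator} shows that $L^u=[L,L]$ is semisimple or trivial, hence characterless; thus $H\coloneqq L^u$ is an admissible choice for Corollary~\ref{cor.product}.

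\textbf{Applying Corollary~\ref{cor.product}.} I would set $m\coloneqq m_0+k=2\dim R_u(P)+k$ and verify the two hypotheses of Corollary~\ref{cor.product} for $Z$, $m$, and $H=L^u$, with target $\AA^m\times L^u\cong X_P\times\AA^k$. First, $m+\dim H=\dim X_P+k=\dim G-1+k$, so the assumption $\dim G+k>2\dim Z+1$ gives $2\dim Z+1\le\dim G+k-1=m+\dim H$, which is condition~\eqref{Eq.Estimate_embbed_prod}. The only delicate point is the remaining condition $\dim Z\le m$: from $2\dim Z+1\le\dim G+k-1$ one has $\dim Z\le\tfrac12(\dim G+k)-1$, so (since $k\ge 0$) it suffices to choose $P$ with
\[
    \dim G\le 4\dim R_u(P)+2\,,\qquad\text{i.e.}\qquad \dim L\le 2\dim R_u(P)+2\,.
\]
That every connected simple algebraic group admits such a maximal parabolic is exactly Proposition~\ref{prop.estimate}. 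Granting that choice of $P$, Corollary~\ref{cor.product} yields an embedding $Z\hookrightarrow\AA^m\times L^u\cong X_P\times\AA^k\subseteq G\times\AA^k$, as required.

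\textbf{Where the difficulty lies.} Everything above is formal once Lemmas~\ref{lem:max_parabolic} and~\ref{lem:G_red_commutator} are available; the real content is Proposition~\ref{prop.estimate}. The inequality $\dim L\le 2\dim R_u(P)+2$ fails for many maximal parabolics---for example, a poor choice of node in $E_8$ leaves a Levi of type $E_7$, far too large---so one must select the right node in each Dynkin type (a middle node for $A_n$, the Siegel node for $C_n$, and analogously for the remaining types) and verify the bound, the exceptional types $G_2,F_4,E_6,E_7,E_8$ requiring explicit root-system bookkeeping. That is the step I expect to be the main obstacle.
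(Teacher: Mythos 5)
Your proposal is correct and follows the same route as the paper: you identify the closed codimension-one subvariety $X_P\cong\AA^{2\dim R_u(P)}\times L^u\subseteq G$ via Lemma~\ref{lem:max_parabolic}, reduce to Corollary~\ref{cor.product}, and relegate the case-by-case Dynkin-type analysis to Proposition~\ref{prop.estimate}---precisely what the paper does, packaged there as Proposition~\ref{prop.embedding_general} combined with Proposition~\ref{prop.estimate}. Note only that Proposition~\ref{prop.estimate} yields the slightly stronger bound $\dim P^u\le 3\dim R_u(P)$, of which you (and the paper, in Proposition~\ref{prop.embedding_general}) in fact need only the weaker $\dim P^u-1\le 3\dim R_u(P)$, equivalently your $\dim L\le 2\dim R_u(P)+2$.
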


%


For the proof of Theorem~\ref{thm.simple} we will use the two next propositions.

\begin{prop}
	\label{prop.embedding_general}
	Let $G$ be a semisimple algebraic group and let $k \geq 0$ be an integer. 
	If there exists a parabolic subgroup $P \subset G$
	with $\dim P^u-1 \leq 3 \dim R_u(P)$, then for every smooth affine variety $Z$ with
	\[
	2 \dim Z + \dim P - \dim P^u < \dim G + k
	\]
	there exists an embedding of $Z$ into $G \times \AA^k$.
\end{prop}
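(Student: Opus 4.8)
The plan is to use the parabolic subgroup $P$ to cut out a large closed affine subvariety of $G\times\AA^k$ that is a product of an affine space with a characterless algebraic group, and then to invoke Corollary~\ref{cor.product}.

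First I would pick a parabolic subgroup $P^-\subset G$ opposite to $P$ and set $L\coloneqq P\cap P^-$, a common Levi factor of $P$ and $P^-$. By Lemma~\ref{lem:max_parabolic}\eqref{lem:max_parabolic1.5} the product morphism
\[
	R_u(P^-)\times R_u(P)\times L^u\longrightarrow G
\]
is an embedding; write $X_P\coloneqq R_u(P^-)P^u$ for its image, a closed (hence affine) subvariety of $G$. The groups $R_u(P^-)$ and $R_u(P)$ are unipotent, hence isomorphic as varieties to affine spaces, and they have the same dimension $n_0\coloneqq\dim R_u(P)$ (compare the dimension count in Lemma~\ref{lem:max_parabolic}\eqref{lem:max_parabolic1.5}, which also gives $n_0=\dim G-\dim P$). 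Since $L$ is a Levi factor it is connected reductive, so Lemma~\ref{lem:G_red_commutator} shows that $L^u=[L,L]$ is trivial or semisimple, and in either case characterless. Thus $X_P\simeq\AA^{2n_0}\times L^u$, so $X_P\times\AA^k\simeq\AA^{m}\times L^u$ with $m\coloneqq 2n_0+k$ is a closed subvariety of $G\times\AA^k$. It therefore suffices to embed $Z$ into $\AA^m\times L^u$, which I would do with Corollary~\ref{cor.product}, taking $H\coloneqq L^u$.

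It remains to check the two numerical hypotheses of Corollary~\ref{cor.product}. By Lemma~\ref{lem:max_parabolic}\eqref{lem:max_parabolic1} we have $\dim P^u=n_0+\dim L^u$, and together with $n_0=\dim G-\dim P$ this turns the standing assumption $2\dim Z+\dim P-\dim P^u<\dim G+k$ into $2\dim Z<2n_0+k+\dim L^u=m+\dim L^u$; since everything is an integer this yields $2\dim Z+1\le m+\dim L^u=m+\dim H$, which is condition~\eqref{Eq.Estimate_embbed_prod}. For the remaining condition $\dim Z\le m$ I would use the hypothesis $\dim P^u-1\le 3n_0$: it gives $\dim L^u=\dim P^u-n_0\le 2n_0+1\le m+1$, whence $2\dim Z+1\le m+\dim L^u\le 2m+1$ and therefore $\dim Z\le m$. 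Corollary~\ref{cor.product} then provides an embedding $Z\hookrightarrow\AA^m\times L^u\simeq X_P\times\AA^k\subseteq G\times\AA^k$, as desired.

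I do not anticipate a genuine obstacle: once Lemmas~\ref{lem:max_parabolic} and~\ref{lem:G_red_commutator} and Corollary~\ref{cor.product} are available, the argument is essentially dimension bookkeeping. The only point that needs care is the precise role of the hypothesis $\dim P^u-1\le 3\dim R_u(P)$: it is exactly what is needed to control the auxiliary bound $\dim Z\le m$, since the affine-space factor of $X_P\times\AA^k$ must have dimension at least $\dim Z$ in order to apply Corollary~\ref{cor.product}. One should also keep in mind the degenerate case where $L^u$ is trivial (for instance when $P$ is a Borel subgroup), in which $X_P$ is just an affine space and the conclusion reduces to the Holme--Kaliman--Srinivas theorem.
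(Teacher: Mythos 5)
Your proposal is correct and follows essentially the same route as the paper's proof: you use Lemma~\ref{lem:max_parabolic}\eqref{lem:max_parabolic1.5} to exhibit a closed subvariety $\AA^{2\dim R_u(P)}\times (P\cap P^-)^u$ of $G$, reduce to Corollary~\ref{cor.product} with $H=(P\cap P^-)^u$, and do the same dimension bookkeeping (your bound $\dim L^u\le 2n_0+1$ is exactly the paper's \eqref{eq.estimate_H} rewritten). The only cosmetic difference is that you spell out via Lemma~\ref{lem:G_red_commutator} why $L^u$ is characterless, a point the paper leaves implicit.
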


\begin{prop}
	\label{prop.estimate}
	Let $G$ be a simple algebraic group. Then there exists a maximal
	parabolic subgroup $P \subset G$ such that $\dim P^u \leq 3 \dim R_u(P)$.
\end{prop}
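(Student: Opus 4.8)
The plan is to go through the classification of simple algebraic groups (equivalently, simple Lie algebras) type by type, and for each type exhibit a concrete maximal parabolic $P$ for which the inequality $\dim P^u \le 3\dim R_u(P)$ holds. Recall from Lemma~\ref{lem:max_parabolic}\eqref{lem:max_parabolic1} that $P^u = R_u(P)\rtimes L^u$ where $L$ is a Levi factor, so $\dim P^u = \dim R_u(P) + \dim L^u$; hence the desired inequality is equivalent to $\dim L^u \le 2\dim R_u(P)$. Writing $n = \dim R_u(P)$ for the number of positive roots that do not lie in the Levi, and noting $\dim R_u(P) = \dim R_u(P^-)$, we may also phrase things via $\dim G = \dim L + 2n$ from Lemma~\ref{lem:max_parabolic}\eqref{lem:max_parabolic1.5}; since $L^u$ is the semisimple part of $L$ (plus possibly a torus that does not contribute to $L^u$), $\dim L^u$ is just $\dim G$ minus $2n$ minus the rank contribution, so the inequality becomes a purely combinatorial statement about root-counting once a node of the Dynkin diagram is chosen.

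Concretely, for a maximal parabolic $P_i$ corresponding to omitting the simple root $\alpha_i$, the unipotent radical $R_u(P_i)$ is spanned by the positive roots whose coefficient of $\alpha_i$ is nonzero, and the Levi $L_i$ has semisimple part with Dynkin diagram obtained by deleting node $i$. So I would, for each irreducible Dynkin type $A_r, B_r, C_r, D_r, E_6, E_7, E_8, F_4, G_2$, pick a good node: the natural candidates are an end node giving a ``cominuscule'' or near-cominuscule parabolic, where $R_u(P)$ tends to be large (abelian of dimension comparable to $\dim G/P$) while $L^u$ is a smaller classical group. For instance in type $A_r$ one takes the parabolic with Levi $\mathrm{GL}_a\times\mathrm{GL}_b$, $a+b=r+1$, where $\dim R_u(P) = ab$, $\dim L^u = (a^2-1)+(b^2-1)$, and one optimizes $a,b$ (roughly $a\approx b\approx (r+1)/2$) so that $(a^2-1)+(b^2-1)\le 2ab$, i.e.\ $(a-b)^2 \le 2$; this holds when $|a-b|\le 1$, so such a node always exists. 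Similar bookkeeping handles $B,C,D$ with a Levi of the form $\mathrm{GL}_a\times(\text{smaller classical group})$, and for the five exceptional types one simply checks a single well-chosen maximal parabolic by hand using the known root-system data (e.g.\ for $E_8$ the node giving $\dim R_u(P)=57$, $\dim L^u = \dim E_7 = 133$ — wait, that fails, so one instead picks the node with Levi of type $A_7$ or $D_7$ and verifies the numbers there).

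The main obstacle I anticipate is precisely the exceptional types and the ``middle'' nodes: for some naive choices of $P$ the Levi is too big (e.g.\ omitting an end node of $E_8$ adjacent to the long tail gives Levi $E_7$, and $\dim E_7 = 133 > 2\cdot 57$), so one must be careful to select the node for which the balance works, and then verify the inequality numerically from tables of root multiplicities (Bourbaki). This is a finite check but genuinely requires the classification; there is no uniform one-line argument, which is why the statement is phrased ``there exists a maximal parabolic''. I would organize the verification in a table listing, for each type, the chosen node $i$, the value $\dim R_u(P_i)$, the value $\dim P_i^u = \dim G - \dim R_u(P_i) - \mathrm{rank}\,L_i^{ss,\perp}$ (or more simply $\dim R_u(P_i) + \dim L_i^u$), and confirming $\dim P_i^u \le 3\dim R_u(P_i)$. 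Once Proposition~\ref{prop.estimate} is in hand, combined with Lemma~\ref{lem:max_parabolic} it produces the subvariety $X_P \simeq \AA^{\dim R_u(P)}\times H$ of $G$ with $H = R_u(P^-)\cdot(\text{stuff})$ characterless, feeding directly into Proposition~\ref{prop.embedding_general} and hence Theorem~\ref{thm.simple}.
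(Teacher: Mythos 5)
Your proposal is correct and takes essentially the same route as the paper: via Lemma~\ref{lem:max_parabolic}, reduce the desired inequality to $\dim L^u \le 2\dim R_u(P)$ (equivalently $\dim G \ge 2\dim L^u + 1$), then run through the classification of simple types, choosing a balanced ``middle'' node for the classical types and a well-chosen node for the exceptional types. The paper simply carries the finite check to completion, with explicit floor-function node choices for $A_n,B_n,C_n,D_n$ and a small table for $E_6,E_7,E_8,F_4,G_2$ (e.g.\ for $E_8$ it uses the node with Levi of type $A_1\times A_2\times A_4$ rather than your proposed $A_7$ or $D_7$ Levi, though those choices also satisfy the inequality).
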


\begin{proof}[Proof of Theorem~\ref{thm.simple}]
	Let $P \subset G$ be a maximal parabolic subgroup as in Proposition~\ref{prop.estimate}.
	By Lemma~\ref{lem:max_parabolic}\eqref{lem:max_parabolic2} we have 
	$\dim P - \dim P^u = 1$.
	Thus the theorem follows from Proposition~\ref{prop.embedding_general}.
\end{proof}

\begin{proof}[Proof of Proposition~\ref{prop.embedding_general}]
	By Lemma~\ref{lem:max_parabolic}\eqref{lem:max_parabolic1.5} 
	there exists an embedding 
	of $\AA^m \times H$ into $G \times \AA^k$, where $m = 2 \dim R_u(P) +k$ and 
	$H = (P \cap P^-)^u$ for
	an opposite parabolic subgroup $P^- \subset G$ of $P$. By Lemma~\ref{lem:max_parabolic}\eqref{lem:max_parabolic1} we have $\dim H = \dim P^u - \dim R_u(P)$.
	Now, we get 
	\begin{equation}
		\label{eq.estimate_H}
		\dim H -1 = \dim P^u - \dim R_u(P) -1 \leq 2 \dim R_u(P)\leq m \, . 
	\end{equation}
	By Lemma~\ref{lem:max_parabolic}\eqref{lem:max_parabolic1.5} we get 
	$\dim G = \dim R_u(P)+ \dim P$. Hence
	\begin{align*}
		2 \dim Z + 1 &\leq \dim G - \dim P + \dim P^u +k \\
						 &= \dim P^u + \dim R_u(P)+k \\
						 &= \dim H + m \, .
	\end{align*}
	Thus, we get $\dim Z \leq \frac{\dim H - 1 + m}{2} \leq m$ by~\eqref{eq.estimate_H}.
	Hence, the proposition follows from Corollary~\ref{cor.product}.
\end{proof}

\begin{proof}[Proof of Proposition~\ref{prop.estimate}]
	Let $P \subset G$ be a maximal parabolic subgroup.
	By Lemma~\ref{lem:max_parabolic}\eqref{lem:max_parabolic1.5},\eqref{lem:max_parabolic2}
	we get $\dim R_u(P) + \dim P^u + 1 = \dim G$. Let $L$ be a Levi factor of $P$. Then,
	by Lemma~\ref{lem:max_parabolic}\eqref{lem:max_parabolic1} 
	$\dim P^u = \dim L^u + \dim R_u(P)$.
	Now, if we find a maximal parabolic subgroup $P$ in $G$ such that
	\begin{equation}
		\label{Eq.Condition_for_max_parabolic}
		\dim G \geq 2 \dim L^u + 1 \, ,
	\end{equation}
	then we are done, as in this case we would get 
	\begin{align*}
		3\dim R_u(P) &= \dim R_u(P) + \dim P^u + 1 - 1 + 2\dim R_u(P) - \dim P^u \\
							&= \dim G -1 + 2 \dim R_u(P) - \dim P^u \\
							&\geq 2 \dim L^u + 2 \dim R_u(P) - \dim P^u \\
							&=\dim P^u \, .
	\end{align*}
	
	We treat first the case, when $G$ is one of 
	the classical Lie-types $A_n, B_n, C_n$ or $D_n$.
	For $n \geq 1$, 
	we denote by $a_n, b_n, c_n, d_n$ the dimension of the Lie algebra of type $A_n, B_n, C_n$
	and $D_n$, respectively. By \cite[\S1.2]{Hu1978Introduction-to-Li}, we get
	\[
		a_n = n^2 + 2n \, , \quad b_n = c_n = 2 n^2 + n \, , \quad d_n = 2n^2 - n \, .
	\]
	Now we choose $s \in \mathbb{N}_0$ according to the Lie-type as follows
	\begin{center}
		\begin{tabular}{l|l|l}
			Lie-type & Dynkin diagram & s \\
			\hline
			$A_n$, $n \geq 1$ & \dynkin A{}  &  $\floor*{(n+ 1)/2}$ \\
			$B_n$, $n \geq 2$ & \dynkin B{} & $\floor*{(4n + 1)/6}$ \\
			$C_n$, $n \geq 3$ & \dynkin C{} & $\floor*{(4n + 1)/6}$ \\
			$D_n$, $n \geq 4$ & \dynkin D{} & $\floor*{(4n - 1)/6}$ \\
		\end{tabular}
	\end{center}
	where $\floor*{x}$ means the largest integer that is smaller or equal than $x$. 
	In order to specify the maximal parabolic subgroup $P$ of $G$, let $I$ be the set of all simple roots in the Dynkin diagram
	of $G$, except the simple root at position $s$, when we count from the 
	left in the Dynkin diagram. We let $P$ be the standard parabolic subgroup with respect to $I$
	and some fixed chosen Borel subgroup of $G$ and we let (as above) $L \subset P$ be a Levi factor.
	Then $L^u$ is semisimple or trivial (by Lemma~\ref{lem:G_red_commutator}) 
	and the corresponding Dynkin diagram is the  Dynkin diagram of $G$
	with the vertex $s$ (counted from the left) deleted;
	see~\cite[\S30.2]{Hu1975Linear-algebraic-g}. 
	For example, if the Lie type of $G$ is $B_4$,
	then $s = \floor*{17/6} = 2$ and we have the following Dynkin diagrams
	(the cross \dynkin[parabolic=1,x/.style={gray,very thick}] A1 means to delete the corresponding simple root):
	\[
		\textrm{$G$:} \ \dynkin B4 \qquad \qquad
		\textrm{$P$:} \ \dynkin[parabolic=2,x/.style={gray,very thick}] B4
		\quad \implies \quad \dim L^u = a_1 + b_2 = 13 \, .
	\]
	By considering the Dynkin diagrams for the classical types
	$A_n, B_n, C_n$ and $D_n$ and by using that $a_1 = b_1 = c_1$, $d_2 = 2 a_1$ and $d_3 = a_3$,
	we get
	\begin{center}
		\begin{tabular}{l|l|rl}
			Lie-type & s & $\dim L^u$ & \\
			\hline
			$A_n$, $n \geq 1$ & $\floor*{\frac{n+ 1}{2}} \geq 1$ &  $a_{s-1} + a_{n-s}$
			= &$2s^2-(2n+2)s+n^2+2n-1$ \\[0.1cm]
			$B_n$, $n \geq 2$ & $\floor*{\frac{4n + 1}{6}} \geq 1$ & $a_{s-1} + b_{n-s}$
			= &$3s^2-(4n+1)s+2n^2+n-1$\\[0.1cm]
			$C_n$, $n \geq 3$ & $\floor*{\frac{4n + 1}{6}} \geq 2$ & $a_{s-1} + c_{n-s}$
			= &$3s^2-(4n+1)s+2n^2+n-1$ \\[0.1cm]
			$D_n$, $n \geq 4$  & $\floor*{\frac{4n - 1}{6}} \geq 2$ & $a_{s-1} + d_{n-s}$
			= &$3(s+1)^2-(4n+5)(s+1)$ \\
			& & & $+2n^2+3n+1$.
		\end{tabular}
	\end{center}
From this table we conclude $\dim G -2\dim L^u \geq 0$ as desired. 
We provide the detailed calculation. For $A_n$ with $n\geq 1$, we note 
	\begin{align*}
   		\dim G -2\dim L^u-1=\;&-n^2 + 4(n + 1)\floor*{\frac{n+1}{2}} - 4\floor*{\frac{n+1}{2}}^2 - 2n + 1
   		\\=\;&
		\left\{\begin{array}{l}
			-n^2 + 4(n + 1)\frac{n+1}{2} - 4\left(\frac{n+1}{2}\right)^2 - 2n + 1 \\
			\hspace{0.5cm} \text{if $n$ is odd}\\
        	-n^2 + 4(n + 1)\frac{n}{2} - 4\left(\frac{n}{2}\right)^2 - 2n + 1 \\
        	\hspace{0.5cm} \text{if $n$ is even}
		\end{array}\right. \\
		=\;&
		\left\{\begin{array}{cl}
			2 & \text{if $n$ is odd}\\
			1 & \text{if $n$ is even}
		\end{array}\right. \\
		\geq\;&0 \, .
	\end{align*}
		For $B_n$ and $C_n$ with $n\geq2$ and $n\geq3$, respectively, and $x\in\{0,-2,-4\}$ such that $6$ divides $4n+x$, we calculate
	\begin{align*}
	 \dim G -2\dim L^u -1=\;&-2n^2 + 2(4n + 1)\floor*{\frac{4n+1}{6}} - 6\floor*{\frac{4n+1}{6}}^2 - n + 1\\
	 =\;&-2n^2 + 2(4n + 1)\frac{4n+x}{6}-\frac{(4n+x)^2}{6} - n + 1 \\
	 =\;&\frac{2n^2+n}{3}+1+\frac{2x-x^2}{6}\geq\frac{2n^2+n}{3}+1-4 \\
	 \geq\;&0 \, .
 \end{align*}
 For $D_n$ with $n\geq4$ and $x\in\{0,2,4\}$ such that $6$ divides $4n+x$, we calculate
 \begin{align*}
	 \dim G -2\dim L^u-1=\;&-2n^2+2(4n + 5)\floor*{\frac{4n+5}{6}} - 6\floor*{\frac{4n+5}{6}}^2\\
	 &-7n - 3\\
	 =\;&-2n^2+2(4n + 5)\frac{4n+x}{6} - \frac{(4n+x)^2}{6} \\
	 &-7n - 3\\
	=\;&\frac{2n^2-n}{3}+\frac{10x-x^2}{6}-3 \\
	\geq\;&\frac{2 n^2-n}{3}-3 \\
	\geq\;&0 \, .
  \end{align*}
  
  Now, for the exceptional Lie-types, we choose $P$ as in the table below  and
  the estimate~\eqref{Eq.Condition_for_max_parabolic} follows from the same table
  (again the cross \dynkin[parabolic=1,x/.style={gray,very thick}] A1 
  in the dynkin diagram of $P$ means, to remove the corresponding simple root):
  
  \begin{center}
		\begin{tabular}{l|l|l|l|l}
			Lie-type & Dynkin diagram & $\dim G$ & Dynkin diagram  & $\dim L^u$ \\
						  & of $G$ & & of $P$ \\
			\hline
			$E_6$ & \dynkin E6 & 78 & \dynkin[parabolic=8,x/.style={gray,very thick}] E6 &  $a_1 + a_2 + a_2 = 19$ \\
			$E_7$ & \dynkin E7 & 133 & \dynkin[parabolic=8,x/.style={gray,very thick}] E7 & $a_1 + a_2 + a_3 = 26$ \\
			$E_8$ & \dynkin E8 & 248 & \dynkin[parabolic=8,x/.style={gray,very thick}] E8 & $a_1 + a_2 + a_4 = 35$ \\
			$F_4$ & \dynkin F4 & 52 & \dynkin[parabolic=8,x/.style={gray,very thick}] F4 &  $b_3 = 21$ \\
			$G_2$ & \dynkin G2 & 14 & \dynkin[parabolic=2,x/.style={gray,very thick}] G2 &  $a_1 = 3$ \, .\qedhere
		\end{tabular}
  \end{center}  
\end{proof}

Having settled the case for simple algebraic groups, we go on to semisimple algebraic groups.
The following result generalizes Theorem~\ref{thm.simple}.

\begin{theorem}
	\label{thm.semisimple}
	Let $G$ be a semisimple algebraic group and let $k\geq 0$ be an integer. 
	Let $r \geq 1$ be the number of minimal normal
	closed  connected 
	subgroups of $G$. If $Z$ is a smooth affine variety with $\dim G +k > 2 \dim Z + r$, 
	then there exists an embedding of $Z$ into $G \times \AA^k$. 
\end{theorem}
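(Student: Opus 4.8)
The plan is to reduce the statement to finding a single well-chosen parabolic subgroup $P \subset G$, exactly as Theorem~\ref{thm.simple} was reduced to Proposition~\ref{prop.estimate} and Proposition~\ref{prop.embedding_general}. Indeed, Proposition~\ref{prop.embedding_general} is already formulated for an arbitrary semisimple $G$: if some parabolic $P \subset G$ satisfies $\dim P^u - 1 \le 3\dim R_u(P)$, then every smooth affine $Z$ with $2\dim Z + \dim P - \dim P^u < \dim G + k$ embeds into $G \times \AA^k$. So it suffices to exhibit a parabolic $P \subset G$ with $\dim P^u \le 3\dim R_u(P)$ and $\dim P - \dim P^u = r$: for such a $P$ the inequality $2\dim Z + \dim P - \dim P^u < \dim G + k$ becomes precisely the hypothesis $2\dim Z + r < \dim G + k$, while $\dim P^u - 1 \le 3\dim R_u(P)$ holds a fortiori.

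To build $P$, let $G_1, \dots, G_r$ be the minimal normal closed connected subgroups of $G$; these are simple, and the product morphism $G_1 \times \cdots \times G_r \to G$ is an isogeny with finite central kernel. By Proposition~\ref{prop.estimate}, each $G_i$ contains a maximal parabolic $P_i$ with $\dim P_i^u \le 3\dim R_u(P_i)$; being maximal, $P_i$ omits exactly one simple root, and $\dim P_i - \dim P_i^u = 1$ by Lemma~\ref{lem:max_parabolic}\eqref{lem:max_parabolic2}. After fixing a Borel subgroup and a maximal torus of $G$, I let $P \subset G$ be the standard parabolic that omits the corresponding simple root in each of the $r$ irreducible components of the Dynkin diagram of $G$. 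Since the root system of $G$ is the orthogonal direct sum of those of the $G_i$, the root subgroups decompose accordingly, and one checks that $R_u(P)$, $P^u$ and $\dim P - \dim P^u$ are additive over the factors — either directly, or by descending through the isogeny $\prod_i G_i \to G$, whose kernel is finite and central and hence meets every unipotent subgroup trivially. Thus $\dim R_u(P) = \sum_i \dim R_u(P_i)$, $\dim P^u = \sum_i \dim P_i^u \le 3\sum_i \dim R_u(P_i) = 3\dim R_u(P)$, and $\dim P - \dim P^u = \sum_i (\dim P_i - \dim P_i^u) = r$, as required. Plugging this $P$ into Proposition~\ref{prop.embedding_general} finishes the proof.

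The only step that needs genuine care is the additivity bookkeeping just described: one must verify that a parabolic of a semisimple group can be assembled component by component from maximal parabolics of its simple factors, and that the dimensions of $R_u(P)$, of $P^u$, and in particular of the connected center of a Levi factor of $P$ (which must come out to exactly $r$) all behave additively. This is standard structure theory of semisimple algebraic groups, and once it is in place the theorem follows immediately from Proposition~\ref{prop.estimate} and Proposition~\ref{prop.embedding_general}, just as in the simple case.
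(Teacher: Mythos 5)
Your proof is correct, and it takes a genuinely different route from the paper's. The paper first reduces to the case $G = G_1 \times \cdots \times G_r$ (an honest product of simple groups) by applying Corollary~\ref{cor:finite_etale} to the finite \'etale surjection $G_1 \times \cdots \times G_r \to G$; in the product it then takes $P = P_1 \times \cdots \times P_r$, where additivity of $\dim R_u(P)$, $\dim P^u$, and $\dim P - \dim P^u$ is immediate, and applies Proposition~\ref{prop.embedding_general}. You instead work directly in $G$, constructing the standard parabolic $P \subset G$ that omits one simple root in each irreducible component of the Dynkin diagram (equivalently, pushing $P_1 \times \cdots \times P_r$ forward along the isogeny, whose kernel is finite central and hence meets unipotent subgroups trivially), and then apply Proposition~\ref{prop.embedding_general} to $G$ itself. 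Your approach avoids Corollary~\ref{cor:finite_etale} entirely (and therefore does not need the dimension hypothesis $\dim X \geq 2\dim Z + 1$ for the transfer step — though that hypothesis is anyway available here), but shifts the burden onto the structure theory: one must check that $\dim R_u(P)$ and $\dim P^u$ are preserved under the isogeny and are additive over the simple factors, and that $\dim P - \dim P^u$ equals the number of omitted simple roots, which requires extending the argument from Lemma~\ref{lem:max_parabolic}\eqref{lem:max_parabolic2} beyond the maximal-parabolic case. You correctly flag this bookkeeping as the one step needing care and it does go through, so the argument is sound; the trade-off between the two proofs is essentially one of where the ``standard but non-trivial'' content lives.
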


\begin{proof}
	Let $G_1, \ldots, G_r$ be the minimal normal closed connected subgroups of $G$. 
	By~\cite[Theorem in \S27.5]{Hu1975Linear-algebraic-g}, the product morphism
	$G_1 \times \cdots \times G_r \to G$ is a finite \'etale surjection. In the light of
	Corollary~\ref{cor:finite_etale} we may thus assume $G = G_1 \times \cdots \times G_r$.
	Since $G_i$ is a simple algebraic group, there exists a maximal parabolic subgroup 
	$P_i \subset G_i$ such that $3 \dim R_u(P_i) \geq \dim P_i^u$, by Proposition~\ref{prop.estimate}.
	Let 
	\[
		P \coloneqq P_1 \times P_2 \times \cdots \times P_r \subset G_1 \times G_2 \times \cdots \times G_r \, .
	\]
	Then we get $P^u = P_1^u \times \cdots \times P_r^u$ and 
	$R_u(P) = R_u(P_1) \times \cdots \times R_u(P_r)$ and therefore $3 \dim R_u(P) \geq \dim P^u$.
	Since $\dim P_i - \dim P_i^u = 1$ for each $i \in \{1, \ldots, r\}$ 
	(Lemma~\ref{lem:max_parabolic}\eqref{lem:max_parabolic2}), we get $\dim P - \dim P^u = r$.
	Thus, the theorem follows from Proposition~\ref{prop.embedding_general}.
\end{proof}

\subsection{Embeddings into algebraic groups of low dimension}
\label{sec.lowdimenion}

	Our main result concerning characterless algebraic groups of low dimension 
	is the following.

	\begin{prop}
		\label{prop.lowdimension}
		Let $G$ be a characterless algebraic group with $\dim G \leq 10$
		and let $Z$ be a smooth affine variety with $2 \dim Z + 1 \leq \dim G$.
		If the Lie algebra of $G$ is non-isomorphic to
		$\mathfrak{sl}_2 \times \mathfrak{sl}_2 \times \mathfrak{sl}_2$ and non-isomorphic to
		$\mathfrak{sl}_3 \times \k$, then $Z$ admits an embedding into $G$.
	\end{prop}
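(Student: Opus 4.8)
The strategy is to reduce to cases according to the structure of the Lie algebra $\mathfrak{g}$ of $G$, and in each case exhibit $G$ (up to a finite étale cover, so that we may invoke Corollary~\ref{cor:finite_etale}) as a variety of the form $\AA^m \times H$ with $H$ characterless and $\dim Z \leq m$, so that Corollary~\ref{cor.product} applies; in the remaining low-dimensional semisimple cases we will instead call upon the sharper Proposition~\ref{prop.product_SL_2}. First I would note that, since $\Aut^{\alg}(G)$ acts sufficiently transitively on $G$ (Proposition~\ref{prop.characterization_suff_trans_algebraic_groups}) and $G$ is connected characterless, we may pass freely along finite étale surjections; in particular we may replace $G$ by any group isogenous to it, and we may assume $G$ is simply connected semisimple times a unipotent group, because a characterless group is $G^u$ which, being characterless and connected, is generated by unipotents and has reductive part semisimple (Lemma~\ref{lem:G_red_commutator}). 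Writing $G^u = S \ltimes R_u(G)$ with $S$ semisimple, the unipotent radical $R_u(G)$ contributes affine-space factors, so the combinatorially relevant input is the list of semisimple Lie algebras $\mathfrak{s}$ with $\dim \mathfrak{s} \leq 10$.

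The enumeration is short: $\mathfrak{sl}_2$ ($\dim 3$), $\mathfrak{sl}_2\times\mathfrak{sl}_2$ ($\dim 6$), $\mathfrak{sl}_2^{\times 3}$ ($\dim 9$, excluded), $\mathfrak{sl}_3$ ($\dim 8$), $\mathfrak{sp}_4\cong\mathfrak{so}_5$ ($\dim 10$), $\mathfrak{g}_2$ ($\dim 14 >10$, irrelevant), and $\mathfrak{so}_4\cong\mathfrak{sl}_2\times\mathfrak{sl}_2$. So $G$ is (isogenous to) a product of $\AA^k$ with one of: the trivial group, $\SL_2$, $\SL_2\times\SL_2$, $\SL_3$, $\Sp_4$; together with the constraint that the total dimension is $\leq 10$ and that $\mathfrak{g}\not\cong\mathfrak{sl}_2^{\times 3}$ and $\mathfrak{g}\not\cong\mathfrak{sl}_3\times\k$. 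For each, I would apply the relevant embedding result. When $G\simeq \AA^k$ this is Holme--Kaliman--Srinivas (Remark~\ref{rem:reproveHKS}). When the semisimple part is $(\SL_2)^s$ with $s\in\{1,2\}$, so $G$ is isogenous to $\AA^m\times(\SL_2)^s$ with $m = k + \dim R_u(G)$, I would invoke Proposition~\ref{prop.product_SL_2}: its dimension hypothesis $\dim Z \leq m+s$ together with $2\dim Z+1\leq \dim G = m+3s$ must be checked, and here Remark~\ref{rem.product_SL_2} is the key — for $s=1$ one needs $0\leq m$ (automatic) and for $s=2$ one needs $1\leq m$ when $m+6$ is even, i.e.\ $m$ even, or $0\le m$ when $m+6$ is odd; a case $m=0, s=2$ with $m+3s=6$ even would be exactly $\mathfrak g\cong\mathfrak{sl}_2\times\mathfrak{sl}_2$ with $\dim Z\le 2$, and since $2\dim Z+1\le 6$ forces $\dim Z\le 2$, one has $\dim Z\le 2 = m+s$ directly, so Proposition~\ref{prop.product_SL_2} still applies. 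For the cases $\mathfrak{s}=\mathfrak{sl}_3$ or $\mathfrak{sp}_4$, I would apply Corollary~\ref{cor.product} to $\AA^m\times H$ obtained from Lemma~\ref{lem:max_parabolic} applied to a suitable maximal parabolic: one needs $\dim Z \leq m$, where $m = 2\dim R_u(P) + \dim R_u(G) + k$ and $H=(P\cap P^-)^u$; using $2\dim Z+1\le \dim G$ and $\dim H - 1 \le 2\dim R_u(P)\le m$ (which is Proposition~\ref{prop.estimate} via \eqref{eq.estimate_H}) gives $\dim Z\le \tfrac{\dim H-1+m}{2}\le m$ as in the proof of Proposition~\ref{prop.embedding_general}.

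The actual content is therefore a finite bookkeeping exercise; the one genuinely delicate point I expect is the boundary cases where the generic bound $2\dim Z+1\le\dim G$ is tight and the parity constraints in Remark~\ref{rem.product_SL_2} or the inequality $\dim Z\le m$ in Corollary~\ref{cor.product} are nearly violated — concretely the $\SL_2\times\SL_2$ case with no extra affine factor (where $\mathfrak g\cong\mathfrak{sl}_2\times\mathfrak{sl}_2$, forcing $\dim Z\le 2$), and distinguishing the excluded $\mathfrak{sl}_3\times\k$ (where a naive parabolic choice would give $m=\dim Z-1$, one short) from the allowed $\mathfrak{sl}_3$ alone (where the parabolic of $\SL_3$ with $\dim R_u(P)=2$ yields $m=4\ge 3\ge\dim Z$). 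The last obstacle is the reason $\mathfrak{sl}_3\times\k$ and $\mathfrak{sl}_2^{\times 3}$ are singled out in the hypothesis: for those the present method produces an $X$ of the form $\AA^m\times H$ with $m$ one too small, and no maximal-parabolic or $(\SL_2)^s$-factorisation repairs the deficit. So the plan is: (1) reduce to $G = \AA^k \times S$ with $S$ simply connected semisimple via Corollary~\ref{cor:finite_etale}; (2) enumerate the finitely many $S$ with $\dim S\le 10$; (3) dispatch each by Holme--Kaliman--Srinivas, Proposition~\ref{prop.product_SL_2} (with Remark~\ref{rem.product_SL_2}), or Corollary~\ref{cor.product}/Proposition~\ref{prop.embedding_general}, checking the numerical hypotheses case by case.
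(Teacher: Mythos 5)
Your overall plan is the same as the paper's (Levi decomposition to get $G \simeq \AA^m \times H$ as a variety, pass to a product of simply connected simple factors via Corollary~\ref{cor:finite_etale}, then dispatch each case via Holme--Kaliman--Srinivas, Proposition~\ref{prop.product_SL_2}, or Theorem~\ref{thm.simple}), but there is a genuine gap in your case enumeration.

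You mark $\mathfrak{sl}_2^{\times 3}$ as ``excluded'' and then restrict the $(\SL_2)^s$ analysis to $s \in \{1,2\}$. However, the hypothesis only excludes $\mathfrak g \cong \mathfrak{sl}_2^{\times 3}$ (dimension~$9$), not $\mathfrak g \cong \mathfrak{sl}_2^{\times 3} \times \k$ (dimension~$10$). The latter corresponds to $G$ isomorphic (as a variety, after Levi and Corollary~\ref{cor:finite_etale}) to $\AA^1 \times (\SL_2)^3$, i.e.\ $s=3$, $m=1$, with $\dim G = 10 \le 10$ and the dimension bound allowing $\dim Z \le 4$. This case is inside the proposition's hypothesis and your enumeration omits it entirely. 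It is handled by Remark~\ref{rem.product_SL_2}: since $m+3s = 10$ is even, one needs $s-2 \le m$, i.e.\ $1 \le 1$, which holds, whence $\dim Z \le m+s = 4$ and Proposition~\ref{prop.product_SL_2} applies. The paper's proof reaches the $(\SL_2)^s$ cases with $s$ unrestricted and uses the parity dichotomy of Remark~\ref{rem.product_SL_2} to show that the only problematic pair is $(s,m)=(3,0)$, which is exactly the excluded Lie algebra $\mathfrak{sl}_2^{\times 3}$.

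A smaller issue: in your parity discussion for $s=2$ you have the two conditions of Remark~\ref{rem.product_SL_2} swapped --- the requirement $s-1 \le m$ is for $m+3s$ \emph{odd} and $s-2 \le m$ for $m+3s$ \emph{even}, not the other way round. You partly repair this by checking $m=0$, $s=2$ directly, but be aware the parity labels are inverted in what you wrote. Also, for the $\SL_3$ and $\Sp_4$ cases you essentially rerun the proof of Proposition~\ref{prop.embedding_general}/Theorem~\ref{thm.simple} rather than simply citing Theorem~\ref{thm.simple} with $k=m$, which is what the paper does; both are valid, but citing the theorem is shorter.
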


	Before giving the proof, let us shortly comment on the above result. Proposition~\ref{prop.lowdimension} implies that
	for any characterless algebraic group $G$ with $\dim G \leq 8$ the condition
	$2 \dim Z + 1 \leq \dim G$ suffices to get an embedding of $Z$ into $G$. 

	\begin{question}
		Does every $4$-dimensional smooth affine variety embed into the algebraic group
		$\SL_2 \times \SL_2 \times \SL_2$ or into $\SL_3 \times \GG_a$?
	\end{question}

	\begin{proof}[Proof of Proposition~\ref{prop.lowdimension}]
		Let $G$ be a characterless algebraic group of dimension $\leq 10$ such that its Lie algebra 
		is neither isomorphic to 
		$\mathfrak{sl}_2 \times \mathfrak{sl}_2 \times \mathfrak{sl}_2$ nor to $\mathfrak{sl}_3 \times \k$.
		We may and will assume that 
		$G$ is connected. Using a Levi decomposition \cite[Theorem 4, Ch.~6]{OnVi1990Lie-groups-and-alg}, $G$ is isomorphic as a variety
		to $\AA^m \times H$ where $H$ is a connected reductive characterless algebraic group.
		In particular, $H$ is semisimple or trivial; see~\cite[Remark 8.3]{FeSa2019Uniqueness-of-embe} and Lemma~\ref{lem:G_red_commutator}.
		In case $H$ is trivial, the result follows from the Holme-Kaliman-Srinivas embedding theorem.
		Thus we may assume that $H$ is semisimple. Since every semisimple algebraic group
		is the target of a finite homomorphism of a product of simple algebraic groups 
		(see \cite[Theorem in \S27.5]{Hu1975Linear-algebraic-g}), 
		we may
		assume that $H$ is the product of simple algebraic groups by Corollary~\ref{cor:finite_etale}.
		From the classification of simple Lie algebras it follows that a simple algebraic group
		of dimension $\leq 10$ has Lie algebra equal to $\mathfrak{sl}_2$, $\mathfrak{sl}_3$ or $\mathfrak{so}_5 = \mathfrak{sp}_4$. Again using Corollary~\ref{cor:finite_etale}, we may assume that the factors
		of $H$ are simple algebraic groups that are not targets of non-trivial finite homomorphisms.
		Hence, $H$ is a product of the groups
		\[
			\SL_2 \, , \quad \SL_3 \quad \textrm{and} \quad \textrm{Sp}_4 \, .
		\]
		If $H$ has a factor equal to $\SL_3$ or $\textrm{Sp}_4$, then the statement follows from 
		Theorem~\ref{thm.simple} (note we excluded the case $\AA^1 \times \SL_3$). 
		Hence, we are left with the case
		\[
			G \simeq \AA^m \times (\SL_2)^s \, .		
		\]
		for some $s \geq 1$.
		We distinguish two cases:
		\begin{itemize}[leftmargin=*]
			\item $m + 3s$ is odd: In case $s-1 \leq m$, the statement follows from Remark~\ref{rem.product_SL_2} and Proposition~\ref{prop.product_SL_2}. Thus we assume
			that $s-1 > m$. Since $m+3s \leq 10$ by assumption, we get 
			$0 \leq m \leq \min\{10-3s, s-2\}$. Since $m + 3s$ is odd, this implies that $(s, m) = (3, 0)$,
			which contradicts the assumption that the Lie algebra of $G$ is non-isomorphic
			to $\mathfrak{sl}_2 \times \mathfrak{sl}_2 \times \mathfrak{sl}_2$.
			\item $m + 3s$ is even: Again using Remark~\ref{rem.product_SL_2} and Proposition~\ref{prop.product_SL_2} we may assume that $s-2 > m$. Similarly as
			above we get $0 \leq m \leq \min\{10-3s, s-3\}$. Hence, $(s, m) = (3, 0)$, and since 
			$m + 3s$ is even, we arrive at a contradiction.\qedhere
		\end{itemize}
	\end{proof}

\section{Non-embedability results for algebraic groups}
\label{sec:non-embed}

Recall from the last section that, for all simple algebraic groups $G$ and smooth affine
varieties $Z$ such that $\dim G \geq 2 \dim Z + 2$, there exists an embedding of $Z$ into $G$
(see Theorem~\ref{thm.simple}).
In this section, for every algebraic group $G$ and every integer
$d$ such that $\dim G \leq 2 d$, we construct a smooth affine variety $Z$ of dimension $d$ such that $Z$
does not allow an embedding into $G$ (see Corollary~\ref{cor.non-emeddability} below).
Thus, for a simple algebraic group $G$ this gives optimality of 
our embedding result (Theorem~\ref{thm.simple}) in case $\dim G$ is even, and optimality
up to one dimension in case $\dim G$ is odd. We will focus more on this last case in Section~\ref{chp.limits_of_our_methods}. 

\smallskip

We recall some facts of the Segre- and Chern class operations. For this we use the excellent book
of Fulton~\cite{Fu1998Intersection-theor} as a reference.
For a smooth irreducible 
variety $X$ of dimension $d$ we denote
by $\CH_i(X)$ its $i$-th Chow group, i.e.~the group of $i$-cycles modulo linear equivalence
for each $0 \leq i \leq d$. For $i > d$ and $i < 0$ we set 
$\CH_i(X) = 0$.
For each vector bundle $E \to X$  and each $i \geq 0$, we get the so-called 
\emph{Segre class operations}
\[
s_i(E) \colon \CH_{k}(X) \to \CH_{k-i}(X) \, , \quad
\alpha \mapsto s_i(E) \cap \alpha
\]
and thus endomorphisms $s_i(E)$ on $\CH(X) = \bigoplus_{i=0}^k \CH_i(X)$
(see \cite[\S3.1]{Fu1998Intersection-theor}). 
By~\cite[Propsition 3.1(a)]{Fu1998Intersection-theor}
we have that $s_0(E) = 1$ is the identity in $\End(\CH(X))$.
Following \cite[\S3.2]{Fu1998Intersection-theor} we consider the formal
power series $s_t(E) = \sum_{i=0}^\infty s_i(E) t^i$ and define $c_t(E) = \sum_{i=0}^\infty c_i(E) t^i$
as the inverse of $s_t(E)$ inside the formal power series ring $\End(\CH(X))[[t]]$.
This makes sense since the endomorphisms $s_i(E)$, $i \geq 0$ commute pairwise
\cite[Proposition 3.1(b)]{Fu1998Intersection-theor}. It follows that $c_i(E)$ maps 
$\CH_k(X)$ into $\CH_{k-i}(X)$ and we denote the image of $\alpha \in \CH_k(X)$
under $c_i(E)$ by $c_i(E) \cap \alpha \in \CH_{k-i}(X)$.
The operations $c_i(E)$, $i \geq 0$ are called \emph{Chern class operations}.
 Moreover, by 
\cite[Example~8.1.6]{Fu1998Intersection-theor} we have
\[
	c_i(E) \cap (c_j(E) \cap [X]) = (c_i(E) \cap [X]) \cdot (c_j(E) \cap [X]) \quad
	\textrm{for all $i, j$} \, ,
\]
where `$\cdot$' denotes the intersection product; see~\cite[\S8.1]{Fu1998Intersection-theor}.
In the sequel we denote by $T^\ast X \to X$  the cotangent bundle of $X$.

\begin{prop}
	\label{prop.existence_of_var_with s_d_neq_zero}
	For $d \geq 1$, there exists an irreducible smooth affine variety $Z$ of dimension $d$ 
	such that $s_d(T^*Z) \neq 0$.
\end{prop}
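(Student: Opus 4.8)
The plan is to produce such a $Z$ explicitly as a smooth complete intersection (or a related explicit construction) and then compute the top Segre class of its cotangent bundle via the Chern classes using the standard relation $s_t(T^*Z) = c_t(TZ)^{-1}$ (up to signs), reducing everything to a computation with the total Chern class of $Z$. Concretely, I would look for $Z$ inside a product of projective spaces, or inside affine space as a complement, where the cotangent bundle is sufficiently "positive" that its Chern classes do not vanish in top degree. The cleanest route: take $Z$ to be a smooth affine variety obtained by removing an ample divisor from a smooth projective variety $\overline{Z}$ whose tangent bundle has non-vanishing top Chern number (for instance a hypersurface of high degree in $\mathbb{P}^{d+1}$, or a product involving such). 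One then needs that the relevant class survives in the Chow group of the affine open $Z$.

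The key steps, in order: (1) Choose $\overline{Z}\subset \mathbb{P}^{d+1}$ a smooth hypersurface of degree $e$ with $e$ large, or more robustly a general complete intersection, so that $s_d(T^*\overline{Z})\cap[\overline{Z}]$ is a nonzero multiple of the point class in $\CH_0(\overline{Z})$; here one uses the adjunction/Euler-sequence computation of $c(T\overline{Z})$ and checks the top coefficient is a nonzero polynomial in $e$ (which it is, since $\mathbb{P}^{d+1}$ has nonzero top Chern number and the correction terms are lower order in $e$). (2) Pass to an affine $Z$: either realize $Z = \overline{Z}\setminus (\overline{Z}\cap H)$ for a generic hyperplane $H$, or — better for keeping Chow classes alive — note that we actually only need $s_d(T^*Z)\neq 0$ as an \emph{operator} on $\CH(Z)$, i.e. we need some class $\alpha\in\CH_k(Z)$ with $s_d(T^*Z)\cap\alpha\neq 0$. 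Since $Z$ is affine of dimension $d$, the only candidate degree is $k=d$ with $\alpha=[Z]$, landing in $\CH_0(Z)$. So the real content is showing $\CH_0(Z)\neq 0$ and that the degree-$d$ part of $s_t(T^*Z)\cap [Z]$ is nonzero there. (3) Control $\CH_0(Z)$: use that for $\overline Z$ with $p_g>0$ (guaranteed by taking the hypersurface degree large, by adjunction $K_{\overline Z}$ effective with sections), $\CH_0$ is "large" by Mumford-type arguments, and removing a divisor only kills classes supported on that divisor, so a very general point of $Z$ still has nonzero class; alternatively invoke that $s_d(T^*\overline Z)\cap[\overline Z]$ is a specific nonzero $0$-cycle whose restriction to $Z$ remains nonzero because it can be represented away from the divisor at infinity.

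The main obstacle I expect is step (3): ensuring the top Segre class, which a priori lives in $\CH_0(\overline Z)$, does not die upon restriction to the affine open $Z$ — i.e. that $\CH_0(Z)$ is nonzero and detects it. For affine varieties $\CH_0$ can easily vanish (e.g. $\CH_0(\mathbb{A}^n)=0$), so the construction of $\overline Z$ and the divisor at infinity must be arranged carefully, using the theory of zero-cycles on surfaces/varieties of general type (Mumford, Bloch) — precisely the circle of ideas the paper attributes to Bloch--Murthy--Szpiro. I would therefore follow their construction: take $\overline Z$ of general type with $p_g > 0$, remove a hyperplane section, and use that the Chow group of $0$-cycles of the affine piece surjects onto (a piece related to) the Albanese-kernel, which is infinite-dimensional, so in particular nonzero; then check that $c_d(TZ)\cap [Z]$, expressed via the Whitney formula from the normal bundle sequence, has a nonzero component there because its leading term in the degree of $\overline Z$ dominates. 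The Chern-class bookkeeping itself (steps (1)–(2)) is routine given the Euler sequence and adjunction and the properties of Segre/Chern operations recalled just before the statement.
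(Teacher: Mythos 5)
The paper's proof of this proposition is a single sentence: it cites the proof of Theorem~5.8 of Bloch--Murthy--Szpiro as producing a smooth irreducible affine $Z$ of dimension $d$ with $s_d(T^*Z)\cap[Z]\neq 0$ in $\CH_0(Z)$. You correctly reduce the claim to this (for irreducible affine $Z$ of dimension $d$, the operator $s_d$ can only act nontrivially on $\CH_d(Z)=\ZZ[Z]$) and correctly locate the heart of the matter in the survival of the class on the affine piece, but the construction you actually propose fails, and at an earlier point than you expect. For a smooth hypersurface $\overline{Z}\subset\PP^{d+1}$ of degree $e$ and $Z=\overline{Z}\setminus D$ with $D=\overline{Z}\cap H$ a hyperplane section, the Euler and adjunction sequences give $c(T^*\overline{Z})=(1-h)^{d+2}(1-eh)^{-1}$ with $h$ the restricted hyperplane class, so $s_d(T^*\overline{Z})\cap[\overline{Z}]$ is a scalar multiple of $h^d\cap[\overline{Z}]$. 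But $h^d\cap[\overline{Z}]=h^{d-1}\cap(h\cap[\overline{Z}])=h^{d-1}\cap[D]$ lies, by the projection formula, in the image of the proper pushforward $\CH_0(D)\to\CH_0(\overline{Z})$, hence dies in $\CH_0(Z)$ by the localization sequence --- no matter how large $\CH_0(Z)$ is. Your proposed escape (``represent the cycle away from the divisor at infinity'') does not help: being representable in $Z$ does not prevent being rationally equivalent to a cycle supported on $D$, and here it manifestly is. The same failure occurs for complete intersections, and by Lefschetz/Noether--Lefschetz it persists for every choice of removed divisor on a generic hypersurface of dimension at least $2$, since there $\mathrm{Pic}(\overline Z)=\ZZ h$.

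So the Bloch--Murthy--Szpiro theorem is not a ``routine Chern-class bookkeeping'' completion of your sketch: it requires a variety for which the top Segre class of the cotangent bundle is \emph{not} a polynomial in the class of the removed ample divisor, together with the Mumford/Roitman nondegeneracy of zero-cycles to detect it, and neither ingredient is present in your hypersurface setup. Since you end by saying you would ``follow their construction,'' and since doing so faithfully would amount to reproducing their paper, the correct (and the paper's) move is simply to cite Theorem~5.8 of Bloch--Murthy--Szpiro directly.
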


\begin{proof}
	By the proof of \cite[Theorem~5.8]{BlMuSz1989Zero-cycles-and-th}, 
	there exists a smooth irreducible affine variety $Z$ of dimension $d$ 
	such that the component
	in $\CH_0(Z)$ of the total Segre class of $T^\ast Z \to Z$
	is non-vanishing, $s_d(T^\ast Z) \cap [Z] \neq 0$ in $\CH_0(Z)$. This implies
	that $s_d(T^\ast Z) \neq 0$ inside $\End(\CH(Z))$.
\end{proof}


From a Theorem of Grothendieck, \cite[Remarque p.21]{Gr1958Torsion-homologiqu} or \cite[Proposition~2.8]{Br2011On-the-geometry-of} we get the following result:

\begin{prop}
	\label{prop.Chow_alg_groups}
	Let $G$ be a connected algebraic group of dimension $n$. 
	Then $\CH_i(G)$ is a torsion group for $0 \leq i \leq n-1$ and
	$\CH_n(G) = \ZZ$. \qed
\end{prop}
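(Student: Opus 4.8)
The plan is to treat the two assertions separately. The identity $\CH_n(G)=\ZZ$ is the easy part: in characteristic zero a connected algebraic group is smooth, hence irreducible, so $G$ itself is the only $n$-dimensional subvariety of $G$ and there are no $(n+1)$-dimensional ones; therefore $\CH_n(G)$ is freely generated by the fundamental class $[G]$. The substance is the torsion statement for $0\le i\le n-1$, equivalently that $\CH_i(G)\otimes_\ZZ\QQ=0$ for $i<n$.

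For this I would first reduce to the case that $G$ is reductive. By the Levi--Mostow decomposition $G\simeq R_u(G)\rtimes L$ with $L$ connected reductive, and since $R_u(G)$ is unipotent it is isomorphic as a variety to $\AA^m$ with $m=\dim R_u(G)$; hence $G\simeq\AA^m\times L$ as varieties. Iterating the homotopy invariance $\CH_i(\AA^1\times Y)\cong\CH_{i-1}(Y)$ (Fulton) gives $\CH_i(G)\cong\CH_{i-m}(L)$, which for $i<n=m+\dim L$ is either $0$ (if $i<m$) or $\CH_{i-m}(L)$ with $i-m<\dim L$; so everything follows once the proposition is established for the connected reductive group $L$.

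Assume then that $G$ is connected reductive, fix a Borel subgroup $B\subseteq G$ with maximal torus $T$, and consider the projection $\pi\colon G\to G/B$ onto the smooth projective flag variety. As $B$ is connected split solvable, $\pi$ is a Zariski-locally trivial $B$-torsor. Choose a chain of closed subgroups $1=N_0\subseteq N_1\subseteq\cdots\subseteq N_k=B$ with each $N_{i-1}$ normal in $N_i$ and $N_i/N_{i-1}\cong\GG_a$ or $\GG_m$; then $\pi$ factors into a tower of torsors $G=G/N_0\to G/N_1\to\cdots\to G/N_k=G/B$, and one builds $\CH_\ast(G)$ up from $\CH_\ast(G/B)$ by a finite sequence of two operations: a homotopy-invariance isomorphism (a dimension shift) at each $\GG_a$-step, and, at each $\GG_m$-step, passage to the cokernel of capping with $c_1(\mathcal L)$ for a line bundle $\mathcal L$ on the relevant base, via the localization sequence for the complement of the zero-section in $\mathcal L$. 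Because affine bundles do not change the Picard group, the line bundles that occur correspond to the bundles $\mathcal L_\chi$ on $G/B$ attached to the characters $\chi$ in a basis of the character lattice $X^\ast(T)$. Rationally these $c_1(\mathcal L_\chi)$ span $\CH^1(G/B)\otimes\QQ=\operatorname{Pic}(G/B)\otimes\QQ$, and $\CH^\ast(G/B)\otimes\QQ$ is generated as a $\QQ$-algebra by $\CH^1(G/B)\otimes\QQ$ (Borel), so the ideal they generate is all of $\CH^{>0}(G/B)\otimes\QQ$. Hence the successive cokernels annihilate everything of positive codimension rationally and one is left with $\CH_\ast(G)\otimes\QQ=\QQ\cdot[G]$, as wanted; this is in essence Grothendieck's argument, see~\cite[Remarque p.~21]{Gr1958Torsion-homologiqu} and \cite[Proposition~2.8]{Br2011On-the-geometry-of}.

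The step I expect to be the main obstacle is precisely this devissage of $\pi\colon G\to G/B$: checking that each $\GG_m$-quotient contributes exactly a cokernel of $\cap c_1$, keeping track of which line bundles arise, and justifying that the character classes $c_1(\mathcal L_\chi)$ rationally span $\operatorname{Pic}(G/B)$. Equivalently, the only obstruction to the Bruhat cells of $G$ closing up into rationally trivial classes is detected by the finite group $\operatorname{Pic}(G)$, and in general this obstruction is nonzero, so the conclusion cannot be sharpened to vanishing: for $G=\mathrm{PGL}_n$ one has $\CH_{n-1}(G)=\operatorname{Pic}(G)=\ZZ/n$.
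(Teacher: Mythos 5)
Your argument is correct, and it is essentially the d\'evissage carried out in the references the paper cites for this proposition (Grothendieck~\cite{Gr1958Torsion-homologiqu} and Brion~\cite{Br2011On-the-geometry-of}); the paper itself supplies no proof, only those citations. In particular, the step you flag as the main obstacle does go through as you expect: ordering the chain so that the $\GG_a$-quotients sit below the unipotent radical $U$ of $B$ one gets $\CH_\ast(G)\cong\CH_{\ast-\dim U}(G/U)$ by $\AA^1$-invariance, and $G/U\to G/B$ is the fiber product of the $\GG_m$-bundles $\mathcal L_{\chi_i}^{\times}$ for a basis $\chi_1,\dots,\chi_r$ of $X^\ast(T)$, so the localization sequences give $\CH^\ast(G)\cong\CH^\ast(G/B)/(c_1(\mathcal L_{\chi_1}),\dots,c_1(\mathcal L_{\chi_r}))$, which is $\QQ$ in degree $0$ after tensoring with $\QQ$ by Borel's presentation of $\CH^\ast(G/B)_\QQ$.
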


\begin{lemma}
	\label{lem.embedding_cond}
	Let $Z$ be an irreducible smooth affine variety of dimension $d \geq 1$.
	If there is a connected algebraic group $G$ 
	of dimension $2d$ such that
	there is an embedding $\iota \colon Z \to G$, then $s_d(T^\ast Z) = 0$.
\end{lemma}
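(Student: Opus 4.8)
The plan is to compare $T^\ast Z$ with the (trivial) cotangent bundle of $G$ along the conormal sequence, and then to express $s_d(T^\ast Z)\cap[Z]$ as a self-intersection that vanishes because the relevant Chow group of $G$ is zero. Concretely: since $G$ is an algebraic group, $T^\ast G$ is trivial (a basis of left-invariant one-forms trivialises it), so $T^\ast G|_Z\cong\OO_Z^{\oplus 2d}$; and since $\iota$ is a closed immersion of smooth varieties, it is a regular immersion of codimension $d$, with normal bundle $N=N_{Z/G}$ of rank $d$ and conormal bundle $N^\vee$. The conormal sequence $0\to N^\vee\to T^\ast G|_Z\to T^\ast Z\to 0$ together with the Whitney formula for Chern class operations (\cite[Theorem~3.2(e)]{Fu1998Intersection-theor}) gives $c_t(N^\vee)\,c_t(T^\ast Z)=c_t(T^\ast G|_Z)=1$ in $\End(\CH(Z))[[t]]$. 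As $s_t(T^\ast Z)$ is by definition the inverse of $c_t(T^\ast Z)$, it follows that $s_t(T^\ast Z)=c_t(N^\vee)$, hence $s_d(T^\ast Z)=c_d(N^\vee)=(-1)^d c_d(N)$.

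Next I would reduce to a statement about zero-cycles: because $Z$ is irreducible of dimension $d$, the operation $c_d(N^\vee)$ carries $\CH_k(Z)$ into $\CH_{k-d}(Z)=0$ for $k\neq d$, so $s_d(T^\ast Z)=0$ in $\End(\CH(Z))$ if and only if $c_d(N)\cap[Z]=0$ in $\CH_0(Z)$. Applying the self-intersection formula for the regular immersion $\iota$ (\cite[Corollary~6.3]{Fu1998Intersection-theor}) yields $c_d(N)\cap[Z]=\iota^{!}\bigl(\iota_\ast[Z]\bigr)$, with $\iota^{!}\colon\CH_d(G)\to\CH_0(Z)$ the refined Gysin map of $\iota$. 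Now $\iota_\ast[Z]=[\iota(Z)]\in\CH_d(G)$, and the point is that $\CH_d(G)=0$: for a connected linear algebraic group $G$ with $\dim G=2d$ and $d\geq1$, the Bruhat stratification of $G$ into locally closed pieces each isomorphic as a variety to $\AA^m\times\GG_m^{\,r}$ — whose Chow groups vanish below their top dimension — forces, via the iterated localization sequences, the vanishing of $\CH_d(G)$ (the degenerate case $G\cong\AA^{2d}$ being immediate, since $d<2d$). Hence $\iota_\ast[Z]=0$, so $c_d(N)\cap[Z]=0$, so $s_d(T^\ast Z)=0$.

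The step I expect to be the main obstacle is precisely this last input. Proposition~\ref{prop.Chow_alg_groups} as stated gives only that $\CH_d(G)$ is \emph{torsion}, and with that the self-intersection argument proves merely that $s_d(T^\ast Z)\cap[Z]$ is a torsion element of $\CH_0(Z)$ — which in general need not vanish. So one has to sharpen ``torsion'' to ``zero'' for the middle-dimensional Chow group of $G$; the cell-decomposition argument indicated above does this, but it is the point that requires care. A secondary, routine matter is to confirm that the Whitney and self-intersection formulas transfer to the exact formalism of Segre/Chern class \emph{operations} recalled in the text; this is only a reformulation of the results in \cite{Fu1998Intersection-theor}.
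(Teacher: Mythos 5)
Your reduction $s_d(T^\ast Z)=c_d(N^\vee)$ via the conormal sequence and the Whitney formula, and the further reduction (using $\dim Z=d$ and irreducibility of $Z$) to the single zero-cycle $c_d(N)\cap[Z]=\iota^{!}(\iota_\ast[Z])$, is exactly the paper's. Where you diverge is in the key input: the paper quotes only that $\CH_d(G)$ is a \emph{torsion} group (Proposition~\ref{prop.Chow_alg_groups}), concludes $\iota^\ast\iota_\ast[Z]$ is torsion in $\CH_0(Z)$, and then kills it by citing Bloch--Murthy--Szpiro's theorem that $\CH_0(Z)$ is torsion-free for smooth affine $Z$ of dimension $\geq2$ (handling $d=1$ separately by noting $G$ is solvable, so $\CH_1(G)=0$). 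You instead try to sharpen torsion to vanishing, $\CH_d(G)=0$, and push $\iota_\ast[Z]$ to zero before pulling back.

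Your route is in fact correct, and it is a genuinely different and arguably cleaner proof, since it sidesteps the Bloch--Murthy--Szpiro input entirely and unifies the $d=1$ and $d\geq2$ cases. But the step you yourself flag as the obstacle is stated too loosely. Knowing that each stratum $S$ of a stratification satisfies $\CH_i(S)=0$ for $i<\dim S$ only forces $\CH_d(G)=0$ if you also know that \emph{every} stratum has dimension strictly greater than $d$; otherwise the closed strata of dimension $\leq d$ contribute via the localization sequence (and this is precisely how the torsion in Grothendieck's theorem arises in general). What saves the argument is the numerical fact, which you should make explicit: writing $G\simeq R_u(G)\times L$ as varieties (Mostow/Levi decomposition in characteristic zero) with $L$ reductive, the strata $R_u(G)\times B_LwB_L\simeq\AA^a\times\GG_m^{\,r_L}$ all have dimension $\geq \dim R_u(G)+\dim B_L$; a short computation using $\dim L=2\dim R_u(B_L)+r_L$ shows this lower bound equals $d+\tfrac{1}{2}(\dim R_u(G)+r_L)>d$ whenever $G$ is nontrivial (and the purely unipotent or purely toral cases $G\simeq\AA^{2d}$, $G\simeq\GG_m^{2d}$ are immediate since $d<2d$). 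With that in hand, the filtration by closed unions of strata and right-exactness of the localization sequences do give $\CH_d(G)=0$, and the rest of your argument closes. In exchange for this extra Lie-theoretic bookkeeping, you obtain the sharper statement $\CH_i(G)=0$ for all $i<\dim B$ and avoid appealing to the (nontrivial) torsion-freeness of $\CH_0$ of smooth affine varieties.
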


\begin{proof}
	Since $d \geq 1$, by Proposition~\ref{prop.Chow_alg_groups}, we get that $\iota_*([Z]) \in \CH_d(G)$
	is a torsion element where $[Z] \in \CH_d(Z)$ denotes the class associated to $Z$.
	By \cite[Corollary 6.3]{Fu1998Intersection-theor} we have
	\[
	\iota^*(\iota_*([Z])) = c_d(N^\ast) \cap [Z] \in \CH_0(Z) 
	\]
	where $N^*$ denotes the conormal bundle of $Z$ in $G$.
	Hence $\iota^*(\iota_*([Z]))$ is a torsion element in $\CH_0(Z)$. In case
	$d = 1$, we have $\dim G = 2$ and thus $G$ is solvable. In particular $\CH_1(G) = 0$.
	In case $d \geq 2$, it follows from \cite[Proposition~2.1]{BlMuSz1989Zero-cycles-and-th}
	that $\CH_0(Z)$ is torsion free. Thus in both cases $\iota^*(\iota_*([Z]))$ is zero.
	Moreover $c_d(N^*) \cap \alpha = 0$ for each $\alpha \in \CH_k(Z)$ if $k < d$.
	This implies that $c_d(N^\ast) = 0$, it is the zero endomorphism of $\CH(Z)$.
	
	Since $G$ is an algebraic group, the cotangent bundle $T^*G \to G$ is trivial.
	Moreover, we have a short exact sequence of vector bundles over $Z$:
	\[
	0 \to N^\ast \to \iota^\ast(T^*G) \to T^*Z \to 0 \, .
	\]
	Then we get
	\[
	1 = c_t(\iota^\ast(T^*G)) = c_t(N^\ast) c_t(T^*Z) \quad \textrm{inside $\End(\CH(Z)$)[[t]]}
	\]
	by \cite[Theorem~3.2(e)]{Fu1998Intersection-theor}. By definition we get $s_t(T^*Z) = c_t(N^*)$
	and thus $s_d(T^*Z) = c_d(N^*) = 0$. 
\end{proof}

Now, we apply the above results in order to get irreducible smooth affine varieties that
do not admit an embedding into algebraic groups for appropriate dimensions.

\begin{corollary}
	\label{cor.non-emeddability}
	Let $G$ be an algebraic group of dimension $n > 0$. Then, for each integer $d \geq \frac{n}{2}$
	there exists a smooth irreducible affine variety $Z$ of dimension $d$ that does not admit
	an embedding into $G$.
\end{corollary}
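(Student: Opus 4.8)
The plan is to combine Proposition~\ref{prop.existence_of_var_with s_d_neq_zero} with Lemma~\ref{lem.embedding_cond}, reducing to the case where $\dim G$ is exactly $2d$ and $G$ is connected. First I would reduce to the case where $G$ is connected: if $Z$ embeds into $G$, then, since $Z$ is irreducible, its image lies in a single connected component $gG^\circ$ of $G$; translating by $g^{-1}$ we obtain an embedding of $Z$ into the identity component $G^\circ$, which is a connected algebraic group of the same dimension $n$. So it suffices to prove the statement for connected $G$.

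Next I would reduce from $d \geq \tfrac{n}{2}$ to $d = \tfrac{n}{2}$ by a padding trick. Given a connected algebraic group $G$ of dimension $n$ and an integer $d \geq \tfrac{n}{2}$, consider $G' \coloneqq G \times \GG_a^{2d-n}$, which is a connected algebraic group of dimension exactly $2d$. Any embedding $Z \to G$ would compose with the embedding $G \hookrightarrow G'$, $g \mapsto (g,0)$, to give an embedding $Z \to G'$; hence it is enough to produce a smooth irreducible affine $Z$ of dimension $d$ that does not embed into $G'$, i.e.\ to treat the case $\dim G = 2d$.

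Now, for the case $\dim G = 2d$ with $G$ connected: by Proposition~\ref{prop.existence_of_var_with s_d_neq_zero} there exists an irreducible smooth affine variety $Z$ of dimension $d$ with $s_d(T^\ast Z) \neq 0$ in $\End(\CH(Z))$. If $Z$ admitted an embedding into $G$, then Lemma~\ref{lem.embedding_cond} would force $s_d(T^\ast Z) = 0$, a contradiction. Hence this $Z$ does not embed into $G$, completing the proof.

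I do not expect any genuine obstacle here, since the two substantive inputs (the Chow-group construction behind Proposition~\ref{prop.existence_of_var_with s_d_neq_zero}, and the Segre/Chern class obstruction in Lemma~\ref{lem.embedding_cond}) have already been established. The only mild care needed is in the two reductions: checking that passing to the identity component and padding with copies of $\GG_a$ both preserve the hypotheses of Lemma~\ref{lem.embedding_cond} (connectedness and the exact dimension $2d$) and that composing with the closed immersion $G \hookrightarrow G'$ keeps an embedding an embedding, which is immediate since a composite of closed immersions is a closed immersion.
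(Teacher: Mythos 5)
Your proposal is correct and follows essentially the same route as the paper: construct $Z$ via Proposition~\ref{prop.existence_of_var_with s_d_neq_zero}, reduce to the connected component, pad with $\GG_a^{2d-n}$ to reach dimension exactly $2d$, and conclude by Lemma~\ref{lem.embedding_cond}. The only cosmetic difference is that you perform the two reductions up front while the paper folds them into the contradiction argument.
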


\begin{proof}
	By assumption $2d \geq n$. Let $k \coloneqq 2d -n \geq 0$.
	By Proposition~\ref{prop.existence_of_var_with s_d_neq_zero}
	there exists a smooth irreducible affine variety $Z$ of dimension $d$ such that $s_d(T^* Z) \neq 0$.
	Towards a contradiction, assume that $Z$ allows an embedding into $G$. 
	As $Z$ is irreducible,
	there exists an embedding of $Z$ into the identity
	component  $G^\circ$ of $G$ and hence also 
	into $G^\circ \times (\GG_a)^k$. Since $\dim G^\circ + k = n + 2d - n = 2d$,
	by Lemma~~\ref{lem.embedding_cond} we get $s_d(T^*Z) = 0$, contradiction.
\end{proof}


\section{Limits of our methods for odd dimensional simple groups}
\label{chp.limits_of_our_methods}
In Section~\ref{sec:non-embed} we proved that Theorem~\ref{thm.simple} is optimal for even dimensional simple algebraic groups $G$.
Moreover, by Proposition~\ref{prop.lowdimension} 
we also get optimality in case $\dim G \leq 8$.
In this section we will explain, why we are not able to apply our
method to an odd dimensional simple algebraic group $G$ and smooth affine varieties $Z$
with $\dim G = 2 \dim Z + 1$ and $\dim Z > 1$.

Concretely, let $G$ be an odd dimensional simple algebraic group. In order to apply our 
method (Theorem~\ref{thm.Product}) 
to a smooth affine variety $Z$ with $\dim G = 2 \dim Z + 1$ we need at least the following:
a smooth morphism
\[
	\pi \colon G \to P \quad \textrm{with} \quad \textrm{$\dim P = \dim Z$}
\]
that factors through a principal $\GG_a$-bundle,
$\Aut^{\alg}_P(G)$ acts sufficiently transitively on each fiber of $\pi$, and a finite surjective morphism $Z \to P$. 

The only way to construct such
a $\pi \colon G \to P$ seems to be forming the algebraic quotient by 
some proper connected characterless algebraic subgroup $H \subset G$ of the right
dimension; see Proposition~\ref{prop.characterization_suff_trans_algebraic_groups} 
and Proposition~\ref{prop.sufficiently_transitive_on_fibers_quotient}. 
However, in this
section we prove Proposition~\ref{prop:intronomapsZtoG/H}  which yields an 
obstruction to the existence of proper surjective
morphisms $Z \to G/H$; see also the discussion in the introduction. 

\bigskip

Since the obstruction comes from algebraic topology, in this section we work 
with varieties  over the complex numbers, i.e.~our ground field will be $\CC$. However, using 
an appropriate
Lefschetz principle, we promote a version of Proposition~\ref{prop:intronomapsZtoG/H} 
back to every algebraically closed field of characteristic zero;
see Appendix~\ref{Appendix.Lefschetz}.

In order to avoid confusion with the category of complex manifolds, below we write \emph{algebraic
morphism} instead of just \emph{morphism}. We restate Proposition~\ref{prop:intronomapsZtoG/H}:

\begin{prop}
	\label{prop.no-finite-morph}
	Let $Z$ be a {simply-connected} complex smooth algebraic variety with the rational homology of a point. If $G/H$ is a $\dim Z$-dimensional complex homogeneous space of a complex simple algebraic group $G$, then there is no proper surjective algebraic
	morphism from $Z$ to $G/H$. 
\end{prop}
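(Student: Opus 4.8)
The plan is to argue by contradiction, using Theorem~\ref{mthm:surjectiononhomology} to reduce the claim to a statement of pure algebraic topology: a positive-dimensional complex homogeneous space of a simple algebraic group is never a rational homology point.

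So suppose, toward a contradiction, that there is a proper surjective algebraic morphism $f\colon Z\to G/H$. First I would observe that we are in the setting of Theorem~\ref{mthm:surjectiononhomology}: the homogeneous space $G/H$ is a smooth irreducible complex variety, by hypothesis $\dim(G/H)=\dim Z=:n$, and $Z$ is likewise smooth and irreducible of dimension $n$ (it is smooth, hence normal, and it has the rational homology of a point, hence is connected). Applying Theorem~\ref{mthm:surjectiononhomology} to $f$ then yields that $f_\ast\colon H_k(Z;\Q)\to H_k(G/H;\Q)$ is surjective for every $k\ge 0$. Since $Z$ has the rational homology of a point, $H_k(Z;\Q)=0$ for $k\ge 1$, and therefore $H_k(G/H;\Q)=0$ for all $k\ge 1$; that is, $G/H$ has the rational homology of a point. (Note that necessarily $n=\dim G/H\ge 1$ here, so $\dim H=\dim G-n<\dim G$.)

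It remains to rule this out, which is the heart of the matter. The approach I would take uses the fibration $H\hookrightarrow G\xrightarrow{\ \rho\ }G/H$, a principal $H$-bundle. If $H$ is connected, then translations by the structure group are homotopic to the identity, so the rational Serre spectral sequence of $\rho$ has untwisted coefficients; if in addition $G/H$ were rationally acyclic, the spectral sequence would collapse onto the column $p=0$, giving $H^\ast(G;\Q)\cong H^\ast(H;\Q)$ as graded $\Q$-vector spaces. But $G$ deformation retracts onto a maximal compact subgroup, a closed orientable manifold of real dimension $\dim_{\mathbb{C}}G$, so $H^{\dim_{\mathbb{C}}G}(G;\Q)=\Q\neq 0$; whereas $H$, whose unipotent radical is contractible, deformation retracts onto a compact Lie group of real dimension at most $\dim_{\mathbb{C}}H<\dim_{\mathbb{C}}G$, so $H^{\dim_{\mathbb{C}}G}(H;\Q)=0$. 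This contradiction settles the connected case.

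The main obstacle will be the case of a disconnected $H$: then the monodromy of $\rho$ permutes the summands of $H^\ast(H;\Q)$ indexed by $\pi_0(H)$, so the Serre spectral sequence carries a nontrivial local system and the collapse argument does not apply verbatim; a finer analysis is needed here, and this is precisely the content of Proposition~\ref{prop.homotopy_non-vanishing}. By contrast, the deduction of Proposition~\ref{prop.no-finite-morph} from Proposition~\ref{prop.homotopy_non-vanishing} and Theorem~\ref{mthm:surjectiononhomology} is the routine step recorded in the second paragraph above.
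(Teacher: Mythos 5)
Your overall plan is sound, and your Serre spectral sequence argument for connected $H$ is correct and in fact rather pleasant: it avoids the paper's appeal to the classification of rational homotopy types (Table~\ref{table1}) by pitting $H^{\dim_{\CC}G}(G;\QQ)\cong\QQ$ against $H^{\dim_{\CC}G}(H;\QQ)=0$. However, the handling of disconnected $H$ has a genuine gap, and the symptom is that you never use the hypothesis that $Z$ is simply connected. Proposition~\ref{prop.homotopy_non-vanishing} gives you a nonvanishing \emph{rational homotopy} group $\pi_{i_0}(G/H)\otimes\QQ\neq 0$ with $i_0>1$; to turn this into the nonvanishing \emph{rational homology} group that contradicts your second paragraph, you need the rational Hurewicz theorem, which requires $G/H$ to be simply connected. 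When $H$ is disconnected (and likewise when $G$ is not simply connected) this fails, and there is no direct implication from ``$G/H$ rationally acyclic'' to ``$\pi_i(G/H)\otimes\QQ=0$ for all $i>1$''. So the step you call routine is where the remaining work actually lives.

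The paper closes this gap precisely by using $\pi_1(Z)=1$: the proper surjective map $Z\to G/H$ lifts through the finite \'etale cover $G/H^\circ\to G/H$ to a proper surjective map $Z\to G/H^\circ$ (GAGA-type argument via Serre's~\cite[Proposition~20]{Se1958Espaces-fibres-alg} to see the lift is algebraic), and one may also replace $G$ by its universal cover $\tilde G$ since $\tilde G/p^{-1}(H)\cong G/H$ as varieties. After these two reductions one may assume $G$ simply connected and $H$ connected, so $G/H$ is simply connected, rational Hurewicz applies, and your argument (or the paper's) finishes. If you incorporate this reduction at the outset, your spectral-sequence proof of the topological statement then genuinely replaces the paper's Proposition~\ref{prop.homotopy_non-vanishing} in this application, and the hypothesis $\pi_1(Z)=1$ reappears where it belongs.
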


In Appendix~\ref{Appendix.Lefschetz} we prove that Proposition~\ref{prop.no-finite-morph} holds
for $Z = \AA^{\dim G - \dim H}$
over any algebraically closed field of characteristic zero; see Proposition~\ref{prop:intronomapsZtoG/H_over_any_k}.

%

\begin{proof}[Proof of Proposition~\ref{prop.no-finite-morph}]
	Let $\tilde{G}$ be the universal cover of $G$.
		Then $p \colon \tilde{G} \to G$ is a homomorphism of
		simple complex algebraic groups
		\cite[Th\'eor\`eme 5.1,  Expos\'e XII]{GrRa2003Revetements-etales}. 
		Since $\tilde{G} / p^{-1}(H)$ and $G/ H$ are isomorphic as algebraic varieties, we
		may assume that $G$ is simply connected.
	
	Assume that there exists a proper surjective algebraic morphism $Z \to G/H$. 
	Let $H^\circ$ be the identity 
	component of $H$.  Denote by 
	$p \colon G/H^\circ \to G/H$ the canonical projection, 
	which is a finite algebraic \'etale surjection.	
	As $Z$ is simply connected, there exists a holomorphic map $f \colon Z  \to G / H^\circ$
	such that $p \circ f \colon Z \to G / H$ is the original proper surjective algebraic 
	morphism. By \cite[Proposition~20]{Se1958Espaces-fibres-alg}, it follows that 
	$f \colon Z \to G / H^\circ$ is an algebraic morphism, and it is also proper and surjective. 
	Thus, by replacing
	$H$ by $H^\circ$, we may assume without loss of generality that $H$ is connected.

	Since $G$ is simply connected and $H$ is connected, the 
	long exact homotopy sequence assocaited to $H \hookrightarrow G \twoheadrightarrow G/H$
	yields the exact sequence
	\[
		1 = \pi_1(G) \to \pi_1(G/H) \to \pi_0(H) = 1 \, .
	\]
	Thus, since $G$ is connected, we get that $G/H$ is simply connected. Let
	\[
		i_0 \coloneqq \inf\set{i\geq 1}{
		\textrm{ $\pi_i(G/H) \otimes_{\ZZ} \QQ$ is non-vanishing}} \, .
	\]
	By Proposition~\ref{prop.homotopy_non-vanishing} below, it follows that $1 < i_0 < \infty$.
	As $G/H$ is simply connected, we may apply a rational version of the
	Hurewicz Theorem \cite[Theorem 1.1]{KlKr2004A-quick-proof-of-t} and get
	\[
		0 \neq \pi_{i_0}(G/H) \otimes_{\ZZ} \QQ \simeq H_{i_0}(G/H; \QQ)
	\]
	where $H_\ast(\cdot; \QQ)$ denotes singular homology with rational coefficients. 
 	Since $f \colon Z \to G/H$ is a proper surjective algebraic morphism, 
 	Theorem~\ref{mthm:surjectiononhomology} applies, 
	and we get that
	$f_{\ast} \colon H_{i_0}(Z; \QQ) \to H_{i_0}(G/H; \QQ)$ is surjective. However, this contradicts
	$H_{i_0}(Z; \QQ) = 0$. 
\end{proof}

\begin{prop}
	\label{prop.homotopy_non-vanishing}
	Let $G$ be a simple complex algebraic group. 
	Then, for each proper closed complex subgroup
	$H \subset G$, there exists $i > 1$ such that 
	\[
	\pi_i(G / H) \otimes_{\ZZ} \QQ \neq 0 \, .
	\]
\end{prop}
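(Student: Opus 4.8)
The plan is to argue by contradiction: suppose $G$ is a simple complex algebraic group and $H\subset G$ is a proper closed subgroup with $\pi_i(G/H)\otimes_\ZZ\QQ=0$ for all $i>1$. First I would reduce to the case where $H$ is connected, exactly as in the proof of Proposition~\ref{prop.no-finite-morph}: passing from $H$ to $H^\circ$ only changes $G/H$ by a finite étale cover, which does not affect $\pi_i\otimes\QQ$ for $i\geq 2$, and if one also passes to the universal cover $\tilde G$ one may assume $G$ is simply connected so that $G/H$ is simply connected (via the long exact homotopy sequence of $H\hookrightarrow G\twoheadrightarrow G/H$). Thus the hypothesis becomes: $G/H$ is a simply connected space all of whose rational homotopy groups vanish, i.e.\ $G/H$ is rationally trivial (a rational point), hence by the rational Hurewicz theorem $H_*(G/H;\QQ)=H_*(\mathrm{pt};\QQ)$.

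The core of the argument is then to derive a contradiction from the rational acyclicity of $G/H$ using the fibration $H\hookrightarrow G\twoheadrightarrow G/H$ and the known rational homotopy/cohomology of the compact Lie group $G$ (equivalently its maximal compact subgroup $K$, since $G/H$ is homotopy equivalent to $K/(K\cap H)$ up to the usual deformation retractions). Rationally, $G$ is a product of odd spheres $S^{2d_1-1}\times\cdots\times S^{2d_r-1}$ where $d_1,\dots,d_r$ are the degrees of the invariants / exponents plus one, and $r=\operatorname{rank}G\geq 1$. If $G/H$ were rationally trivial, the Leray--Serre spectral sequence (or the fact that a fibration with rationally trivial base gives a rational homology equivalence of fiber and total space) would force the inclusion $H\hookrightarrow G$ to be a rational homology isomorphism, hence $\dim_\QQ H^*(H;\QQ)=\dim_\QQ H^*(G;\QQ)=2^r$ and, comparing Euler characteristics of the homogeneous space, $\chi(G/H)=1$. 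But a compact homogeneous space $K/L$ has $\chi(K/L)>0$ only if $\operatorname{rank}L=\operatorname{rank}K$, in which case $\chi(K/L)=|W_K|/|W_L|$; and $\chi(K/L)=1$ with $\operatorname{rank}L=\operatorname{rank}K$ forces $W_L=W_K$, hence $L=K$ (a closed subgroup of full rank containing a maximal torus is determined by its Weyl group inside $N(T)/T$, and equal Weyl groups give $L=K$). This contradicts $H\subsetneq G$ being proper. I would phrase the argument so as to only need: (i) a fibration with rationally acyclic base has fiber $\to$ total space a rational homology iso; (ii) the classification fact that $\chi(G/H)\leq 0$ unless $H$ has maximal rank, and $=|W_G|/|W_H|$ otherwise; (iii) full-rank equality of Weyl groups implies $H=G$.

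Alternatively, and perhaps more cleanly in the simply connected rational-homotopy language, I would invoke the structure theory of rationally elliptic homogeneous spaces: $G/H$ is a rationally elliptic space, and its rational homotopy Euler characteristic $\chi_\pi(G/H)=\dim(\pi_{\mathrm{even}}\otimes\QQ)-\dim(\pi_{\mathrm{odd}}\otimes\QQ)$ equals $\operatorname{rank}H-\operatorname{rank}G\leq 0$, with $\chi_\pi=0$ exactly when $H$ has maximal rank; and $G/H$ rationally trivial would force in particular $\pi_{\mathrm{odd}}(G/H)\otimes\QQ=0$, so combined with the minimal-model long exact sequence $\pi_*(H)\otimes\QQ\to\pi_*(G)\otimes\QQ\to\pi_*(G/H)\otimes\QQ$ one sees that the $r=\operatorname{rank}G\geq1$ odd-degree generators of $\pi_*(G)\otimes\QQ$ must all come from $\pi_*(H)\otimes\QQ$, and then a dimension/degree count (the top-degree generator of $\pi_*(G)\otimes\QQ$ has degree $2\dim G/\operatorname{rank}G -1 > \dim H$ when $H$ is a proper subgroup of a simple group, since the Coxeter number forces the top exponent to be large) yields the contradiction. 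The main obstacle I anticipate is making the degree/dimension estimate in this last step fully rigorous and uniform across all simple types — i.e.\ verifying that for every simple $G$ and every proper closed $H$, one of the odd-sphere factors of $G$ survives rationally in $G/H$ — which is essentially why the cleanest route is the Euler-characteristic argument of the previous paragraph, reducing everything to the classical statement $\chi(G/H)=1,\ \operatorname{rank}H=\operatorname{rank}G\ \Rightarrow\ H=G$.
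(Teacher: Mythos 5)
Your first (Euler--characteristic) argument is correct and takes a genuinely different route from the paper. The paper also reduces to $G$ simply connected and $H$ connected, but then passes to the semisimple part of $H$ and its universal cover $S$, uses the long exact homotopy sequence tensored with $\QQ$ to conclude $\pi_i(S)\otimes\QQ\simeq\pi_i(G)\otimes\QQ$ for $i>1$, deduces from $\pi_3$ that $S$ is simple, and then compares the rational homotopy type of $S$ with that of $G$ against the explicit table of rational homotopy types of simple Lie types (Table~\ref{table1}) -- concluding $S$ and $G$ have the same Lie type (or the $B_m/C_m$ coincidence), hence the same dimension, a contradiction. Your approach instead passes to the compact models $K/L$ via Mostow-type deformation retraction, observes that rational acyclicity forces $\chi(G/H)=\chi(K/L)=1$, and invokes the Hopf--Samelson theorem ($\chi(K/L)>0$ iff $L$ has full rank, and then $\chi=|W_K|/|W_L|$) to force $\operatorname{rank}L=\operatorname{rank}K$ and $|W_L|=|W_K|$, whence $L=K$. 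The paper's route is more self-contained in the sense that it needs only the long exact homotopy sequence and a finite table; yours avoids the case-by-case table at the cost of citing Hopf--Samelson and the deformation retraction $G/H\simeq K/L$ (which requires first replacing $H$ by a Levi factor $H_{\mathrm{red}}$, using that $G/H_{\mathrm{red}}\to G/H$ is an affine bundle -- a step analogous to, and best handled like, the paper's passage from $H$ to $H/R(H)$). Two small points: your final inference ``$W_L=W_K$ hence $L=K$'' is true but your one-line justification is a bit loose; it is cleanest to observe directly that $\chi(K/L)=1$ with $L$ of full rank forces $H^*(K/L;\QQ)$ (which is concentrated in even degrees) to be $\QQ$ in degree $0$ only, so Poincar\'e duality gives $\dim K/L=0$. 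And you are right that your second alternative (a degree/dimension count on the top odd-sphere factor) would be hard to make uniform over all types -- the paper's table-based argument is essentially the cleaned-up version of exactly that idea.
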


For the proof of this proposition, we use facts about the rational homotopy
groups of all simply connected simple complex algebraic groups. We recall those facts next.

Denote by $G$ a simply connected semisimple complex algebraic group. 
Recall that there exists a maximal compact connected real Lie
subgroup $K \subset G$ such that $G$ and $K$ are homotopy equivalent
\cite[Theorem 2.2, Ch.~VI]{He1978Differential-geome}. In particular, 
$K$ is simply connected, and thus
we may apply \cite[Theorem 6.27, Ch.~IV]{MiTo1991Topology-of-Lie-gr} to get
a continuous map of a product of odd dimensional spheres into $K$
\[
	f \colon S^{2n_1-1} \times \cdots \times S^{2n_l-1} \to K
\]
that induces an isomorphism between the singular cohomology rings with rational coefficients
\[
	H^\ast(K; \QQ) \simeq H^\ast(S^{2n_1-1} \times \cdots \times S^{2n_l-1}; \QQ) \, .
\]
By the universal coefficient theorem for cohomology, 
$f$ induces an isomorphism between singular homology groups with rational coefficients.
Since $K$ is simply connected, we get by K\"unneth's formula
\[
H_1(S^{2n_1-1}; \QQ) \oplus \cdots \oplus H_1(S^{2n_l-1}; \QQ) \simeq H_1(K; \QQ) = 0 \, .
\]
This implies $n_i \geq 2$ for each $i \in \{1, \ldots, l\}$. In particular, the product of spheres
$S^{2n_1-1} \times \cdots \times S^{2n_l-1}$ is simply connected as well.
Now, by the Whitehead-Serre Theorem \cite[Theorem 8.6]{FeHaTh2001Rational-homotopy-}, 
$f$ induces for each $i \geq 0$ an isomorphism of rational homotopy groups
\begin{equation}
\label{Eq.homotpy_semisimple}
\pi_i(S^{2n_1-1}) \otimes_\ZZ \QQ \times \cdots \times
\pi_i(S^{2n_l-1}) \otimes_\ZZ \QQ \simeq \pi_i(K) \otimes_\ZZ \QQ = \pi_i(G) \otimes_\ZZ \QQ  \, .
\end{equation}
Note that by a Theorem of Serre 
(\cite[Example 1 in \S15(d)]{FeHaTh2001Rational-homotopy-} or \cite[Theorem~1.3]{KlKr2004A-quick-proof-of-t}, for
odd positive integers $k$, the group
$\pi_i(S^k) \otimes_\ZZ \QQ$ is isomorphic to $\QQ$ if $i =k$
and otherwise it vanishes.

\begin{definition}
	For a simply connected semisimple complex algebraic group $G$, we call the above
	constructed unordered $l$-tuple $\{2n_1-1, \ldots, 2n_l-1\}$ the \emph{rational homotopy type} of $G$.
\end{definition}

In the following table we list the complex dimension and rational
homotopy type for each Lie type 
(the statements follow from 
\cite[Theorem 6.5, Ch.~III and Theorem 5.10, Ch.~VI]{MiTo1991Topology-of-Lie-gr}):

\begin{center} 
	\captionof{table}{Rational homotopy types} 
	\label{table1}
	\begin{tabular}{r|r|r}
		Lie-Type  & Complex dimension  & Rational homotopy type of the \\
		& & simply connected simple \\
		& & complex algebraic group \\
		\hline
		$A_m$, $m \geq 1$ & $m^2 + 2m$ & $\{3, 5, \ldots, 2m+1\}$ \\
		$B_m$, $m \geq 2$ & $2m^2 +m$ & $\{3, 7, \ldots, 4m-1 \}$ \\
		$C_m$, $m \geq 3$ & $2m^2 +m$ & $\{3, 7, \ldots, 4m-1 \}$ \\
		$D_m$, $m \geq 4$ & $2m^2 -m$ & $\{3, 7, \ldots, 4m-5 \} \cup \{ 2m-1 \}$ \\
		$E_6$ & $78$ & $\{3, 9, 11, 15, 17, 23\}$ \\
		$E_7$ & $133$ & $\{3, 11, 15, 19, 23, 27, 35\}$ \\
		$E_8$ & $248$ & $\{3, 15, 23, 27, 35, 39, 47, 59\}$ \\
		$F_4$ & $52$ & $\{3, 11, 15, 23\}$ \\
		$G_2$ & $14$ & $\{3, 11\}$
	\end{tabular}
\end{center}

\begin{proof}[Proof of Proposition~\ref{prop.homotopy_non-vanishing}]
	With the same argument as in the beginning of the proof of Proposition~\ref{prop.no-finite-morph}, 
	we may assume that $G$ is simply connected.
	Let $H^\circ \subset H$ be the identity component of $H$.
	Since $G / H^\circ \to G/H$ is a finite \'etale surjection, we get for each
	$i > 1$ an isomorphism  $\pi_i(G/H^\circ) \simeq \pi_i(G/H)$. Hence, in addition
	we may assume that $H$ is connected.
	
	Let $R(H)$ be the radical of $H$.
	By definition $H / R(H)$ is a semisimple complex algebraic group.
	Let $S \to H/R(H)$ be the universal covering. As before, $S$ is a simply connected
	semisimple complex algebraic group.
	Since $R(H)$ is the product of an algebraic torus and a 
	unipotent algebraic group, it follows from the long exact homotopy sequence that
	\[
		\pi_i(H) = \pi_i(H/R(H)) = \pi_i(S) \quad \textrm{for each $i > 2$}
	\]
	and
	\[
		\pi_2(H) \hookrightarrow \pi_2(H/R(H)) = \pi_2(S)
	\] 
	is injective.
	From~\eqref{Eq.homotpy_semisimple} it follows that $\pi_2(S) \otimes_{\ZZ} \QQ = 0$.
	Hence we get
	\[
		\pi_i(H) \otimes_{\ZZ} \QQ = \pi_i(S) \otimes_{\ZZ} \QQ \quad
		\textrm{for each $i > 1$} \, .
	\]
	Let $S_1, \ldots, S_l$ be the connected normal minimal closed
	complex subgroups of $S$.
	Then each $S_i$ is a simple complex algebraic group and the product morphism
	$S_1 \times \cdots \times S_l \to S$ is a finite \'etale surjection 
	(see \cite[Theorem in \S27.5]{Hu1975Linear-algebraic-g}). Hence,
	we get
	\[
	\pi_i(S) = \pi_i(S_1) \times \cdots \times \pi_i(S_l) \quad \textrm{for each $i > 1$} \, .
	\]
	
	Now, assume towards a contradiction that $\pi_i(G/ H) \otimes_{\ZZ} \QQ = 0$ 
	for each $i > 1$. By tensoring the long exact homotopy sequence associated to 
	$H \hookrightarrow G \twoheadrightarrow G / H$ with $\QQ$, we get isomorphisms
	\[
	\pi_i(H) \otimes_\ZZ \QQ \simeq \pi_i(G) \otimes_\ZZ \QQ 
	\quad \textrm{for each $i > 1$} \, .
	\] 
	In particular,
	\begin{equation}
	\label{eq:homotopy_iso_1}
	\pi_3(S_1) \otimes_\ZZ \QQ \times \cdots \times \pi_3(S_l) \otimes_{\ZZ} \QQ  \simeq
	\pi_3(G) \otimes_\ZZ \QQ \, .
	\end{equation}
	According to Table~\ref{table1}, we have 
	$\pi_3(S_i) \otimes_\ZZ \QQ \simeq \pi_3(G) \otimes_\ZZ \QQ \simeq \QQ$ for each
	$i \in \{1, \ldots, l\}$. Hence, due to~\eqref{eq:homotopy_iso_1}, we get $l = 1$,
	$S$ is already simple, and
	\[
	\pi_i(S) \otimes_\ZZ \QQ \simeq \pi_i(G) \otimes_\ZZ \QQ 
	\quad \textrm{for each $i > 1$} \, .
	\]
	Since $S$ and $G$ are both simply connected we get even
	\[
	\pi_i(S) \otimes_\ZZ \QQ \simeq \pi_i(G) \otimes_\ZZ \QQ 
	\quad \textrm{for each $i \geq 0$} \, .
	\]
	This implies that $S$ and $G$ have the same rational homotopy type.
	However, according to Table~\ref{table1} this can only happen if the Lie types of
	$S$ and $G$ coincide or the Lie types of $S$ and $G$ are $B_m$ and $C_m$, respectively,
	for some $m \geq 3$ (note that the \nth{5} rational homotopy group
	is non-vanishing only for $A_m$ with $m \geq 2$ 
	and the \nth{7} rational homotopy group
	is non-vanishing only for $B_m$, $C_m$ and $D_m$). In both cases
	the complex dimension of 
	$S$ and $G$ coincide, which contradicts the fact that $H$ is a proper
	closed complex subgroup of $G$.
\end{proof}

\appendix

\addtocontents{toc}{\protect\setcounter{tocdepth}{1}}

\section{Hopf's Umkehrungshomomorphismus theorem
}
\label{AppendixA}
In this chapter we use a version of Hopf's Umkehrungshomomorphismus theorem to prove Theorem~\ref{mthm:surjectiononhomology}.
Apart from the proof of Theorem~\ref{mthm:surjectiononhomology}, in this section we consider the Euclidean topology on (topological, smooth and complex) manifolds and subsets thereof.
For lack of reference, we provide a proof of the following version for (in general) non-closed manifolds of a result going back to the work of Hopf in the case of closed (smooth) manifolds~\cite{Hopf_30}. While we will apply the result only for smooth maps, we take the opportunity to formulate the statements for topological manifolds and proper continuous maps between them. Aspects of our proof are written with smooth concepts in mind (definition of degree, exhaustion of manifolds by full-dimensional compact manifolds with boundary), even if the proficient topologist might have worked differently, e.g.~to avoid topological transversality in Lemma~\ref{lem:nicecompacts}. An advantage is that this proof works very naturally in the smooth setup as well, and it seems like the fastest path from citable literature to the theorem.

The reader may read what follows for the ring $R$ being $\Z$ or $\Q$ without loss for the application in this paper. Recall that an orientation on a manifold is a $\Z$-orientation. The notions used in the result will be explained afterwards.

\begin{theorem}\label{thm:Hopfdegree}Let $R$ be a commutative unital ring, $M$ and $N$ be $R$-oriented non-empty topological manifolds of the same dimension where $N$ is connected, and let $f\colon M\to N$ be a proper continuous map. 
Denote by $d\in R$ the degree of $f$, by $f_k\colon H_k(M;R)\to H_k(N;R)$ the induced map in $k$-th homology, and by $f_{!,k}: H_k(N;R)\to H_k(M;R)$ the Umkehrungshomomorphismus.
Then, for all non-negative integers $k$ and all $c\in H_k(N;R)$, we have $f_k\circ f_{!,k}(c)=dc$.
\end{theorem}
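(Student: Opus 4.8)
The plan is to reduce the statement to the projection formula for the cap product, combined with the defining property of the degree, after identifying the Umkehrungshomomorphismus concretely via Poincar\'e--Lefschetz duality. Write $n$ for the common dimension, and let $[M]\in H^{BM}_n(M;R)$ and $[N]\in H^{BM}_n(N;R)$ be the fundamental classes determined by the $R$-orientations, living in Borel--Moore homology. Since $f$ is proper it induces a pullback $f^{\ast}_c\colon H^{\ast}_c(N;R)\to H^{\ast}_c(M;R)$ on compactly supported cohomology and a proper pushforward $f^{BM}_{\ast}\colon H^{BM}_{\ast}(M;R)\to H^{BM}_{\ast}(N;R)$, while the ordinary pushforward $f_{\ast}\colon H_{\ast}(M;R)\to H_{\ast}(N;R)$ is defined for any continuous map. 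Poincar\'e--Lefschetz duality supplies isomorphisms $D_M\colon H^{n-k}_c(M;R)\xrightarrow{\ \sim\ }H_k(M;R)$, $x\mapsto x\cap[M]$, and likewise $D_N$, and the Umkehrungshomomorphismus is by definition $f_{!,k}=D_M\circ f^{\ast}_c\circ D_N^{-1}$. Finally, since $N$ is connected and $R$-oriented one has $H^{BM}_n(N;R)=R\cdot[N]$, and the degree is the unique $d\in R$ with $f^{BM}_{\ast}[M]=d\,[N]$.

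The heart of the argument is the naturality (projection) formula: for the proper map $f$, every $\alpha\in H^{j}_c(N;R)$ and every $\mu\in H^{BM}_m(M;R)$ satisfy $f_{\ast}\bigl(f^{\ast}_c\alpha\cap\mu\bigr)=\alpha\cap f^{BM}_{\ast}\mu$ in $H_{m-j}(N;R)$. Granting this, fix $c\in H_k(N;R)$ and set $\alpha:=D_N^{-1}(c)\in H^{n-k}_c(N;R)$, so that $\alpha\cap[N]=c$. Then $f_{!,k}(c)=D_M(f^{\ast}_c\alpha)=f^{\ast}_c\alpha\cap[M]$, and hence
\[
	f_k\bigl(f_{!,k}(c)\bigr)=f_{\ast}\bigl(f^{\ast}_c\alpha\cap[M]\bigr)=\alpha\cap f^{BM}_{\ast}[M]=\alpha\cap(d\,[N])=d\,(\alpha\cap[N])=d\,c,
\]
which is the claim.

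What remains is to establish, in the topological category and without compactness of $M$ or $N$, the three ingredients used above: the fundamental classes together with Poincar\'e--Lefschetz duality with compactly supported coefficients; well-definedness of the degree $d\in R$; and the projection formula, i.e.\ the compatibility of the cap product with proper pushforwards and with $f^{\ast}_c$. This bookkeeping is the main obstacle. One route is to cite these facts directly in the Borel--Moore / compactly-supported framework; the route the authors seem to prefer is to fix a compact exhaustion of $M$ and $N$ by full-dimensional topological manifolds-with-boundary compatible with $f$ (cf.\ Lemma~\ref{lem:nicecompacts}), apply the classical Hopf formula and Lefschetz duality on each compact piece, and pass to the colimit, where one must check that the degree is locally constant and that homology and compactly supported cohomology commute with the relevant direct limits. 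Once these are in place, the displayed computation is immediate.
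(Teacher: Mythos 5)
Your argument is structurally the same as the paper's: both reduce the identity $f_k\circ f_{!,k}=d\cdot\mathrm{id}$ to a single application of the naturality/projection formula for the cap product, together with the defining property of the degree. The difference is purely one of framing. You package the orientation data in a Borel--Moore fundamental class $[M]\in H^{BM}_n(M;R)$ and invoke the projection formula $f_{\ast}(f^{\ast}_c\alpha\cap\mu)=\alpha\cap f^{BM}_{\ast}\mu$, whereas the paper never introduces Borel--Moore homology: it works directly with the colimit presentation $H^{n-k}_c(M)=\varinjlim_K H^{n-k}(M,M\setminus K)$, with the compatible orientation classes $o_K\in H_n(M,M\setminus K)$, and with the naturality of the relative cap product $H_n(M,M\setminus K)\times H^{n-k}(M,M\setminus K)\to H_k(M)$ under the map of pairs $f\colon(M,M\setminus f^{-1}(J))\to(N,N\setminus J)$. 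These two formulations are equivalent in content, and you correctly anticipate in your final paragraph that the paper's route is via compact exhaustion (Lemma~\ref{lem:nicecompacts}) and Alexander duality. The payoff of the paper's more concrete route is that it can cite Hatcher's relative cap product and local orientation classes directly, and it makes explicit why the degree is well-defined without compactness (by showing $H_n(N,N\setminus J)\simeq R$ for a suitably chosen connected, locally contractible compact $J$, e.g.\ a connected compact submanifold with boundary), rather than relying on $H^{BM}_n(N)\simeq R$, which would need to be set up separately. As you note yourself, the bookkeeping you defer is precisely the content that the paper's definitions and Lemmas~\ref{lem:nicecompacts} and \ref{lem:degreeviagoodcompacts} supply; with those in hand your displayed computation and the paper's chain of equalities \eqref{eq:deff!}--\eqref{eq:defPDJ} are the same proof.
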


We use Theorem~\ref{thm:Hopfdegree} to prove Theorem~\ref{mthm:surjectiononhomology}. In fact we prove the following.

\begin{theorem}\label{thm:THMCgeneral} Let $f\colon X\to Y$ be a proper surjective
	holomorphic map between complex $n$-dimensional manifolds. 
	Assume that $Y$ is connected and let the integer $d\geq 1$ be the number of preimages of a regular value of $f$. Then the following hold. 
\begin{enumerate}[leftmargin=*, label=(\alph*)]
\item \label{thm:THMCgeneral_a} The image of the induced map on $k$-th homology $H_k(X;R)\to H_k(Y;R)$ contains $dH_k(Y;R)$ for all
integers $k \geq 0$.
\item \label{thm:THMCgeneral_b} Assume that $X$ is connected. 
Then for all $x$ in $X$, the image of the induced homomorphism 
on the fundamental groups $f_*\colon \pi_1(X,x)\to \pi_1(Y,f(x))$ has finite index
in $\pi_1(Y,f(x))$ and this index divides $d$. 
In case $X$ has the rational homology of a point, then $f_*$ is a surjection.
\end{enumerate}
\end{theorem}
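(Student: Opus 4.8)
The plan is to obtain part~\ref{thm:THMCgeneral_a} as a direct application of Hopf's Umkehrungshomomorphismus theorem (Theorem~\ref{thm:Hopfdegree}), and then to bootstrap part~\ref{thm:THMCgeneral_b} from part~\ref{thm:THMCgeneral_a} by lifting $f$ along the covering of $Y$ corresponding to $\operatorname{im}(f_\ast)\subseteq\pi_1(Y)$. For part~\ref{thm:THMCgeneral_a}: a complex manifold carries a canonical orientation, so $X$ and $Y$ are non-empty $R$-oriented $2n$-manifolds with $Y$ connected and $f$ proper, and Theorem~\ref{thm:Hopfdegree} applies once its degree is identified with $d$. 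Since $f$ is holomorphic, at a regular point the real Jacobian determinant of $f$ equals $\lvert\det_\CC(\mathrm d f)\rvert^2\geq 0$, so every local degree is $+1$ and the degree of $f$ equals the number $d\geq 1$ of points in a regular fiber. Theorem~\ref{thm:Hopfdegree} then gives $dc=f_k(f_{!,k}(c))\in\operatorname{im}(f_k)$ for all $k\geq 0$ and all $c\in H_k(Y;R)$, i.e.\ $dH_k(Y;R)\subseteq\operatorname{im}(f_k)$; specializing to $R=\QQ$, where $d$ is invertible, recovers Theorem~\ref{mthm:surjectiononhomology}.

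For part~\ref{thm:THMCgeneral_b} I would fix $x_0\in X$, set $y_\ast\coloneqq f(x_0)$ and $\Gamma\coloneqq\operatorname{im}\bigl(f_\ast\colon\pi_1(X,x_0)\to\pi_1(Y,y_\ast)\bigr)$; since a change of base point conjugates $\Gamma$ and the index is a conjugacy invariant, one base point suffices. Let $p\colon(\tilde Y,\tilde y_\ast)\to(Y,y_\ast)$ be the connected covering with $p_\ast\pi_1(\tilde Y,\tilde y_\ast)=\Gamma$, give $\tilde Y$ the pulled-back complex structure so that $p$ is a holomorphic local biholomorphism, and let $\tilde f\colon X\to\tilde Y$ be the lift of $f$, which exists by the lifting criterion because $f_\ast\pi_1(X,x_0)\subseteq\Gamma$. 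The routine checks are that $\tilde f$ is holomorphic (locally it is $f$ followed by a local inverse of $p$) and proper (the preimage of a compact $K\subseteq\tilde Y$ is closed in the compact set $f^{-1}(p(K))$); then $\tilde f$ is a proper holomorphic map of $2n$-manifolds with connected target, so it has a degree $\tilde d\geq 0$ equal to the cardinality of each regular fiber.

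Next I would count. Picking a regular value $y_0$ of $f$ with $\lvert f^{-1}(y_0)\rvert=d$, every point of $p^{-1}(y_0)$ is a regular value of $\tilde f$ (since $\mathrm d p$ is an isomorphism), and the partition $f^{-1}(y_0)=\bigsqcup_{\tilde y\in p^{-1}(y_0)}\tilde f^{-1}(\tilde y)$ gives $d=\lvert p^{-1}(y_0)\rvert\cdot\tilde d$. As $d$ is a positive integer, this forces $\tilde d\geq 1$ and shows the number of sheets $\lvert p^{-1}(y_0)\rvert=[\pi_1(Y,y_\ast):\Gamma]$ is finite and divides $d$, which is the index statement. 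For the last claim, assume in addition that $X$ has the rational homology of a point and, for contradiction, that $m\coloneqq[\pi_1(Y,y_\ast):\Gamma]\geq 2$. Because $\tilde d\geq 1$, the proper holomorphic map $\tilde f$ is surjective (its image is closed and dense), so part~\ref{thm:THMCgeneral_a} applied to $\tilde f$ over $\QQ$ makes $\tilde f_\ast$ surjective on rational homology, whence $\tilde Y$ is $\QQ$-acyclic. The transfer of the finite covering $p$ makes $p_\ast$ surjective on rational homology, so $Y$ is $\QQ$-acyclic as well. Both $\tilde Y$ and $Y$ are then connected $\QQ$-acyclic oriented $2n$-manifolds, hence have Euler characteristic (ordinary and, via Poincaré--Lefschetz duality, compactly supported) equal to $1$; by multiplicativity of the compactly supported Euler characteristic under finite coverings, $1=\chi_c(\tilde Y)=m\cdot\chi_c(Y)=m$, contradicting $m\geq 2$. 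Hence $f_\ast$ is surjective.

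I expect the only genuinely delicate point to be the interplay in part~\ref{thm:THMCgeneral_b}: part~\ref{thm:THMCgeneral_a} may be applied to the lift $\tilde f$ only after the degree count has secured its surjectivity, and one must handle the possibly non-compact manifolds $\tilde Y$ and $Y$ with a little care (using their $\QQ$-acyclicity, via duality, to ensure the relevant Euler characteristics are defined before invoking multiplicativity). The remaining ingredients---covering-space theory, Sard's theorem, the transfer for finite covers, and multiplicativity of compactly supported Euler characteristics---are standard.
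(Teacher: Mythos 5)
Your proposal is correct, and both parts~\ref{thm:THMCgeneral_a} and~\ref{thm:THMCgeneral_b} follow the same overall architecture as the paper's proof (canonical orientation plus Theorem~\ref{thm:Hopfdegree} for~\ref{thm:THMCgeneral_a}; the covering $p$ corresponding to $\operatorname{im}(f_\ast)$, the lift $\tilde f$, a degree count, and Euler-characteristic multiplicativity for~\ref{thm:THMCgeneral_b}). There is, however, one genuine and rather pleasant divergence in~\ref{thm:THMCgeneral_b}. To show the covering $p$ has finitely many sheets, the paper proves that $p^{-1}(y)$ lies in the image of $\tilde f$ via a path-lifting argument: it takes a path $\tilde\beta$ in $\tilde Y$ from $\tilde y$ to an arbitrary $\tilde z\in p^{-1}(y)$, pushes it down, homotopes it rel endpoints into the regular locus $Y^{\mathrm{reg}}$ (this is the technical footnote invoking Remmert's Proper Mapping Theorem, a stratification of the singular locus, and Thom transversality), and lifts twice. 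You sidestep all of that by counting directly: every $\tilde y\in p^{-1}(y_0)$ is a regular value of $\tilde f$ because $\mathrm{d}p$ is an isomorphism, the degree $\tilde d$ of $\tilde f$ is well defined (as $\tilde Y$ is connected and $\tilde f$ is proper and orientation-compatible) and equals $\lvert\tilde f^{-1}(\tilde y)\rvert$ at every regular $\tilde y$ by Remark~\ref{rem:locdegree}, and the partition $f^{-1}(y_0)=\bigsqcup_{\tilde y\in p^{-1}(y_0)}\tilde f^{-1}(\tilde y)$ forces $\tilde d\geq 1$ and $\lvert p^{-1}(y_0)\rvert=d/\tilde d<\infty$. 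This counting argument buys a cleaner proof that avoids the transversality machinery, at the cost of leaning a bit harder on the degree theory of the appendix (specifically the constancy of the local degree over the connected $\tilde Y$, which you use at all points of $p^{-1}(y_0)$, not just at the base point $\tilde y$). For the final $\QQ$-acyclicity step, you deduce $Y$ is $\QQ$-acyclic via the transfer of $p$, while the paper applies Theorem~\ref{thm:Hopfdegree} to $f$ directly; these are interchangeable. Your explicit invocation of the compactly supported Euler characteristic and Poincar\'e duality to justify multiplicativity of $\chi$ for the (possibly non-compact) finite cover is a welcome clarification of a point the paper treats as standard.
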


%

\begin{proof}[Proof of Theorem~\ref{mthm:surjectiononhomology}]
Since $f$ is proper (in the sense of algebraic geometry), it is proper as a map when 
$X$ and $Y$ are endowed with the Euclidean topology (i.e.~their topology as (complex) 
differentiable manifolds); see~\cite[Proposition 3.2, Exp. XII]{GrRa2003Revetements-etales}, \cite[Proposition~6, \S10]{Bo1971Elements-de-mathem}.
From here on we consider $X$ and $Y$ with their Euclidean topology. 
W.l.o.G.~$X$ and $Y$ are connected.
Since $d\neq 0$ is a unit in $\Q$, the statement follows by applying
Theorem~\ref{thm:THMCgeneral}\ref{thm:THMCgeneral_a}
for $R=\Q$.
\end{proof}
\begin{proof}[Proof of Theorem~\ref{thm:THMCgeneral}]


Since $X$ and $Y$ are complex manifolds, they are canonically oriented, and since $f$ is
holomorphic, for every regular point $x\in X$, $f$ maps a neighborhood orientation-preservingly to $Y$.
Consequently, the local degree of $f$ at a regular value $y\in Y$ (see Remark~\ref{rem:locdegree} for a topological definition) equals the non-negative integer $d$ of the number of elements of $f^{-1}(y)$.

The degree of $f$ is well-defined as the local degree $d\geq 0$ of any regular value $y\in Y$, and,
since preimages of regular values are non-empty, the degree of $f$ is non-zero.

\ref{thm:THMCgeneral_a}: We apply Theorem~\ref{thm:Hopfdegree} to find that
\[
	f_k(H_k(X;R))
	\supseteq f_k(f_{!,k}(H_k(Y;R)))=dH_k(Y;R) \, .
\]

\ref{thm:THMCgeneral_b}: W.l.o.g., we fix $x\in X$ such that $y\coloneqq f(x)\in Y$ is a regular value of $f$.
Let $p\colon\widetilde{Y} \to Y$ be the covering 
of $Y$ corresponding to the subgroup $f_*(\pi_1(X,x))\subseteq \pi_1(Y,y)$. We show that this is a finite cover, whence $f_*(\pi_1(X,x))$ has finite index in $\pi_1(Y,y)$.

We pick $\widetilde{y}\in p^{-1}(y)$ and denote by $\widetilde{f}\colon X\to \widetilde{Y}$ a lift of $f$ with $\widetilde{f}(x)=\widetilde{y}$.

First we show that $p^{-1}(y)$ is contained in the image of $\widetilde{f}$.
 Indeed, take $\widetilde{z}\in p^{-1}(y)$ and let $\widetilde{\beta}\colon [0,1]\to \widetilde{Y}$ be a path connecting $\widetilde{y}$ and $\widetilde{z}$. We arrange for $\widetilde{\beta}$ to lie in $p^{-1}(Y^\mathrm{reg})$, where $Y^\mathrm{reg}\subseteq Y$ denotes the subset of regular values of $f$. This can, for example, be achieved by composing $\tilde{\beta}$ with $p$, homotoping the resulting path rel endpoints  into $Y^\mathrm{reg}$ (here we invoke that $f$ is 
 holomorphic\footnote{
We provide an argument for the claim that any smooth path $\beta\colon[0,1]\to Y$ with endpoints in $Y^{\mathrm{reg}}$ can be homotoped relative endpoints to a smooth 
path in $Y^{\mathrm{reg}}$.
 
 Note that $Y^\mathrm{sing}\coloneqq Y\setminus Y^\mathrm{reg}$ is the image $f(X^{\mathrm{sing}})$ of the closed analytic subset $X^{\mathrm{sing}}=\set{x\in X}{f\text{ is singular at } x}\subseteq X$,
 and thus $Y^\mathrm{sing}$ is a closed analytic subset of $Y$ by Remmert's Proper Mapping Theorem~\cite[Satz~23]{Re_57}. As such, $Y^\mathrm{sing}$ can be stratified as a finite union $M_1\dot\cup\cdots\dot\cup M_{k}$ of complex submanifolds $M_i \subseteq Y$
 of complex codimesion at least $1$; 
 see e.g.~\cite[\S5.5.~Stratifications]{Chi_89}.
Let $G \colon [0, 1] \times \RR^N \to Y$ be a smooth map for some $N\in\N$ with 
 $G |_{[0, 1] \times \{0\}} = \beta$ such that
 for each $t \in [0, 1]$, the map $\RR^N \to Y$, $v \mapsto G(t, v)$ is submersive (see e.g.~\cite[Corollary to the $\varepsilon$-Neighbourhood Theorem]{GuPo_74}). Let $F \colon [0, 1] \times \RR^N \to Y$
 be given by $F(t,v)\coloneqq G(t,t(1-t)v)$. Then, 
 $F$ and $\partial F \colon \partial [0, 1] \times \RR^N \to Y$ are both transversal to each submanifold $M_i$ of $Y$.
 By Thom's Transversality Theorem, 
 $\beta_v \colon [0, 1] \to Y$, $t \mapsto F(t, v)$ 
 is transversal to $Y^{\mathrm{sing}}$ for each $v \in \RR^N$ away from a nullset.
As the $M_i$ have real codimension at least $2$ in $Y$, 
this means that the image of $\beta_v$ is disjoint from each $M_i$. We set $\beta'=\beta_v$ for some $v$ not in that nullset.}),
 and lifting the resulting path.
 The loop $\beta\coloneqq p\circ \widetilde{\beta}$ can be lifted to a path $\alpha\colon[0,1]\to f^{-1}(Y^\mathrm{reg})$ starting at $x$ since $f$ restricts to a covering $f^{-1}(Y^\mathrm{reg})\to Y^\mathrm{reg}$ (recall that proper local homeomorphisms 
 are coverings). By construction, $\widetilde{f}\circ\alpha$ and $\widetilde{\beta}$ are lifts of $\beta$ starting at $\widetilde{y}$; in particular, $\widetilde{f}(\alpha(1))=\widetilde{z}$ as desired.
 
 We conclude that $p^{-1}(y)$, which is the index of 
 $f_*(\pi_1(X,x))$ in $\pi_1(Y,y)$, must be finite. In fact,
   \[\left|p^{-1}(y)\right|\left|\widetilde{f}^{-1}(\widetilde{y})\right|=\left|\left(p\circ \widetilde{f}\right)^{-1}(y)\right|=\left|f^{-1}(y)\right|
 =d\in\N,\]
 where the first equality follows since $\tilde{f}$ restricts to a covering $f^{-1}(Y^{\mathrm{reg}})\to p^{-1}(Y^{\mathrm{reg}})$ and, thus, $|\tilde{f}^{-1}(y')|=|\tilde{f}^{-1}(\widetilde{{y}})|$ for all $y'\in p^{-1}(y)$.
 
 Assume now, that $X$ has the rational homology of a point.
 Note that $\widetilde{f}$ is a holomorphic map between complex $n$-dimensional manifolds; hence, its degree $\widetilde{d}$ equals $\widetilde{f}^{-1}(\widetilde{y})$ by the same argument we used above to find $d=f^{-1}(y)$. Hence, since $f$ and $\widetilde{f}$ have non-zero-degree, Theorem~\ref{thm:Hopfdegree} implies that they both induce surjections on rational homology.
 Hence, both $Y$ and $\widetilde{Y}$ have the rational homology of a point and, in particular, they both have Euler characteristic 1. However, for a finite covering $p\colon\widetilde{Y}\to Y$ of index $k$, the Euler characteristic of $\widetilde{Y}$ is $k$-times that of $Y$, thus $k=1$.
 \end{proof}

\begin{remark}An $n$-dimensional manifold $M$ is said to \emph{dominate} an $n$-dimensional connected
manifold $N$, if there exists a proper continuous map $f\colon M\to N$ of non-zero degree. 
Using this term, the above proof of Theorem~\ref{thm:THMCgeneral}\ref{thm:THMCgeneral_a}
amounts to observing that  a proper surjective holomorphic map between complex $n$-dimensional 
manifolds is a map that establishes that the domain dominates the target and then applying Theorem~\ref{thm:Hopfdegree}.
%
\end{remark}

\begin{remark}\label{rmk:Gurjar}
Only after a preprint of this article appeared on the arXiv, the authors became aware of Gurjar's result~\cite{Gu1980Topology-of-affine}. This was the motivation 
to add~\ref{thm:THMCgeneral_b} 
to Theorem~\ref{thm:THMCgeneral}, so that Theorem~\ref{thm:THMCgeneral} specializes to Gurjar's result by setting $X=\C^n$. Our proof of part~\ref{thm:THMCgeneral_b} can also be understood as the natural generalization of the argument from~\cite{Gu1980Topology-of-affine}.
\end{remark}

Before providing the proof of Theorem~\ref{thm:Hopfdegree}, we recall orientations, dualities, the Umkehrungshomomorphismus, and the degree. We do this somewhat detailed and in a for us suitable way since we need all notions to work for non-compact manifolds. We take~\cite{hatcher_AT} as our reference for algebraic topology.

For readability, we will drop the coefficients from the notation of homology and cohomology. 

\subsection*{Manifold} A topological manifold, short manifold, of dimension $n$ is a second countable Hausdorff space locally homeomorphic to $\R^n$. In particular, manifolds have no boundary unless otherwise stated. A manifold is said to be closed if it is compact.
\subsection*{Orientation} An $R$-orientation is a map $o\colon M\to \bigcup_{x\in M} H_n(M, M\setminus\{x\})$ such that $o(x)\in H_n(M, M\setminus\{x\})\simeq R$ is a generator 
(i.e.~$Ro(x)=H_n(M, M\setminus\{x\})$) and $o$ is continuous.
Here, $\bigcup_{x\in M} H_n(M, M\setminus\{x\})$ is endowed with the following topology, which turns the canonical projection to $M$ into a covering map and $o$ into a section of this covering map. The topology is the inductive limit topology
	with respect to the maps
	\[
		\xymatrix{
			\RR^n \times R \ar[rr]^-{(x, r) \mapsto r \varepsilon_x(\mu)} && 
			\bigcup_{y\in \RR^n} H_n(\RR^n, \RR^n\setminus\{y\}) \ar[r]^-{\phi_\ast} &
			\bigcup_{x\in M} H_n(M, M\setminus\{x\})
		}
	\]
	for all local charts $\phi \colon \RR^n \to U$ where $R$ carries the discrete topology,
	$\mu \in H_n(\RR^n, \RR^n \setminus \{0\})$
	is a fixed generator and 
	$\varepsilon_x \colon H_n(\RR^n, \RR^n \setminus \{0\}) \to H_n(\RR^n, \RR^n \setminus \{x\})$
	is induced by the translation $\RR^n \to \RR^n$, $y \mapsto y+x$; see \cite[$R$-orientation]{hatcher_AT}).

For every compact $K\subset M$, we denote by $o_K\in H_n(M,M\setminus K)$ the unique element in $H_n(M,M\setminus K)$ that maps to $o(x)$ under the map induced by inclusion of pairs $(M,M\setminus K)\subset (M,M\setminus \{x\})$ for all $x\in K$; see \cite[Lemma~3.27]{hatcher_AT}.

For context, recall that for a closed oriented manifold $M$, $o_M$ 
is the fundamental class in $H_n(M)$.

\subsection*{Cohomology groups with compact supports as a limit}
For any topological space $X$, we denote by $H^k_{\c}(X)$ \emph{cohomology groups with compact supports}; that is, the limit group of the directed system of groups given by the groups and maps
\[
	\left\{H^k(X,X\setminus K)\right\}_{K\subset X\text{, $K$ compact}}\et H^k(X,X\setminus K)\overset{\iota^*}{\to}H^k(X,X\setminus L),
\] 
where $K\subseteq L\subseteq X$ are compacts and $i:(X,X\setminus L)\to (X,X\setminus K)$ denotes the inclusions of pairs, respectively; see \cite[Paragraph after Prop~3.33]{hatcher_AT}.

This yields a functor from the category of topological space with morphisms given by \emph{proper} continuous maps to the category of $R$-modules for each non-negative integer $k$: to a proper continuous map $f\colon X\to Y$ we associate
\[
	f_{\c}^k\colon H^k_\c(Y)\to H^k_\c(X) \, , \quad [\phi] \mapsto [f^k(\phi)\in H^k(X,X\setminus f^{-1}(J))] \, ,
\] 
where $\phi\in H^k(Y,Y\setminus J)$ for some compact subset $J\subseteq Y$, and we denote by $f^k\colon H^k(Y,Y\setminus J)\to H^k(X,X\setminus f^{-1}(J))$ the homomorphism induced by $f\colon (X,X\setminus f^{-1}(J))\to (Y,Y\setminus J)$.

\subsection*{Poincar\'e duality and the Umkehrungshomomorphismus}
We recall that for an $R$-oriented topological manifolds $M$ we have the Poincare duality isomorphism.
One can write the \emph{Poincar\'e duality} map
\[
	PD_k(M)\colon H^{n-k}_\c(M)\to H_{k}(M)
\]
as the homomorphism induced by
\[
	H^{n-k}(M, M \setminus K) \to H_k(M) \, , \quad  \psi \mapsto o_K\cap \psi
\]
for all compact subsets $K$ in $X$; see~\cite[Theorem~3.35]{hatcher_AT}.

Correspondingly, for all non-negative integers $k$, one defines the \emph{Um\-kehr\-ungs\-homo\-morphismus} in homology of a proper continuous map $f\colon M\to N$ between $R$-oriented $n$-manifolds as
\[f_{!,k}\coloneqq PD_k(M)\circ f_\c^{n-k}\circ (PD_k(N))^{-1}\colon H_k(N)\to H_k(M).\]

\subsection*{Alexander duality}
For an $R$-orientable manifold $M$ and a locally contractible compact path-connected subset $K\subset M$, one has the following
\begin{equation}\label{eq:Kconnectedandloccotractible}
H_l(M,M\setminus K)\simeq H^{n-l}(K) \text{ for all $l\in\{0,\ldots, n\}$},
\end{equation}
which we only use for $l=n$ and $K$ path-connected, so that $H^{n-l}(K)\simeq R$. 

\begin{proof}[Proof of~\eqref{eq:Kconnectedandloccotractible}]
Let $K$ be a compact in an $R$-orientable $n$-dimensional manifold~$M$.
If $M$ is closed, see~\cite[Theorem 3.44]{hatcher_AT} for a proof (the proof given there works as stated for every $R$).

If instead $M$ is not closed (i.e.~$M$ is not compact), we find a compact $n$-dimensional submanifold $M_0\subset M$ with boundary such that $K$ is contained in the interior of $M_0^\circ$ of $M_0$; see Lemma~\ref{lem:nicecompacts} below. Now~\eqref{eq:Kconnectedandloccotractible} follows
from the case above since by excision
\[
	H_l(M,M\setminus K)\simeq H_l(M_0^\circ, M_0^\circ
	\setminus K) \simeq H_l\left(M_0\cup\textrm{id}_{\partial M_0}{M_0},M_0\cup\textrm{id}_{\partial M_0}{ \overline{M_0}}\setminus K\right),
\] 
where $M_0\cup\textrm{id}_{\partial M_0}{M_0}$ denotes the doubling of $M_0$, i.e.~the closed $R$-orientable $n$-manifold obtained by gluing $M_0$ to a copy of itself along their boundary via the identity.
\end{proof}

\subsection*{Every compact sits in a compact submanifold}
The following lemma was used above to assure that Alexander duality holds for non-compact manifolds. We will also use it below for degree calculations.
\begin{lemma}
\label{lem:nicecompacts}
Let $M$ be an $n$-dimensional manifold. If $K\subset M$ is a compact subset, then there exists a compact $n$-dimensional manifold $M_0$ (with boundary if $K$ has non-empty intersection with at least one non-compact component of $M$) such that the interior of $M_0$ contains $K$. 
If $M$ is connected, then $M_0$ can be chosen to be path-connected.
\end{lemma}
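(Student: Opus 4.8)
The plan is to build $M_0$ by a standard exhaustion-and-thickening argument. First I would reduce to the connected case: since $K$ is compact, it meets only finitely many connected components of $M$, and $K\cap M_i$ is compact in each such component $M_i$; it suffices to produce a suitable $M_0^{(i)}\subset M_i$ for each and take their (disjoint) union. So assume $M$ is connected; if $M$ itself is compact, take $M_0=M$ (no boundary), so assume $M$ is non-compact. Pick a countable exhaustion $U_1\subset \overline{U_1}\subset U_2\subset\overline{U_2}\subset\cdots$ of $M$ by open sets with compact closure, whose existence follows from second countability and local compactness; then $K\subset U_N$ for some $N$ since $K$ is compact, so it is enough to find a compact connected $n$-manifold with boundary whose interior contains $\overline{U_N}$.

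Next I would replace $\overline{U_N}$ by a slightly larger \emph{smooth} compact codimension-zero submanifold with boundary. Here I would invoke that $M$, being a (second countable) topological manifold, carries a smooth structure if $n\le 3$, but in general one cannot smooth $M$; so instead I would argue directly in the topological category. Concretely, cover the compact set $\overline{U_N}$ by finitely many charts $\phi_j\colon \R^n\to V_j\subset M$, and inside each chart choose a closed ball $B_j=\phi_j(\overline{B(0,r_j)})$ so that the interiors of the $B_j$ still cover $\overline{U_N}$. The set $M_0:=\bigcup_j B_j$ is a compact subset of $M$ whose interior contains $\overline{U_N}\supset K$. The only remaining issue is that a finite union of balls need not be a topological manifold with boundary along the overlaps; to fix this I would take the $B_j$ in ``general position'' so that their boundary spheres intersect nicely. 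In dimensions where smoothing is available this is the handle-by-handle union of tubular neighborhoods; in general one appeals to the fact that the boundaries $\partial B_j$ can be isotoped to be in general position (topological transversality, e.g.\ via \cite[\S5.5]{Chi_89}-type stratification arguments as already used in Appendix~\ref{AppendixA}), after which $\bigcup_j B_j$ is a compact $n$-manifold with boundary. Finally, to make $M_0$ path-connected when $M$ is: $M$ connected (hence, being a manifold, path-connected) lets me join the finitely many components of $M_0$ by finitely many arcs in $M$, thicken each arc to an embedded closed $n$-ball (a ``$1$-handle''), and adjoin these; the result is a connected compact $n$-manifold with boundary still containing $K$ in its interior.

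I expect the main obstacle to be the general-position step that turns the finite union of coordinate balls into an honest compact manifold-with-boundary: in the purely topological category one must be careful, since one cannot simply quote smooth transversality. The clean way around this is to phrase the construction so that only \emph{topological} transversality of the finitely many boundary spheres is needed — which is exactly the kind of input already used (for paths) in Appendix~\ref{AppendixA} — or, alternatively, to observe that for the applications in this paper $M$ is always a complex manifold and hence smooth, so the smooth version of the argument (exhaustion by regular sublevel sets of a proper smooth function, or a handle decomposition) suffices. Everything else — the reduction to finitely many components, the exhaustion, and the $1$-handle surgery for connectedness — is routine.
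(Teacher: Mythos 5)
Your overall strategy is correct but it takes a genuinely different route from the paper's. The paper does not cover $K$ by coordinate balls; instead it picks a single proper continuous function $f\colon M\to\RR$ (built from an exhaustion by compacts and the Tietze extension theorem, much as you suggest), chooses $a,b$ with $a+1<f<b-1$ on $K$, perturbs $f$ by a homotopy supported in $f^{-1}([a-1,a+1]\cup[b-1,b+1])$ to be transversal to $a$ and $b$, and takes $M_0 \coloneqq f^{-1}([a,b])$. That way the only transversality input is for a map to $\RR$ at two point values, citing the topological transversality theorems of \cite{FNOP}. Your route instead requires putting the finitely many boundary spheres $\partial B_j$ into general position so that $\bigcup_j B_j$ is a manifold-with-boundary; this is strictly more delicate (triple and higher intersections matter, and one must check the resulting boundary is a manifold), and the reference you reach for, \cite[\S5.5]{Chi_89}, is about stratifications of complex analytic sets — it was used in the paper's Appendix~\ref{AppendixA} footnote for a holomorphic map between complex manifolds, not for topological general position, so it does not cover what you need here. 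Both proofs ultimately lean on topological transversality as the nontrivial input, and both (you explicitly, the paper in a footnote) point out that for the actual application everything is smooth so one can retreat to smooth transversality. For the connectedness step the paper is also a bit more economical: rather than attaching thickened $1$-handles (which reopens the ``union of manifolds-with-boundary'' issue), it joins the components of $M_0$ by paths to form a larger connected compact $L\supset K$, reruns the construction to get a compact submanifold containing $L$, and takes the connected component containing $L$. If you wanted to salvage your coordinate-ball approach, you would need to either (i) explicitly cite a topological general-position theorem strong enough to handle the multi-sphere arrangement, or (ii) switch to the paper's ``one proper function, two regular values'' device, which avoids the issue entirely.
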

\begin{proof}
If $K$ has empty intersection with all non-compact components of $M$, set $M_0$ to be the union of connected components of $M$ that have non-empty intersection with $K$. Hence, we consider the case that $K$ has non-empty intersection with at least one non-compact component of $M$ (in particular, $M$ is non-compact).

Pick a proper continuous map $f\colon M \to \R$. (For example, exhaust $M$ by a countable union of compacts $K_1\subset K_2\subset\cdots$ with $K_i\subseteq K_{i+1}^\circ$ (possible by second countability), define $f$ to be $i$ on the compacts $K_{i}\setminus K_i^\circ$, and extend it to map $K_{i+1}\setminus K_i^\circ$ to $[i,i+1]$ by the Tietze extension theorem.)

Let $a,b\in \R$ be such that $a+1<f(x)<b-1$ for all $x$ in $K$. Up to changing $f$ by a homotopy that is constant outside of the compact $f^{-1}([a-1,a+1]\cup [b-1,b+1]$ (in particular, the resulting $f$ stays proper), we may and do assume that $f$ is transversal to $a$ and $b$, which in particular implies that $M_0\coloneqq f^{-1}([a,b])$ is a compact manifold  with boundary $f^{-1}(a)\cup f^{-1}(b)$; see~\cite[Definition 10.7 and Theorem 10.8]{FNOP} for necessary definitions and statements.\footnote{We abstain from providing the details of topological transversality (details and further references can be found in~\cite{FNOP}). We note that in the rest of the paper we use this appendix only for smooth manifolds, and the proof is written such that replacing $f$ by a smooth map the argument works with the notion of transversality and the corresponding transversality theorems in smooth manifold theory.}

In case $M$ is connected, one can easily arrange for $M_0$ to be connected.
Indeed, let $L$ be the union of $M_0$ with the image of (finitely many) paths in $M$ between components of $M_0$. Thus $L$ is a path-connected compact subset of $M$ that contains the original $K$. 
Find a compact submanifold (with boundary) of dimension $n$ of $M$ that contains $L$ (as done in the previous paragraph) and take its connected component that contains $L$.
\end{proof}



\subsection*{Degree} Let $f\colon M\to N$ be a proper continuous map, where $M$ and $N$ are $R$-oriented $n$-manifolds. 

For $y\in N$, we set $K\coloneqq f^{-1}(y)$ and consider the induced map 
\[
	f_n\colon H_n(M, M \backslash f^{-1}(y))\to H_n(N, N \backslash \{y \})=Ro(y) \simeq R \, .
\] 
We define the \emph{local degree} $d_y$ of $f$ at a point $y\in N$ as the unique $d_y\in R$ such that $f_n(o_K)=d_yo(y)$.

\begin{remark}[Local degree for $y$ with finite preimage]\label{rem:locdegree}
In the special case that $K$ is finite, say given by pairwise distinct points $x_1,\cdots x_l$, we have that
\[d_y=\sum_{i=0}^lr(x_i),
\]
where $r(x_i)\in R$ is such that for an open neighborhood $U$ of $x_i$ with $U\cap K=\{x_i\}$ the induced map of pairs $f_n\colon H_n(M,M\setminus \{x_i\}) \simeq H_n(U,U\setminus \{x_i\})\to H_n(N,N\setminus \{y\})$ satisfies $f_n(o(x_i))=r(x_i)o(y)$.
\end{remark}

If $y_1\neq y_2$ are in the same connected component of $N$, then $d_{y_1}=d_{y_2}$. This follows from the following lemma, which is immediate from naturality of induced maps in homology of pairs.
\begin{lemma}\label{lem:degreeviagoodcompacts}
Let $f\colon M\to N$ be a proper continuous map, where $M$ and $N$ are $R$-oriented $n$-manifolds.

If $J$ is a compact subset of $N$ such that $H_n(N,N\setminus J) \simeq R$, e.g.~$J$ is path connected and locally contractible (Alexander duality; see~\eqref{eq:Kconnectedandloccotractible}), then the unique $d\in R$ such that $f_n(o_{f^{-1}(J)})=do_J$ satisfies $d=d_y$ for all $y\in J$.\qed
\end{lemma}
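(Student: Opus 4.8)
The plan is to deduce the statement from naturality of the inclusion-induced maps in relative homology, in the spirit of the preceding paragraphs; no new idea is needed. First I would record that the hypothesis $H_n(N,N\setminus J)\simeq R$ makes the scalar $d$ meaningful. For any $y\in J$ the inclusion of pairs $(N,N\setminus J)\hookrightarrow(N,N\setminus\{y\})$ carries $o_J$ to $o(y)$, by the defining property of $o_J$ (see~\cite[Lemma~3.27]{hatcher_AT}); since $o(y)$ generates $H_n(N,N\setminus\{y\})\simeq R$ and the source $H_n(N,N\setminus J)$ is also $\simeq R$, a short module-theoretic argument (an $R$-module map $R\to R$ carrying some element to a unit is multiplication by a unit) shows this restriction map is an isomorphism and $o_J$ a generator of $H_n(N,N\setminus J)$. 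Consequently $r\mapsto r\,o_J$ is injective, and there is a unique $d\in R$ with $f_n(o_{f^{-1}(J)})=d\,o_J$.

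Next I would fix $y\in J$, set $K\coloneqq f^{-1}(J)$ and $K_y\coloneqq f^{-1}(y)\subseteq K$ (both compact since $f$ is proper), and form the commutative square of pairs built from $f$ together with the inclusions $M\setminus K\subseteq M\setminus K_y$ and $N\setminus J\subseteq N\setminus\{y\}$ (note $f(M\setminus K)\subseteq N\setminus J$ and $f(M\setminus K_y)\subseteq N\setminus\{y\}$, so these maps of pairs exist). Applying $H_n$ yields
\[
\xymatrix{
H_n(M,M\setminus K)\ar[r]\ar[d]_{f_n} & H_n(M,M\setminus K_y)\ar[d]^{f_n}\\
H_n(N,N\setminus J)\ar[r] & H_n(N,N\setminus\{y\})\, .
}
\]
The top horizontal arrow sends $o_K$ to $o_{K_y}$: its value restricts to $o(x)$ for every $x\in K_y$, hence coincides with $o_{K_y}$ by the uniqueness clause in the definition of the latter. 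The bottom horizontal arrow sends $o_J$ to $o(y)$, as just recalled.

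Chasing $o_K$ around the square then finishes the argument: going down and then right gives $f_n(o_K)=d\,o_J\mapsto d\,o(y)$, while going right and then down gives $o_K\mapsto o_{K_y}\mapsto f_n(o_{K_y})=d_y\,o(y)$, the last equality being the definition of the local degree $d_y$. Commutativity forces $d\,o(y)=d_y\,o(y)$ in $H_n(N,N\setminus\{y\})\simeq R$, and since $o(y)$ is a generator this yields $d=d_y$; as $y\in J$ was arbitrary, the lemma follows. I do not expect a genuine obstacle here: this is essentially a diagram chase. The only point deserving a line of care is the verification that $o_J$ generates $H_n(N,N\setminus J)$, so that $d$ is well defined at all, which is precisely where the hypothesis $H_n(N,N\setminus J)\simeq R$—concretely, Alexander duality~\eqref{eq:Kconnectedandloccotractible}—enters.
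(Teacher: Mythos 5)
Your proof is correct and is exactly the naturality-of-inclusions diagram chase that the paper alludes to (the paper declares the lemma ``immediate from naturality of induced maps in homology of pairs'' and gives no further argument). Your preliminary observation that $o_J$ generates $H_n(N,N\setminus J)$ — so that the scalar $d$ exists and is unique — is a worthwhile extra line that the paper leaves implicit.
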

And, indeed, it follows that $d_{y_1}=d_{y_2}$: let $J$ be a closed arc embedded in $N$ with endpoints $y_1$ and $y_2$ (such an arc exists since connected components of manifolds are arc-connected), hence  $d_{y_1}=d_{y_2}$ by Lemma~\ref{lem:degreeviagoodcompacts}.

Hence, if $N$ is connected, the \emph{degree} $d$ of $f$ is defined to be the local degree of $f$ at a $y\in N$.




\subsection*{The proof}
\begin{proof}[Proof of Theorem~\ref{thm:Hopfdegree}] Let $f\colon M\to N$ be a proper continuous map between $R$-oriented manifolds $M$ and $N$, where $N$ is connected. Fix a non-negative integer $k$ and $c_Y\in H_k(N)$. We choose a compact $J\subset N$ such that $PD_k(N)([\psi])=c_Y$ for some $\psi\in H^{n-k}(Y,Y\setminus J)$. In fact, by increasing $J$ if necessary (and changing $\psi$ to the corresponding class given by the inclusion map), we may and do choose $J$ to be connected and locally contractible (indeed, we may choose it as a connected submanifold with boundary by Lemma~\ref{lem:nicecompacts}). We set $K\coloneqq f^{-1}(J)\subset M$, which is compact since $f$ is proper. With this setup we calculate
\begin{align}
\label{eq:deff!}f_{k}\left(f_{!,k}(c_Y)\right)&=f_k\left(PD_k(M)\circ f_\c^{n-k}\circ (PD_k(N))^{-1}(c_Y)\right)&\\
\label{eq:defphi}&=f_k \left(PD_k(M)(f_\c^{n-k}([\psi]))\right)&\\
\label{eq:homologyaslimit}&=f_k \left(PD_k(M)([f^{n-k}(\psi)])\right)&\\
\label{eq:defPD}&=f_k \left(o_K\cap f^{n-k}(\psi)\right)&\\
\label{eq:naturalityofcap}&=f_n(o_K)\cap \psi&\\
\label{eq:orientation}&=do_J\cap\psi&\\
\label{eq:defPDJ}&=dPD_k(N)([\psi])=dc_Y&,
\end{align}
where we use the following.
\eqref{eq:deff!} holds by the definition of the Um\-kehr\-ungs-homomorphism.
\eqref{eq:defphi} holds by our choice of $\psi$.
\eqref{eq:homologyaslimit} follows by the definition of the induced map on cohomology with compact support.
\eqref{eq:defPD} holds by the definition of $PD_k(M)$.
\eqref{eq:naturalityofcap} is an application of the naturality of the cap product
\[\cap\colon H_n(M,M\setminus K)\times H^{n-k}(M, M\setminus K)\to H_k(M);\] see~\cite[more general relative cap product, The Duality Theorem, p.~240]{hatcher_AT}.
For~\eqref{eq:orientation}, note that $do_J=f_n(o_K)$ by Lemma~\ref{lem:degreeviagoodcompacts}
since $K = f^{-1}(J)$ and by Alexander duality (see~\eqref{eq:Kconnectedandloccotractible})
we have $H_n(N,N\setminus J)\simeq R$ by our choice of $J$.
Finally, \eqref{eq:defPDJ} holds by the definition of $PD_k(N)$ and since $PD_k(N)([\psi])=c_Y$.
\end{proof}

\section{A characterization of embeddings}
\label{AppendixB}

For the lack of a reference to an elementary proof of the following characterization
of embeddings, we provide a proof here.

\begin{prop}
	\label{prop.crit_embedding}
	Let $f \colon X \to Y$ be a morphism of varieties. Then the following are
	equivalent:
	\begin{enumerate}[label=\alph*), leftmargin=*]
		\item \label{prop.crit_embedding_a} $f$ is an embedding;
		\item \label{prop.crit_embedding_b}
		$f$ is proper, injective and for each $x \in X$ the differential 
		$\textrm{d}_x  f \colon T_x X \to T_{f(x)} Y$
		is injective.
	\end{enumerate}
\end{prop}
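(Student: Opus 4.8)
The implication \ref{prop.crit_embedding_a} $\Rightarrow$ \ref{prop.crit_embedding_b} is immediate: an embedding is by definition an isomorphism onto a closed subvariety, hence proper and injective, and since $f(X)$ is closed in $Y$, for each $x$ the differential $\textrm{d}_x f$ is the composite of the isomorphism $T_x X \xrightarrow{\sim} T_{f(x)} f(X)$ with the inclusion $T_{f(x)} f(X) \hookrightarrow T_{f(x)} Y$, which is injective. So the content is the converse.

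For \ref{prop.crit_embedding_b} $\Rightarrow$ \ref{prop.crit_embedding_a}, the plan is to factor $f$ as $X \xrightarrow{f'} f(X) \hookrightarrow Y$ and show $f'$ is an isomorphism. Since $f$ is proper, $f(X)$ is closed in $Y$; since $f$ is proper and injective, it is quasi-finite and proper, hence finite (by~\cite[Theorem~12.89]{GoWe2010Algebraic-geometry} or the standard fact that proper quasi-finite morphisms are finite). Thus $f' \colon X \to f(X)$ is a finite bijective morphism of varieties. First I would reduce to the affine case, since finiteness is local on the target and the statement is local; write $f'$ as $\Spec B \to \Spec A$ with $A \to B$ a module-finite ring map that is injective (as $f'$ is dominant onto the reduced scheme $f(X)$) and induces a bijection on points. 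The goal is to prove $A \to B$ is an isomorphism. Injectivity of $A\to B$ is already in hand; for surjectivity I would argue locally at each point, using that $f'$ is bijective and that the induced map on residue fields is an isomorphism (both residue fields equal $\kk$, as $\kk$ is algebraically closed and the points are closed). Then Nakayama's lemma applied to the finite $A_\mathfrak{m}$-module $B\otimes_A A_\mathfrak m / A_\mathfrak m$ would give surjectivity at $\mathfrak m$, provided we know the fiber is a single point with the right multiplicity — which is where the injectivity of the differential enters.

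Concretely, the differential condition should be used as follows. At a closed point $x$ with $y = f(x)$, injectivity of $\textrm{d}_x f$ translates, via the cotangent space, into surjectivity of $\mathfrak m_y/\mathfrak m_y^2 \to \mathfrak m_x/\mathfrak m_x^2$. Combined with injectivity on points (so the fiber over $y$ is the single point $x$, set-theoretically) and the residue field isomorphism, this forces the local homomorphism $\OO_{Y,y} \to \OO_{X,x}$ to be surjective: one shows $\mathfrak m_x = \mathfrak m_y \OO_{X,x}$ using the surjectivity on cotangent spaces together with Nakayama, and then that $\OO_{X,x}$ is generated over $\OO_{Y,y}$ by $1$, again by Nakayama since it is module-finite and the residue field extension is trivial. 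Hence $f^\sharp$ is a surjective local homomorphism at every point; being already injective (on the level of the factored map onto $f(X)$), it is an isomorphism, so $f'$ is an isomorphism of varieties and $f$ is an embedding.

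The main obstacle I anticipate is bookkeeping the reductions cleanly — passing from the scheme-theoretic image $f(X)$ (which one wants reduced, matching our convention that varieties are reduced) to a genuine affine chart, and making sure the "finite + bijective + isomorphism on residue fields + injective differential" package is correctly assembled into "local isomorphism at every point." None of the individual steps is deep, but the argument must be phrased so that one really only uses the three hypotheses in \ref{prop.crit_embedding_b}; in particular, one must resist assuming normality or any smoothness of $Y$. A secondary subtlety is that $X$ need not be irreducible or equidimensional, so all arguments should be carried out pointwise/locally rather than generically.
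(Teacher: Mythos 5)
Your outline is structurally the same as the paper's proof: factor through $f(X)$ (closed because $f$ is proper), reduce to the affine finite case $A \subseteq B$ using that proper plus injective implies finite, and then show each local homomorphism $\OO_{Y,y} \to \OO_{X,x}$ is an isomorphism by combining triviality of the residue-field extension (here $\kk$ is algebraically closed), surjectivity on cotangent spaces, finiteness, and Nakayama. This is exactly the content of the paper's Lemma~\ref{lem.isom}.

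There is, however, a genuine gap in the clause ``again by Nakayama since it is module-finite'': you assert without proof that $\OO_{X,x}$ is a finite $\OO_{Y,y}$-module. What is automatic from finiteness of $A \to B$ is that $S^{-1}B = B \otimes_A A_{\mm_A}$ (with $S = A \setminus \mm_A$) is a finite $A_{\mm_A}$-module; it is \emph{not} a priori that $\OO_{X,x} = B_{\mm_B}$ is finite over $A_{\mm_A}$, and indeed for a finite morphism with several points in the fiber over $y$ it is typically not. To identify $S^{-1}B$ with $B_{\mm_B}$ one must (i) use injectivity of $f$ together with the fact that, by integrality, every prime of $B$ over $\mm_A$ is maximal, to conclude that $\mm_B$ is the \emph{unique} prime of $B$ lying over $\mm_A$, so that $S^{-1}B$ is a local ring with maximal ideal $S^{-1}\mm_B$; and then (ii) deduce that $S^{-1}B = B_{\mm_B}$, which is precisely the paper's Lemma~\ref{lem.algebra}. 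You flirt with the correct object ($B \otimes_A A_{\mm}$) in your first paragraph but then silently replace it with $\OO_{X,x}$ in the second. With the identification supplied, your two Nakayama applications go through unchanged, so the missing piece is local to this step — but it is a real ingredient, not bookkeeping, and it is where injectivity of $f$ enters for a second, independent reason.
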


For the proof, we use the following two lemmas from commutative algebra.

\begin{lemma}
	\label{lem.algebra}
	Let $B$ be a ring and $S \subset B$ be a multiplicative set such that the localization
	$R \coloneqq S^{-1}B$ is a local ring. Denote by $\nn$ the maximal ideal of $R$, 
	by $\varphi \colon B \to R$ the canonical homomorphism and set 
	$\mm \coloneqq \varphi^{-1}(\nn)$. 
	
	Then there exists an isomorphism
	$\psi \colon R \to B_{\mm}$ such that $\psi \circ \varphi \colon B \to B_{\mm}$
	is the canonical homomorphism of the localization.
\end{lemma}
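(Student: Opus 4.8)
The plan is to show that $S$ avoids the prime ideal $\mm$, and then to apply the universal property of localization twice---once to construct $\psi$ and once to construct an inverse---concluding with the uniqueness clause of the universal property. No deep input is needed; the one place where the local-ness of $R$ is essential is the step that identifies the units of $R$ with the complement of $\nn$.

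First I would record that $\mm=\varphi^{-1}(\nn)$ is a prime ideal of $B$, since $\nn$ is a maximal (hence prime) ideal of $R$. For $s\in S$, the image $\varphi(s)$ is a unit of $R=S^{-1}B$ by construction of the localization, so $\varphi(s)\notin\nn$ and therefore $s\notin\mm$; thus $S\subseteq B\setminus\mm$. Consequently, if $\lambda\colon B\to B_\mm$ denotes the canonical localization homomorphism, then $\lambda$ sends every element of $B\setminus\mm$, in particular every element of $S$, to a unit. By the universal property of $S^{-1}B$, there is a unique ring homomorphism $\psi\colon R\to B_\mm$ with $\psi\circ\varphi=\lambda$, which is exactly the asserted compatibility. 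Conversely, here I use that $R$ is local: for $b\in B\setminus\mm$ we have $\varphi(b)\notin\nn$, and in a local ring an element outside the maximal ideal is a unit, so $\varphi(b)\in R^\times$. By the universal property of $B_\mm$, there is a unique ring homomorphism $\psi'\colon B_\mm\to R$ with $\psi'\circ\lambda=\varphi$.

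Finally I would check that $\psi$ and $\psi'$ are mutually inverse. The composite $\psi'\circ\psi\colon R\to R$ satisfies $(\psi'\circ\psi)\circ\varphi=\psi'\circ\lambda=\varphi$; since $\id_R$ has the same property and the factorization of $\varphi$ through $S^{-1}B$ is unique, $\psi'\circ\psi=\id_R$. Symmetrically, $(\psi\circ\psi')\circ\lambda=\psi\circ\varphi=\lambda$, and uniqueness of the factorization of $\lambda$ through $B_\mm$ yields $\psi\circ\psi'=\id_{B_\mm}$. Hence $\psi$ is an isomorphism satisfying $\psi\circ\varphi=\lambda$, as required. The main point to be careful about is keeping the two universal properties straight and, as noted, invoking local-ness of $R$ at precisely the step where $b\notin\mm$ must force $\varphi(b)$ to be invertible.
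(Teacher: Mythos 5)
Your proof is correct and follows essentially the same route as the paper's: observe $S\subseteq B\setminus\mm$ to get $\psi$ from the universal property of $S^{-1}B$, use locality of $R$ to see $\varphi(B\setminus\mm)\subseteq R^\times$ and get the map in the other direction, then check the composites are identities. The only cosmetic difference is that you invoke the uniqueness clause of the universal property to identify both composites with the identity, whereas the paper verifies $\theta\circ\psi=\mathrm{id}_R$ by an explicit element computation and then separately checks surjectivity of $\psi$; both versions are fine.
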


\begin{proof}[Proof of Lemma~\ref{lem.algebra}]
	As $\varphi(S)$ consists of units in $R$, we get $\varphi(S) \subset R \setminus \nn$, i.e.~$S \subset B \setminus \mm$. By the universal property of localizations there exists
	a homomorphism $\psi \colon R \to B_{\mm}$ such that $\psi \circ \varphi$ is equal to the
	canonical homomorphism $\iota \colon B \to B_{\mm}$. Thus it is enough to show that
	$\psi$ is an isomorphism.

	By definition $\varphi(B \setminus \mm) \subset R \setminus \nn$, i.e.~$\varphi(B \setminus \mm)$ consists of units in $R$.
	Hence $\varphi \colon B \to R$ factors through
	$\iota \colon B \to B_{\mm}$, there exists $\theta \colon B_{\mm} \to R$ such that
	$\theta \circ \iota = \varphi$. Thus the following commutative diagram exists:
	\[
	\xymatrix{
		&  B \ar[rd]^-{\varphi} \ar[ld]_-{\varphi} \ar[d]^-{\iota} \\
		R \ar[r]^{\psi} & B_{\mm} \ar[r]^{\theta} & R \, .
	}
	\]
	For $r \in R$ there exist $b \in B$ and $s \in S$ with $r = \frac{b}{s}$ in $R$ and we get
	\[
	(\theta \circ \psi)(r) = (\theta \circ \psi)(\varphi(b) \varphi(s)^{-1}) 
	= \varphi(b) \varphi(s)^{-1} = r \, .
	\]
	Hence $\theta \circ \psi$ is the identity on $R$ and in particular, $\psi$ is injective.
	On the other hand, let $\frac{b}{t} \in B_{\mm}$ where $b \in B$ and $t \in B \setminus \mm$.
	Since $\varphi(B \setminus \mm)$ consists of units in $R$, we get $\varphi(b) \varphi(t)^{-1} \in R$
	and thus $\psi(\varphi(b) \varphi(t)^{-1}) = \iota(b) \iota(t)^{-1} = \frac{b}{t}$. This shows that $\psi$
	is surjective.
\end{proof}

\begin{lemma}
	\label{lem.isom}	
	Let $A \subset B$ be a ring extension of Noetherian local rings
	where $\mm_A$ and $\mm_B$ denote the maximal ideals of $A$ and $B$, respectively. 
	If
	\begin{enumerate}[label=\alph*), leftmargin=*]
		\item \label{conda} $\mm_A \subset \mm_B$,
		\item \label{condb} the induced field extension $A / \mm_A \subset B / \mm_B$ is trivial,
		\item \label{condc} the induced homomorphism $\mm_A / \mm_A^2 \to \mm_B / \mm_B^2$
		is surjective,
		\item \label{condd} $B$ is a finite $A$-module,
	\end{enumerate} 
	then $A = B$.
\end{lemma}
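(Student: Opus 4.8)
The plan is to show $A = B$ by a Nakayama-type argument. First, I would observe that conditions~\ref{conda} and~\ref{condb} give $B = A + \mm_B$ as $A$-modules: every $b \in B$ has, by~\ref{condb}, a representative $a \in A$ with $b - a \in \mm_B$. So it suffices to prove $\mm_B \subseteq A$, and in fact it suffices to prove $\mm_B = \mm_A B$, since then $B = A + \mm_A B$ and, $B$ being a finite $A$-module by~\ref{condd}, Nakayama's lemma (applied to the finite $A$-module $B/A$, noting $B/A = \mm_A(B/A)$) yields $B = A$.

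The core step is therefore: establish $\mm_B = \mm_A B$. By~\ref{conda} we have $\mm_A B \subseteq \mm_B$, so I need the reverse inclusion. Here I would use condition~\ref{condc}: the map $\mm_A/\mm_A^2 \to \mm_B/\mm_B^2$ is surjective, which says $\mm_B = \mm_A B + \mm_B^2$ — wait, more precisely it gives $\mm_B = (\text{image of }\mm_A) + \mm_B^2$, i.e. $\mm_B \subseteq \mm_A B + \mm_B^2$. Now I want to apply Nakayama again, this time to the finitely generated $B$-module $\mm_B/\mm_A B$: since $B$ is Noetherian, $\mm_B$ is a finitely generated ideal, hence $\mm_B/\mm_A B$ is a finite $B$-module, and the inclusion $\mm_B \subseteq \mm_A B + \mm_B^2$ translates to $\mm_B/\mm_A B = \mm_B \cdot (\mm_B/\mm_A B)$. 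Since $\mm_B$ is the Jacobson radical of the local ring $B$, Nakayama's lemma forces $\mm_B/\mm_A B = 0$, i.e. $\mm_B = \mm_A B$, as desired.

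I expect the main subtlety to be bookkeeping about which ring one applies Nakayama over at each stage: the first application is over $B$ (to the $B$-module $\mm_B/\mm_A B$, using that $\mm_B = \operatorname{rad}(B)$ and that $B$ is Noetherian so the module is finite), and the second is over $A$ (to the $A$-module $B/A$, using $\mm_A = \operatorname{rad}(A)$ and that $B$ — hence $B/A$ — is a finite $A$-module by~\ref{condd}). One should double-check that $B/A$ is genuinely finite over $A$, which is immediate from~\ref{condd} since quotients of finite modules are finite. No deep input is needed beyond Nakayama's lemma and the Noetherian hypothesis; conditions~\ref{conda}--\ref{condd} are each used exactly once. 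Lemma~\ref{lem.algebra} is not needed for this lemma itself but will presumably be used to feed the hypotheses of this lemma in the proof of Proposition~\ref{prop.crit_embedding}.
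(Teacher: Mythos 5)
Your proof is correct, and it follows the same overall strategy as the paper: first establish $\mm_B = \mm_A B$, then conclude $B = A + \mm_B = A + \mm_A B$, so that $B/A = \mm_A(B/A)$ and Nakayama's lemma over $A$ (using~\ref{condd}) forces $B/A = 0$. The only divergence is in how you prove the intermediate claim $\mm_B = \mm_A B$. You apply Nakayama's lemma a second time, over $B$, to the finitely generated $B$-module $\mm_B/\mm_A B$: from~\ref{condc} you get $\mm_B = \mm_A B + \mm_B^2$, i.e.\ $\mm_B/\mm_A B = \mm_B \cdot (\mm_B/\mm_A B)$, and since $\mm_B = \operatorname{rad}(B)$ and $\mm_B$ is finitely generated (Noetherianity) you conclude $\mm_B/\mm_A B = 0$. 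The paper instead bootstraps $\mm_B = \mm_A + \mm_B^2$ to $\mm_B = \mm_A + \mm_B^n$ for all $n$ by induction and then invokes Krull's intersection theorem in the Noetherian local ring $B/\mm_A B$. Both are valid; your route is a bit more streamlined, replacing the induction-plus-Krull argument by one clean Nakayama application. Your bookkeeping about which ring you work over at each stage is exactly right, and you correctly identify where each of~\ref{conda}--\ref{condd} enters.
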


\begin{proof}[Proof of Lemma~\ref{lem.isom}]
	We claim that $\mm_A B = \mm_B$. Indeed, by~\ref{conda} we know that $\mm_A B \subset \mm_B$.
	Since $\mm_A / \mm_A^2 \to  \mm_B / \mm_B^2$
	is surjective, we get $\mm_B = \mm_A + \mm_B^2$ and inductively
	\begin{equation}
	\label{eq.formula}
	\mm_B = \mm_A + \mm_B^n \quad  \textrm{for each $n \geq 2$} \, .
	\end{equation}
	Let $\pi \colon B \to B / \mm_A B$ be the canonical projection. 
	Since $B / \mm_A B$ is a Noetherian local ring and $\pi(\mm_B)$ is a proper ideal of $B / \mm_A B$, 
	Krull's intersection theorem implies the second equality below:
	\[
	\pi(\mm_B) \stackrel{\eqref{eq.formula}}{=} \bigcap_{n \geq 1} \pi(\mm_B)^n = (0) \, .
	\]
	This implies $\mm_B \subset \mm_A B$ and proves the claim.
	
	Since by~\ref{condb}, we have that the field extension
	$A / \mm_A \subset B /  \mm_B$ is trivial, the claim implies now that
	\[
	B = A + \mm_B = A + \mm_A B \, . 
	\]
	This in turn gives us $M = \mm_A M$ for $M = B / A$. Since $B$ is a finite $A$-module,
	$M$ is a finite $A$-module as well. Since $A$ is a local ring with maximal ideal $\mm_A$, 
	we conclude by Nakayama's lemma that $M = 0$, $A =B$.
\end{proof}

\begin{proof}[Proof of Proposition~\ref{prop.crit_embedding}]
	Clearly, \ref{prop.crit_embedding_a} implies~\ref{prop.crit_embedding_b}, hence we are left with
	the proof of the reverse implication and thus we assume~\ref{prop.crit_embedding_b} holds.
	
	Note that $f(X)$ is closed in $Y$, since $f$ is proper. 
	We may therefore replace $Y$ with $f(X)$ and assume in addition that $f$ is surjective.
	Now, we have to show that $f$ is locally an isomorphism. Since $f$ is proper and injective,
	it is finite; see~\cite[Corollary 12.89]{GoWe2010Algebraic-geometry}. Thus, for each $x \in X$ 
	there exists an open affine neighbourhood $U \subset Y$ of $f(x)$ 
	such that $f^{-1}(U)$ is affine and $\OO_X(f^{-1}(U))$ is a finite $\OO_Y(U)$-module
	via the induced homomorphism $f_U^\ast \colon \OO_Y(U) \to \OO_X(f^{-1}(U))$. 
	As $f$ is surjective, $f_U^\ast$ is injective.
	
	Let $A \coloneqq \OO_{Y}(U)$, $B \coloneqq \OO_{X}(f^{-1}(U))$ and denote by $\mm_A$, $\mm_B$
	the maximal ideals corresponding to the points $f(x)$, $x$, respectively. We identify
	$A$ with a subring of $B$ and then 
	$\mm_A = \mm_B \cap A$. 
	By the flatness of $A \to A_{\mm_A}$ the homomorphism
	$A_{\mm_A} \to A_{\mm_A} \otimes_A B$
	is injective and it is finite, since $A \subset B$ is finite. 
	Let $R \coloneqq A_{\mm_A} \otimes_A B$. Then $R$ is the localization of
	$B$ at the multiplicative set $A \setminus \mm_A$.
	Hence, we have a commutative push-out diagram
	\begin{equation}
	\begin{gathered}
	\label{eq.comm_diagram}
	\xymatrix@=15pt{
		A_{\mm_A} \ar@{}[r]|-{\subset} & R \\
		A \ar[u]^-{\iota_A} \ar@{}[r]|-{\subset} & B \ar[u]_-{\varphi}
	}
	\end{gathered}
	\end{equation}
	where $\iota_A$ and $\varphi$ denote the canonical homomorphisms into the corresponding
	localizations.
	
	Let $\nn$ be a maximal ideal in $R$. We claim that $\nn = \varphi(\mm_B)R$.
	Indeed, $\nn \cap A_{\mm_A}$ is a maximal ideal of $A_{\mm_A}$, since $A_{\mm_A} \subset R$
	is finite; see~\cite[Lemma~2, \S9]{Ma1986Commutative-ring-t}. This implies the first equality
	below and the second one follows from the commutativity of~\eqref{eq.comm_diagram}:
	\begin{equation}
	\label{eq.m_A}
	\mm_A =  \iota_A^{-1}(\nn \cap A_{\mm_A}) = \varphi^{-1}(\nn) \cap A \, .
	\end{equation}
	Since $\varphi^{-1}(\nn)$ is a prime ideal of $B$, $\varphi^{-1}(\nn) \cap A = \mm_A$ is a maximal
	ideal of $A$ and since $A \subset B$ is finite,
	it follows from~\cite[Lemma~2, \S9]{Ma1986Commutative-ring-t} that $\varphi^{-1}(\nn)$
	is a maximal ideal of $B$. Since $f \colon X \to Y$ is injective, $\mm_B$ is the only
	maximal ideal in $B$ with $\mm_B \cap A = \mm_A$. By~\eqref{eq.m_A}, we get now
	$\varphi^{-1}(\nn) = \mm_B$, which implies the claim.
	
	Using the claim, $\varphi(\mm_B) R$ is the unique maximal ideal in $R$.
	In particular $R$ is a local ring and $\mm_B = \varphi^{-1}(\varphi(\mm_B) R)$.
	By Lemma~\ref{lem.algebra} there is an isomorphism
	$\psi \colon R \to B_{\mm_B}$ such that $\psi \circ \varphi$ is equal to
	the canonical homomorphism $\iota_B \colon B \to B_{\mm_B}$ of the localization.
	Hence we may identify $R$ with $B_{\mm_B}$ and $\varphi$ with $\iota_B$ and we have to 
	show now that $A_{\mm_A} = B_{\mm_B}$. 
	However, this follows from Lemma~\ref{lem.isom} applied
	to the ring extension $A_{\mm_A} \subset B_{\mm_B}$ (condition~\ref{condc} in Lemma~\ref{lem.isom} follows from the injectivity of $\textrm{d}_x f \colon T_x X \to T_{f(x)} Y$
	and condition~\ref{condb}
	follows from the fact that $A / \mm_A = A_{\mm_A} / \mm_A A_{\mm_A}$, 
	$B / \mm_B = B_{\mm_B} / \mm_B B_{\mm_B}$
	and from the assumption that the ground field is algebraically closed).
\end{proof}

\
\section[Non-existence of proper surjective morphisms]
{Non-existence of proper surjective morphisms of affine spaces into homogeneous spaces}
\label{Appendix.Lefschetz}

In this last appendix, we prove a version of Proposition~\ref{prop:intronomapsZtoG/H} that 
works over an arbitrary algebraically closed field $\k$ of characteristic zero:

\begin{prop}
	\label{prop:intronomapsZtoG/H_over_any_k}
	Let $d \geq 1$. If $G/H$ is a $d$-dimensional homogeneous space of a simple algebraic group $G$, then there is no proper surjective
	morphism from $\AA^d$ to $G/H$.
\end{prop}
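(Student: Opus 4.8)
The plan is to reduce Proposition~\ref{prop:intronomapsZtoG/H_over_any_k} to its complex incarnation Proposition~\ref{prop.no-finite-morph} by a Lefschetz-principle argument. What makes the complex statement applicable is that $\AA^d$ over $\CC$ is isomorphic to the contractible space $\CC^d$, hence is a simply-connected complex smooth algebraic variety with the rational homology of a point.

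So I would argue by contradiction: suppose that over $\k$ there is a proper surjective morphism $f\colon \AA^d \to G/H$ with $G$ simple and $\dim(G/H) = d \geq 1$. Since $\k$ has characteristic zero we have $\QQ \subseteq \k$, and the finitely many equations defining $G$, the closed subgroup $H \subseteq G$ (together with the conditions expressing that it is a subgroup), and the morphism $f$ involve only finitely many elements of $\k$. By a standard spreading-out argument there is therefore a subfield $\k_0 \subseteq \k$ finitely generated over $\QQ$ and models $G_0$, $H_0 \subseteq G_0$, $f_0 \colon \AA^d_{\k_0} \to G_0/H_0$ over $\k_0$ whose base change along $\k_0 \hookrightarrow \k$ recovers $G$, $H$, $f$. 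Since $\k_0$ is countably generated over $\QQ$ while $\CC$ has uncountable transcendence degree over $\QQ$, there is a field embedding $\iota \colon \k_0 \hookrightarrow \CC$; base-changing $G_0$, $H_0$, $f_0$ along $\iota$ produces a complex algebraic group $G_\CC$, a closed subgroup $H_\CC \subseteq G_\CC$, the homogeneous space $(G/H)_\CC = G_\CC / H_\CC$, and a morphism $f_\CC \colon \AA^d_\CC \to (G/H)_\CC$.

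The remaining task is to check that this base change preserves all the features needed: properness and surjectivity of $f_\CC$ hold because both properties are stable under arbitrary base change; $\dim(G/H)_\CC = d$ because base change to a field extension preserves dimension; and $G_\CC$ is again a simple algebraic group, since connectedness is preserved and the type (Dynkin diagram) of a split semisimple group, equivalently of its forms, is unchanged under field extension, so $G_\CC$ carries the same irreducible root system as $G$. Granting these points, Proposition~\ref{prop.no-finite-morph} applied with $Z = \AA^d_\CC$ --- which is simply connected and has the rational homology of a point --- shows that no proper surjective morphism $\AA^d_\CC \to (G/H)_\CC$ exists, contradicting the existence of $f_\CC$; hence no such $f$ existed over $\k$. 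The one step that deserves care is the spreading-out and compatibility bookkeeping in the second paragraph, for which I would invoke the standard limit arguments for finitely presented schemes and morphisms; the rest of the argument is immediate.
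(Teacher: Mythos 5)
Your proposal follows essentially the same Lefschetz-principle strategy as the paper: descend to a small subfield of $\k$, embed it into $\CC$, base-change, and apply Proposition~\ref{prop.no-finite-morph} with $Z=\AA^d_\CC$. The overall plan is correct.

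The paper, however, is more careful at two points where you wave at ``standard spreading-out.'' First, it descends not merely to a finitely generated field $\k_0$ but to an \emph{algebraically closed} subfield $\k'\subset\k$ of finite transcendence degree over~$\QQ$. This matters: over a non-algebraically-closed $\k_0$ the notions ``simple algebraic group,'' the formation of the quotient $G_0/H_0$, and the identification $(G_0/H_0)_\CC=(G_0)_\CC/(H_0)_\CC$ all require extra care, which is exactly what Lemma~\ref{lem.extension_of_scalars} handles in the algebraically-closed setting. Your appeal to ``the type of a split semisimple group is unchanged under field extension'' implicitly assumes $G_0$ is split, which it need not be over~$\k_0$; one should either pass to $\overline{\k_0}$ first or invoke Lemma~\ref{lem.extension_of_scalars}\eqref{lem.extension_of_scalars4} as the paper does. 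Second, the paper first observes that the proper surjective morphism $\AA^d\to G/H$ is automatically finite, and that $G/H$ is \emph{affine} (because $H$ is reductive or trivial when $G$ is reductive and $G/H$ is affine, via \cite[Theorem 3.8]{Ti2011Homogeneous-spaces}). This turns the entire spreading-out into explicit bookkeeping with a finite ring extension $\k[G/H]\subseteq\k[x_1,\dots,x_n]$ and monic polynomials, avoiding the heavier machinery of limit arguments for proper morphisms over non-affine bases. Your proof would work if these two points were carried out in detail, but as written it leaves the most delicate parts (descent of simplicity, formation of the quotient after base change to a non-algebraically-closed field) to ``standard arguments'' without pinning down a reference or a reduction that actually disposes of them.
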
 

The idea is simply to reduce the situation to the 
case of complex numbers and then to use Proposition~\ref{prop:intronomapsZtoG/H}.
In other words, we check that the Lefschetz principle holds for the specific statement we need. 

	
For the proof we make the following convention. If $X$ is a variety
over $\k$ and if $\k \subset K$ is a field extension such that $K$ is algebraically
closed as well, we denote by $X_K$ the fiber product $X \times_{\Spec \k} \Spec K$. 
In case $X$ is affine, we will denote the coordinate ring of $X$ by $\k[X]$; in particular we then have 
$K[X_K] = K \otimes_{\k} \k[X]$.
In the proof we will
use the following properties of $G_K$ for an algebraic group $G$ over $\k$:

\begin{lemma}
	\label{lem.extension_of_scalars}
	Let $\k \subset K$ be a field extension such that $K$ is algebraically closed and let $G$ be an 
	algebraic group over $\k$. Then the following holds:
	\begin{enumerate}[leftmargin=*]
		\item \label{lem.extension_of_scalars3} The algebraic group $G$ is connected if and only
		if $G_K$ is connected.
		\item \label{lem.extension_of_scalars0} 
		The group of $\k$-rational points $G(\k)$ is dense in $G_K$.
		\item \label{lem.extension_of_scalars1} Let $H$ be a closed subgroup over $\k$ of $G$.
		Then $G_K / H_K = (G/H)_K$.
		\item \label{lem.extension_of_scalars2} If $G^\circ$ denotes the
		identity component of $G$, then $(G^\circ)_K = (G_K)^\circ$.
		\item \label{lem.extension_of_scalars4} Assume that $G$ is connected. Then, 
		$G$ is simple (semisimple, reductive) 
		if and only if $G_K$ is simple (semisimple, reductive).
	\end{enumerate}
\end{lemma}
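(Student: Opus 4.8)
The plan is to reduce all five parts to two elementary facts about extension of scalars along $\k\subset K$ with both fields algebraically closed of characteristic zero: (i) if $A$ is a finitely generated $\k$-algebra that is reduced (resp.\ a domain), then $A\otimes_\k K$ is again reduced (resp.\ a domain); and (ii) the morphism $\Spec(A\otimes_\k K)\to\Spec A$ is faithfully flat with connected fibres, hence surjective, open, and a bijection on connected components. Granting these, I would first prove \eqref{lem.extension_of_scalars0}: it suffices to treat an affine variety $X=\Spec A$ with $A$ reduced, and writing an element of $A\otimes_\k K$ as $\sum_i a_i\otimes c_i$ with the $c_i\in K$ linearly independent over $\k$, its vanishing at every $x\in X(\k)$ forces $a_i(x)=0$ for all $i$ by that linear independence, hence $a_i=0$ for all $i$ by the Nullstellensatz, as $\k$ is algebraically closed and $A$ is reduced. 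Thus $X(\k)$ is Zariski dense in $X_K$, and in particular $G(\k)$ is dense in $G_K$.

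Part \eqref{lem.extension_of_scalars3} follows from fact (i): a connected algebraic group over a field is irreducible, so for $G$ connected the coordinate rings of its affine charts are domains, whence so are those of $G_K$, i.e.\ $G_K$ is irreducible and in particular connected; the converse is immediate from the surjectivity of $G_K\to G$ in fact (ii). For \eqref{lem.extension_of_scalars1} I would use that $\pi\colon G\to G/H$ is fppf-locally on the base a trivial $H$-torsor, that this is preserved by the flat base change $(-)\times_{\Spec\k}\Spec K$, and that a geometric quotient of $G_K$ by $H_K$ is unique, so $(G/H)_K=G_K/H_K$; alternatively one realises $G/H$ by Chevalley's theorem as a $G$-orbit of a line in $\PP(V)$ for a suitable representation $V$, and orbit maps commute with base change. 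Part \eqref{lem.extension_of_scalars2} is then formal: $(G^\circ)_K$ is a closed subgroup of $G_K$, connected by \eqref{lem.extension_of_scalars3} and containing the identity, hence $(G^\circ)_K\subseteq(G_K)^\circ$; since $\dim(G^\circ)_K=\dim G^\circ=\dim G=\dim G_K$ and $(G_K)^\circ$ is irreducible of that dimension, the inclusion is an equality. (Alternatively, count components via fact (ii) together with $G_K/(G^\circ)_K=(G/G^\circ)_K$ from \eqref{lem.extension_of_scalars1}.)

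The real content is \eqref{lem.extension_of_scalars4}, for which the plan is to pass to Lie algebras, whose structure theory is unaffected by base change. In characteristic zero one has $\operatorname{Lie}(G_K)=\operatorname{Lie}(G)\otimes_\k K$, and for connected $G$ the properties ``reductive'', ``semisimple'', ``simple'' are equivalent to the corresponding properties of $\mathfrak g=\operatorname{Lie}(G)$ as a Lie algebra --- for the last one using that, when $G$ is semisimple, the minimal connected normal closed subgroups of $G$ correspond under $\operatorname{Lie}$ to the simple ideals of $\mathfrak g$. It then remains to observe that a Lie algebra over $\k$ is reductive/semisimple/simple iff its scalar extension to $K$ is: this is classical linear algebra, as the Killing form and Cartan's solvability criterion are insensitive to field extension, and the decomposition of a semisimple Lie algebra into simple ideals is unique with combinatorics unchanged by extending the (already algebraically closed) base field. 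For ``reductive'' and ``semisimple'' one can also bypass Lie algebras by recalling that in characteristic zero the formation of $R_u(G)$ and of $R(G)$ commutes with extension of algebraically closed fields, and then applying \eqref{lem.extension_of_scalars3} to $R_u(G)$ and to $R(G)$.

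I expect the only genuinely delicate point to be the descent implicit in \eqref{lem.extension_of_scalars4}: a priori a connected closed normal subgroup of $G_K$ need not be defined over $\k$, and the clean way around this is exactly the Lie-algebra dictionary in characteristic zero together with the base-change invariance of the simple-ideal decomposition; everything in \eqref{lem.extension_of_scalars3}--\eqref{lem.extension_of_scalars2} is formal once \eqref{lem.extension_of_scalars0} and facts (i)--(ii) are in hand. I would also record that, under the standing hypothesis that $G$ is connected, the two notions of ``simple'' used earlier in the text agree, so testing simplicity on $\mathfrak g$ is harmless.
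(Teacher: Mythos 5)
Your proposal is correct, and in several places it takes a genuinely different route from the paper's proof. For part~\eqref{lem.extension_of_scalars0}, the paper reduces to the connected case and cites Borel's density result for rational points; your argument (expand an element of $A\otimes_\k K$ over a $\k$-basis of $K$ and invoke the Nullstellensatz) is more elementary and self-contained --- in fact it is exactly the trick the paper uses later inside the proof of part~\eqref{lem.extension_of_scalars1}, so it unifies the two. For part~\eqref{lem.extension_of_scalars1}, you invoke torsor triviality under fppf base change (or Chevalley's orbit realization), whereas the paper constructs the comparison map $\theta$ from the universal property and verifies it is an isomorphism by an explicit computation with invariant rings over an affine open trivialized by a finite \'etale cover; the two are close in spirit, but the paper keeps everything at the level of coordinate rings while you lean on higher-level descent/uniqueness of quotients, which would need a precise citation to be airtight. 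For part~\eqref{lem.extension_of_scalars2}, your dimension-count argument (closed irreducible subvariety of an irreducible variety of the same dimension is the whole thing) is a clean alternative to the paper's route through part~\eqref{lem.extension_of_scalars1} and component counting; you also note the latter as an alternative. The biggest divergence is part~\eqref{lem.extension_of_scalars4}: you pass to Lie algebras and use that reductivity/semisimplicity/simplicity of $\g$ is insensitive to extending an algebraically closed field of characteristic zero (Killing form, Cartan's criterion, base-change invariance of the simple-ideal decomposition), whereas the paper works with a maximal torus and its root-space decomposition, citing SGA3 for the preservation of ``maximal torus'' and ``reductive/semisimple'' and then arguing irreducibility of the root system for ``simple'', with the converse handled via density of rational points. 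Your Lie-algebra route buys a more classical and uniform argument that avoids the SGA3 citation, at the cost of invoking the dictionary between connected normal subgroups and ideals in characteristic zero; the paper's route is slightly more down to earth about what base change does to the root datum. One small point of caution: your global fact (ii), asserting a bijection on connected components for $\Spec(A\otimes_\k K)\to\Spec A$, requires more than ``open, surjective, connected fibres'' to justify in general; but you never actually rely on that stronger form (for the converse of part~\eqref{lem.extension_of_scalars3} surjectivity alone suffices, and for part~\eqref{lem.extension_of_scalars2} you give the dimension argument and the component count via part~\eqref{lem.extension_of_scalars1}), so the sketch stands.
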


\begin{remark}
	\label{rem.reductive}
	Let $G$ be a non-trivial algebraic group $G$ over $\k$. Then $G$ 
	is reductive if and only if the identity component $G^\circ$
	is reductive or trivial. 
	Hence, for any field extension $\k \subset K$ where $K$ is algebraically closed, the algebraic
	group $G$ is reductive if and only if $G_K$ is (see Lemma~\ref{lem.extension_of_scalars}).
\end{remark}

\begin{proof}[Proof of Lemma~\ref{lem.extension_of_scalars}]
	\eqref{lem.extension_of_scalars3}: If $G$ is connected, then
			$\k[G]$ is an integral domain. 
			There is a canonical inclusion $K[G_K] = K \otimes_{\k} \k[G] \subset K \otimes_{\k} \k(G)$
			where $\k(G)$ denotes the field of rational functions on $G$. Since $\k$ is algebraically
			closed, by~\cite[Corollary~1, \S15, Ch.~III]{ZaSa1958Commutative-algebr}, we get that
			$K \otimes_{\k} \k(G)$ is an integral domain and thus $G_K$ is connected.
			
			If $G_K$ is connected, then $K[G_K] = K \otimes_{\k} \k[G]$
				is an integral domain. As $\k \subset K$ is an inclusion, it follows that
				$\k[G] \to K \otimes_{\k} \k[G]$ is an inclusion and thus $\k[G]$
				is an integral domain as well. This shows that $G$ is connected.

	\eqref{lem.extension_of_scalars0}: Note that a $\k$-rational point of $G$ corresponds to 
	a $\k$-algebra homomorphism $\k[G] \to \k$ which in turn induces a $K$-algebra
	homomorphism $K[G_K] = K \otimes_{\k} \k[G] \to K \otimes_{\k} \k = K$ and thus gives a (closed) point in
	$G_K$. In this way we see $G(\k)$ as a subgroup of $G_K$. 
	
	Denote by $G^\circ$ the identity component. Hence
	there exists a finite set $E \subset G(\k)$ such that $G = \coprod_{e \in E} e \cdot G^\circ$. Since
	$(G^\circ)_K$ is connected (see~\eqref{lem.extension_of_scalars3}), 
	it follows from \cite[18.3 Corollary]{Bo1991Linear-algebraic-g} that 
	$G^\circ(\k)$ is dense in $(G^\circ)_K$. Hence
	\[
		G(\k) = \coprod_{e \in E} e \cdot G^\circ(\k)	\quad \textrm{is dense in} \quad
		G_K = \coprod_{e \in E} e \cdot (G^\circ)_K \, .
	\]
	
	\eqref{lem.extension_of_scalars1}: Denote by $\pi_K \colon G_K \to (G/H)_K$ the pull-back
	of the natural projection $\pi \colon G \to G/H$. Let $\pr \colon H \times G \to G$ be the projection onto the
	second factor. Since $\pi$ is $H$-invariant, we get the commutativity of
	\[
		\xymatrix@=9pt{
			G \ar[dd]^-{\pr} \times H \ar[rrr]^-{(g, h) \mapsto g \cdot h} &&& G \ar[dd]_-{\pi}  \\ \\
			G \ar[rrr]^-{\pi} &&& G/H 
		}	
		\quad \xymatrix@=9pt{\\ \textrm{and thus of} \\} \quad
		\xymatrix@=8.5pt{
					G_K \ar[dd]^-{\pr_K} \times H_K \ar[rrr]^-{(g, h) \mapsto g \cdot h} &&& G_K \ar[dd]_-{\pi_K}  \\ \\
					G_K \ar[rrr]^-{\pi_K} &&& (G/H)_K \, .
				}	
	\] 
	This shows that $\pi_K$ is $H_K$-invariant. In particular,
	there exists a morphism $\theta \colon G_K / H_K \to (G/H)_K$ such that $\pi_K$ factors as
	\begin{equation}
		\label{eq.composition_def_theta}
		G_K \to G_K/H_K \stackrel{\theta}{\to} (G/H)_K
	\end{equation}
	where the first morphism denotes the canonical projection.
	
	Let $U \subset G/H$ be an open affine subvariety and let 
	$V \to U$ be a finite \'etale morphism 
	such that $V \times_U \pi^{-1}(U) \to V$ is a trivial principal $H$-bundle. In particular,
	$V \times_U \pi^{-1}(U) \simeq U \times H$ is affine and since 
	$V \times_U G \to \pi^{-1}(U)$ is finite,
	it follows that $\pi^{-1}(U)$ is affine by Chevalley's Theorem~\cite[Theorem 12.39]{GoWe2010Algebraic-geometry}. 
	Using~\eqref{eq.composition_def_theta}, we get that the restriction 
	$\pi_K |_{\pi^{-1}(U)_K} \colon \pi^{-1}(U)_K \to U_K$ factorizes as
	\[
		\pi^{-1}(U)_K \to \pi^{-1}(U)_K / H_K \stackrel{\theta_{U_K}}{\longrightarrow} U_K \, ,
	\]
	where $\theta_{U_K}$ denotes the restriction of $\theta$ to $\pi^{-1}(U)_K / H_K$.
	Since $\pi^{-1}(U)$ is affine, we get $K[\pi^{-1}(U)_K] = K \otimes_{\k} \k[\pi^{-1}(U)]$.
	
	We claim that $\theta_{U_K}$ is an isomorphism. To achieve this
	it is enough to show that the induced map of
	$\theta_{U_K}$ on global sections of the structure sheaves is a
	$K$-algebra isomorphism (since $U_K$ is affine). Since~$U_K = (\pi^{-1}(U) /H)_K$, 
	this amounts to showing that the invariant rings
	satisfy
	\[
		 (K \otimes_{\k} \k[\pi^{-1}(U)])^{H_K} = K \otimes_{\k} \k[\pi^{-1}(U)]^H \quad	
		 \textrm{inside $K \otimes_{\k} \k[\pi^{-1}(U)]$} \, .
	\]
	The inclusion `$\supseteq$' follows from the existence of $\theta_{U_K}$. For the reverse
	inclusion let $(e_i)_i$ be a $\k$-basis of the $\k$-vector space $K$ and let 
	$\sum_i e_i \otimes_{\k} f_i \in K \otimes_{\k} \k[\pi^{-1}(U)]$ be $H_K$-invariant
	(almost all $f_i \in \k[\pi^{-1}(U)]$ are zero). In particular, we get for all $h \in H(\k)$ and $g \in \pi^{-1}(U)$ that
	\[
		\sum_i e_i f_i(h \cdot g) = \sum_i e_i f_i(g) \quad \textrm{inside $K$} \, .
	\]
	As $(e_i)_i$ is a $\k$-basis for $K$, 
	we get $f_i(h \cdot g) = f_i(g)$ for each $h \in H(\k)$, each $g \in \pi^{-1}(U)$
	and each $i$. This implies $f_i \in \k[\pi^{-1}(U)]^H$ for each $i$ and shows `$\subseteq$'.
	Hence $\theta_{U_K}$ is an isomorphism.
	
	As we may cover $G/H$ by open affine subvarieties $U$ such that there is a finite \'etale morphism
	$V \to U$ that trivializes $\pi$ over $U$,
	it follows that $\theta$ is an isomorphism.
	
	\eqref{lem.extension_of_scalars2}: 
		The connectedness of $(G^\circ)_K$ follows from the connectedness of $G^\circ$; see~\eqref{lem.extension_of_scalars3}. 
		Since $(G/ G^\circ)_K = G_K/(G^\circ)_K$ is finite (see \eqref{lem.extension_of_scalars1}), 
		we get that $(G^\circ)_K$ is the identity component in $G_K$.

	\eqref{lem.extension_of_scalars4}: Let $T$ be a maximal algebraic torus of $G$, denote by $\mathfrak{X}(T)$
	the character lattice of $T$ and denote by
	$\g$ the Lie algebra of $G$. Moreover, let $R \subset \mathfrak{X}(T)$ be 
	the roots of $\g$ with respect to $T$ and for each $\alpha \in R$, let $\g^\alpha$ denote the corresponding 
	eigenspace. Hence we get
	\[
			\g = \g^0 \oplus \bigoplus_{\alpha \in R} \g^{\alpha} \, .
	\]
	Note that $K \otimes_{\k} \g$ is the Lie algebra of $G_K$,
	that we may naturally identify $\mathfrak{X}(T)$ with $\mathfrak{X}(T_K)$ and that the natural $T$-action on $\g$
	induces naturally a $T_K$-action on $K \otimes_{\k} \g$.
	Since 
	$(K \otimes_{\k} \g)^{\alpha} \supset K \otimes_{\k} \g^{\alpha}$ for each $\alpha \in R$
	and $(K \otimes_{\k} \g)^0 \supset K \otimes_{\k} \g^0$, we get
	\[
			(K \otimes_{\k} \g)^0 \oplus \bigoplus_{\alpha \in R} (K \otimes_{\k} \g)^{\alpha}  = 
			K \otimes_{\k} \g = K \otimes_{\k} \g^0 \oplus \bigoplus_{\alpha \in R} K \otimes_{\k} \g^{\alpha}
	\] 
	and 
	\[
			(K \otimes_{\k} \g)^0 = K \otimes_{\k} \g^0 \, , \quad
			(K \otimes_{\k} \g)^{\alpha} = K \otimes_{\k} \g^\alpha \ \textrm{for each $\alpha \in R$} \, .
	\]
	
	We assume first that $G$ is semisimple (reductive).
	Using that $G_K$ is connected, 
	it follows from~\cite[Proposition 1.12, Corollaire 1.13, Exp. XIX]{DeGr2011Schemas-en-groupes} that $T_K$ is a maximal algebraic torus in $G_K$ and
	$G_K$ is semisimple (reductive). If $G$ is simple, then $G_K$ is semisimple. Moreover, 
	the roots system of $G$ with respect to $T$ is irreducible, and thus
	the root system of $G_K$ with respect to $T_K$ is irreducible as well. Hence, if $G$ is simple, then $G_K$ is simple as well.
	
	Assume now that $G_K$ is simple. If there exists a proper connected normal
	subgroup $N$ over $\k$ of $G$, then $N_K$ is a proper connected normal subgroup
	of $G_K$, since $G(\k)$ is dense in $G_K$ and $N(\k)$ is dense in $N_K$; see~\eqref{lem.extension_of_scalars0}.
	Hence, $N_K$ contains only the identity and 
	thus $N$ as well. Moreover, as $G_K$ is non-commutative and $G(\k)$
	is dense in $G_K$, we get that $G$ is non-commutative.
	Analogously one shows that $G$ is semisimple (reductive) in case $G_K$ is semisimple (reductive).
\end{proof}

\begin{proof}[Proof of Proposition~\ref{prop:intronomapsZtoG/H_over_any_k}]
	We assume towards a contradiction that there exists a proper surjective morphism $\varphi \colon \AA^n \to G/H$. In particular,
	$\varphi$ is quasi-finite and since $\varphi$ is proper, we conclude that $\varphi$ is finite; 
	see~\cite[Corollary 12.89]{GoWe2010Algebraic-geometry}.
	By Chevalley's Theorem~\cite[Theorem 12.39]{GoWe2010Algebraic-geometry}, $G/H$ is affine. In particular, we get a finite ring extension
	\[
		\k[G/H] \subseteq \k[x_1, \ldots, x_n] \, ,
	\]
	where $x_1, \ldots, x_n$ are variables. 
	By the assumption there exist monic polynomials $f_1, \ldots, f_n \in \k[G/H][T]$
	such that $f_1(x_1) = \ldots = f_n(x_n) = 0$. 
	
	There exists an algebraically closed subfield $\k' \subset \k$ of finite transcendence 
	degree over $\QQ$, an algebraic group $G'$ over $\k'$, and a 
	proper subgroup $H'$ over $\k'$ of $G'$ such that $G = G'_{\k}$ and $H = H'_{\k}$.
	
	Since $G/H$ is affine and $G$ is reductive, $H$ is reductive or trivial 
	(see \cite[Theorem 3.8]{Ti2011Homogeneous-spaces}).
	By Remark~\ref{rem.reductive}, $H'$ is reductive or trivial, and thus $G'/H'$ is affine.
	Hence, $\k'[G'/H']$ is a finitely generated $\k'$-algebra, and thus there exists a surjective 
	$\k'$-algebra homomorphism $\eta' \colon \k'[y_1, \ldots, y_m] \to \k'[G'/H']$, where $y_1, \ldots, y_m$
	are new variables. By Lemma~\ref{lem.extension_of_scalars}\eqref{lem.extension_of_scalars1}
	\[
		\eta \coloneqq \k \otimes_{\k'} \eta \colon \k[y_1, \ldots, y_m] \to \k \otimes_{\k'} \k'[G'/H'] = \k[G/H]  
	\] 
	is a surjective $\k$-algebra homomorphism. For each $i=1, \ldots, n$, let $d_i \coloneqq \deg(f_i) > 0$ and let
	$p_{ij} \in \k[y_1, \ldots, y_m]$, where $j =0, \ldots, d_i-1$,
	such that 
	\[
		f_i = T^{d_i} + \sum_{j=0}^{d_i-1} \eta(p_{ij}) T^j \, .
	\]
	By enlarging $\k'$ we may assume in addition that the coefficients of all the 
	$p_{ij} \in \k[y_1, \ldots, y_m]$ and all the $\eta(y_i) \in \k[x_1, \ldots, x_n]$ 
	are contained in $\k'$. 
	In particular, the polynomial $f_i$ has coefficients in $\k'[G'/H']$ for each $i$ and
	\begin{equation}
		\label{eq.restricted_extension}
		\k'[G' / H'] \subseteq \k'[x_1, \ldots, x_n] \, .
	\end{equation}
	As $f_i(x_i) = 0$ for each $i$, we get that~\eqref{eq.restricted_extension} is a finite
	ring extension.
	
	Since the field extension $\QQ \subset \k'$ has finite transcendence degree, there exists
	an embedding of $\k'$ into the field of complex numbers $\CC$. Hence,
	\[
		\CC[(G'/H')_{\CC}] = \CC \otimes_{\k'} \k'[G' / H'] \subset \CC[x_1, \ldots, x_n]
	\]
	is a finite ring extension and $G'_{\CC} / H'_{\CC} = (G'/H')_{\CC}$ is affine.
	Thus, we get a finite surjective morphism $\AA^n_{\CC} \to G'_{\CC} / H'_{\CC}$. Since $G$ simple, we get that $G'$ is simple, and thus  also $G'_{\CC}$ is simple; see Lemma~\ref{lem.extension_of_scalars}\eqref{lem.extension_of_scalars4}. 
	This contradicts Proposition~\ref{prop:intronomapsZtoG/H}.
\end{proof}

\par\bigskip
\renewcommand{\MR}[1]{}
\bibliographystyle{amsalpha}
\bibliography{BIB}

\end{document}